\documentclass[11pt]{article}
\usepackage{xcolor}
\usepackage[colorlinks=true, linkcolor=blue, citecolor=red]{hyperref}
\usepackage{amsmath}
\usepackage{amssymb}
\usepackage{mathrsfs}
\usepackage{bbm}
\usepackage{amsfonts}
\usepackage{amsthm}
\usepackage{bm}
\usepackage[numbers,sort&compress]{natbib} 
\usepackage{enumerate}

\numberwithin{equation}{section}
\theoremstyle{plain}
\newtheorem{theorem}{Theorem}[section]

\newtheorem{lemma}{Lemma}[section]
\newtheorem{prop}{Proposition}[section]
\newtheorem{remark}{Remark}[section]
\newtheorem{definition}{Definition}[section]

\newcommand{\R}{\mathbb{R}}

\newcommand{\E}{\mathbb{E}}

\def \cald {{  {\mathcal{D}}  }}

\def\bt{\begin{theorem}}
\def\et{\end{theorem}}
\def\bl{\begin{lemma}}
\def\el{\end{lemma}}
\def\br{\begin{remark}}
\def\er{\end{remark}}
\def\l^2{\ell^2}

\def\5{$}

\def\be{\begin{equation}}
\def\ee{\end{equation}}

\allowdisplaybreaks
\everymath{\displaystyle}

\usepackage{lineno} 

\begin{document}

\title{
 {                    \large\bf { 
   Existence and vanishing noise limit of measure attractors for McKean-Vlasov $p$-Laplacian lattice systems with delay driven by L\'{e}vy noise
}}}
 \author{Zhang Chen $^{a,}$\thanks{E-mail address: zchen@sdu.edu.cn} , \ \
        Xiaoxiao Sun $^{a,}$\thanks{E-mail address: xsun@mail.sdu.edu.cn} , \ \
        Bixiang Wang $^{b,}$\thanks{E-mail address: bwang@nmt.edu}       
\\
\normalsize{$^a$ School of Mathematics, Shandong University, Jinan 250100,  China} \\
\normalsize{$^b$ Department of Mathematics, New Mexico Institute of Mining and Technology,} \\
\normalsize{Socorro,  NM~87801, USA}\\
}
\date{}
\maketitle

\begin{minipage}{14cm}
{\bf Abstract.}
 This paper is concerned with the existence and
 the  limiting behavior of
 measure attractors
 of distribution laws of 
 the  
 solution segment process  for
 the  McKean-Vlasov stochastic $p$-Laplace lattice system
 with time delay
 driven by    L\'{e}vy noise.
 The nonlinear drift and  diffusion terms are
 allowed to have
  superlinear growth.
  Due to time delay,
  the Skorohod metric space
  is employed to
  describe the trajectories
  of the solutions
  with jumps. 
 We first prove the
 existence and uniqueness
 of  c\`{a}dl\`{a}g solutions for the lattice system,
 and then define a   non-autonomous cocycle
 acting on the Borel
 probability measures
 in the Skorohod space.
 This cocycle is continuous
 in bounded subsets
 of the space of probability
 measures only when time is
 sufficiently large.
 We then prove the existence
 of pullback absorbing sets
 and the asymptotic compactness
 of the cocycle as well as
 the existence and uniqueness
 of pullback 
 measure attractors.
 We finally investigate
 the limiting behavior
 of 
 measure
  attractors
 of the lattice system
 as the noise intensity approaches zero,
 and establish the 
     optimal convergence rate of singleton measure attractors in
     the  Wasserstein distance 
      of order $\theta$.
 
\vspace{2mm}

\noindent{\bf Keywords.}
 McKean-Vlasov equation, lattice system, L\'{e}vy noise, delay, Skorokhod space, measure attractor, asymptotic compactness, convergence rate.

\vspace{2mm}

\noindent{\bf AMS 2020 Mathematics Subject Classification.}
37L55,  
37L30, 
37L60, 
60G51. 
\end{minipage}

\tableofcontents

\section{Introduction}
 In this paper, we 
 study the  McKean-Vlasov stochastic $p$-Laplace delay lattice system defined on the integer set $\mathbb{Z}$ with superlinear drift and diffusion terms driven by L\'{e}vy noise:
  \begin{equation} \label{lmvlds-1}
  \left\{
  \begin{aligned}
   & du_i(t)
   + (Au)_i (t)
            = - \lambda u_i(t) dt
          - f_i(u_i(t), \mathcal{L}_{u(t)}) dt
          + g_i(t) dt  
          + F_i(u_i(t - \rho),\mathcal{L}_{u(t - \rho)}) dt \\
   & \ \ \
       + \sqrt{\varepsilon} \sum_{k \in \mathbb{N} }
               \left( h_{k,i}(t) + \sigma_{k,i}(u_i(t), \mathcal{L}_{u(t)})
               \right) dW_k(t) \\
   & \ \ \
       + \sqrt{\varepsilon} \int_{y \in \mathbb{Y}}
             \left( \widetilde{h}_{i}(t,y) + \widetilde{\sigma}_{i}(u_i(t-),\mathcal{L}_{u(t)},y)
             \right) \widetilde{N}(dt,dy),  \ \ \  t > \tau,  \\
   & u_i(s) = \varphi_i(s-\tau),\  s \in [\tau-\rho, \tau],
  \end{aligned}
  \right.
\end{equation}
 where
 $i \in \mathbb{Z}$,
 $\lambda$ is a positive constant,  
 $\rho \in (0,1]$ is a time delay parameter,
 $u_i(t-) = \lim_{s \uparrow t} u_i(s)$,
 $\mathcal{L}_{u(t)} $ is the probability distribution of $u(t) = (u_i(t))_{i \in \mathbb{Z}}$;
 $g(t) = (g_i(t))_{i \in \mathbb{Z}}$, $h(t) = (h_{k,i}(t))_{k \in \mathbb{N}, i \in \mathbb{Z}} $
 and $ \widetilde{h}(t,y) = (\tilde{h}_{i}(t,y))_{ i \in \mathbb{Z}}$ are  given    sequences
 in $\ell^2$; 
 for every $k \in \mathbb{N}$ and $i \in \mathbb{Z}$,
 $f_i$, $\sigma_{k,i}$
 and  $\widetilde{\sigma}_{i}$ are superlinear functions
 and   
 $F_i$ is a Lipschitz   function;
 $\varepsilon \in (0,1]$ is  a noise intensity parameter,
 $\{W_k\}_{k \in \mathbb{N}}$ is a sequence of independent real-valued two-sided Wiener processes on a complete filtered probability space
 $(\Omega, \mathcal{F}, \{\mathcal{F}_t\}_{t \in \mathbb{R}}, \mathbb{P})$
 satisfying the usual conditions;
 and
 $\widetilde{N} (dt, dy)
 = {N} (dt, dy) -\nu (dy) dt
 $ 
 is an independent Poisson martingale measure with Poisson random measure $N$ 
 and $\sigma$-finite characteristic measure $\nu$ on the measure space
 $(\mathbb{Y}, \mathcal{Y}, \nu)$.
 The operator $A$ in \eqref{lmvlds-1}
 is the discrete
 $p$-Laplace operator
 with $p>2$ which is
 given by:
 for every 
 $ u =(u_i)_{i
 	\in \mathbb{Z}}
 \in \ell^2$,
   \be\label{mar02a1}
   (Au)_i
   =  |u_{i}  - u_{i-1}|^{p-2} \left( u_{i}  - u_{i-1}\right)
- |u_{i+1} - u_{i}|^{p-2} \left( u_{i+1}  - u_i \right)
  ,
\quad \forall \  i
\in \mathbb{Z}.
\ee

Indeed, the 
operator 
$A$ in \eqref{mar02a1} 
corresponds to the
discretization of the 
continuous $p$-Laplace
operator on 
$\mathbb{R}$ 
which is given by
 $$
   Au
   =    - \frac{\partial}{\partial x}                               \left(  \left|\frac{\partial u(x)}{\partial x} \right|^{p-2}
    \frac{\partial u(x)}{\partial x}
         \right) ,
         \quad \forall\  x\in \mathbb{R} .
$$
 The $p$-Laplace equation   describes  the type of diffusion depending on the gradient and is closely related to the heterogeneous medium or turbulence \cite{dt1994SIMA, ev1988ARMA}.
 For 
 the stochastic $p$-Laplace equations, 
 the well-posedness and invariant measures  have been investigated 
 in    \cite{sz2023SAA, bwang2026DCDSB, wliu2009JMAA, gt2016SIMA, wct2024JDE}
 for white noise
 and
 in    
  \cite{blz2014NA, neelima2022PA, km2025JEE} for
  L\'{e}vy noise.

  When modeling the solidification of alloys or neural networks with chains of coupled oscillators, the underlying spatial medium is inherently discrete rather than continuous.
  So it is significant to investigate lattice systems in 
 this case   instead of spatially continuous equations.  
  Moreover, lattice models also have wide applications in image processing \cite{cr1993IEEE}, pattern recognition \cite{cm1995IEEE}, climate science \cite{MT2016CPAM}, and nonlinear optics \cite{cls2003Nature}. 
  Recently, random attractors and invariant measures for stochastic $p$-Laplacian lattice systems
  driven by Brownian motion
   have been studied in \cite{WW2020SPA, WG2025Non} without  delay and 
   in \cite{ZL2025AMO} with  delay.
 The time    delay reflects the dependence of 
 physical systems on the past states due to   signal propagation, processing speeds and reaction time of the systems.   
  For  lattice systems
   and partial differential equations with delay effect,   the 
  asymptotic behavior of solutions
  such as 
  the  global 
  attractors and invariant measures  
   has been 
  studied in  \cite{CMV2014DCDS, LYC2023SINUM, XCV2024QTDS, CW2025SPDEAC} and the references therein.
  
 The stochastic lattice system \eqref{lmvlds-1} depends not only on the state of the solution, but also on the probability distribution of the solution. Such  equations are called the McKean-Vlasov stochastic equations or the mean-field equations \cite{blpr2017AP, cdll2019}, which  are used to describe the limit of interacting particle systems \cite{lwz2021CMP}.
 The
 McKean-Vlasov stochastic equations
 without delay
  have been
  extensively  
  studied in 
   \cite{ad1995SPA, W2018SPA, wr2025WSP, HDS2021AP, RZ2021Ber, LMW2021AIHP, W2023Ber,HL2021AMO, LSZZ2023PA}.
   Recently, 
   the solutions and the asymptotic
   behavior of 
  the McKean-Vlasov   delay equations driven by Brownian motion
   have been
   investigated in
   \cite{HRW2019DCDS, bsy2022EJP, SSL2024arXiv} and the references therein.
   
   As far as the authors are aware,
   it seems that there is no result in the literature
   on the dynamics
   of the  McKean-Vlasov    equations
   with time delay
    driven by 
    L\'{e}vy noise.
    The purpose of the present  paper
    is to fill
     this gap and study the
    existence, uniqueness and zero-noise
    limit of measure attractors
    of the delay
    $p$-Laplace lattice system
    \eqref{lmvlds-1} with  L\'{e}vy noise.

      The 
      concept of measure attractor 
      was introduced 
      in \cite{S1991MN}
      for the 
        autonomous stochastic Navier-Stokes 
        equations in bounded domains
         (see also   \cite{CC1998EJP}).  
        This concept was extended to
        the non-autonomous case and studied
        in
        \cite{LW2024JDE} for
        the  non-autonomous stochastic reaction-diffusion equation on bounded thin domain  and  in 
   \cite{CWZ2025JNS}     
   for the  superlinear stochastic Schr\"{o}dinger 
   delay lattice system. 
 In these works, 
 the  stochastic equation
 does not  depend on the distributions of solutions.    
 Recently, 
 the  measure attractors
 have been 
 investigated 
 in  \cite{SSW2026JDE}
  for the  McKean-Vlasov 
  stochastic   reaction-diffusion equation
  in unbounded domain driven by Brownian motion,
  and in 
  \cite{BCS2026JDE}  for
  the  McKean-Vlasov stochastic lattice system
  driven by L\'{e}vy noise.

 In contrast to \cite{SSW2026JDE, BCS2026JDE},  
 a striking feature of \eqref{lmvlds-1} is that 
 the
 nonlinearity  depends on the delay term $u(t-\rho)$ and its probability distribution,
 and  hence the arguments of
  \cite{SSW2026JDE, BCS2026JDE}
  for the McKean-Vlasov  equations without delay
  cannot be applied to   
  the McKean-Vlasov  delay system \eqref{lmvlds-1}.
  On the other hand,
  since system \eqref{lmvlds-1}
  is driven by 
  L\'{e}vy noise,
  the trajectory of the solution
  is discontinuous, and thus
  the arguments of 
  \cite{SSL2024arXiv, CWZ2025JNS}
 for  the   delay 
 equations driven by
 Brownian motion 
 do not work for   \eqref{lmvlds-1}.

 Indeed, the 
   discontinuity of solutions 
   of \eqref{lmvlds-1}
   is a major obstacle for studying
   the existence of
   measure attractors
   of the non-autonomous
   dynamical system
   associated with the
   segment process of 
   solutions as explained below.

   The trajectory of the solution
   of \eqref{lmvlds-1}
   is 
    c\`adl\`ag; that is,
    it is right-continuous with left-hand limits,
    which implies that
    the segment process of solutions
    lives  in the 
    Skorohod space
     $ 
    ( D([-\rho,0], \ell^2), \ d^0)$
    of   c\`adl\`ag functions from 
    $[-\rho,0]$ to $ \ell^2$
    rather than
    $  ( C([-\rho,0], \ell^2), \| \cdot \|_{\infty})$
    of continuous functions
     from 
    $[-\rho,0]$ to $ \ell^2$,
    where
    $d^0$ is the 
     Skorohod metric 
     in  $ 
  D([-\rho,0], \ell^2)$
  and  $  \| \cdot \|_{\infty} $
  is the uniform norm
  in  $ C([-\rho,0], \ell^2)$.
  Note that the 
     Skorohod metric 
     $d^0$ is quite different from 
     $\|\cdot \|_{\infty}$.
     For example,
     if a sequence converges
     in 
   $  ( C([-\rho,0], \ell^2), \| \cdot \|_{\infty})$,
   then it also converges
   in  $  ( C([s, 0], \ell^2), \| \cdot \|_{\infty})$
   for any $-\rho <s <0$.
   However, this property does not hold
   true 
   in the space
    $ 
    ( D([-\rho,0], \ell^2), \ d^0)$;
    that is,
   the convergence of a  sequence
    in
     $ 
    ( D([-\rho,0], \ell^2), \ d^0)$
    does  not necessarily  
  imply the convergence
  of the sequence
  in  $ 
    ( D([s,0], \ell^2), \ d^0)$
    for $ -\rho <s <0$.
      In other words,
    the convergence in
      $ 
    ( D([-\rho,0], \ell^2), \ d^0)$
    does not imply the 
    convergence
    in  subintervals
     of $[-\rho, 0]$
    in terms of the Skorohod metric
    $d^0$,
    which
    introduces a fundamental
    problem for the continuity
    of distributions of
    the segment process
    of solutions 
    with respect to
    the distributions of initial data
    in
     $ 
    ( D([-\rho,0], \ell^2), \ d^0)$.
   Indeed,
   it is impossible
   for the dynamical system associated with \eqref{lmvlds-1}
   defined in the space of probability measures
   in $( D([-\rho,0], \ell^2), \ d^0)$
   to be continuous for all time,
   and hence we are unable to  
   {directly
    apply \cite[Proposition 2.2]{SSW2026JDE} and \cite[Proposition 3.6]{bwang2012JDE} to obtain the existence of measure attractors} for \eqref{lmvlds-1}
   due to the fact that such invariance depends heavily on the continuity of the dynamical system for all time.

    Despite this fact,
    we  observe that the dynamical system for \eqref{lmvlds-1} is still continuous 
    when time is sufficiently large
    (see Theorem \ref{consys}).
    We carefully employ 
     this kind of continuity
     and the cocycle property
     of the dynamical system to successfully
  prove
  that  the measure attractor of \eqref{lmvlds-1} is 
  {invariant for all time
     (see Theorem \ref{pis} and Remark \ref{difference}).
     }

    Another difficulty of the paper
    is caused by the superlinear growth
    of the coefficient of  L\'{e}vy noise.
    In order to establish the existence of
    measure attractors, we need to
    construct a closed pullback
    absorbing set for system
     \eqref{lmvlds-1},  
     for which we must derive
     the higher-order uniform moment
     estimates of solutions.
     Both   coefficients of the white noise and
     the 
     L\'{e}vy noise
     of \eqref{lmvlds-1}
      are superlinear, but the
      uniform estimates on  the 
      integral against
     L\'{e}vy noise are much more
     involved than the integral against 
    white noise.
    A detailed analysis on the nonlinear coefficient
    is required to obtain the uniform estimates
    on the  L\'{e}vy  integral
    (see Lemma \ref{pue}).
    
    The superlinear  L\'{e}vy noise
    also introduces a major difficulty
    for proving the pullback
    asymptotic compactness
    of solutions
    in the  Skorohod space
    $( D([-\rho,0], \ell^2), \ d^0)$,
    for which we need to
    establish the 
    equi-continuity of solutions
    in probability.
    For stochastic equations
    driven by Brownian motion,
    such continuity can be 
    obtained by 
    the   higher-order
    uniform  moment  estimates of solutions,
    see  e.g., \cite{CWZ2025JNS, wwm2019SCL, ll2022DCDSB, csw2025PRSEA}.
    However,
    this method does not work
    for stochastic equations driven by superlinear
     L\'{e}vy noise.
     In order to prove the equi-continuity of
     solutions in probability and hence the 
     pullback asymptotic compactness
     of  \eqref{lmvlds-1} with superlinear
    L\'{e}vy noise,
    we will
    first
    employ  Jakubowski's theorem
     \cite{J1986AIHP}
     to reduce the infinite-dimensional
     problem to a one-dimensional problem
     and then apply Aldou's theorem
     \cite{,A1978AOP}
     to obtain the desired results
 (see Lemma \ref{pac}).
 We mention  that
 the argument of this paper 
 is 
          different from
          that in
           \cite{RRG2006SPA, bc2016Stoch} where the tightness of solutions was 
           established  for delay   equations with linearly
           growing   L\'{e}vy noise. 
 Based on 
 the existence of
 closed pullback
 absorbing sets
 and asymptotic
 compactness of solutions,
 we then conclude
 the existence and uniqueness
 of measure attractors
 for system  \eqref{lmvlds-1}
 in the space of probability measures
 in    $ 
    ( D([-\rho,0], \ell^2), \ d^0)$
     (see Theorem \ref{pis}).

 We 
 remark that
 the method of this paper
  is  applicable to 
  a  wide class of
   stochastic delay equations driven by 
   either Brownian motion or L\'{e}vy noise
   (or both).
   In particular,  
if  the stochastic equation
 \eqref{lmvlds-1} doesn't depend on the distribution of solutions, 
 then 
 by virtue of Lemma \ref{pac}, 
 one can obtain the existence of invariant measures   
  by   Krylov-Bogolyubov's argument
   as in \cite{RRG2006SPA, bc2016Stoch}.
  
The last goal of this paper is
to investigate the limiting behavior
of measure attractors
of  \eqref{lmvlds-1} 
as the noise intensity
approaches zero.
We will also establish the optimal
convergence rate
in terms of the noise intensity
  when the measure attractor
is a singleton.
 
We mention that
the   upper semi-continuity  of   measure attractors with respect to time delay  
and noise intensity
has been explored in
 \cite{CWZ2025JNS} and   \cite{mll2024AMO},
 respectively,
 for the distribution independent
   stochastic equations driven by Brownian motion.
   In this paper, we deal with system
   \eqref{lmvlds-1}    depending 
   on distributions of solutions
   driven by 
   L\'{e}vy noise.
   
   We will first
   establish the convergence rate of 
   solutions of
   \eqref{lmvlds-1} 
   in finite time interval as
   $\varepsilon
   \to 0$ in Lemma~\ref{solcon},
   and then  
   prove the convergence of pullback measure attractors  
 in Theorem \ref{uppercont}.  
 Under further dissipative conditions,
 we show the measure attractor is  
 a singleton
  in Theorem \ref{sin-attractor},
  which is actually 
  an evolution family of 
  pullback mixing
  probability measures.
  Finally, in Theorem \ref{conv-rate},
  by the convergence of solutions in $L^2(\Omega, D_\rho)$,
  we  establish the optimal convergence rate of 
  singleton
   measure attractors  in
   terms of the  Wasserstein distance of order $\theta$ with respect to $\varepsilon$,
 which generalizes the convergence rate of invariant measures \cite{CW2024AML} to 
 the case of evolution family of probability measures.
 The reader is referred to \cite{DR2007, LL2025CMP, WCT2023PAMS} for more details
 on the existence and  limiting    behavior of   evolution systems of probability measures
 for stochastic equations.

This paper is organized as follows. 
 In Section~\ref{2},
  we introduce  preliminaries and 
 discuss the  assumptions
 on the nonlinear terms
 of    \eqref{lmvlds-1}.
  Section~\ref{3} is devoted to the well-posedness of \eqref{lmvlds-1} as well
  as the uniform esitmates of solutions. 
In Section~\ref{4},
 we 
prove the existence of pullback absorbing sets,
  pullback asymptotic compactness 
 and the existence of pullback
 measure attractors.
 In Section~\ref{5},
  we discuss
  the zero-noise limit of  measure attractors 
  of \eqref{lmvlds-1},
  and establish the optimal convergence rate in
  terms of the Wasserstein distance of order $\theta$ 
   with respect to noise intensity.  

 For convenience, throughout this paper, we denote by
 $ \ell^r =  \{ u = (u_i)_{i \in \mathbb{Z} } : \sum\limits_{i \in \mathbb{Z}} |u_i|^r < \infty  \}$
  with   norm $\|u\|_r =  ( \sum\limits_{i \in \mathbb{Z}} |u_i|^r  )^{ \frac{1}{r}
 }$ for  $r>0$; and 
  $\ell^\infty =  \{ u = (u_i)_{i \in \mathbb{Z} } : \sup_{i \in \mathbb{Z}} |u_i| <  \infty  \}$
  with norm
   $\|\cdot\|_{\ell^\infty}$.
  If $r = 2$, we
  will write  the norm and the inner product of $\ell^2$ 
  as  $\|\cdot\|$ and $\langle \cdot, \cdot \rangle$, respectively.

\section{Preliminaries}\label{2}

 In this section, we first recall
 the  compensated Poisson random measures and  the the non-autonomous
 dynamical systems
 defined on probability measure spaces.
 We then introduce the assumptions on 
 the nonlinear terms   in \eqref{lmvlds-1},
 which will be frequently used in the sequel.
 
Let  $(\Omega, \mathcal{F}, \{\mathcal{F}_t\}_{t \in \mathbb{R}}, \mathbb{P})$
be a
  complete filtered probability space
  satisfying the usual conditions, and 
 $\{W_k\}_{k \in \mathbb{N}}$
 be  a sequence of independent real-valued two-sided Wiener processes
 on  $(\Omega, \mathcal{F}, \{\mathcal{F}_t\}_{t \in \mathbb{R}}, \mathbb{P})$.
 Suppose $(\mathbb{Y}, \mathcal{Y}, \nu)$ is a $\sigma$-finite measure space,
 and $N$ is a Poisson random measure on 
 $(\mathbb{R}^+ \times \mathbb{Y}, \mathcal{B} (\mathbb{R}^+) \times \mathcal{Y},  
 dt \times \nu)$,
  which is also referred to as a Poisson random measure on
  $( \mathbb{Y},  \mathcal{Y}, \nu)$ with characteristic measure $\nu$.
   The random measure
    $\widetilde{N} (dt, dy)
      = {N} (dt, dy) -\nu (dy) dt$
  is called the compensated Poisson random measure of $N$.
  We assume that
  $\widetilde{N} $
  and the sequence $\{W_k\}_{k\in \mathbb{N}}$ are independent, 
  and $\widetilde{N} $ is a martingale measure on 
   $(\Omega, \mathcal{F}, \{\mathcal{F}_t\}_{t \in \mathbb{R}}, \mathbb{P})$. 
 We refer the reader to \cite{APPL2009} for more details on Poisson random measures and 
 martingale-valued measures.

 Denote by $   D([a,b], \ell^2) $
 the space of $ \ell^2 $-valued right-continuous functions with left-hand limits (c\`{a}dl\`{a}g)
with norm
$\|u\|_{\infty} = \sup_{t \in [a,b]} \|u(t)\| $ for $ u \in    D([a,b], \ell^2) $.
 Then  $ (  D([a,b], \ell^2), \|\cdot\|_{\infty}) $ is a  Banach space  but  it is not separable.

 In the sequel, 
 we will also use the Skorohod metric $ d^0 $ in $D([a,b], \ell^2)$ which is given by:
 for all $ u,v \in D([a,b], \ell^2)$,   
 \begin{align*}
       d^0(u,v) := \inf_{\lambda \in \Lambda}
                               \{ \|\lambda\|^0 \vee \| u - v \circ \lambda \|_{ \infty } \},
 \end{align*}
 where $ \Lambda $  is the set of all strictly increasing and continuous mappings of $[a,b]$ onto itself
 and $ \|\lambda\|^0 := \sup_{s,t \in [a,b], s \neq t}
                                  \left| \ln \frac{\lambda(t)-\lambda(s)}{t-s} \right| < +\infty $.
 Then the metric space $( D([a,b], \ell^2), d^0)$ is complete and separable \cite{B2023BOOK},
 and $d^0(u,0) = \|u\|_{\infty }$
 for $u\in  D([a,b], \ell^2) $.
 
 Throughout this paper, 
 when we  consider  
 $  D([a,b], \ell^2) $ as a measurable space,
 we always mean  
 $  D([a,b], \ell^2) $ is equipped with its  
  Borel $\sigma$-algebra
 $\mathscr{B}( D([a,b], \ell^2))$   under the Skorohod metric $ d^0 $.

 Next, we recall the theory of cocycles on the space of probability measures.
 Denote by $ D_{\rho} = D([-\rho,0], \ell^2) $ if $ \rho > 0 $, and $ D_{\rho}= \ell^2 $ if $ \rho = 0 $.
 Let
  $ \mathcal{P}(D_{\rho})$ be 
  the space of probability measures on $\left( D_{\rho}, \mathscr{B}(D_{\rho}) \right)$   
 where $\mathscr{B}(D_{\rho})$ is the Borel $\sigma$-algebra of $ D_{\rho} $ under the Skorohod metric $ d^0 $.
 Denote by $L_b(D_{\rho})$ the set of all bounded Lipschitz functions on $D_{\rho}$.
 Let
\begin{align*}
   \|\psi\|_{Lip} = \sup_{x \neq y}
                       \frac{| \psi(x) - \psi(y)|}{d^0(x,y)}, \ \ \
   \text{and}\
   \|\psi\|_{L_b} = \sup_{ x \in D_{\rho}} |\psi(x)| + \|\psi\|_{Lip}.
\end{align*}

 Define a metric on $ \mathcal{P}(D_{\rho}) $ corresponding to the Fortet-Mourier norm by
 $$ d_{\mathcal{P}(D_{\rho})} (\mu_1, \mu_2)
    = \sup_{\psi \in L_b(D_{\rho}), \|\psi\|_{L_b} \le 1}
        \left| ( \psi, \mu_1 ) - ( \psi, \mu_2 ) \right|, \ \  \text{for}\  \mu_1, \mu_2 \in \mathcal{P}(D_{\rho}).$$
 Then $\left( \mathcal{P}(D_{\rho}), d_{ \mathcal{P}( D_{\rho}) } \right)$ is a Polish space \cite{AJ2023},
 and $\mu_k \to \mu$ in $\left( \mathcal{P}(D_{\rho}), d_{ \mathcal{P}( D_{\rho}) } \right)$ if and only if $\mu_k \to \mu$ weakly.

 For $ m \ge 1 $, let
 $$ \mathcal{P}_m( D_{\rho} ) = \left\{ \mu \in \mathcal{P}(D_{\rho}): \int_{D_{\rho}} \|\xi\|_{ \infty }^m \mu(d\xi) < \infty \right\},
 $$
 where $\|\xi\|_{ \infty } = \sup_{t \in [-\rho,0]} \| \xi(t) \|$. Then
   the space $\left( \mathcal{P}_m( D_{\rho} ), d_{ \mathcal{P}( D_{\rho}) } \right)$ is separable but not complete.
 For any $\mathcal{U} \subseteq \mathcal{P}_m(D_{\rho})$, denote by
 $ \|\mathcal{U}\|_{\mathcal{P}_m(D_{\rho})}
   = \sup_{\mu \in \mathcal{U}}
                \left( \int_{D_{\rho}}
                             \| \xi \|^m_{\infty
                             } \mu(d \xi)
                \right)^{\frac{1}{m} }
 $.
 If $ \|\mathcal{U}\|_{\mathcal{P}_m(D_{\rho})}
 <\infty$, then $ \|\mathcal{U}\|_{\mathcal{P}_m(D_{\rho})}
 $
  is said to be a bounded subset of
  $\mathcal{P}_m(D_{\rho})$.

 Now we introduce non-autonomous cocycles  defined on the metric space $\left( \mathcal{P}_m(D_{\rho}), d_{\mathcal{P}(D_{\rho})} \right)$.

\begin{definition}\label{def_ds}
  A family $\Phi =
                           \left\{\Phi(t, \tau): t \in \mathbb{R}^+, \tau \in \mathbb{R}
                           \right\}$
  of mappings from $\left( \mathcal{P}_m(D_{\rho}), d_{\mathcal{P}(D_{\rho})} \right)$
  to itself 
  is called a non-autonomous cocycle on $\left( \mathcal{P}_m(D_{\rho}), d_{\mathcal{P}(D_{\rho})} \right)$
  if $ \Phi $ satisfies 
  the following conditions:
   for all $\tau \in \mathbb{R}$ and $t,s \in \mathbb{R}^+$:

  (i) $\Phi(0, \tau) = I$,
      where $ I $ is the identity operator on $\left( \mathcal{P}_m(D_{\rho}), d_{\mathcal{P}(D_{\rho})} \right)$;

  (ii) $\Phi(t+s, \tau) = \Phi(t,s+\tau) \circ \Phi(s,\tau)$.

  In addition, if for every bounded subset $\mathscr{B}$ of $ \mathcal{P}_m (D_{\rho}) $,
  $ \Phi $ is continuous from  $\left( \mathscr{B} , d_{\mathcal{P}(D_{\rho})} \right)$
  to $\left( \mathcal{P}_m(D_{\rho}),  d_{\mathcal{P}(D_{\rho})} \right)$,
  then   $\Phi$ is 
  said to be continuous on bounded subsets of $ \mathcal{P}_m(D_{\rho}) $.
\end{definition}

Recall that
 for every $ m \ge 1 $,  
 $$ \mathcal{P}_m(\ell^2) = \left\{ \mu \in \mathcal{P}(\ell^2): \mu(\|\cdot\|^m) := \int_{\ell^2} \|\xi\|^m \mu(d\xi) < \infty \right\}.$$
 The Wasserstein distance 
 in   $\mathcal{P}_m(\ell^2)$ is given by:
 for all  $\mu_1, \mu_2 \in \mathcal{P}_m(\ell^2)$,  
 $$ \mathbb{W}_m(\mu_1,\mu_2)
    = \inf_{\pi \in \mathcal{C}(\mu_1,\mu_2)}
            \left( \int_{\ell^2 \times \ell^2}
                       \|\xi_1 - \xi_2\|^m \pi(d\xi_1, d\xi_2)
            \right)^{\frac{1}{m}},
 $$
 where $\mathcal{C}(\mu_1,\mu_2)$ is the set of all couplings of $\mu_1$ and $\mu_2$.
 Then $\left( \mathcal{P}_m(\ell^2), \mathbb{W}_m \right)$ is a Polish space.

 In what follows, we
 introduce the  assumptions on
 the  drift and diffusion terms in \eqref{lmvlds-1}.

\noindent
{\bf(H1)}
 $g =(g_i)_{i \in \mathbb{Z}},
  h = (h_{k,i})_{k \in \mathbb{N}, i \in \mathbb{Z}}$
 and $\widetilde{h} = (\widetilde{h}_{i})_{ i \in \mathbb{Z}}$
 are $\ell^2$-valued functions  such that
 $$ \int_{\tau}^{\tau + T}
       \left(\| g(t) \|^2
              + \| h(t) \|^2
              + \| \widetilde{h}(t) \|^2_{L^2(\mathbb{Y}, \nu; \ell^2)}
       \right) dt
    < \infty, \ \ \  \forall\  \tau \in \mathbb{R},~ T > 0,
 $$
  where
$
  L^2(\mathbb{Y}, \nu; \ell^2)$
  is the space of all 
  square-integrable
  functions from $(\mathbb{Y},
  \mathcal{Y}, \nu)$
  to $\ell^2$.

 \noindent
{\bf(H2)}\  For every
$i\in \mathbb{Z}$,
$f_i: \mathbb{R} \times  \mathcal{P}_2(\ell^2)
\to \mathbb{R}$ is  a continuous map,
 $f_i(0,\delta_0) = 0$,
  and for any $ s, s_1, s_2 \in \mathbb{R} $ and $\mu, \mu_1, \mu_2 \in  \mathcal{P}_2(\ell^2)$,
 \begin{align}\label{f1}
    f_i(s,\mu) s
    \ge \lambda_1 |s|^p - \phi_{1,i} \left( 1 + |s|^2 + \mu(\|\cdot\|^2) \right),
 \end{align}
 \begin{align}\label{f2}
    | f_i(s_1,\mu_1) - f_i(s_2, \mu_2) |
    \le \phi_{2,i} \left( 1 + |s_1|^{p-2} + |s_2|^{p-2} \right) |s_1 - s_2 |
         + \phi_{3,i} \mathbb{W}_2(\mu_1,\mu_2),
 \end{align}
 \begin{align}\label{f3}
    \left( f_i(s_1,\mu_1) - f_i(s_2,\mu_2) \right) (s_1 - s_2)
    \ge & \lambda_2 \left( |s_1|^{p-2} s_1 - |s_2|^{p-2} s_2 \right) ( s_1 - s_2 ) \nonumber\\
         & - \phi_{4,i} \left( |s_1 - s_2|^2 + \mathbb{W}_2^2(\mu_1,\mu_2) \right),
\end{align}
 where
      $\delta_0 $ is the Dirac measure at $ 0 $,
      $\lambda_1 > 0, \lambda_2 > 0, p > 2$,
      $\phi_1 = \{\phi_{1,i}\}_{i \in \mathbb{Z}} \in \ell^1$,
      $\phi_2 = \{ \phi_{2,i} \}_{ i \in \mathbb{Z} } \in \ell^{\infty}$,
      $\phi_3 = \{ \phi_{3,i} \}_{ i \in \mathbb{Z} } \in \ell^2$,
 and
      $\phi_4 = \{\phi_{4,i}\}_{i \in \mathbb{Z}} \in \ell^1$.

\noindent
 {\bf(H3)}\ 
  For every
$i\in \mathbb{Z}$,
$F_i: \mathbb{R} \times  \mathcal{P}_2(\ell^2)
\to \mathbb{R}$ is  globally Lipschitz continuous;
that is,
    there exists a constant $L_{F,i} > 0$
 such that for all $ s_1, s_2 \in \mathbb{R} $ and $\mu_1, \mu_2 \in \mathcal{P}_2(\ell^2)$,
 \begin{align}\label{F1}
    | F_i(s_1, \mu_1) - F_i(s_2,\mu_2) |
    \le L_{F,i} \left( |s_1 - s_2| + \mathbb{W}_2(\mu_1,\mu_2) \right),
 \end{align}
 where $L_F = (L_{F,i})_{i \in \mathbb{Z}} \in \ell^2$.
 In addition, we assume  
 $(F_i(0, \delta_0))_{i \in \mathbb{Z}} \in \ell^2$.

\noindent
 {\bf(H4)}\
  For every
$i\in \mathbb{Z}$
and $k\in \mathbb{N}$,
$ \sigma_{k,i}: \mathbb{R} \times  \mathcal{P}_2(\ell^2)
\to \mathbb{R}$ is  
continuous;
and
$  \widetilde{\sigma}_{i}: \mathbb{R} \times  \mathcal{P}_2(\ell^2) \times \mathbb{Y}
\to \mathbb{R}$ is 
$(\mathcal{B}
(\mathbb{R})
\times \mathcal{B}
(\mathcal{P}_2(\ell^2))
\times \mathcal{Y}, \mathcal{B}
(\mathbb{R}) )$-measurable
such that 
 for all $s_1, s_2 \in \mathbb{R}$, $\mu_1, \mu_2 \in \mathcal{P}_2(\ell^2)$ and $y \in \mathbb{Y}$,
\begin{align} \label{sigma1}
  |\sigma_{k,i}(s_1, \mu_1) - \sigma_{k,i}(s_2, \mu_2) |
  \le L_{\sigma,k,i}
                 \left[ \left( |s_1|^{\frac{q}{2} - 1} + |s_2|^{\frac{q}{2} - 1} \right)
                        |s_1 - s_2|
                        + \mathbb{W}_2(\mu_1,\mu_2)
                 \right],
\end{align}
and 
\begin{align}\label{sigma2} |\widetilde{\sigma}_{i}(s_1, \mu_1, y) - \widetilde{\sigma}_{i}(s_2, \mu_2, y)| 
	\le L_{\tilde{\sigma},i}(y)
	 \left[ \left( |s_1|^{\frac{q}{2} - 1} + |s_2|^{\frac{q}{2} - 1} \right)
	|s_1 - s_2|
	+ \mathbb{W}_2(\mu_1,\mu_2)
	\right],
\end{align}
where $q\in [2,p)$ and
$$
\|L\|^2 
:= \sum_{k\in\mathbb{N}}\sum_{i\in\mathbb{Z}} L_{
\sigma, k,i}^2 
+ \sum_{i\in\mathbb{Z}}  \int_{y\in
\mathbb{Y}
} 
\bigl(L_{\tilde{\sigma},i}(y)\bigr)^2 \,\nu(dy) 
  <\infty,
$$
and
$$
\|\hat{\sigma}\|^2
:=  \sum_{k\in\mathbb{Z}} \sum_{i\in\mathbb{Z}}
  |\sigma_{k,i} (0,\delta_0)|^2
+  \sum_{i\in\mathbb{Z}}  \int_{ y\in \mathbb{Y}} 
|\widetilde{\sigma}_{i}(0,\delta_0,y)|^2 \,\nu (dy)
  <\infty.
 $$

Notice that
{\bf(H2)}  implies that
  for all $ s \in \mathbb{R} $ and $ \mu \in \mathcal{P}_2(\ell^2) $,
 \begin{align}\label{f4}
    | f_i(s,\mu) |
    \le \phi_{2,i} \left( 1 + |s|^{p-2} \right) |s|
          + \phi_{3,i} \mathbb{W}_2(\mu,\delta_0).
 \end{align}
 In the sequel, we will write 
 $h_k(t) = (h_{k,i}(t))_{i \in \mathbb{Z} }$
 for $k \in \mathbb{N}$
 and $t\in
 \mathbb{R}$
 for convenience.
 
 \begin{remark}
 In this paper, for simplicity,  
    we assume that
    the growth rate of 
   the nonlinear drift   $f_i$
  is 
    same   as that of 
    the  discrete $p$-Laplace
    operator  in \eqref{lmvlds-1}.
    Actually,  the growth rate
    of $f_i$ may be greater than
    the power   of the $p$-Laplace operator. 
    For example, we may 
    take  $f_i(s,\mu) = \lambda |s|^{p-2}s + f_i^*(s, \mu)$,
    where 
    $\lambda>0$ and $f_i^*(s, \mu)$ satisfies  {\bf(H2)} with $p$
    replaced by $p^*>p$. 
 \end{remark}

 To reformulate \eqref{lmvlds-1} as an abstract equation in $\ell^2$,
  we  define the discrete $p$-Laplace operator $A: \ell^2 \to \ell^2 $ by
 $$ (Au)_i = |u_{i}  - u_{i-1}|^{p-2} \left( u_{i}  - u_{i-1} \right)
             - |u_{i+1} - u_{i}|^{p-2} \left( u_{i+1}  - u_i \right),
 $$
  and the  linear  bounded operators $B$ and $B^*$ from $\ell^2$ to $\ell^2 $ by
 $$(Bu)_i = u_{i+1} - u_i \ \ \text{and}\ \  (B^*u)_i = u_{i-1} - u_{i}$$
 for $u=(u_i)_{i \in \mathbb{Z} } \in \ell^2$.
 Then
 $ Au= B^*
                        \left( \left( |(Bu)_i|^{p-2} (Bu)_i \right)_{i \in \mathbb{Z}}
                        \right)
 $
 and
 $(Bu, v) = (u, B^*v)$
 for all $u,v \in \ell^2$

 Denote by
 $f(u,\mu) = (f_i(u_i, \mu ))_{i \in \mathbb{Z}}$,
 $F(u,\mu) = (F_i(u_i, \mu))_{i \in \mathbb{Z}}$,
 $\sigma_k(u, \mu) = (\sigma_{k,i}(u_i, \mu))_{i \in \mathbb{Z}}$
 and
 $\widetilde{\sigma} (u, \mu, y) = (\widetilde{\sigma}_{i}(u_i, \mu, y))_{i \in \mathbb{Z}}$
 for $u = (u_i)_{i \in \mathbb{Z}} \in \ell^2$,
       $\mu \in \mathcal{P}_2(\ell^2)$
 and $k \in \mathbb{N}$.

With  above notation, system \eqref{lmvlds-1} can be rewritten as
\begin{equation}\label{lmvlds-2}
  \left\{
    \begin{aligned}
       & d u (t)
         + Au(t) dt
         + \lambda u(t) dt
         + f(u(t),\mathcal{L}_{u(t)}) dt \\
       & = g(t) dt
           + F( u(t - \rho), \mathcal{L}_{u(t - \rho)} ) dt
           + \sqrt{\varepsilon}\sum_{k \in \mathbb{N} }
              \left( h_k(t) + \sigma_k(u(t), \mathcal{L}_{u(t)}) \right) dW_k(t)  \\
       & \ \ \
           + \sqrt{\varepsilon}
            \int_{y\in \mathbb{Y}}
                \left( \widetilde{h}(t,y) + \widetilde{\sigma}
                (u(t-), \mathcal{L}_{u(t)}, y)
                \right) \widetilde{N}(dt,dy), \ \ \ t > \tau, \\
       & u(s) = \varphi(s-\tau) := ( \varphi_i(s-\tau) )_{i \in \mathbb{Z}} ,\  s \in  [\tau-\rho, \tau].
    \end{aligned}
  \right.
\end{equation}

 By {\bf(H3)}, we obtain that for all $ u_1, u_2, u \in \ell^2$, and $ \mu_1, \mu_2, \mu \in \mathcal{P}_2(\ell^2) $
 \begin{align}\label{F2}
    \| F(u_1,\mu_1) - F(u_2,\mu_2) \|^2
    \le 2 \|L_{F}\|_{\ell^\infty}^2 \|u_1 - u_2\|^2
         + 2 \|L_{F}\|^2\mathbb{W}_2^2(\mu_1,\mu_2),
 \end{align}
 and
 \begin{align}\label{F3}
    \|F(u,\mu)\|^2
    \le 2 \|F(0,\delta_0)\|^2
            + 4 \|L_{F}\|_{\ell^\infty}^2 \|u\|^2
            + 4 \|L_{F}\|^2 \mathbb{W}_2^2(\mu,\delta_0).
 \end{align}

 We will frequently use the following elementary inequalities to deal with 
 the nonlinearity $f$:
 for any $ s_1, s_2 \in \mathbb{R} $ and $ p \ge 2 $,
 \begin{align*}
   \left( |s_1|^{ p-2 } s_1 - |s_2|^{p-2} s_2 \right) (s_1 - s_2)
    \ge 2^{1-p} |s_1 - s_2|^p,
 \end{align*}
 and
 \begin{align*}
   (|s_1|^{p-2} s_1 - |s_2|^{p-2} s_2)(s_1 - s_2)
   \ge \frac{1}{2} (|s_1|^{p-2} + |s_2|^{p-2}) (s_1 - s_2)^2.
 \end{align*}

  From {\bf(H4)}, it follows that there exist 
   $\alpha_{k,i}>0$
   and
   $\beta_i>0$
     with
    $\| \alpha \|^2 := \sum_{k \in \mathbb{N}} \sum_{i \in \mathbb{Z}} \alpha_{k,i}^2 <  \infty$
    and $\| \beta \|^2 :=   \sum_{i \in \mathbb{Z}} \beta_{i}^2 <  \infty$
    such that for all $s \in \mathbb{R}$ and $\mu \in \mathcal{P}_2(\ell^2)$,
 \begin{align}\label{siglinear-1}
    |\sigma_{k,i}(s, \mu)|^2
    \le   \alpha_{k,i}^2
         + 4L_{\sigma, k,i}^2
           \left( |s|^{q}
                  + \mathbb{W}^2_2(\mu,\delta_0)
           \right),
 \end{align}
 and
 \begin{align}\label{siglinear-2}
     \int_{y\in \mathbb{Y} }
              |\widetilde{\sigma}_i(s, \mu, y)| ^2 \nu (dy)
    \le \beta_{i}^2
         + 4 \int_{y\in \mathbb{Y} }
            \left( L_{ \widetilde{\sigma}, i}(y)
            \right)^2 \nu(dy)
            \left( |s|^{q}
                     + \mathbb{W}^2_2(\mu, \delta_0)
            \right).
 \end{align}
  Then for  every  $\tilde{\theta} > 0$
  and $\lambda_0>0$, 
  by Young's inequality we have
 \begin{align}\label{sigma3}
       & \tilde{\theta}\sum_{k \in \mathbb{N}}  \|\sigma_k(u,\mu)\|^2
           +  \tilde{\theta} \int_{y \in \mathbb{Y} } \|\widetilde{\sigma} (u,\mu,y)\|^2 \nu (dy)    \nonumber\\
 \le & \tilde{\theta} \sum_{k \in \mathbb{N}} \sum_{i \in \mathbb{Z}}
             \left( \alpha^2_{k,i} 
                     + 4L_{\sigma, k, i}^2 |u_i|^{q} 
             \right)
          + \tilde{\theta} 
                 \sum_{i \in \mathbb{Z}}
                       \left( \beta^2_{i} 
                               + 4 |u_i|^{q}
                                      \int_{y \in \mathbb{Y} }
                                          \left( L_{\widetilde{\sigma}, {i}}(y)
                                          \right)^2 \nu(dy)
                       \right)
           + 4 \tilde{\theta} \|L\|^2 \mathbb{W}^2_2(\mu,\delta_0)  \nonumber\\
 \le & \tilde{\theta} ( \| \alpha \|^2 + \|\beta\|^2)
          + 4 \tilde{\theta} \|L\|^2 \mathbb{W}_2^2(\mu,\delta_0)
          + \frac{\lambda_0}{4} \| u \|^p_p
          + C_1  \nonumber\\
   = & \frac{\lambda_0}{4} \| u \|^p_p
          + 4 \tilde{\theta} \|L\|^2 \mathbb{W}^2(\mu,\delta_0)
          +C_2,
 \end{align}
 where
 $C_1 = C_1(\tilde{\theta}, \lambda_0, p, q, \|L\|^2) > 0$ and
 $C_2= C_1 + \tilde{\theta}( \| \alpha \|^2 + \|\beta\|^2)$.
 
 Similarly, 
 by {\bf(H4)} we  also  obtain that for all $u_1, u_2 \in \ell^2 $, $\mu_1, \mu_2  \in \mathcal{P}_2(\ell^2) $ and $\tilde{\theta} > 0$,
 \begin{align} \label{sigma4}
      & \tilde{\theta}\sum_{k \in \mathbb{N}}
               \|\sigma_k(u_1,\mu_1) - \sigma_k(u_2,\mu_2)\|^2
        +
   \tilde{\theta}    
                  \int_{y\in \mathbb{Y}
                  }
                          \|\widetilde{\sigma}(u_1,\mu_1,y) - \widetilde{\sigma}(u_2, \mu_2, y)\|^2\nu (dy)  \nonumber \\
 \le & \tilde{\theta} \sum_{k \in \mathbb{N} } \sum_{i \in \mathbb{Z}}
         L^2_{\sigma, k,i}
              \left[
                     \left( | u_{1,i} |^{\frac{q}{2} - 1}  + | u_{2,i} |^{\frac{q}{2} - 1} \right)
                     | u_{1,i} - u_{2,i} |
                     + \mathbb{W}_2(\mu_1, \mu_2)
              \right]^2  \nonumber\\
      & + \tilde{\theta}   \sum_{i \in \mathbb{Z}}
             \int_{y \in \mathbb{Y}}
                 \left( L_{\widetilde{\sigma}, i} (y) \right )^2 \nu(dy)
                       \left[
                                \left( | u_{1,i} |^{\frac{q}{2} - 1}  + | u_{2,i} |^{\frac{q}{2} - 1} \right)
                                | u_{1,i} - u_{2,i} |
                                + \mathbb{W}_2(\mu_1, \mu_2)
                       \right]^2  \nonumber\\
 \le & \tilde{\theta} \sum_{k \in \mathbb{N} } \sum_{i \in \mathbb{Z}}
         4 L^2_{\sigma, k,i}
                \left[
                       \left( | u_{1,i} |^{q-2} + | u_{2,i} |^{q-2} \right)
                       | u_{1,i} - u_{2,i} |^2
                \right]
           + \tilde{\theta} 
                  \sum_{k \in \mathbb{N} } 
                        \sum_{i \in \mathbb{Z}}  2 L^2_{\sigma, k,i} \mathbb{W}_2^2(\mu_1, \mu_2)  \nonumber\\
      & + \tilde{\theta}   \sum_{i \in \mathbb{Z}}
             4 \int_{\mathbb{Y}}
                    \left( L _{\widetilde{\sigma}, i}(y) \right )^2 \nu(dy)
                         \left[
                                  \left( | u_{1,i} |^{q-2} + | u_{2,i} |^{q-2} \right)
                                  | u_{1,i} - u_{2,i} |^2
                         \right]  \nonumber\\
      & + 2 \tilde{\theta}    
                   \mathbb{W}_2^2(\mu_1, \mu_2) 
                          \sum_{i \in \mathbb{Z}}   
                              \int_{y \in \mathbb{Y}}
                                  \left ( L_{\widetilde{\sigma}, i} (y) \right )^2 \nu(dy)  \nonumber\\
 \le & \frac{\lambda_2}{4}
              \sum_{ i \in \mathbb{Z}} 
                    \left( |u_{1,i}|^{p-2} + |u_{2,i}|^{p-2} \right) |u_{1,i} - u_{2,i}|^2
         + 2 \tilde{\theta} \| L \|^2 
                    \mathbb{W}_2^2(\mu_1, \mu_2)
         + C_3 \|u_1 - u_2 \|^2,
 \end{align}
 where
   $C_3 = C_3(\tilde{\theta}, p, q, \|L\|^2) > 0$
 is a constant.

\section{Well-posedness and uniform esitmates of solutions} \label{3}

 In this section, 
 we establish  the existence and uniqueness of solutions to \eqref{lmvlds-2} 
 and also derive the  uniform estimates of solutions.
 
 In the sequel, 
 for a process  $u(t)$,
 $\tau -\rho \le t \le \tau +T$,
 we will use $u_t$ to denote the segment of $u$ on $[t-\rho, t]$ which is given by:
 $$ u_t (s) = u(t+s), \quad -\rho \le s \le 0.
 $$
 Let
 $L^2( \Omega, \mathcal{F}_\tau;  D_{\rho} )$ be the set of all $D_{\rho}$-valued 
 $\mathcal{F}_\tau$-measurable random variables  $\varphi$ such that
 $\mathbb{E} \left[ \|\varphi\|^2_\infty \right] < \infty$.

\begin{definition}\label{def-sol}
  Given  $T>0$, $\rho > 0$ and 
  a  $D_{\rho}$-valued 
  $\mathcal{F}_\tau$-measurable 
   random variable $\varphi$,
   an $\ell^2$-valued
    c\`{a}dl\`{a}g process $u(t)$,
    $\tau -\rho \le t \le \tau +T$,
   is called a solution to \eqref{lmvlds-2} on
    $[\tau-\rho, \tau+T]$ if
    $u_\tau = \varphi$, 
   $u (t)$ is  $\mathcal{F}_t$-adapted
   for $t\ge \tau$, and
   for all $t \in [\tau, \tau+T]$, 
    $ \mathbb{P} $-almost surely,
   \begin{align*}
       u(t) = & \varphi(0)
                + \int_{\tau}^{t}
                     \left( - Au(s) - \lambda u(s)
                            - f(u(s), \mathcal{L}_{u(s)})
                            + g(s)
                            + F( u(s - \rho), \mathcal{L}_{u(s - \rho)} )
                     \right)ds \\
              & + \sqrt{\varepsilon}\sum_{k \in \mathbb{N} }
                     \int_{\tau}^{t}
                        \left( h_k(s) + \sigma_k(u(s),\mathcal{L}_{u(s)}) \right) dW_k(s) \\
              & + \sqrt{\varepsilon} 
                     \int_{\tau}^{t}\int_{
                     y\in \mathbb{Y} }
                        \left( \widetilde{h}(s,y) + \widetilde{\sigma}(u(s-),\mathcal{L}_{u(s)},y)
                        \right) \widetilde{N}(ds,dy)
                        \ \ \text{in }  \ell^2 .
   \end{align*}
 \end{definition}

\begin{theorem}\label{wp-sol}
   Suppose that {\bf(H1)}-{\bf(H4)} hold
   and $\varepsilon \in (0,1]$.
   Then for every $\varphi \in L^2( \Omega, \mathcal{F}_\tau;  D_{\rho} )$, the stochastic 
   equation
   \eqref{lmvlds-2} has a unique solution $u$ in the sense of Definition \ref{def-sol},
   which satisfies,
  for any $T > 0$,
   \begin{align}\label{secondmoment}
          & \mathbb{E} \left[ \sup_{t \in [\tau,\tau+T]} \|u(t)\|^2 \right]
            + \mathbb{E} \left[ \int_{\tau}^{\tau+T} \|u(s)\|^p_p ds \right]  \nonumber\\
     \le & C \mathbb{E} \left[ \sup_{s \in [-\rho,0]} \|\varphi(s)\|^2 \right] e^{C T}  
         + C \left( T
                       + \int_{\tau}^{\tau + T}
                               \left( \|g(s)\|^2 + \|h(s)\|^2 +  \| \widetilde{h}(s) \|^2_{L^2(\mathbb{Y}, \nu; \ell^2)}
                               \right)ds
                \right) e^{C T},
   \end{align}
   where   $C>0$
   is constant    independent of $\tau$, $T$, $\varphi$
   and $\varepsilon$.
   
   Moreover, 
   if  
   $\theta \in \left(2, \tfrac{2(p-2)}{q-2} \right)$,
   \begin{equation}\label{thetaassum1}
   	     \int_{\tau}^{\tau+T}
   	         \bigl( \|g(s)\|^\theta 
   	                   + \|h(s)\|^\theta
   	                   + \|\widetilde{h}(s)\|_{L^\theta(\mathbb{Y}, \nu; \ell^2)}^\theta
   	                   + \|\widetilde{h}(s)\|_{L^2(\mathbb{Y}, \nu; \ell^2)}^\theta
          	 \bigr) ds
   	      < \infty,
   \end{equation}
   and
   \begin{equation}\label{thetaassum2}
         \int_{y \in \mathbb{Y} }
   	         \biggl[ 
   	                    \Bigl( \sum_{i \in \mathbb{Z}} L_{\tilde{\sigma}, i}(y) 
   	                    \Bigr)^{\theta}
   	                   + 
   	                   \Bigl( \sum_{i \in \mathbb{Z}} |\widetilde{\sigma}_{i}(0, \delta_{0}, y) |
   	                      \Bigr)^{\theta}
   	         \biggr] \nu(dy)
   	      < \infty 
   \end{equation}
   hold, 
   then for every 
   $\varphi \in L^{\theta}(\Omega,\mathcal{F}_\tau;\mathcal{D}_{\rho})$, 
   we have 
   \[ 
      \mathbb{E}
             \left[ \sup_{t \in [\tau, \tau+T] } 
                        \|u(t)\|^{\theta}
             \right]
       + \mathbb{E}
                 \left[ \int_{\tau}^{\tau+T} 
                              \|u(s)\|^{\theta-2} \|u(s)\|_{p}^{p} 
                           ds
                 \right]  
   \le
   M_{\theta,T,\varphi} 
   \] 
   where $M_{\theta,T,\varphi}>0$ is a constant   depending 
   only on $\theta$, $T$ and $\varphi$, but not
   on $\varepsilon$.
\end{theorem}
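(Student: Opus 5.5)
Our approach to well-posedness combines a Galerkin/truncation construction for the local monotone part with a fixed-point argument in the measure variable. The standard route for McKean--Vlasov equations is to freeze the law first. Fix a deterministic flow $\mu=(\mu_t)_{t\in[\tau-\rho,\tau+T]}$ of measures in $\mathcal{P}_2(\ell^2)$, with $\mu_t=\mathcal{L}_{\varphi(t-\tau)}$ for $t\le\tau$. Then consider the distribution-independent delay equation obtained by replacing $\mathcal{L}_{u(t)}$ and $\mathcal{L}_{u(t-\rho)}$ with $\mu_t$ and $\mu_{t-\rho}$. On each interval $[\tau+j\rho,\tau+(j+1)\rho]$ the delay term $F(u(t-\rho),\mu_{t-\rho})$ is a known adapted $\ell^2$-valued process, so the method of steps reduces everything to an equation without delay.

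That equation has a monotone operator part. Indeed, $A$ is monotone since $\langle Au-Av,u-v\rangle=\sum_i(|(Bu)_i|^{p-2}(Bu)_i-|(Bv)_i|^{p-2}(Bv)_i)((Bu)_i-(Bv)_i)\ge0$. The term $f$ is locally monotone by (f3). The noise coefficients satisfy the local monotonicity estimate \eqref{sigma4}, where the superlinear part $(|u_{1,i}|^{p-2}+|u_{2,i}|^{p-2})|u_{1,i}-u_{2,i}|^2$ is absorbed by the $\lambda_2$-term coming from (f3). Coercivity follows from (f1) and \eqref{sigma3}. We would therefore solve by finite-dimensional truncation ($|i|\le n$) together with a stopping-time localization, giving cadlag adapted solutions of the truncated SDEs with L\'evy noise. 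We then pass to the limit using uniform energy bounds and monotonicity (the Minty trick). Alternatively, since $\ell^2$ is a Hilbert space and the operators are locally Lipschitz on $\ell^2$ (because $\ell^2\subset\ell^\infty$), we could localize by $\|u\|\le R$ to get Lipschitz coefficients and remove the localization using the a priori bound. We prefer this second option because the lattice setting makes all nonlinearities locally Lipschitz in $\ell^2$.

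For the a priori bound \eqref{secondmoment}, apply It\^o's formula to $\|u(t)\|^2$. The $A$-term gives $2\|Bu\|_p^p\ge0$. By (f1), $-2\langle f,u\rangle\le-2\lambda_1\|u\|_p^p+C(1+\|u\|^2+\mathbb{E}\|u\|^2)$. The $F$-term is bounded by \eqref{F3}. The It\^o correction terms are bounded by \eqref{sigma3} with $\lambda_0=\lambda_1$. We control the Brownian martingale supremum by Burkholder--Davis--Gundy. For the Poisson martingale supremum we use BDG with the quadratic variation $\int\!\int\|\cdot\|^2N(ds,dy)$ and the estimate
\[
\mathbb{E}\Big[\sup_t\Big|\int\!\int\langle u(s-),\cdot\rangle\widetilde N\Big|\Big]\le \tfrac14\mathbb{E}\sup\|u\|^2+C\,\mathbb{E}\int\!\int\|\cdot\|^2\nu\,ds.
\]
The delay integral is handled by $\int_\tau^t\|u(s-\rho)\|^2ds\le\rho\|\varphi\|_\infty^2+\int_\tau^t\|u(s)\|^2ds$. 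Gronwall's inequality then yields \eqref{secondmoment} with constants independent of $\varepsilon\le1$ and of $\tau$.

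Next we close the McKean--Vlasov fixed point. Define $\Gamma(\mu)_t=\mathcal{L}_{u^\mu(t)}$. For two flows, subtract the equations and apply It\^o's formula to $\|u^{\mu}-u^{\nu}\|^2$, using (f3), \eqref{F2} and \eqref{sigma4}. To compensate for the superlinear local monotonicity we multiply by the exponential weight $e^{-K\int_\tau^t(1+\|u^\mu\|_p^p+\dots)ds}$, or else localize by stopping times. This gives $\sup_{s\le t}\mathbb{W}_2^2(\Gamma(\mu)_s,\Gamma(\nu)_s)\le C\int_\tau^t\sup_{r\le s}\mathbb{W}_2^2(\mu_r,\nu_r)\,ds$. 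Hence an iterate of $\Gamma$ is a contraction on the complete space of flows with the metric $\sup_t e^{-\kappa t}\mathbb{W}_2$. Uniqueness follows from the same estimate.

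\textbf{Main obstacle.} The weighted estimate in the fixed-point step, and especially the higher moment bound, are where the real difficulty lies. For $\mathbb{E}\sup\|u\|^\theta$ we apply It\^o's formula to $(\|u\|^2)^{\theta/2}$. The jump part contributes
\[
\int\!\int\Big(\|u(s-)+\sqrt\varepsilon\,G\|^\theta-\|u(s-)\|^\theta-\theta\|u(s-)\|^{\theta-2}\langle u(s-),\sqrt\varepsilon\,G\rangle\Big)\nu(dy)\,ds,
\]
where $G=\widetilde h+\widetilde\sigma$. This is bounded by $C(\|u\|^{\theta-2}\|G\|^2+\|G\|^\theta)$. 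The dangerous piece is $\|G\|^\theta$ with $G$ superlinear. Since $\|\widetilde\sigma(u,\mu,y)\|\le\sum_i|\widetilde\sigma_i|\le\sum_iL_{\tilde\sigma,i}(y)(|u_i|^{q/2}+\dots)+\sum_i|\widetilde\sigma_i(0,\delta_0,y)|$, we get $\|G\|^\theta\lesssim(\sum_iL_{\tilde\sigma,i}(y))^\theta\|u\|^{\theta q/2}+\dots$, and condition \eqref{thetaassum2} makes this integrable in $y$. The power $\|u\|^{\theta q/2}$ must then be absorbed by the dissipation $\|u\|^{\theta-2}\|u\|_p^p\ge\|u\|^{\theta-2}\|u\|_\infty^{p-2}\cdots$ via interpolation and Young's inequality. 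The restriction $\theta<2(p-2)/(q-2)$ is exactly what allows this absorption, and it is the step we would check most carefully. The remaining martingale terms are handled by BDG as before, and Gronwall's inequality then gives $M_{\theta,T,\varphi}$ independent of $\varepsilon$.
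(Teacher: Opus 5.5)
\paragraph{Fixed-point and second-moment steps.} These follow the paper's route. You freeze the law and solve the distribution-independent delay equation; your localization argument is a self-contained substitute for the paper's citation. You then get a contraction in $\sup_t e^{-\gamma t}\mathbb{W}_2$ by letting the $\lambda_2$-term from (f3) absorb the superlinear part of \eqref{sigma4}, and derive \eqref{secondmoment} by It\^o, BDG and Gronwall.

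Drop the random weight $e^{-K\int(1+\|u^\mu\|_p^p+\cdots)}$ you mention for the fixed point. It is unnecessary once (f3) does the absorbing. It also would not close the argument, because a randomly weighted bound does not control $\mathbb{E}\|u^\mu(t)-u^\nu(t)\|^2\ge\mathbb{W}_2^2$. Stopping times followed by Fatou are the right tool.

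\paragraph{The gap: the higher-moment absorption.} The problem is in the step you yourself single out. You bound $\sum_i L_{\tilde\sigma,i}(y)|u_i|^{q/2}$ by $(\sum_i L_{\tilde\sigma,i}(y))\|u\|^{q/2}$ with the $\ell^2$-norm, and then propose to absorb $\|u\|^{\theta q/2}$ into $\|u\|^{\theta-2}\|u\|_p^p$.

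On the infinite lattice this is impossible for every $\theta>2$ once $q>2$. If $u$ has $N$ entries equal to $M/\sqrt N$, then $\|u\|=M$ but $\|u\|_p^p=M^pN^{1-p/2}\to0$ as $N\to\infty$. So no inequality $\|u\|^{\theta q/2}\le\epsilon\|u\|^{\theta-2}\|u\|_p^p+C\|u\|^\theta+C$ can hold. In your version the condition $\theta<2(p-2)/(q-2)$ plays no role, which is a sign the step cannot work as written.

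\paragraph{How to fix it.} You must keep the site-wise information. The simplest fix is to bound by the sup-norm instead:
\[
\sum_iL_{\tilde\sigma,i}(y)|u_i|^{q/2}\le\Bigl(\sum_iL_{\tilde\sigma,i}(y)\Bigr)\|u\|_{\ell^\infty}^{q/2}.
\]
Set $a=\frac{\theta(q-2)}{2(p-2)}$ and $b=(1-a)\theta$, so that $a(\theta-2+p)+b=\frac{q\theta}{2}$. Then
\[
\|u\|_{\ell^\infty}^{q\theta/2}=\|u\|_{\ell^\infty}^{a(\theta-2+p)}\|u\|_{\ell^\infty}^{b}\le\bigl(\|u\|^{\theta-2}\|u\|_p^p\bigr)^{a}\|u\|^{b}\le\epsilon\|u\|^{\theta-2}\|u\|_p^p+C_\epsilon\|u\|^{\theta}.
\]
The first inequality uses $\|u\|_{\ell^\infty}\le\|u\|$ and $\|u\|_{\ell^\infty}^p\le\|u\|_p^p$. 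The second is Young's inequality with exponents $1/a$ and $1/(1-a)$.

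The paper does it differently but to the same effect. It applies H\"older in $i$ to obtain $(\sum_iL_{\tilde\sigma,i}^{\theta/(\theta-1)})^{\theta-1}\sum_i|u_i|^{q\theta/2}$, and then splits $|u_i|^{q\theta/2}=(|u_i|^{\theta-2+p})^a|u_i|^b$.

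Either way, $a<1$ is exactly the condition $\theta<\frac{2(p-2)}{q-2}$. The $y$-factor is integrable by \eqref{thetaassum2}, since $(\sum_iL_{\tilde\sigma,i}^{\theta/(\theta-1)})^{\theta-1}\le(\sum_iL_{\tilde\sigma,i})^\theta$.
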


\begin{proof}
Given  $ \varphi \in L^2(\Omega, \mathcal{F}_{\tau}; D_\rho)$,
we will apply the fixed point 
theorem 
 to prove the existence 
and uniqueness of solutions
in the space:
$$
D^\varphi([\tau-\rho, \tau+T], \mathcal{P}_2(\ell^2))
=
\left \{
\mu \in D([\tau-\rho, \tau+T], \mathcal{P}_2(\ell^2)):
\   {\mu}(t) = \mathcal{L}_{\varphi(t-\tau)},
\   t \in [\tau-\rho, \tau]  
\right \}.
$$
 Let  $\mu \in 
 D^\varphi([\tau-\rho, \tau+T], \mathcal{P}_2(\ell^2))$ 
 and  consider the distribution independent stochastic delay system:
 \begin{align}\label{distr-indep-lds}
      & d u_{\mu}(t)
        + Au_{\mu}(t) dt
        + \lambda u_{\mu}(t) dt
        + f^{\mu}(u_{\mu}(t)) dt \nonumber \nonumber \\
    = & g(t) dt
        + F( u(t-\rho), {\mu}(t-\rho) ) dt
        + \sqrt{\varepsilon}\sum_{k \in \mathbb{N} }
                \left( h_k(t) + \sigma_k^{\mu}( u_{\mu}(t))
                \right) dW_k(t) \nonumber \\
       & + \sqrt{\varepsilon}  
                   \int_{y \in \mathbb{Y} }
                        \left( \widetilde{h}(t) 
                                  + \widetilde{\sigma}^{\mu}(u_{\mu}(t-), y)
                        \right)
                   \widetilde{N}(dt,dy), \  t > \tau,
 \end{align}
 with initial data
 $u_{\mu}(s) = \varphi(s-\tau)$ for $s \in [\tau-\rho, \tau]$,
 where
  $f^{\mu}(u(t)) = f(u(t),\mu (t) )$,
  $\sigma_k^{\mu}( u(t)) = \sigma_k( u(t), \mu
  (t))$ and 
  $\widetilde{\sigma}^{\mu}(u(t-), y) = \widetilde{\sigma}(u(t-), \mu (t), y)$.

 Similar to the arguments in \cite[Theorem 3.1]{CYZ2023SAA}
 together with \cite{WW2020SPA},
 it follows from {\bf(H1)}-{\bf(H4)} that \eqref{distr-indep-lds} has a unique solution $ u_{\mu} \in L^2(\Omega, D([\tau-\rho,\tau + T],\ell^2))$
 which satisfies  the uniform estimates
 \begin{align*}
    \mathbb{E}
              \left[ \sup_{t \in [\tau-\rho,\tau + T]} \|u_{\mu}(t)\|^2
              \right]
    + \int_{\tau}^{\tau+T} \mathbb{E} \left[ \|u_\mu(t)\|^p_p \right] dt
   \le  M_{\tau, T,\mu} \left( 1 + \mathbb{E} \left[ \sup_{r \in [-\rho,0]} \|\varphi(r)\|^2 \right]  \right),
 \end{align*}
 where $ M_{\tau, T, \mu}>0$
 is a constant
   independent of $\varphi$
   and $\varepsilon$.
   Next, we prove the distribution
   of the solution
   belongs
   to 
   $ D^\varphi([\tau-\rho, \tau + T], \mathcal{P}_2(\ell^2))$.

 {\bf Step 1:}  Prove
  $\mathcal{L}_{u_{\mu}} \in D^\varphi([\tau-\rho, \tau + T], \mathcal{P}_2(\ell^2))$.

 By $u_{\mu} \in  L^2(\Omega, D([\tau
 -\rho, \tau + T], \ell^2))$ and the Lebesgue dominated convergence theorem,
 we can know that $u_{\mu} \in D([\tau-\rho, \tau + T], L^2(\Omega,\ell^2) )$.
 Noting that
 $$\mathbb{W}_2(\mathcal{L}_{u_{\mu}(t)},
                       \mathcal{L}_{u_{\mu}(s)} )
    \le
    \left( \mathbb{E} \left[ \| u_{\mu}(t) - u_{\mu}(s) \|^2 \right]
    \right)^{\frac{1}{2}}, ~~ \forall s, t \in [\tau-\rho, \tau + T],$$
 we find that for any $t_0 \in [\tau-\rho, \tau + T)$,
 \begin{align*}
    \lim_{t \to t_0 +} \mathbb{W}_2(\mathcal{L}_{u_{\mu}(t)},
                       \mathcal{L}_{u_{\mu}(t_0 )} )
    \le \lim_{t \to t_0 +}
    \left( \mathbb{E} \left[ \| u_{\mu}(t) - u_{\mu}(t_0 ) \|^2 \right]
    \right)^{\frac{1}{2}}
    =0,
 \end{align*}
that is,
$ \lim_{t \to t_0 +} \mathcal{L}_{u_{\mu}(t)} = \mathcal{L}_{u_{\mu}(t_0)} $ in $\left( \mathcal{P}_2(\ell^2), \mathbb{W}_2 \right)$.

 On the other hand, for any $ t_0, t_n \in (\tau
 -\rho, \tau + T] $
 such that $t_n < t_0$ and $t_n \to t_0$ as $n \to \infty$,
 it follows from $u_{\mu} \in D([\tau,\tau + T], L^2(\Omega,\ell^2) )$ that
 for any $ \varepsilon' > 0 $, there is $ N_0 \ge 1 $ such that
 $\left( \mathbb{E} \left[ \| u_{\mu}(t_n) - u_{\mu}(t_m) \|^2 \right]
  \right)^{\frac{1}{2}}
  \le \varepsilon' $
 for $ n, m \ge N_0$.
 Then we have
 \begin{align*}
    \mathbb{W}_2(\mathcal{L}_{u_{\mu}(t_n)},
                       \mathcal{L}_{u_{\mu}(t_m)} )
     \le \varepsilon', \ \text{for}\ n,m \ge N_0,
 \end{align*}
 which implies $ \lim_{n \to \infty} \mathcal{L}_{u_{\mu}(t_n)} $ exists in $\left( \mathcal{P}_2(\ell^2), \mathbb{W}_2 \right) $.
 By a   contradiction argument,
 we  find  that
    $\lim_{t \to t_0 -} \mathcal{L}_{u_{\mu}(t)}$ exists in $\left( \mathcal{P}_2(\ell^2), \mathbb{W}_2 \right)$, and thus
    we obtain 
    $\mathcal{L}_{u_{\mu} } \in D^\varphi
    ([\tau
    -\rho, \tau + T], \mathcal{P}_2(\ell^2))$.

 {\bf Step 2:} Prove existence and uniqueness of solutions of \eqref{lmvlds-2}.

By  {\bf Step 1},
 we   define a map $\Phi^{\varphi}: D^\varphi([\tau
 -\rho, \tau + T],\mathcal{P}_2(\ell^2)) \to D^\varphi([\tau-\rho, \tau + T],\mathcal{P}_2(\ell^2))$ by
 \begin{align*}
    (\Phi^{\varphi}\mu)(t) = \mathcal{L}_{u_{\mu}(t)},
                                                        ~ t \in [\tau-\rho, \tau + T],
                                                        ~ \mu \in D^\varphi( [\tau-\rho, \tau + T], \mathcal{P}_2(\ell^2) ),
 \end{align*}
 where $u_{\mu}$ is the solution of \eqref{distr-indep-lds} with initial value
 $\varphi$
 at  $\tau$.
 We  will
 find a fixed point 
 of  $ \Phi^{\varphi}$ 
 in the complete metric space
     $\left( D
     ^\varphi([\tau-\rho,\tau + T],
      \mathcal{P}_2(\ell^2)),
             d_{ D^\varphi([\tau-\rho,\tau + T],
              \mathcal{P}_2(\ell^2))}
      \right)$
 where the metric $d_{ D
  ^\varphi([\tau-\rho,\tau + T],
   \mathcal{P}_2(\ell^2) ) }$ is defined by
 \begin{align*}
    d_{D ^\varphi([\tau-\rho,\tau + T], \mathcal{P}_2(\ell^2))}(\mu_1, \mu_2)
    = \sup_{t \in [\tau-\rho,\tau + T]}
        e^{- \gamma t} \mathbb{W}_2(\mu_1(t), \mu_2(t)),
        ~~ \forall \mu_1, \mu_2 \in D
      ^\varphi([\tau-\rho,\tau + T], \mathcal{P}_2(\ell^2)),
 \end{align*}
 where  $\gamma > 0$
 is a number to be
  determined later.

 Let $ \mu_1, \mu_2 \in D^\varphi([\tau-\rho,\tau + T], \mathcal{P}_2(\ell^2) )$,
     $ u_{\mu_1} $ and $ u_{\mu_2} $ be the solutions of \eqref{distr-indep-lds}.
 By It\^{o}'s formula,
 we obtain that for all $t\ge \tau$,
 \begin{align}\label{wps-1}
    & \|u_{\mu_1}(t) - u_{\mu_2}(t) \|^2
       + 2 \int_{\tau}^{t}
       \langle  A u_{\mu_1}(s) - A u_{\mu_2}(s) ,
               u_{\mu_1}(s) - u_{\mu_2}(s)
       \rangle ds \nonumber \\
   & + 2 \int_{\tau}^{t}
       \langle f^{\mu_1}(u_{\mu_1}(s)) - f^{\mu_2}(u_{\mu_2}(s)),
               u_{\mu_1}(s) - u_{\mu_2}(s)
       \rangle ds \nonumber \\
  = & - 2 \lambda \int_{\tau}^{t}
        \|  u_{\mu_1}(s) - u_{\mu_2}(s) \|^2 ds
      + \varepsilon \int_{\tau}^{t}
              \sum_{k \in \mathbb{N}}
                                     \| \sigma^{\mu_1}_k(u_{\mu_1}(s)) - \sigma^{\mu_2}_k(u_{\mu_2}(s)) \|^2
           ds \nonumber\\
    & + 2 \int_{\tau}^{t}
             \langle F( u_{\mu_1}(s-\rho),  
             {\mu_1}(s-\rho) ) - F( u_{\mu_2}(s-\rho), 
             {\mu_2}(s-\rho) ),
                     u_{\mu_1}(s) - u_{\mu_2}(s)
             \rangle ds \nonumber\\
    & + 2\sqrt{\varepsilon} \int_{\tau}^{t}
             \sum_{k \in \mathbb{N}}
                 \langle \sigma^{\mu_1}_k(u_{\mu_1}(s)) - \sigma^{\mu_2}_k(u_{\mu_2}(s)),
                         u_{\mu_1}(s) - u_{\mu_2}(s)
                 \rangle
          dW_k(s)  \nonumber \\
    & + \varepsilon \int_{\tau}^{t} 
                 \int_{y\in \mathbb{Y}
                 }
                             \| \widetilde{\sigma}^{\mu_1}(u_{\mu_1}(s-),y)
                                - \widetilde{\sigma}^{\mu_2}(u_{\mu_2}(s-),y) \|^2
            {N}(ds, dy)  \nonumber \\
    & +2 \sqrt{\varepsilon} \int_{\tau}^{t} 
              \int_{y\in \mathbb{Y}
              }
                   \langle\widetilde{\sigma}^{\mu_1}(u_{\mu_1}(s-),y) 
                                         - \widetilde{\sigma}^{\mu_2}(u_{\mu_2}(s-),y) ,\     u_{\mu_1}(s-) - u_{\mu_2}(s-)    \rangle
           \widetilde{N}(ds, dy).
 \end{align}
  Note that
 \begin{align*}
       & \langle  A u_{\mu_1}(s) - A u_{\mu_2}(s) ,
               u_{\mu_1}(s) - u_{\mu_2}(s)
         \rangle  \nonumber \\
   = & \sum_{i \in \mathbb{Z} }
            \Big( |u_{\mu_1,i}(s) - u_{\mu_1,i-1}(s)|^{p-2} \left( u_{\mu_1,i}(s) - u_{\mu_1,i-1}(s) \right)
                        \nonumber \\
       & \ \ \ \ \ \
                  - |u_{\mu_2,i}(s) - u_{\mu_2,i-1}(s)|^{p-2} \left( u_{\mu_2,i} (s) - u_{\mu_2,i-1}(s) \right)
            \Big)       \nonumber \\
       & \ \ \ \ \
            \cdot
            \left( u_{\mu_1,i}(s) - u_{\mu_1,i-1}(s) - u_{\mu_2,i}(s) + u_{\mu_2,i-1}(s) \right)  \nonumber \\
  \ge & 2^{1-p} \| u_{\mu_1}(s) - u_{\mu_2}(s) \|_p^p.
 \end{align*}
Then by \eqref{wps-1}
(using a stopping time
if necessary),   we obtain
 \begin{align}\label{wps-2}
        & \mathbb{E}
             \left[ \|u_{\mu_1}(t) - u_{\mu_2}(t) \|^2 \right]
          + 2 \mathbb{E}
                 \left[ \int_{\tau}^{t}
                           \langle f^{\mu_1} (u_{\mu_1}(s)) - f^{\mu_2} (u_{\mu_2}(s)),
                                   u_{\mu_1}(s) - u_{\mu_2}(s)
                           \rangle ds
                 \right]  \nonumber \\
  \le & - 2 \lambda \int_{\tau}^{t}
             \mathbb{E}
                \left[ \| u_{\mu_1}(s) - u_{\mu_2}(s) \|^2 \right] ds  \nonumber \\
       & + 2 \mathbb{E}
                \left[ \int_{\tau}^{t}
                          \langle F( u_{\mu_1}(s-\rho),  {\mu_1}(s-\rho) ) - F( u_{\mu_2}(s-\rho),  {\mu_2}(s-\rho) ),
                                  u_{\mu_1}(s) - u_{\mu_2}(s)
                          \rangle ds
                \right]  \nonumber\\
       & + \varepsilon \mathbb{E}
              \left[ \int_{\tau}^{t}
                        \sum_{k \in \mathbb{N}}
                           \| \sigma^{\mu_1}_k(u_{\mu_1}(s)) - \sigma^{\mu_2}_k(u_{\mu_2}(s)) \|^2 ds
              \right]  \nonumber \\
       & + \varepsilon \mathbb{E}
              \left[
                     \int_{\tau}^{t}  
                        \int_{
                        y\in \mathbb{Y}
                        } \| \widetilde{\sigma}^{\mu_1}(u_{\mu_2}(s),y) - \widetilde{\sigma}^{\mu_2}(u_{\mu_2}(s),y) \|^2
                        \nu (dy)
                     ds
              \right].
 \end{align}

 For the second term on the left-hand side of \eqref{wps-2},
 by \eqref{f3} and Young's equality, we obtain
 \begin{align} \label{wps-3}
        & 2 \mathbb{E}
               \left[ \int_{\tau}^{t}
                         \langle f^{\mu_1} (u_{\mu_1}(s)) - f^{\mu_2} (u_{\mu_2}(s)),
                                 u_{\mu_1}(s) - u_{\mu_2}(s)
                        \rangle ds
               \right]  \nonumber \\
  \ge & 2 \mathbb{E}
              \left[ \int_{\tau}^{t} \sum_{i \in \mathbb{Z}}
                        \left( \frac{\lambda_2}{2}
                                \left( |u_{\mu_1,i}(s)|^{p-2} + |u_{\mu_2,i}(s)|^{p-2} \right)
                                | u_{\mu_1,i}(s) - u_{\mu_2,i}(s) |^2
                                - \phi_{4,i} | u_{\mu_1,i}(s) - u_{\mu_2,i}(s) |^2
                        \right) ds
              \right]  \nonumber \\
       & - 2 \int_{\tau}^{t} \sum_{i \in \mathbb{Z}}
                             |\phi_{4,i}| \mathbb{W}^2_2(\mu_1(s), \mu_2(s))
                          ds  \nonumber \\
  \ge & \lambda_2
           \mathbb{E}
              \left[ \int_{\tau}^{t} \sum_{i \in \mathbb{Z}}
                        \left( \left( |u_{\mu_1,i}(s)|^{p-2} + |u_{\mu_2,i}(s)|^{p-2} \right)
                                | u_{\mu_1,i}(s) - u_{\mu_2,i}(s) |^2
                        \right) ds
              \right]  \nonumber \\
       & - 2 \|\phi_4\|_{\ell^\infty} \int_{\tau}^{t}
               \mathbb{E}
                  \left[ \| u_{\mu_1}(s) - u_{\mu_2}(s) \|^2 ds
                  \right]
         - 2 \|\phi_4\|_1 \int_{\tau}^{t}
               \mathbb{W}^2_2(\mu_1(s), \mu_2(s)) ds.
 \end{align}

 For the second term on the right-hand side of \eqref{wps-2},
 by \eqref{F2}, we have
 \begin{align} \label{wps-4}
       & 2 \mathbb{E}
                \left[ \int_{\tau}^{t}
                          \langle F( u_{\mu_1}(s-\rho),  {\mu_1}(s-\rho) ) - F( u_{\mu_2}(s-\rho),  {\mu_2}(s-\rho) ),
                                  u_{\mu_1}(s) - u_{\mu_2}(s)
                          \rangle ds
                \right]  \nonumber\\
  \le & \mathbb{E}
                \left[ \int_{\tau}^{t}
                          \| F( u_{\mu_1}(s-\rho),  {\mu_1}(s-\rho) ) - F( u_{\mu_2}(s-\rho), 
                          {\mu_2}(s-\rho) ) \|^2 ds
                \right]  \nonumber \\
        & + \mathbb{E}
              \left[ \int_{\tau}^{t} \| u_{\mu_1}(s) - u_{\mu_2}(s) \|^2 ds
              \right]  \nonumber \\
    \le & \mathbb{E}
                \left[ \int_{\tau-\rho}^{t-\rho}
                          \| F( u_{\mu_1}(s),  {\mu_1}(s) ) - F( u_{\mu_2}(s),  {\mu_2}(s) ) \|^2 ds
                \right]  \nonumber \\
         & + \mathbb{E}
              \left[ \int_{\tau}^{t} \| u_{\mu_1}(s) - u_{\mu_2}(s) \|^2 ds
              \right]  \nonumber\\
  \le & \left( 2 \|L_{F}\|_{\ell^\infty}^2 +1 \right)
            \int_{\tau}^{t}
               \mathbb{E}
                  \left[ \| u_{\mu_1}(s) - u_{\mu_2}(s) \|^2 \right]
            ds
          + 2 \|L_{F}\|^2 \int_{\tau}^{t} \mathbb{W}_2^2(\mu_1(s),\mu_2(s) ) ds,
 \end{align}

 For the last two terms on the right-hand side of \eqref{wps-2},
 by \eqref{sigma4} with $\tilde{\theta}=1$, we obtain for all $\varepsilon \in (0,1]$,
\begin{align}\label{wps-5}
       & \varepsilon\mathbb{E}
            \left[ \int_{\tau}^{t} \sum_{k \in \mathbb{N}}
                      \| \sigma^{\mu_1}_k(u_{\mu_1}(s))
                         - \sigma^{\mu_2}_k(u_{\mu_2}(s)) \|^2 ds
            \right]  \nonumber \\
       & + \varepsilon \mathbb{E}
              \left[ \int_{\tau}^{t} 
                     \int_{y\in\mathbb{Y} }
                           \| \widetilde{\sigma}^{\mu_1}(u_{\mu_2}(s),y) - \widetilde{\sigma}^{\mu_2} (u_{\mu_2}(s),y) \|^2  \nu(dy)ds
              \right]  \nonumber \\
  \le & \frac{\lambda_2}{4}
         \mathbb{E}
            \left[ \int_{\tau}^{t} \sum_{i \in \mathbb{Z}}
                         \left( |u_{\mu_1,i}(s)|^{p-2} + |u_{\mu_2,i}(s) |^{p-2} \right)
                         |u_{\mu_1,i}(s) - u_{\mu_2,i}(s)|^2 ds
            \right]  \nonumber\\
       & + 2 \|L\|^2 \int_{\tau}^{t} \mathbb{W}^2_2(\mu_1(s),\mu_2(s) ) ds
         + C_1
             \int_{\tau}^{t}
                \mathbb{E} \left[ \|u_{\mu_1}(s) - u_{\mu_2}(s) \|^2 \right] ds,
\end{align}
where $C_1=C_1(p,q, L)>0$ is a constant
independent of $\varepsilon$.

 By \eqref{wps-2}-\eqref{wps-5}, we get
 for all $ t \in [\tau, \tau + T]$,
 \begin{align}\label{wps-6}
       & \mathbb{E}
            \left[ \|u_{\mu_1}(t) - u_{\mu_2}(t) \|^2 \right]
            + \frac{3\lambda_2}{4}
              \mathbb{E}
                 \left[ \int_{\tau}^{t} \sum_{i \in \mathbb{Z}}
                           \left( |u_{\mu_1,i}(s)|^{p-2} + |u_{\mu_2,i}(s) |^{p-2} \right)
                                  |u_{\mu_1,i}(s) - u_{\mu_2,i}(s)|^2 ds
                 \right]  \nonumber\\
  \le & \left( 2 \|\phi_4\|_{\ell^\infty}
                + 2 \|L_{F}\|_{\ell^\infty}^2
                + 1
                + C_1
         \right)
         \int_{\tau}^{t}
                  \mathbb{E} \left[ \| u_{\mu_1}(s) - u_{\mu_2}(s) \|^2 \right] ds
                  \nonumber \\
       & + \left( 2 \|\phi_4\|_1
                  + 2 \|L_{F}\|^2
                  + 2 \|L\|^2
           \right)
           \int_{\tau}^{t} \mathbb{W}_2^2(\mu_1(s),\mu_2(s) ) ds.
\end{align}
 Applying the Gronwall inequality to \eqref{wps-6}, we obtain  that for all $ t \in [\tau, \tau + T]$,
 \begin{align}\label{wps-7}
        & \mathbb{E}
             \left[ \|u_{\mu_1}(t) - u_{\mu_2}(t) \|^2 \right]  \nonumber\\
   \le & \left( 2 \|\phi_4\|_1
                  + 2 \|L_{F}\|^2
                  + 2 \|L\|^2
          \right)
          \int_{\tau}^{t} \mathbb{W}_2^2(\mu_1(s),\mu_2(s) ) ds \
           e^{ \left( 2 \|\phi_4\|_{\ell^\infty}
                      + 2 \|L_{F}\|_{\ell^\infty}^2
                      + 1
                      + C_1
                \right) (t-\tau) }  \nonumber \\
   \le & C_T \int_{\tau}^{t}
                 \mathbb{W}^2_2(\mu_1(s), \mu_2(s) ) ds,
 \end{align}
 where
   $ C_T = \left( 2 \|\phi_4\|_1
                  + 2 \|L_{F}\|^2
                  + 2 \|L\|^2
           \right)
            e^{ \left( 2 \|\phi_4\|_{\ell^\infty}
                      + 2 \|L_{F}\|_{\ell^\infty}^2
                      + 1
                      + C_1
                \right) T }$.
 By \eqref{wps-7}, we obtain that for $ t \in [\tau, \tau + T]$,
 \begin{align} \label{wps-8}
        e^{-2 \gamma t}
        \mathbb{E}
           \left[ \|u_{\mu_1}(t) - u_{\mu_2}(t) \|^2 \right]
 \le & C_T \int_{\tau}^{t} e^{-2 \gamma (t - s)} e^{-2 \gamma s}
                 \mathbb{W}^2_2(\mu_1(s), \mu_2(s)) ds  \nonumber \\
 \le & \frac{C_T}{2 \gamma}
        \sup_{s \in [\tau,\tau + T]} e^{-2 \gamma s} \mathbb{W}^2_2(\mu_1(s), \mu_2(s)).
 \end{align}
 By \eqref{wps-8} and the definition of $ \mathbb{W}_2(\cdot,\cdot)$,
 we have
 \begin{align}\label{wps-9}
    e^{-2 \gamma t}
    \mathbb{W}^2_2( \mathcal{L}_{u_{\mu_1}(t)}, \mathcal{L}_{u_{\mu_2}(t)} )
    \le \frac{C_T}{2 \gamma}
         \sup_{s \in [\tau, \tau + T]} e^{-2 \gamma s}
                 \mathbb{W}^2_2(\mu_1(s), \mu_2(s)),
                 \ 
         \text{ for all } t \in [\tau, \tau + T].
 \end{align}
 By \eqref{wps-9} and the definition of $ d_{D^\varphi([\tau-\rho,\tau + T], \mathcal{P}_2(\ell^2))}$,
 we obtain
 \begin{align}\label{wps-10}
    d_{D^\varphi([\tau-\rho,\tau + T], \mathcal{P}_2(\ell^2))}
       \left( \mathcal{L}_{u_{\mu_1}}, \mathcal{L}_{u_{\mu_2}} \right)
    \le \left( \frac{C_T}{2\gamma} \right)^{\frac{1}{2}}
         d_{D^\varphi([\tau-\rho,\tau + T], \mathcal{P}_2(\ell^2))}(\mu_1,\mu_2).
 \end{align}
 Choosing $\gamma > 0$ such that $\frac{C_T}{2\gamma} < \frac{1}{4}$,
 by \eqref{wps-10} and the definition of $\Phi^{\varphi}$,
 we get
 \begin{align*}
    d_{D^\varphi([\tau-\rho, \tau + T], \mathcal{P}_2(\ell^2))}
       \left( \Phi^{\varphi}\mu_1, \Phi^{\varphi}\mu_2 \right)
    \le \frac{1}{2} d_{D^\varphi([\tau-\rho, \tau + T], \mathcal{P}_2 (\ell^2))}(\mu_1,\mu_2)
 \end{align*}
 for all $\mu_1, \mu_2 \in D^\varphi([\tau-\rho,\tau + T], \mathcal{P}_2(\ell^2) )$,
 which shows that $\Phi^{\varphi}$ is a contraction mapping in
   $(D^\varphi([\tau-\rho,\tau + T], \mathcal{P}_2(\ell^2)), d_{D^\varphi([\tau-\rho,\tau + T], \mathcal{P}_2(\ell^2))})$.
 Therefore, $\Phi^{\varphi}$ has a unique fixed point
   $\bar{\mu} \in D^\varphi([\tau-\rho,\tau + T], \mathcal{P}_2(\ell^2)) $.
 Then $u_{\bar{\mu}}$ is 
 the unique  solution of \eqref{lmvlds-2} on $t \in [\tau,\tau + T]$ in the sense of Definition \ref{def-sol}.

 {\bf Step 3:} Derive uniform estimates of solutions.

For any initial value $\varphi \in L^2( \Omega, \mathcal{F}_\tau; D_{\rho} )$,  
applying It\^o's formula to \eqref{lmvlds-2}, we obtain
 \begin{align} \label{ue-1}
       & \| u(t)\|^2
         + 2 \int_{\tau}^{t} \|B u(s)\|_p^p ds
         + 2 \lambda \int_{\tau}^{t}\| u(s)\|^2 ds
         + 2 \int_{\tau}^{t}
               \langle f(u(s), \mathcal{L}_{u(s)}), u(s) \rangle ds  \nonumber \\
     = & \|\varphi(0)\|^2
         + 2 \int_{\tau}^{t} \langle g(s), u(s) \rangle ds
         + 2 \int_{\tau}^{t} \langle F(u(s-\rho),\mathcal{L}_{u(s-\rho)}),  u(s) \rangle ds  \nonumber \\
       & + 2 \sqrt{\varepsilon}\sum_{k \in \mathbb{N} } \int_{\tau}^{t}
                \langle \sigma_k(u(s), \mathcal{L}_{u(s)}) + h_k(s),
                        u(s)
                \rangle dW_k(s)
         +\varepsilon \sum_{k \in \mathbb{N} } \int_{\tau}^{t}
               \| \sigma_k(u(s), \mathcal{L}_{u(s)}) + h_k(s)\|^2 ds \nonumber \\
       & +  2 \sqrt{\varepsilon} 
              \int_{\tau}^{t} \int_{y \in \mathbb{Y} }
                   \langle
                              \widetilde{\sigma}(u(s-), \mathcal{L}_{u(s)}, y) 
                               + \widetilde{h}(s, y), 
                              u(s-) 
                   \rangle
              \widetilde{N}(ds, dy)  \nonumber \\
       & + \varepsilon  
              \int_{\tau}^{t} \int_{y \in \mathbb{Y} }
                  \| \widetilde{\sigma}(u(s-), \mathcal{L}_{u(s)}, y) 
                     + \widetilde{h}(s, y) \|^2
               N(ds, dy).
 \end{align}
 For the fourth term on the left-hand side of \eqref{ue-1},
 by \eqref{f1}, we obtain
 \begin{align}\label{ue-2}
        & 2 \int_{\tau}^{t}
             \langle f(u(s), \mathcal{L}_{u(s)}), u(s) \rangle ds  \nonumber \\
 \ge & 2 \lambda_1 \int_{\tau}^{t} \|u(s)\|^p_p ds
          - 2 \int_{\tau}^{t}
              \sum_{i \in \mathbb{Z}}
                    |\phi_{1,i}|
                     \left[ 1 +  |u_i(s)|^2  +  \mathcal{L}_{u(s)}(\|\cdot\|^2)
                     \right] ds  \nonumber \\
\ge & 2 \lambda_1 \int_{\tau}^{t} \|u(s)\|^p_p ds
          - 2 \|\phi_1\|_{\ell^\infty} \int_{\tau}^{t} \| u(s)\|^2 ds
          - 2 \int_{\tau}^{t} \|\phi_1\|_1
                    \left( 1 + \mathcal{L}_{u(s)}(\|\cdot\|^2) \right) ds.
 \end{align}
 By \eqref{ue-1} and \eqref{ue-2}, we 
 obtain for all $t\ge \tau$,
 \begin{align} \label{ue-3}
       & \mathbb{E} \left[ \sup_{r \in [\tau, t]} \|u(r)\|^2 \right]
         + 2 \mathbb{E} \left[ \int_{\tau}^{t} \|B u(s)\|_p^p ds \right]
         + 2 \lambda_1 \mathbb{E} \left[ \int_{\tau}^{t} \|u(s)\|^p_p ds \right]  \nonumber \\
  \le & 3 \mathbb{E} \left[ \|\varphi(0)\|^2 \right]
         + 6 \left( \|\phi_1\|_{\ell^\infty} + \|\phi_1\|_1 \right)
                 \mathbb{E} \left[ \int_{\tau}^{t} \| u(s)\|^2 ds \right]
         + 6 \|\phi_1\|_1 (t-\tau)  \nonumber \\
       & + \frac{3}{\lambda} \int_{\tau}^{t} \mathbb{E} \left[ \|g(s)\|^2 \right] ds
         + 6 \mathbb{E}
                \left[ \int_{\tau}^{t} \left| \langle F(u(s-\rho),\mathcal{L}_{u(s-\rho)}),  u(s) \rangle \right| ds
                \right] \nonumber \\
       & + 6 \mathbb{E}
                \left[ \sup_{r \in [\tau, t]}
                       \left| \int_{\tau}^{r} \sum_{k \in \mathbb{N} }
                                 \langle \sigma_k(u(s), \mathcal{L}_{u(s)}) + h_k(s),
                                         u(s)
                                 \rangle dW_k(s)
                       \right|
                \right]  \nonumber \\
       & + 3\mathbb{E}
              \left[ \sum_{k \in \mathbb{N} }
                        \int_{\tau}^{t}
                           \| \sigma_k(u(s), \mathcal{L}_{u(s)}) + h_k(s)\|^2 ds
              \right] \nonumber \\
       & + 6 \mathbb{E}
                \left[ \sup_{r \in [\tau, t]}
                       \left| 
                             \int_{\tau}^{r}
                              \int_{y \in \mathbb{Y} }
                \langle
              \widetilde{\sigma}(u(s-), \mathcal{L}_{u(s)},y) +  \widetilde{h}(s,y), 
                       u(s-) \rangle
               \widetilde{N}(ds, dy) 
                        \right|
                \right]  \nonumber \\
       & + 3 \mathbb{E}
                \left[  
                          \int_{\tau}^{t} \int_{y \in \mathbb{Y} }
                                \|\widetilde{\sigma}(u(s), \mathcal{L}_{u(s)},y) + \widetilde{h}(s,y)\|^2
                          \nu(dy) ds
                \right]. 
 \end{align}
              
 For the fifth term on the right-hand side of \eqref{ue-3},
 by \eqref{F1}, we obtain
 \begin{align}\label{ue-4}
        & 6 \mathbb{E}
                \left[ \int_{\tau}^{t} \left| \langle F(u(s-\rho),\mathcal{L}_{u(s-\rho)}),  u(s) \rangle \right| ds
                \right]  \nonumber \\
   \le & 6 \|F(0,\delta_0)\|^2 (t-\tau)
          + 12 \|L_{F}\|_{\ell^\infty}^2 \int_{\tau}^{t} \mathbb{E} \left[ \|u(s-\rho)\|^2 \right] ds  \nonumber \\
        & + 12 \|L_{F}\|^2 \int_{\tau}^{t} \mathbb{W}_2^2(\mathcal{L}_{u(s-\rho)}, \delta_0) ds
          + 3 \int_{\tau}^{t} \mathbb{E} \left[ \|u(s)\|^2 \right] ds  \nonumber \\
   \le & 6 \|F(0,\delta_0)\|^2 (t-\tau)
          + 3 \left( 4 \|L_{F}\|_{\ell^\infty}^2 + 4 \|L_{F}\|^2 + 1 \right)
            \int_{\tau}^{t} \mathbb{E} \left[ \|u(s)\|^2 \right] ds  \nonumber \\
        & + 12 \left( \|L_{F}\|_{\ell^\infty}^2 + \|L_{F}\|^2 \right)
            \int_{-\rho}^{0} \mathbb{E} \left[ \|\varphi(s)\|^2 \right] ds.
\end{align}
  Then by \eqref{ue-3} and \eqref{ue-4}, we have
  for $t\ge \tau$,
 \begin{align} \label{ue-5}
       & \mathbb{E} \left[ \sup_{r \in [\tau, t]} \|u(r)\|^2 \right]
         + 2 \mathbb{E} \left[ \int_{\tau}^{t} \|B u(s)\|_p^p ds \right]
         + 2 \lambda_1 \mathbb{E} \left[ \int_{\tau}^{t} \|u(s)\|^p_p ds \right]  \nonumber \\
  \le & 3 \mathbb{E} \left[ \|\varphi(0)\|^2 \right]
         + 3 \left( 2 \|\phi_1\|_{\ell^\infty} + 2 \|\phi_1\|_1 + 4 \|L_{F}\|_{\ell^\infty}^2 + 4 \|L_{F}\|^2 + 1
             \right)
           \mathbb{E}
              \left[ \int_{\tau}^{t} \| u(s)\|^2 ds
              \right]  \nonumber \\
       & + 6 \|\phi_1\|_1 (t-\tau)
         + \frac{3}{\lambda} \int_{\tau}^{t}  \|g(s)\|^2 ds
         + 6 \|F(0,\delta_0)\|^2 (t-\tau)  \nonumber \\
       & + 12 \left( \|L_{F}\|_{\ell^\infty}^2 + \|L_{F}\|^2 \right)
              \int_{-\rho}^{0} \mathbb{E} \left[ \|\varphi(s)\|^2 \right] ds \nonumber \\
       & + 6 \mathbb{E}
                \left[ \sup_{r \in [\tau, t]}
                       \left| \int_{\tau}^{r} \sum_{k \in \mathbb{N} }
                                 \langle \sigma_k(u(s), \mathcal{L}_{u(s)}) + h_k(s),
                                         u(s)
                                 \rangle dW_k(s)
                       \right|
                \right]  \nonumber \\
       & + 3 \mathbb{E}
                   \left[ \sum_{k \in \mathbb{N} } \int_{\tau}^{t}
                                \| \sigma_k(u(s), \mathcal{L}_{u(s)}) + h_k(s)\|^2 ds
                   \right] \nonumber \\
                   & + 6 \mathbb{E}
                \left[ \sup_{r \in [\tau, t]}
                       \left| 
                             \int_{\tau}^{r}
                              \int_{y \in \mathbb{Y}}
                \langle
              \widetilde{\sigma}(u(s-), \mathcal{L}_{u(s)},y) +  \widetilde{h}(s,y), 
                       u(s-) \rangle
               \widetilde{N}(ds, dy) 
                        \right|
                \right]  \nonumber \\
       & + 3 \mathbb{E}
                \left[  
                          \int_{\tau}^{t} \int_{y \in \mathbb{Y} }
                                \|\widetilde{\sigma}(u(s), \mathcal{L}_{u(s)},y) + \widetilde{h}(s,y)\|^2
                          \nu(dy) ds
                \right].
    \end{align}

 For the eighth and tenth terms on the right-hand side of \eqref{ue-5},
 by \eqref{sigma3} with $\tilde{\theta}=6$, we obtain
 \begin{align}\label{ue-6}
       & 3 \mathbb{E}
                   \left[ \sum_{k \in \mathbb{N} } \int_{\tau}^{t}
                             \| \sigma_k(u(s), \mathcal{L}_{u(s)}) + h_k(s)\|^2 ds
                   \right]  \nonumber \\
       & + 3 \mathbb{E}
                 \left[  
                           \int_{\tau}^{t} \int_{
                           y\in \mathbb{Y}
                           }
                                 \|\widetilde{\sigma}(u(s), \mathcal{L}_{u(s)},y) + \widetilde{h}(s,y)\|^2
                           \nu(dy) ds
                 \right]  \nonumber \\
 \le & 6 \mathbb{E}
               \left[ \sum_{k \in \mathbb{N} } \int_{\tau}^{t} \|\sigma_k(u(s), \mathcal{L}_{u(s)}) \|^2  ds
               \right]
          + 6 \int_{\tau}^{t}
                   \left( \|h(s)\|^2 + \|\widetilde{h}(s)\|^2_{L^2(\mathbb{Y}, \nu; \ell^2)}          
                    \right) ds  \nonumber \\
      & + 6 \mathbb{E}
                 \left[ 
                           \int_{\tau}^{t} \int_{
                           y\in \mathbb{Y}}
                                 \|\widetilde{\sigma}(u(s), \mathcal{L}_{u(s)},y) \|^2
                           \nu(dy) ds
                 \right]  \nonumber \\ 
 \le & \frac{\lambda_1}{4}
             \mathbb{E}
                \left[ \int_\tau^t \| u(s) \|^p_p ds \right]
          + 24 \|L\|^2 \int_\tau^t \mathbb{E} \left[ \|u(s)\|^2 \right] ds
          +  C_2  (t-\tau)  \nonumber \\
      & + 6 \int_{\tau}^{t}
                    \left( \|h(s)\|^2 
                             + \|\widetilde{h}(s)\|^2_{L^2(\mathbb{Y}, \nu; \ell^2)}   
                    \right)ds,
 \end{align}
 where $C_2 = C_2(p, q, \|L\|^2) > 0$ is a constant.

 For the seventh term on the right-hand side of \eqref{ue-5},
 by \eqref{sigma3}  and the Burkholder-Davis-Gundy (BDG) inequality, we have
 \begin{align}\label{ue-7}
       & 6 \mathbb{E}
                \left[ \sup_{r \in [\tau, t]}
                       \left| \int_{\tau}^{r} \sum_{k \in \mathbb{N} }
                                 \langle \sigma_k(u(s), \mathcal{L}_{u(s)}) + h_k(s),
                                         u(s)
                                 \rangle dW_k(s)
                       \right|
                \right]  \nonumber \\
 \le & 6 c_1 \mathbb{E}
                   \left[
                         \left( \sum_{k \in \mathbb{N} }
                                   \int_{\tau}^{t} \| u(s)\|^2
                                         \|\sigma_k(u(s), \mathcal{L}_{u(s)}) + h_k(s)\|^2 ds
                         \right)^{\frac{1}{2}}
                   \right] \nonumber\\
 \le & 6 c_1
          \mathbb{E}
             \left[ \sup_{s \in [\tau, t]} \| u(s)\|
                       \left( \int_{\tau}^{t}
                                 \sum_{k \in \mathbb{N} }
                                     \|\sigma_k(u(s), \mathcal{L}_{u(s)}) + h_k(s)\|^2
                               ds
                       \right)^{\frac{1}{2}}
             \right]  \nonumber\\
 \le & \frac{1}{4} \mathbb{E} \left[ \sup_{s \in [\tau, t]} \| u(s)\|^2 \right]
          + 36 c_1^2
            \mathbb{E}
               \left[ \int_{\tau}^{t}
                         \sum_{k \in \mathbb{N} } \|\sigma_k(u(s), \mathcal{L}_{u(s)}) + h_k(s)\|^2
                       ds
               \right]  \nonumber \\
 \le & \frac{1}{4} \mathbb{E} \left[ \sup_{s \in [\tau, t]} \| u(s)\|^2 \right]
          + \frac{\lambda_1}{4}
               \mathbb{E}
                  \left[ \int_{\tau}^{t} \|u(s)\|^p_p ds
                  \right]
          + 288 c_1^2 \|L\|^2
            \int_{\tau}^{t}
               \mathbb{E} \left[ \|u(s)\|^2 \right] ds  \nonumber \\
      & + C_3  (t-\tau)
          + 72 c_1^2
               \int_{\tau}^{t} \|h(s)\|^2 ds,
 \end{align}
 where $c_1>0$ is a constant
 from the BDG inequality, and
 $C_3=C_3(p,q, \|L\|^2, c_1)>0$ from
  \eqref{sigma3} with $\tilde{\theta}=72 c_1^2$. 

 For the ninth term on the right-hand side of \eqref{ue-5},
 it follows from the BDG inequality of discontinuous martingale in \cite[Theorem 3.50]{PZ2007} that
  \begin{align}\label{ue-8}
        & 
        6 \mathbb{E}
                \left[ \sup_{r \in [\tau, t]}
                       \left| 
                             \int_{\tau}^{r}
                              \int_{y \in \mathbb{Y}}
                \langle
              \widetilde{\sigma}(u(s-), \mathcal{L}_{u(s)},y) +  \widetilde{h}(s,y), 
                       u(s-) \rangle
               \widetilde{N}(ds, dy) 
                        \right|
                \right]   
           \nonumber \\
            \le & 6 c_2 
          \mathbb{E}
                \left[   \left (
                             \int_{\tau}^{t}
                              \int_{y \in \mathbb{Y}}
              \|
              \widetilde{\sigma}(u(s-), \mathcal{L}_{u(s)},y) +  \widetilde{h}(s,y)\|^2
                       \|u(s-) \|^2
              {N}(ds, dy)  
              \right )^{\frac 12}
                \right]   
             \nonumber \\
      \le & \frac{1}{4}
           \mathbb{E}
              \left[ \sup_{r \in [\tau, t]} \| u(s)\|^2 \right]
           + 36 c_2^2 \mathbb{E}
               \left[  
                         \int_{\tau}^{t} \int_{y \in \mathbb{Y} }
                               \| \widetilde{\sigma}(u(s), \mathcal{L}_{u(s)},y) + \widetilde{h}(s,y) \|^2
                         \nu (dy)ds
               \right],
  \end{align}
 where $c_2>0$ is a constant
 from the BDG inequality,
 which along with 
  \eqref{sigma3} with $\tilde{\theta}=72 c_2^2  $ implies that for $t\ge \tau$,
 \begin{align}\label{ue-9}
            & 
        6 \mathbb{E}
                \left[ \sup_{r \in [\tau, t]}
                       \left| 
                             \int_{\tau}^{r}
                              \int_{y \in \mathbb{Y}}
                \langle
              \widetilde{\sigma}(u(s-), \mathcal{L}_{u(s)},y) +  \widetilde{h}(s), 
                       u(s-) \rangle
               \widetilde{N}(ds, dy) 
                        \right|
                \right]   
           \nonumber \\
        \le & \frac{1}{4}
           \mathbb{E}
              \left[ \sup_{s \in [\tau, t]} \| u(s)\|^2 \right]
          + \frac{\lambda_1}{4}
               \mathbb{E}
                  \left[ \int_{\tau}^{t} \| u(s) \|^p_p ds
                  \right]
          +  288 c_2^2   \|L\|^2
              \int_{\tau}^{t} \mathbb{E} \left[ \| u(s) \|^2 \right] ds
                    \nonumber \\
        & + C_4  (t-\tau)
          + 72 c_2^2  
            \int_{\tau}^{t} 
            \|\widetilde{h}(s)\|^2
             _{L^2(\mathbb{Y}, \nu; \ell^2)} ds,
 \end{align}
 where $C_4 = C_4(p, q, \|L\|^2) > 0$ is a constant.

 From \eqref{ue-5}-\eqref{ue-7} and \eqref{ue-9},
 it follows that all $ t \in [\tau, \tau + T] $,
 \begin{align*}
       & \frac{1}{2} \mathbb{E} \left[ \sup_{r \in [\tau, t]} \|u(r)\|^2 \right]
         + 2 \mathbb{E} \left[ \int_{\tau}^{t} \|B u(s)\|_p^p ds \right]
         + \lambda_1 \mathbb{E} \left[ \int_{\tau}^{t} \|u(s)\|^p_p ds \right]  \nonumber \\
  \le & 3 \mathbb{E} \left[ \|\varphi(0)\|^2 \right]
         + 3 \left( 2 \|\phi_1\|_{\ell^\infty} + 2 \|\phi_1\|_1 + 4 \|L_{F}\|_{\ell^\infty}^2 + 4 \|L_{F}\|^2 + 1
             \right)
           \mathbb{E}
              \left[ \int_{\tau}^{t} \| u(s)\|^2 ds
              \right]  \nonumber \\
       & + 6 \|\phi_1\|_1 (t-\tau)
         + \frac{3}{\lambda} \int_{\tau}^{t}  \|g(s)\|^2 ds
         + 6 \|F(0,\delta_0)\|^2 (t-\tau)  \nonumber \\
       & + 12 \left( \|L_{F}\|_{\ell^\infty}^2 + \|L_{F}\|^2 \right)
              \int_{-\rho}^{0} \mathbb{E} \left[ \|\varphi(s)\|^2 \right] ds \nonumber \\
       & + 24 \left( 12 c_1^2 + 12 c_2^2 + 1 \right) \|L\|^2
            \int_{\tau}^{t}
               \mathbb{E}\left[ \|u(s)\|^2 \right] ds  \nonumber \\
       & + \left( C_2 +C_3 +C_4
           \right) (t-\tau)  \nonumber \\
       & + 6 \left( 12 c_1^2 + 1 \right)
               \int_{\tau}^{t} \|h(s)\|^2 ds
         + 6 \left( 12 c_2^2 + 1 \right)
            \int_{\tau}^{t} \|\widetilde{h}(s) \|^2
             _{L^2(\mathbb{Y}, \nu; \ell^2)}    ds
              ,
 \end{align*}
 which together with Gronwall's inequality, yields that for all $t \in [\tau, \tau+T]$,
 \begin{align*}
       & \mathbb{E} \left[ \sup_{r \in [\tau, t]} \|u(r)\|^2 \right]
         + 4 \mathbb{E} \left[ \int_{\tau}^{t} \|B u(s)\|_p^p ds \right]
         + 2 \lambda_1 \mathbb{E} \left[ \int_{\tau}^{t} \|u(s)\|^p_p ds \right]  \nonumber \\
  \le & \bar{C}_1  
         \left( \mathbb{E} \left[ \sup_{s \in [-\rho, 0]} \|\varphi(s)\|^2 \right]
                + T
                + \int_{\tau}^{\tau+T}
                        \left( \|g(s)\|^2 + \|h(s)\|^2 + \|\widetilde{h}(s)\|^2
         _{L^2(\mathbb{Y}, \nu; \ell^2)}                   
                        \right)
                  ds
         \right)
         e^{\bar{C}_2 (t-\tau) },
 \end{align*}
 where
 \begin{align*}
    \bar{C}_1 = & 6 + 24 \left( \|L_{F}\|_{\ell^\infty}^2 + \|L_{F}\|^2 \right)  
                            + 12 \|\phi_1\|_1 + 12 \|F(0,\delta_0)\|^2
                            + 2 \left( C_2 +C_3+C_4
                                  \right)  \\
                        & + \frac{6}{\lambda} 
                            + 12 \left( 12 c_1^2 + 1 \right)
                            + 12 \left( 12 c_2^2 + 1 \right),
 \end{align*}
 and
 \begin{align*}
    \bar{C}_2 = & 6 \left( 2 \|\phi_1\|_{\ell^\infty} 
                                         + 2 \|\phi_1\|_1 
                                         + 4 \|L_{F}\|_{\ell^\infty}^2 
                                         + 4 \|L_{F}\|^2 + 1
                                \right) 
                           + 48 \left( 12 c_1^2 + 12c_2^2 + 1 \right) \|L\|^2.
 \end{align*} 
 This completes the proof of \eqref{secondmoment}. 
 
 {\bf Step 4:} Derive uniform higher-order moment estimates  of solutions. 
 
Let 
 $u_\tau\in L^\theta(\Omega,\mathcal{F}_\tau;\ell^2)$. 
 For $m\in\mathbb N$, define a stopping time by
 \[ 
     \tau_m(\omega)
     = \inf\{t \in [\tau, +\infty) : \|u(t)\| > m \}\wedge(\tau+T),
 \]
 where $\inf \emptyset = +\infty$.  
 By Itô's formula, we obtain that for any $t \in [\tau, \tau+T]$,
\begin{align}\label{theta1}
	  & \mathbb{E} \left[ \| u(t \wedge \tau_m) \|^{\theta} \right]
	      + \theta
	             \mathbb{E}
	                    \left[ \int_{\tau}^{t \wedge \tau_m}
	                                      \| u(s) \|^{\theta-2} \, \| B u(s) \|_p^{p} ds
          	           \right]  \nonumber\\
	  & + \theta \lambda 
	             \mathbb{E}
	                   \left[ \int_{\tau}^{t \wedge \tau_m}
	                                     \| u(s) \|^{\theta} ds
	                   \right]  
	      + \theta
             	\mathbb{E}
             	       \left[ \int_{\tau}^{t \wedge \tau_m}
	                                     \| u(s) \|^{\theta-2}
	                                     \langle f(u(s), \mathcal{L}_{u(s)}), u(s) \rangle ds
	                  \right]  \nonumber\\
 \le & \mathbb{E}\left[ \| \varphi(0) \|^{\theta} \right] 
       + \theta
	           \mathbb{E}
	                 \left[ \int_{\tau}^{t \wedge \tau_m}
        	                           \| u(s) \|^{\theta-2}
	                                  \langle g(s), u(s) \rangle ds
	                \right]  \nonumber\\
	  & + \theta
	            \mathbb{E}
	                   \left[ \int_{\tau}^{t \wedge \tau_m}
	                                     \| u(s) \|^{\theta-2}
	                                     \langle F(u(s-\rho), \mathcal{L}_{u(s-\rho)}), u(s) \rangle ds
	                  \right]  \nonumber\\
	  & + \frac{\theta (\theta-1) }{2}
	             \mathbb{E}
	                   \left[ \sum_{k \in \mathbb{N} } 
	                                   \int_{\tau}^{t \wedge \tau_m}
	                                            \| u(s) \|^{\theta-2}
	                                            \| \sigma_k(u(s), \mathcal{L}_{u(s)}) + h_k(s) \|^{2} ds
	                   \right]  \nonumber\\
	  & + \mathbb{E}
	                \bigg[ \int_{\tau}^{t \wedge \tau_m}
	                             \int_{y \in \mathbb{Y} }
	                                 \Bigl( \| u(s) 
	                                             + \sqrt{\varepsilon}
	                                                       \widetilde{\sigma}(u(s), \mathcal{L}_{u(s)}, y)
	                                             + \sqrt{\varepsilon}\widetilde{h}(s, y) 
	                                           \|^{\theta}
      	                                       - \| u(s) \|^{\theta}  \nonumber\\
	  & \qquad\qquad\qquad\qquad
	                                           - \sqrt{ \varepsilon} \theta \| u(s) \|^{\theta-2}
	                                           \langle u(s),
	                                                      \widetilde{\sigma}(u(s), \mathcal{L}_{u(s)}, y)
	                                                      + \widetilde{h}(s,y) 
	                                          \rangle
	                                 \Bigr) \nu(dy) ds
	                \bigg].
\end{align}
For the last term on the left-hand side of  
\eqref{theta1}, 
by \eqref{f1} and {\bf Step 3},  we 
find that
 there exists $C_{1,T} \ge 1$ depending on $T$ such that 
\begin{align}\label{theta2}
      &	\theta \mathbb{E}
              \left[ \int_{\tau}^{t \wedge \tau_m}
	                            \| u(s) \|^{\theta-2} \,
	                            \big\langle f \big( u(s), \mathcal{L}_{u(s)} \big), u(s) \big\rangle ds
              \right]  \nonumber \\
\ge & \theta \mathbb{E}
               \left[ \int_{\tau}^{t \wedge \tau_m}
	                            \| u(s) \|^{\theta-2} \sum_{i \in \mathbb{Z}}
                            	\Bigl( \lambda_1 |u_i(s)|^{p}
	                                      - \phi_{1,i} 
	                                                       \bigl( 1 + |u_i(s)|^{2}
	                                                                   + \mathcal{L}_{u(s)}( \|\cdot\|^{2} ) 
	                                                       \bigr)
	                            \Bigr) ds
            	\right]  \nonumber\\
\ge &\theta \lambda_1
      	       \mathbb{E}
      	              \left[ \int_{\tau}^{t \wedge \tau_m}
	                                    \| u(s) \|^{\theta-2} \, \| u(s) \|_{p}^{p} ds
	                  \right]  
	      - \theta \| \phi_1 \|_{\ell^\infty} 
	              \mathbb{E}
	                     \left[ \int_{\tau}^{t \wedge \tau_m}
	                                      \| u(s) \|^{\theta} ds
	                     \right] \nonumber\\	                  
	   & - \theta
	             \mathbb{E}
	                    \left[ \int_{\tau}^{t \wedge \tau_m}
	                                      \| u(s) \|^{\theta-2} \, \| \phi_1 \|_{1}
	                                      \bigl( 1 + \mathbb{E} \left[ \| u(s) \|^{2} \right] \bigr) ds
	                    \right]  \nonumber\\
\ge & \theta \lambda_1
	            \mathbb{E}
	                   \left[ \int_{\tau}^{t \wedge \tau_m}
	                                     \| u(s) \|^{\theta-2} \, \| u(s) \|_{p}^{p} ds
	                   \right]
	     - \theta \| \phi_1 \|_{\ell^\infty} \,
	               \mathbb{E}
	                      \left[ \int_{\tau}^{t \wedge \tau_m}
	                                        \| u(s) \|^{\theta} ds
	                      \right]  \nonumber\\	                   
	  & - \theta \| \phi_1 \|_{1}
	              \left[ 1 + C_{1,T}
	                               \left( \mathbb{E} 
	                                               \left[ \sup_{s \in [-\rho,0]} \| \varphi(s) \|^{2} 
	                                               \right] + 1 
	                              \right)
	             \right]
	             \mathbb{E}
	                   \left[ \int_{\tau}^{t \wedge \tau_m}
	                                     \| u(s) \|^{\theta-2} ds
	                   \right]  \nonumber\\
\ge & \theta \lambda_1 \,
	            \mathbb{E}
	                   \left[ \int_{\tau}^{t \wedge \tau_m}
	                                     \| u(s) \|^{\theta-2} \, \| u(s) \|_{p}^{p} ds
	                   \right]  \nonumber\\
	  & - 2 \| \phi_1 \|_{1}
               \left[ 1 + C_{1,T}
	                                \bigg( \mathbb{E} 
	                                                \Big[ \sup_{s \in [-\rho,0]} \| \varphi(s) \|^{2} 
	                                                \Big] 
	                                          + 1 
	                               \bigg)
 	           \right] (t - \tau)  \nonumber\\
	  & - \left\{ \theta \| \phi_1 \|_{\ell^\infty}
	                   + (\theta - 2) \| \phi_1 \|_{1}
	                       \left[ 1 + C_{1,T}
       	                                        \bigg( \mathbb{E} 
       	                                                        \Big[ \sup_{s \in [-\rho,0]} \| \varphi(s) \|^{2} \Big] 
       	                                                  + 1 
       	                                        \bigg)
	                      \right]
	        \right\}
	        \mathbb{E}
	               \left[ \int_{\tau}^{t \wedge \tau_m}
	                                 \| u(s) \|^{\theta} ds
	              \right].
\end{align} 
 For the second term on the right-hand side of \eqref{theta1}, by Young's inequality, we have
 \begin{align} \label{thetag}
 	   \theta \mathbb{E}
 	                     \Biggl[ \int_{\tau}^{t\wedge\tau_m}
 	                                          \|u(s)\|^{\theta-2}
 	                                          \langle g(s), u(s) \rangle ds
 	                     \Biggr]
    \le (\theta-1)
             \mathbb{E}
                    \Biggl[ \int_{\tau}^{t\wedge\tau_m}
 	                                     \|u(s)\|^{\theta} ds
 	                \Biggr]
 	       + \int_{\tau}^{t} \|g(s)\|^{\theta} ds.
 \end{align}
 For the third term on the right-hand side of \eqref{theta1}, 
 by \eqref{F1}  and {\bf Step 3} we obtain 
 \begin{align}\label{thetaF}
 	& \theta \mathbb{E}\left[
 	\int_{\tau}^{t \wedge \tau_m}
 	\|u(s)\|^{\theta-2} \,
 	\big\langle F\big(u(s-\rho), \mathcal{L}_{u(s-\rho)}\big), u(s) \big\rangle \, ds
 	\right] \nonumber\\
 	& \quad \le
 	\theta \mathbb{E} \bigg[
 	\int_{\tau}^{t \wedge \tau_m}
 	\|u(s)\|^{\theta-2}
 	\Bigl(
 	 \|F(0,\delta_0)\|^2
 	+ 2 \|L_{F}\|_{\ell^\infty}^2 \, \|u(s-\rho)\|^2
 	\nonumber \\
 	& \qquad \qquad  \qquad \qquad \qquad \qquad 
 	   + 2 \|L_{F}\|^2 \, \mathbb{W}_2^2\big(\mathcal{L}_{u(s-\rho)}, \delta_0\big)
 	+ \tfrac{1}{2} \|u(s)\|^2
 	\Bigr) ds
 	\bigg] \nonumber\\
 	& \quad \le
 	\left(
 	(\theta-2) C_{T,\varphi}
 	+ 2 (\theta-2) \|L_F\|_{\ell^\infty}^2
 	+ \frac{\theta}{2}
 	\right)
 	\mathbb{E}\left[
 	\int_{\tau}^{t \wedge \tau_m}
 	\|u(s)\|^{\theta} \, ds
 	\right]
 	+  2C_{T,\varphi} (t-\tau) \nonumber\\
 	& \qquad
 	+ 4 \|L_F\|_{\ell^\infty}^2
 	\mathbb{E}\left[
 	\int_{\tau}^{t \wedge \tau_m}
 	\|u(s-\rho)\|^{\theta} \, ds
 	\right] \nonumber\\
 	& \quad \le
 	\left(
 	(\theta-2) C_{T,\varphi}
 	+ 2 (\theta-2) \|L_F\|_{\ell^\infty}^2
 	+ \frac{\theta}{2}
 	+ 4 \|L_F\|_{\ell^\infty}^2
 	\right)
 	\mathbb{E}\left[
 	\int_{\tau}^{t \wedge \tau_m}
 	\|u(s)\|^{\theta} \, ds
 	\right]
 	+  2C_{T,\varphi} (t-\tau) \nonumber\\
 	& \qquad
 	+ 4 \|L_F\|_{\ell^\infty}^2 \rho \,
 	\mathbb{E}\left[
 	\sup_{s \in [-\rho,0]} \|\varphi(s)\|^{\theta}
 	\right], 
 \end{align}
 where 
 $C_{T,\varphi}
    = \|F(0,\delta_0)\|^2 
        + 2 \|L_{F}\|^2
            C_{1,T} 
                         \bigg( \mathbb{E} 
                                                      \Big[ \sup_{s \in [-\rho,0]} \|\varphi(s)\|^2 
                                                      \Big]
                                    + 1 
                         \bigg). 
 $ \\
 For the fourth term on the right-hand side of \eqref{theta1},
 by Young's inequality
 and
 \eqref{sigma3} with $\tilde{\theta}$ replaced by $\theta-1$,
 we obtain
 \begin{align}\label{moment_gjg1}
 	   & \frac{\theta(\theta-1)}{2}
 	            \mathbb{E} 
 	                   \left[ \sum_{k \in \mathbb{N}}
                             	      \int_{\tau}^{t \wedge \tau_m}
 	                                       \|u(s)\|^{\theta-2}
 	                                       \|\sigma_k(u(s), \mathcal{L}_{u(s)}) + h_k(s)\|^2 ds
 	                   \right]  \nonumber\\
 \le & \theta(\theta-1)
             	\mathbb{E}
             	       \left[ \sum_{k \in \mathbb{N}}
 	                                  \int_{\tau}^{t \wedge \tau_m}
 	                                      \|u(s)\|^{\theta-2}
 	                                      \|\sigma_k(u(s), \mathcal{L}_{u(s)})\|^2 ds
 	                  \right]  \nonumber\\
 	  & + \theta(\theta-1)
 	                 \mathbb{E} 
 	                       \left[ \int_{\tau}^{t \wedge \tau_m}
 	                                    \|u(s)\|^{\theta-2}
 	                                    \|h(s)\|^2 ds
 	                       \right]  \nonumber\\[1ex]
 \le & \frac{\theta \lambda_1}{4}
 	            \mathbb{E}
 	                   \left[ \int_{\tau}^{t \wedge \tau_m}
 	                                \|u(s)\|^{\theta-2} \|u(s)\|_p^{p} ds
                       \right] 
 	         + 4 \theta(\theta-1) \|L\|^2 \,
         	         \mathbb{E} 
 	                       \left[ \int_{\tau}^{t \wedge \tau_m}
 	                                    \|u(s)\|^{\theta-2}
 	                                    \mathbb{E} \left[ \|u(s)\|^2 \right] ds
 	                       \right]  \nonumber\\
 	  & + \theta C_5
 	                 \mathbb{E} 
            	          \left[ \int_{\tau}^{t \wedge \tau_m}	\|u(s)\|^{\theta-2} ds
                         \right]  
 	         + (\theta-1)(\theta-2)
                 	\mathbb{E}
            	        \left[ \int_{\tau}^{t \wedge \tau_m}
                                 	\|u(s)\|^{\theta} ds
                        \right]
 	        + 2(\theta-1) \int_{\tau}^{t} \|h(s)\|^{\theta} ds  \nonumber \\[1ex]
 \le & \frac{\theta \lambda_1}{4} 
 	             \mathbb{E}
 	                   \left[ \int_{\tau}^{t \wedge \tau_m}
                                   	\|u(s)\|^{\theta-2} \|u(s)\|_p^{p} ds
 	                   \right]  \nonumber\\
 	  & + (\theta-2)
 	           \Bigg[ 4(\theta-1)\|L\|^2 C_{1,T}
 	                       \bigg( \mathbb{E} 
 	                                      \Big[ \sup_{s \in [-\rho,0]} \|\varphi(s)\|^2 
 	                                      \Big] + 1 
 	                       \bigg)  
                         + C_5
                         + \theta - 1
 	          \Bigg]
              \mathbb{E}
                     \left[ \int_{\tau}^{t \wedge \tau_m}
 	                                  \|u(s)\|^{\theta} ds
                  	\right]  \nonumber\\
 	  & + \left[ 8(\theta-1) \|L\|^2 C_{1,T}
                                \bigg( \mathbb{E} 
                                                \Big[ \sup_{s \in [-\rho,0]} \|\varphi(s)\|^2 \Big]
                                          + 1 
                                \bigg) 
 	                     + 2 C_5 
             	\right] (t-\tau)  
             	+ 2(\theta-1)	\int_{\tau}^{t}	\|h(s)\|^{\theta} ds,
 \end{align} 
 where $C_5 = C_2(p, q, \|L\|^2) > 0$ is a constant.
 
 For the last term on the right-hand side of \eqref{theta1},  applying Taylor's
  formula
  to the function
  $\left \| u(s)  
 	+\delta
 	\left (
 	\sqrt{\varepsilon}\widetilde{\sigma}(u(s), \mathcal{L}_{u(s)}, y)
 	+\sqrt{\varepsilon} \widetilde{h}(s,y)
 	\right )
 	\right \|^{\theta}
  $
  with respect to $\delta$
   we infer that
  there exists $\delta \in (0,1)$ such that
  \begin{align}\label{moment_gjhjan20}
 	 & \left \| u(s)  
 	                 + \sqrt{\varepsilon} \widetilde{\sigma}(u(s), \mathcal{L}_{u(s)}, y)
 	                 + \sqrt{\varepsilon} \widetilde{h}(s,y)
 	     \right \|^{\theta}
 	     - \|u(s)\|^{\theta}  \nonumber\\  
 	&  - \theta\sqrt{\varepsilon} \|u(s)\|^{\theta-2}
 	       \big\langle
                        	 u(s),\
 	                        \widetilde{\sigma}(u(s), \mathcal{L}_{u(s)}, y)
 	                        + \widetilde{h}(s, y)
 	       \big\rangle  \nonumber\\
 = &
 {\frac 12} \theta (\theta -2)
  \left \| u(s)
 	+ \delta \sqrt{\varepsilon}\widetilde{\sigma}(u(s), \mathcal{L}_{u(s)}, y)
 	+ \delta \sqrt{\varepsilon} \widetilde{h}(s,y)
 	\right \|^{\theta-4}
 	\nonumber\\
     & \  \cdot
  \left (
  	\big\langle
  	u(s)
 	+ \delta \sqrt{\varepsilon}\widetilde{\sigma}(u(s), \mathcal{L}_{u(s)}, y)
 	+ \delta \sqrt{\varepsilon} \widetilde{h}(s,y),
 	\
   \sqrt{\varepsilon}\widetilde{\sigma}(u(s), \mathcal{L}_{u(s)}, y)
 	+  \sqrt{\varepsilon} \widetilde{h}(s,y)
  	 \big\rangle
  	 \right )^2	\nonumber\\
  & + 
    {\frac 12} \theta
  \left \| u(s)
 	+ \delta \sqrt{\varepsilon}\widetilde{\sigma}(u(s), \mathcal{L}_{u(s)}, y)
 	+ \delta \sqrt{\varepsilon} \widetilde{h}(s,y)
 	\right \|^{\theta-2}
 \left \| 
 	  \sqrt{\varepsilon}\widetilde{\sigma}(u(s), \mathcal{L}_{u(s)}, y)
 	+   \sqrt{\varepsilon} \widetilde{h}(s,y)
 	\right \|^{2} \nonumber\\
 \le &  
    {\frac 12} \theta
    (\theta-1)
  \left \| u(s)
 	+ \delta \sqrt{\varepsilon}\widetilde{\sigma}(u(s), \mathcal{L}_{u(s)}, y)
 	+ \delta \sqrt{\varepsilon} \widetilde{h}(s,y)
 	\right \|^{\theta-2}
 \left \| 
 	  \sqrt{\varepsilon}\widetilde{\sigma}(u(s), \mathcal{L}_{u(s)}, y)
 	+   \sqrt{\varepsilon} \widetilde{h}(s,y)
 	\right \|^{2}\nonumber\\
 \le & 
      \theta(\theta-1) 2^{\theta -3}
  \left \| u(s)
   \right \|^{\theta-2}
 \left \| 
 	  \sqrt{\varepsilon}\widetilde{\sigma}(u(s), \mathcal{L}_{u(s)}, y)
 	+   \sqrt{\varepsilon} \widetilde{h}(s,y)
 	\right \|^{2}
 	\nonumber\\
 	& + 
      \theta(\theta-1) 2^{\theta -3}
  \left \| 
 	  \sqrt{\varepsilon}\widetilde{\sigma}(u(s), \mathcal{L}_{u(s)}, y)
 	+   \sqrt{\varepsilon} \widetilde{h}(s,y)
 	\right \|^{\theta}.
\end{align}
By \eqref{moment_gjhjan20} we have
for all $\varepsilon\in (0,1]$,
 \begin{align}\label{moment_gjh}
 	& \mathbb{E}\left[
 	\int_{\tau}^{t \wedge \tau_m}
  \int_{y\in \mathbb{Y}
  }
 	\Big(
 	\|u(s)
 	+ \sqrt{\varepsilon}\widetilde{\sigma}(u(s), \mathcal{L}_{u(s)}, y)
 	+\sqrt{\varepsilon} \widetilde{h}(s,y)\|^{\theta}
 	- \|u(s)\|^{\theta} \right. \nonumber\\
 	& \qquad\qquad\qquad\quad\
 	\left.
 	- \theta\sqrt{\varepsilon} \|u(s)\|^{\theta-2}
 	\big\langle
 	u(s),
 	\widetilde{\sigma}(u(s), \mathcal{L}_{u(s)}, y)
 	+ \widetilde{h}(s,y)
 	\big\rangle
 	\Big) \nu (dy) \, ds
 	\right] \nonumber\\
 	\le &
 	C_{\theta} 
 	\mathbb{E}\left[
 	\int_{\tau}^{t \wedge \tau_m}
 	\|u(s)\|^{\theta-2}
  \int_{y\in \mathbb{Y}
  }
 	\|\widetilde{\sigma}(u(s), \mathcal{L}_{u(s)}, y)
 	+ \widetilde{h}(s,y)\|^{2}
 	\,\nu(dy) \, ds
 	\right] \nonumber\\
 	& + C_{\theta} 
 	\mathbb{E}\left[
 	\int_{\tau}^{t \wedge \tau_m}
  \int_{y\in\mathbb{Y}
  }
 	\|\widetilde{\sigma}(u(s), \mathcal{L}_{u(s)}, y)
 	+ \widetilde{h}(s,y)\|^{\theta}
 	\, \nu(dy) \, ds
 	\right] \nonumber\\
 	=:& I_1 + I_2,
 \end{align}
 where
 $ C_{\theta} = \theta(\theta-1) 2^{\theta-3}
 $.
 
 Similar to the argument of \eqref{moment_gjg1},
 by \eqref{sigma3} with $\tilde{\theta}$ replaced by $2 \theta^{-1} C_{\theta}$ and Young's inequality,
 we obtain
 \begin{align} \label{moment_gjh_1}
 	 I_1
 	\le & \frac{\theta \lambda_1}{4} 
 	\mathbb{E}\left[
 	\int_{\tau}^{t \wedge \tau_m} 
 	\|u(s)\|^{\theta-2} \|u(s)\|^p_p ds 
 	\right]
   +
   C_6  \left( 
   1+   \mathbb{E} \Big (
    \sup_{s \in [-\rho,0]} \|\varphi(s)\|^2 
    \Big ) \right)   
 	    \mathbb{E} 
 	          \left (
 	           \int_{\tau}^{t \wedge \tau_m} \|u(s)\|^{\theta} ds 
 	          \right ) \nonumber\\
 	& + 
 	  C_6  \left( 
   1+   \mathbb{E} \Big (
    \sup_{s \in [-\rho,0]} \|\varphi(s)\|^2 
    \Big ) \right)   (t-\tau)
    +C_6   \int_{\tau}^{t} \|\widetilde{h}(s)
    \|^{\theta} _{L^2(
    \mathbb{Y},
    \nu; \ell^2)}ds . 
 \end{align}
 where $C_6=C_6(p,q, L, T, \theta)>0$
 is a constant.
 
  By \eqref{sigma2} and H\"older's inequality, we have
 \begin{align}\label{h_22_simple}
 	\|\widetilde{\sigma}(u(s), \mathcal{L}_{u(s)}, y)\|
 	& =
 	\Bigg(
 	\sum_{i \in \mathbb{Z}}
 	\big|\widetilde{\sigma}_{i}(u_i(s), \mathcal{L}_{u(s)}, y)\big|^2
 	\Bigg)^{\frac12}                                            \nonumber\\
 	& \le
 	\Bigg(
 	\sum_{i \in \mathbb{Z}}
 	\Big(
 	L_{\tilde{\sigma},i}(y)
 	\Big(|u_i(s)|^{\frac{q}{2}}
 	+ \sqrt{\mathcal{L}_{u(s)}(\|\cdot\|^2)}\Big)
 	+ |\widetilde{\sigma}_{i}(0, \delta_0, y)|
 	\Big)^2
 	\Bigg)^{\frac12}                                            \nonumber\\
 	& \le
 	\sqrt{2}
 	\Bigg(
 	\sum_{i \in \mathbb{Z}}
 	\big(L_{\tilde{\sigma},i}(y)\big)^2
 	\Big(|u_i(s)|^{\frac{q}{2}}
 	+ \sqrt{\mathcal{L}_{u(s)}(\|\cdot\|^2)}\Big)^2
 	\Bigg)^{\frac12}
 	 + \sqrt{2}
 	\Bigg(
 	\sum_{i \in \mathbb{Z}}
 	|\widetilde{\sigma}_{i}(0, \delta_0, y)|^2
 	\Bigg)^{\frac12}                                          \nonumber\\
 	& \le
 	\sqrt{2}
 	\sum_{i \in \mathbb{Z}}
 	L_{\tilde{\sigma},i}(y)
 	\Big(|u_i(s)|^{\frac{q}{2}}
 	+ \sqrt{\mathcal{L}_{u(s)}(\|\cdot\|^2)}\Big) 
 	+ \sqrt{2}
 	\sum_{i \in \mathbb{Z}}
 	|\widetilde{\sigma}_{i}(0, \delta_0, y)|              \nonumber\\
 	& \le
 	\sqrt{2}
 	\Bigg(
 	\sum_{i \in \mathbb{Z}}
 	\big(L_{\tilde{\sigma}, i}(y)\big)^{\frac{\theta}{\theta-1}}
 	\Bigg)^{\frac{\theta-1}{\theta}}
 	\Bigg(
 	\sum_{i \in \mathbb{Z}}
 	|u_i(s)|^{\frac{q\theta}{2}}
 	\Bigg)^{\frac1\theta}                                      \nonumber\\
 	& \quad
 	+ \sqrt{2}\,\sqrt{\mathcal{L}_{u(s)}(\|\cdot\|^2)}
 	\sum_{i \in \mathbb{Z}}
 	L_{\tilde{\sigma}, i}(y)
 	+ \sqrt{2}
 	\sum_{i \in \mathbb{Z}}
 	|\widetilde{\sigma}_{i}(0, \delta_0, y)| .
 \end{align}
 By \eqref{h_22_simple} and Young's inequality, we obtain
\begin{align}\label{moment_gjh11}
   I_2
 \le & 2^{\theta-1} C_{\theta} 
	           \mathbb{E} 
	                 \left[ \int_{\tau}^{t \wedge \tau_m}
	                              \int_{y \in \mathbb{Y} }
         	                          \big\| \widetilde{\sigma}(u(s), 
         	                                   \mathcal{L}_{u(s)}, y) 
         	                          \big\|^{\theta}
	                           \nu(dy) ds
    	            \right]  
    	   + 2^{\theta-1} C_{\theta} 
	               \int_{\tau}^{t}
                         \big\| \widetilde{h}(s) \big\|^{\theta}_{L^{\theta}(  \mathbb{Y}, \nu; \ell^2)} ds
                                      \nonumber\\[1ex]
 \le & 2^{\theta-1} C_{\theta}
	            \mathbb{E} 
	                  \Bigg[ \int_{\tau}^{t \wedge \tau_m}
                                   \int_{y \in \mathbb{Y} }
	                                   \bigg( \sqrt{2}
	                                                   \Big( \sum_{i \in \mathbb{Z}}
	                                                                       \big( L_{\tilde{\sigma}, i}(y) \big)^{\frac{\theta}{\theta-1}}
	                                                   \Big)^{\frac{\theta-1}{\theta}}
	                                                   \Big( \sum_{i \in \mathbb{Z}}
	                                                                  |u_i(s)|^{\frac{q \theta}{2}}
	                                                  \Big)^{\frac{1}{\theta}}  \nonumber\\
	  &                                           + \sqrt{2} \sum_{i \in \mathbb{Z}} 
	                                                         L_{\tilde{\sigma}, i}(y)
	                                                               \sqrt{\mathcal{L}_{u(s)}(\|\cdot\|^2) }
	                                                + \sqrt{2} \sum_{i \in \mathbb{Z}}
	                                                         \big| \widetilde{\sigma}_{i}(0, \delta_0, y) \big|
	                                \bigg)^{\theta}
	                        \nu(dy) ds
               	 \Bigg]  \nonumber\\
	  & + 2^{\theta-1} C_{\theta} 
           	       \int_{\tau}^{t}
                       \big\| \widetilde{h}(s) \big\|
                       ^{\theta}_{L^{ \theta}( \mathbb{Y}, \nu; \ell^2)} \, ds
	                                    \nonumber\\[1ex]
 \le & 6^{\theta-1} 2^{\frac{\theta}{2}} C_{\theta}
                \int_{y \in \mathbb{Y} }
	                \Big( \sum_{i \in \mathbb{Z} }
	                              \big( L_{\tilde{\sigma}, i}(y) \big)^{\frac{\theta}{\theta-1}}
	               \Big)^{\theta-1}
	          \nu(dy)
	          \mathbb{E} 
	                \left[ \int_{\tau}^{t \wedge \tau_m}
	                             \sum_{i \in \mathbb{Z}}
	                                   |u_i(s)|^{\frac{q \theta}{2}} ds
	                \right]  \nonumber\\
	 & + 6^{\theta-1} 2^{\frac{\theta}{2}} C_{\theta}
                  \int_{y \in \mathbb{Y} }
       	              \Big( \sum_{i \in \mathbb{Z} }
	                                 L_{\tilde{\sigma}, i}(y)
	                  \Big)^{\theta}
	               \nu(dy)
	         C_{1,T}^{\frac{\theta}{2}}
	         \bigg( \mathbb{E} 
	                         \Big[ \sup_{s \in [-\rho,0]} \|\varphi(s)\|^2
	                         \Big] + 1
	         \bigg)^{\frac{\theta}{2}}
	         (t-\tau)   \nonumber\\
	  & + 6^{\theta-1} 2^{\frac{\theta}{2}} C_{\theta}
                   \int_{y \in \mathbb{Y} }
                   	   \Big( \sum_{i \in \mathbb{Z}}
	                                  \big| \widetilde{\sigma}_{i}(0, \delta_0, y) \big|
	                   \Big)^{\theta}
	                \nu(dy) (t-\tau)  
	     + 2^{\theta-1} C_{\theta}
	               \int_{\tau}^{t} 
	                   \big\| \widetilde{h}(s) \big\|
	                   ^{\theta}_{L^{\theta}( \mathbb{Y}, \nu; \ell^2)} ds.
 \end{align}

 Let 
 $ a = \tfrac{\theta(q-2)}{2(p-2)}$ and $
     b = (1-a) \theta.
 $
Then $a \in (0,1), \tfrac{b}{1-a} > 2$ 
 and $a (\theta-2+p) + b = \tfrac{q \theta}{2}$.
 For the first term on the right-hand side of \eqref{moment_gjh11},
 by H\"older's and Young's inequalities, we have
 \begin{align}\label{I11}
 	   & 6^{\theta-1} 2^{\frac{\theta}{2}} C_{\theta}  
 	             \int_{y\in\mathbb{Y}
 	             } 
                     \Big( \sum_{i \in \mathbb{Z}}
 	                                  \big( L_{\tilde{\sigma},i}(y) \big)^{\frac{\theta}{\theta-1}}
 	                 \Big)^{\theta-1} \nu(dy) 
 	                 \mathbb{E} 
 	                         \left[ \int_{\tau}^{t \wedge \tau_m}
 	                                      \sum_{i \in \mathbb{Z}}
 	                                      |u_i(s)|^{\frac{q \theta}{2}} ds 
                    	     \right]  \nonumber \\
   = & 6^{\theta-1} 2^{\frac{\theta}{2}} C_{\theta}  
                	\int_{y \in \mathbb{Y} } 
                      	\Big( \sum_{i \in \mathbb{Z}}
 	                                     \big( L_{\tilde{\sigma},i}(y) 
 	                                     \big)^{\frac{\theta}{\theta-1}}
                    	\Big)^{\theta-1} \nu(dy) 
                        \mathbb{E} 
 	                            \left[ \int_{\tau}^{t \wedge \tau_m}
 	                                         \sum_{i \in \mathbb{Z}}
 	                                               \big(|u_i(s)|^{\theta-2+p}\big)^{a}
                                                	|u_i(s)|^{\,b} ds 
                            	\right]  \nonumber\\
  \le & 6^{\theta-1} 2^{\frac{\theta}{2}} C_{\theta}  
 	                    \int_{y \in \mathbb{Y} } 
 	                        \Big( \sum_{i \in \mathbb{Z}}
 	                                         \big( L_{\tilde{\sigma},i}(y) 
 	                                         \big)^{\frac{\theta}{\theta-1}}
 	                        \Big)^{\theta-1} \nu(dy)  \nonumber \\
 	   & \cdot \mathbb{E} 
 	                      \left[ \int_{\tau}^{t \wedge \tau_m}
 	                                   \Big( \sum_{i \in \mathbb{Z}} |u_i(s)|^{\theta-2+p} 
 	                                   \Big)^{a}
 	                                   \Big( \sum_{i \in \mathbb{Z}} 
 	                                                     |u_i(s)|^{\frac{b}{1-a}}
 	                                   \Big)^{1-a} ds 
 	                       \right]  \nonumber\\
 \le & 6^{\theta-1} 2^{\frac{\theta}{2}} C_{\theta} 
                    \int_{y \in \mathbb{Y} } 
 	                            \Big( \sum_{i \in \mathbb{Z}}
 	                                             \big( L_{\tilde{\sigma},i}(y) 
 	                                             \big)^{\frac{\theta}{\theta-1}}
 	                            \Big)^{\theta-1} \nu (dy)  
 	               \mathbb{E} 
 	                        \left[ \int_{\tau}^{t \wedge \tau_m}
 	                                     \big( \|u(s)\|^{\theta-2} \|u(s)\|_{p}^{p} 
 	                                     \big)^{a}
                                     	\|u(s)\|^{b} ds 
                        	\right]  \nonumber\\
 \le & \frac{\theta \lambda_1 }{4}
 	                     \mathbb{E} 
 	                           \left[ \int_{\tau}^{t \wedge \tau_m} 
                                          	\|u(s)\|^{\theta -2} \|u(s)\|_p^{p} ds
                               \right]  \nonumber\\
 	  & + (1-a) 
 	            \bigg( 6^{\theta-1} 2^{\frac{\theta}{2}} C_{\theta}  
                              \int_{y \in \mathbb{Y} } 
                       	          \Big( \sum_{i \in \mathbb{Z}}
 	                                                \big( L_{\tilde{\sigma},  i}(y) \big)^{\frac{\theta}{\theta-1}}
 	                              \Big)^{\theta-1} \nu(dy) 
 	            \bigg)^{\frac{1}{1-a}} 
 	            \left( \frac{4 a}{\lambda_1 \theta}
 	            \right)^{\frac{a}{1-a}}  \nonumber\\
 	    & \quad
 	           \cdot \mathbb{E} 
 	                           \left[ \int_{\tau}^{t \wedge \tau_m} \|u(s)\|^{\theta} ds
 	                          \right].
 \end{align}
 
 Using assumptions \eqref{thetaassum1}–\eqref{thetaassum2}, we have
\begin{align*}
      &  
                \int_{
                y\in \mathbb{Y}
                }
	                     \left( \sum_{i \in \mathbb Z} ( L_{\tilde\sigma,i}(y) )^{\frac{\theta}{\theta-1}}
	                    \right)^{\theta-1}
	             \nu(dy) 
	        < \infty,  \nonumber\\
      &	 
            	\int_{y\in\mathbb{Y}
            	} 
	                 \left( \sum_{i \in \mathbb{Z}}
                                	\big| \widetilde{\sigma}_{i}(0, \delta_0, y) \big|
                 	\right)^{\theta} \nu(dy)
	       < \infty, \nonumber\\
      & \int_{\tau}^{t}  
	             \big\| \widetilde{h}(s)\big\|
	             ^{\theta} _{	 
  L^2(  \mathbb{Y}, \nu; \ell^2)}ds
	      \ \vee\ 
	      \int_{\tau}^{t}  
                 \big\| h(s) \big\|^{\theta} ds
          \ \vee \ 
		  \int_{\tau}^{t} 
   	             \big\| g(s) \big\|^{\theta} ds
	       < \infty. 
\end{align*}
Then from \eqref{theta1}–\eqref{I11} it follows that there exists a constant 
$C_{\theta,T,\varphi}>0$, depending only on $\theta$, $T$ and $\varphi$, such that
for all $t\in[\tau,\tau+T]$ and $m\in\mathbb{N}$,
\begin{align}\label{moment_hb}
	  & \mathbb{E} \left[ \|u(t \wedge \tau_m)\|^{\theta} \right]
          + \frac{\theta \lambda_1}{4} 
       	           \mathbb{E} 
       	                  \bigg[ \int_{\tau}^{t \wedge \tau_m}
	                                      \|u(s)\|^{\theta-2} \|u(s)\|_p^p ds 
	                      \bigg]  \nonumber\\
\le & C_{\theta, T, \varphi}
	       \mathbb{E}
	               \bigg[ \int_{\tau}^{t \wedge \tau_m} 
	                               \|u(s)\|^{\theta} ds 
	               \bigg]
	        + C_{\theta,T,\varphi}.
\end{align}
Applying Gronwall's inequality to \eqref{moment_hb} we obtain, for all
$t\in[\tau,\tau+T]$ and $m\in\mathbb{N}$,
\begin{align}\label{moment_hb1}
	\mathbb{E}\big[ \|u(t \wedge \tau_m)\|^{\theta} \big]
	  + \frac{\theta \lambda_1}{4} 
       	           \mathbb{E} 
       	                  \bigg[ \int_{\tau}^{t \wedge \tau_m}
	                                      \|u(s)\|^{\theta-2} \|u(s)\|_p^p ds 
	                      \bigg]   
	\le C_{\theta,T,\varphi} e^{C_{\theta,T,\varphi} T}.
\end{align}
Letting $m \to +\infty$ in \eqref{moment_hb1} and using Fatou's lemma, we infer
that, for all $t \in [\tau,\tau+T]$,
\begin{align}\label{moment_hb2}
	\mathbb{E}\big[ \|u(t)\|^{\theta} \big]
	  + \frac{\theta \lambda_1}{4} 
       	           \mathbb{E} 
       	                  \bigg[ \int_{\tau}^{t}
	                                      \|u(s)\|^{\theta-2} \|u(s)\|_p^p ds 
	                      \bigg]   
	\le C_{\theta,T,\varphi} e^{C_{\theta,T,\varphi} T}.
\end{align}

By \eqref{lmvlds-2}  and It\^o's formula, we obtain that for all $t \ge \tau$,
\begin{align}\label{uniform-1}
	  & \mathbb{E} 
	           \left[ \sup_{r \in [\tau, t]} 
	                         \left( \| u(r) \|^{\theta} 
	                                  + \theta
	                                           \int_{\tau}^{r}
	                                           	    \| u(s) \|^{\theta-2}
	                                                     \langle f(u(s), \mathcal{L}_{u(s)}), u(s) \rangle ds 
	                         \right)
          	   \right]  \nonumber\\
 \le & \mathbb{E}\left[ \| u(\tau) \|^{\theta} \right] 
          + \theta
	           \mathbb{E}
	                 \left[ \int_{\tau}^{t}
        	                           \| u(s) \|^{\theta-2}
	                                  \big| \langle g(s), u(s) \rangle \big| ds
	                \right]  \nonumber\\
	  & + \theta
	            \mathbb{E}
	                   \left[ \int_{\tau}^{t}
	                                     \| u(s) \|^{\theta-2}
	                                     \big| \langle F(u(s-\rho), \mathcal{L}_{u(s-\rho)}), u(s) \rangle \big| ds
	                  \right]  \nonumber\\
	  & + \frac{\theta (\theta-1) }{2}
	             \mathbb{E}
	                   \left[ \sum_{k \in \mathbb{N} } 
	                                   \int_{\tau}^{t}
	                                            \| u(s) \|^{\theta-2}
	                                            \| \sigma_k(u(s), \mathcal{L}_{u(s)}) + h_k(s) \|^{2} ds
	                   \right]  \nonumber\\
	  & + \mathbb{E}
	                \bigg[ \int_{\tau}^{t}
	                             \int_{y \in \mathbb{Y} }
	                             	 \Big| 
 	                                   \| u(s-)
 	                                      + \sqrt{\varepsilon} 
 	                                         \widetilde{\sigma}\big( u(s-), \mathcal{L}_{u(t)}, y \big)
 	                                      + \sqrt{\varepsilon} \widetilde{h}(s,y)
 	                                   \|^{\theta}
 	                                   - \|u(s-)\|^{\theta}   \nonumber\\
 	& \qquad\qquad\qquad\quad
 	                                   - \theta \|u(s-)\|^{\theta-2}
 	                                          \sqrt{\varepsilon}
 	                                                \big\langle
 	                                                    \widetilde{\sigma}\big(u(s-), \mathcal{L}_{u(t)}, y\big)
 	                                                    + \widetilde{h}(s,y),
 	                                                    u(s-)
 	                                                \big\rangle 
 	                                  \Big| N(ds,dy)	                                 
	               \bigg] \nonumber\\
& + \mathbb{E} 
	         \left[ \sup_{r \in [\tau, t]}
                     	\Big| \theta 
 	                               \int_{\tau}^{r}
                             	 \|u(s)\|^{\theta-2}
 	                            \big\langle  \sum_{k \in \mathbb{N} }
                                   	\left( \sigma_k(u(s), \mathcal{L}_{u(s)}) + h_k(s) \right) dW_k(s),
 	                               u(s)
                            	\big\rangle
                     	\Big|
              \right] \nonumber\\
 	& +  \mathbb{E} 
 		         \left[ \sup_{r \in [\tau,t]}
 	                 \Big| \int_{\tau}^{r} 
 	                            \int_{y \in \mathbb{Y} }
 	                                \|u(s-)\|^{\theta-2}
                                    	\big\langle
 	                                             \widetilde{\sigma}\big(u(s-), \mathcal{L}_{u(t)}, y\big)
 	                                             + \widetilde{h}(s,y),
 	                                             u(s-)
             	                        \big\rangle
 	                                 \widetilde{N}(ds,dy)
 	                 \Big|
 	              \right].
\end{align}

For the last two terms in \eqref{uniform-1}, 
from the BDG inequality, it follows that there exists a constant $C>0$ such that
\begin{align} \label{uniform-2}
	  &  \mathbb{E} 
	  	         \left[ \sup_{r \in [\tau, t]}
	                       	\Big| \theta 
	   	                               \int_{\tau}^{r}
	                               	 \|u(s)\|^{\theta-2}
	   	                            \big\langle  \sum_{k \in \mathbb{N} }
	                                     	\left( \sigma_k(u(s), \mathcal{L}_{u(s)}) + h_k(s) \right) dW_k(s),
	   	                               u(s)
	                              	\big\rangle
	                       	\Big|
	                \right] \nonumber\\
 	& +  \mathbb{E} 
 		         \left[ \sup_{r \in [\tau,t]}
 	                 \Big| \int_{\tau}^{r} 
 	                            \int_{y \in \mathbb{Y} }
 	                                \|u(s-)\|^{\theta-2}
                                    	\big\langle
 	                                             \widetilde{\sigma}\big(u(s-), \mathcal{L}_{u(t)}, y\big)
 	                                             + \widetilde{h}(s,y),
 	                                             u(s-)
             	                        \big\rangle
 	                                 \widetilde{N}(ds,dy)
 	                 \Big|
 	              \right]  \nonumber\\	                
 \le & \theta C
        	\mathbb{E}
        	     \Biggl[
 	                  \biggl( \int_{\tau}^{t}
                                    	\|u(s)\|^{2\theta-2}
 	                                       \sum_{k \in \mathbb{N} }
 	                                           \bigl\| \sigma_k(u(s),\mathcal{L}_{u(s)}) + h_k(s)
 	                                           \bigr\|^{2}
 	                                ds
 	                 \biggr)^{\frac12}
 	             \Biggr]  \nonumber \\
	 & + \theta C
            	\mathbb{E}
            	     \Biggl[
                       	 \Biggl( \int_{\tau}^{t} 
                       	                \int_{\mathbb{Y} } 
                                       	   \|u(s-)\|^{2\theta-2}
                    	                   \bigl\| \widetilde{\sigma}(u(s-),\mathcal{L}_{u(s)}, y) 
	                                                 + \widetilde{h}(s,y)
	                                       \bigr\|^{2}
	                                 N(ds, dy)
	                     \Biggr)^{\frac{1}{2}}
	                 \Biggr]  \nonumber \\ 	 
   \le & \frac{1}{2} 
            \mathbb{E}
         	     \bigg[ \sup_{s \in [\tau, t] } \|u(s)\|^{\theta}
         	     \bigg]
           + \theta^2 C^2
           	    \mathbb{E}
 	                 \bigg[ \int_{\tau}^{t}
                                     \|u(s)\|^{\theta-2}
 	                                 \sum_{k \in \mathbb{N} }
 	                                       \bigl\| \sigma_k(u(s),\mathcal{L}_{u(s)}) + h_k(s)
 	                                       \bigr\|^{2}
 	                             ds
 	                 \bigg]  \nonumber \\
	   & + \theta^2 C^2
            	\mathbb{E}
            	      \bigg[ \int_{\tau}^{t} 
                       	                \int_{\mathbb{Y} } 
                                       	   \|u(s)\|^{\theta-2}
                    	                   \bigl\| \widetilde{\sigma}(u(s),\mathcal{L}_{u(s)}, y) 
	                                                 + \widetilde{h}(s,y)
	                                       \bigr\|^{2}
	                                 \nu(dy) ds
	                    \bigg]. 		                    		                             
\end{align}
Then together with the arguments in \eqref{theta2}-\eqref{moment_gjg1} and \eqref{moment_gjh}-\eqref{I11}, 
it follows from \eqref{moment_hb2}-\eqref{uniform-2} that there exists $C_{\theta, T, \varphi}^\prime> 0$ depending only on $\theta$, $T$ and $\varphi$ such that for all $t \in [\tau, \tau+T]$,
\begin{align*}
	  \mathbb{E} 
	        \left[ \sup_{r \in [\tau, t]} 
	                          \| u(r) \|^{\theta} 
	        \right]
	  + \mathbb{E} 
	  	        \left[ \int_{\tau}^{t}
	  	                                      \|u(s)\|^{\theta-2} \|u(s)\|_p^p ds 
	  	        \right]   
     \le C_{\theta, T, \varphi}^\prime,
\end{align*}
as desired. 
 \end{proof}

\section{Existence of  measure attractors
 in  $\mathcal{P}_{\theta}(D_{\rho})$} \label{4}
 This section is devoted to the existence 
 of measure attractors
of  \eqref{lmvlds-2} in 
$\mathcal{P}_{\theta}(D_{\rho})$   
 for $ \theta \in \left(2, \tfrac{2(p-2)}{q-2} \right) $. 
 To that  end,
 we need to
 establish  the continuity of 
 the  cocycle associated with \eqref{lmvlds-2} 
 on bounded subsets 
 of  $\mathcal{P}_{\theta}(D_{\rho})$. 
 Since the space $D_{\rho}$
 is equipped with the 
 Skorohod topology,
the continuity of the cocycle in this case
is  quite  different from
that in
 \cite{SSW2026JDE, SSL2024arXiv, BCS2026JDE} .

\subsection{The cocycle on $ \mathcal{P}_{\theta}(D_{\rho})$}

 Denote by $\mathcal{B}_b(D_{\rho})$ the set of all bounded Borel measurable functions on $D_{\rho}$.
 For any $ \psi \in \mathcal{B}_b(D_{\rho}) $ and $ r \le t $, define
   $(P_{r,t} \psi)(\varphi)
     = \mathbb{E} \left[ \psi(u_t(\cdot;r,\varphi) ) \right]
   $
 for all $\varphi \in D_{\rho}$,
 where the segment process $u_t(s; r,\varphi) = u(t+s; r,\varphi)$ for $s \in [-\rho, 0]$,
 and $u(t; r,\varphi)$ is the solution of \eqref{lmvlds-2} with initial data $\varphi$ at initial time $r$.

 By Theorem \ref{wp-sol} in Section 3 
 and the Yamada-Watanabe theorem with jumps \cite{BLP2015IJSA, FHK2025arXiv}, 
 we can define 
 an  operator 
     $P^{*}_{r,t}:  \mathcal{P}_{\theta}(D_{\rho}) \to \mathcal{P}_{\theta}(D_{\rho})$ 
 by $P^{*}_{r,t} \mu = \mathcal{L}_{u_{t}(\cdot; r, \varphi)}$
 for every $\mu \in  \mathcal{P}_{\theta}(D_{\rho})$
 where  
  $\varphi \in L^{\theta}(\Omega, \mathcal{F}_r; \mathcal{D}_{\rho})$
with
  $  \mathcal{L}_{\varphi}=\mu$.

 For every $t \in \mathbb{R}^+$ and $\tau \in \mathbb{R}$,
 define 
 a
  mapping $\Phi(t,\tau): \mathcal{P}_{\theta}(D_{\rho}) \to \mathcal{P}_{\theta}(D_{\rho})$ by
   $\Phi(t,\tau) \mu = P^{*}_{\tau, \tau + t} \mu$
 for every $\mu \in \mathcal{P}_{\theta}(D_{\rho})$.
 Note that
 $\Phi(t,\tau) \mu$ is the law of the segment process of solution to \eqref{lmvlds-2} at $ \tau + t $ with
 initial distribution $\mu$ at  initial time $\tau$.
 Then $\Phi(0,\tau) = I$, and by the uniqueness of solutions of \eqref{lmvlds-2},
 we  see  that for all $t,s \in \mathbb{R}^{+}$, $\tau \in \mathbb{R}$ and $\mu \in \mathcal{P}_{\theta}(D_{\rho})$,
   $\Phi(t + s,\tau) \mu = \Phi(t, s+\tau) \circ \Phi(s,\tau)\mu$.
 Hence $\Phi$ is a non-autonomous cocycle on $\left( \mathcal{P}_{\theta}(D_{\rho}), d_{\mathcal{P}(D_{\rho})} \right)$.

\begin{theorem}\label{consys}
 If {\bf(H1)}-{\bf(H4)} hold,
then for  every 
   $ \theta \in \left(2, \tfrac{2(p-2)}{q-2} \right) $,
    the cocycle $\{ \Phi(t,\tau) \}_{t \ge { \rho}, \tau \in \mathbb{R}}$ associated with \eqref{lmvlds-2}
   is continuous on bounded subsets of $\mathcal{P}_{\theta}(D_{\rho})$.
\end{theorem}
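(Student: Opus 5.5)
To prove Theorem \ref{consys}, fix $t\ge\rho$, $\tau\in\mathbb{R}$, a bounded set $\mathscr{B}\subseteq\mathcal{P}_\theta(D_\rho)$ and $\mu_n,\mu\in\mathscr{B}$ with $\mu_n\to\mu$ weakly. The goal is $\Phi(t,\tau)\mu_n\to\Phi(t,\tau)\mu$ weakly. We work through a coupling. Since $D_\rho$ with $d^0$ is Polish, Skorohod's representation theorem gives, on some filtered probability space carrying independent Wiener processes and an independent Poisson random measure, $\mathcal{F}_\tau$-measurable random variables $\varphi_n,\varphi$ with $\mathcal{L}_{\varphi_n}=\mu_n$, $\mathcal{L}_\varphi=\mu$ and $d^0(\varphi_n,\varphi)\to0$ almost surely. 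By the Yamada--Watanabe theorem with jumps, the laws of the solutions do not depend on the space. Boundedness of $\mathscr{B}$ gives $\sup_n\mathbb{E}\|\varphi_n\|_\infty^\theta<\infty$. Since $\theta>2$, the family $\|\varphi_n\|_\infty^2$ is uniformly integrable. Skorohod convergence implies $\varphi_n(s)\to\varphi(s)$ at every continuity point $s$ of $\varphi$, hence for a.e. $s$, and at the endpoint $s=0$. Therefore
$$\mathbb{E}\|\varphi_n(0)-\varphi(0)\|^2\to0,\qquad \int_{-\rho}^0\mathbb{E}\|\varphi_n(s)-\varphi(s)\|^2ds\to0.$$

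Let $u^n,u$ be the corresponding solutions of \eqref{lmvlds-2}. We repeat the It\^o computation of Step 2 of Theorem \ref{wp-sol} for $u^n-u$, now with $\mu_i(s)=\mathcal{L}_{u^n(s)},\mathcal{L}_{u(s)}$. This uses $\mathbb{W}_2^2(\mathcal{L}_{u^n(s)},\mathcal{L}_{u(s)})\le\mathbb{E}\|u^n(s)-u(s)\|^2$, the monotonicity of $A$, \eqref{f3}, \eqref{F2} and \eqref{sigma4}. The $p$-power terms coming from $\sigma,\widetilde\sigma$ are absorbed by $\lambda_2$. The only new feature is the delay term: it contributes $\int_{\tau-\rho}^{\tau}\mathbb{E}\|\varphi_n(s-\tau)-\varphi(s-\tau)\|^2ds$ plus terms in $u^n-u$ on $[\tau,t]$. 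Next we take the supremum before the expectation, using BDG for both martingales exactly as in \eqref{ue-7}--\eqref{ue-9}, and then apply Gronwall. This gives
$$\mathbb{E}\Big[\sup_{r\in[\tau,\tau+t]}\|u^n(r)-u(r)\|^2\Big]\le C_t\Big(\mathbb{E}\|\varphi_n(0)-\varphi(0)\|^2+\int_{-\rho}^0\mathbb{E}\|\varphi_n(s)-\varphi(s)\|^2ds\Big)\to0.$$

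The main obstacle is the supremum estimate in the presence of the superlinear cross terms. The It\^o formula for $\|u^n-u\|^2$ produces the stochastic integrals with $\sigma(u^n)-\sigma(u)$. BDG turns these into $\frac12\mathbb{E}\sup\|u^n-u\|^2$ plus $\mathbb{E}\int\sum_k\|\sigma_k(u^n)-\sigma_k(u)\|^2$. The latter is controlled by \eqref{sigma4} through the dissipative term $\frac{\lambda_2}{4}\sum_i(|u^n_i|^{p-2}+|u_i|^{p-2})|u^n_i-u_i|^2$ appearing on the left side. The jump term is handled by \cite[Theorem 3.50]{PZ2007}, as in \eqref{ue-8}, and the compensator is again bounded by \eqref{sigma4}. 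If these terms cannot be closed, the fallback is a stopping-time localization $\tau_R=\inf\{r:\|u^n(r)\|\vee\|u(r)\|>R\}$. On $\{\tau_R\le\tau+t\}$ we use the uniform $\theta$-moment bound of Theorem \ref{wp-sol} (uniform in $n$ on $\mathscr{B}$) together with Chebyshev, and let $R\to\infty$ after $n\to\infty$. Either route yields $\sup_{r\in[\tau,\tau+t]}\|u^n(r)-u(r)\|\to0$ in probability.

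Finally, because $t\ge\rho$, the segments $u^n_{\tau+t}$ and $u_{\tau+t}$ live entirely on $[\tau+t-\rho,\tau+t]\subseteq[\tau,\tau+t]$. They therefore involve no portion of the initial data, which may converge only in the Skorohod sense. This is exactly why $t\ge\rho$ is needed. Hence
$$d^0(u^n_{\tau+t},u_{\tau+t})\le\|u^n_{\tau+t}-u_{\tau+t}\|_\infty\to0$$
in probability. For $\psi\in L_b(D_\rho)$ with $\|\psi\|_{L_b}\le1$ this gives
$$|(\psi,\Phi(t,\tau)\mu_n)-(\psi,\Phi(t,\tau)\mu)|\le\mathbb{E}\big[d^0(u^n_{\tau+t},u_{\tau+t})\wedge2\big]\to0,$$
uniformly in $\psi$. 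Thus $d_{\mathcal{P}(D_\rho)}(\Phi(t,\tau)\mu_n,\Phi(t,\tau)\mu)\to0$. Membership in $\mathcal{P}_\theta(D_\rho)$ follows from Theorem \ref{wp-sol}.
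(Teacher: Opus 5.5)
Your proposal is correct and follows essentially the same route as the paper. Both arguments use the Skorohod representation on a product space and uniform integrability from the $\theta$-moment bound ($\theta>2$) to get $L^2$ convergence of $\varphi_n(0)$ and of $\varphi_n$ in $L^2(-\rho,0;\ell^2)$, followed by an It\^o--BDG--Gronwall estimate for $\mathbb{E}\sup_{[\tau,\tau+T]}\|u^n-u\|^2$. The paper localizes with stopping times and uses a preliminary pointwise-in-time bound to control the unlocalized Wasserstein terms; your direct Gronwall on $\mathbb{E}\sup$ also works, since that quantity is finite by Theorem \ref{wp-sol}. Your remark that for $t\ge\rho$ the segment lies in $[\tau,\tau+t]$, so uniform convergence gives $d^0$-convergence and hence convergence in $d_{\mathcal{P}(D_\rho)}$, makes explicit the concluding step the paper leaves implicit after \eqref{feb23a}.
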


\begin{proof}
 By Definition \ref{def_ds},
 we need to 
  prove that if $ \mu_n \to \mu $ in $
  \left( \mathcal{P}_{\theta}(D_{\rho}), d_{\mathcal{P}(D_{\rho})} \right)$,
 where $\mu$ and $\mu_n $ are 
 in
  $  \mathcal{P_{\theta}(D_{\rho})}$
 such that
   $\int_{D_{\rho}}
       \|\xi\|_{
       \infty }^{\theta} \mu(d\xi) \vee \int_{D_{\rho}} \|\xi\|_{\infty }^{\theta} \mu_n(d \xi)
       \le R$  for some
       positive number  $R  $ independent of $n$,
 then  
 $\Phi(t,\tau)\mu_n \to \Phi(t,\tau)\mu 
 $ in 
$ \left( \mathcal{P}_{\theta}(D_{\rho}), d_{\mathcal{P}(D_{\rho})} \right)$
for every $ \tau \in \mathbb{R} $ and $ t \ge \rho$.

 Since $ \mu_n \to \mu $ in
 $\left(\mathcal{P}_{\theta}(D_{\rho}), d_{\mathcal{P}(D_{\rho})} \right)$,
 it follows that $ \mu_n \to \mu $ weakly.
 Then by the Skorokhod representation theorem,
 there exist a probability space
   $(\widetilde{\Omega}, \widetilde{\mathcal{F}}, \widetilde{\mathbb{P}})$
 and random variables $\widetilde{\varphi}$ and $\widetilde{\varphi}_n$ defined on
 $(\widetilde{\Omega}, \widetilde{\mathcal{F}}, \widetilde{\mathbb{P}})$ such that the distributions of
 $\widetilde{\varphi}$ and $\widetilde{\varphi}_n$ coincide with
 $\mu$ and $\mu_n$, respectively.
 Furthermore,
   $  \widetilde{\varphi}_n 
   \to  \widetilde{\varphi}$
   in $ ({D_\rho}, d^0)$ 
   $\widetilde{\mathbb{P}}$-almost surely.
 Note that $\widetilde{\varphi}, \widetilde{\varphi}_n, W_k$
 and $\widetilde{N}$ can be considered as random variables defined in the product space
 $(\Omega \times \widetilde{\Omega},
   \mathcal{F} \times \widetilde{\mathcal{F}},
   \mathbb{P} \times \widetilde{\mathbb{P}} )
 $.
 So we may consider the solutions of system \eqref{lmvlds-2} in the product space
 with initial data $\widetilde{\varphi}$ and $\widetilde{\varphi}_n$ instead of the solutions in $(\Omega, \mathcal{F}, \mathbb{P})$
 with initial data $\varphi$ and $\varphi_n$.
 For convenience, denote by
     $\hat{u}_{n}(t) = u(t;\tau, \widetilde{\varphi}_n)$
 and $\hat{u}(t) = u(t; \tau, \widetilde{\varphi})$.

{\bf Step 1:} Prove the  estimates:
 for all $t \in [\tau,\tau+T]$,
 \begin{align}\label{jan23a}
          \widehat{\mathbb{E}} \left[ \|\hat{u}_n(t) - \hat{u}(t)\|^2 \right]   
\le   \left( \widetilde{\mathbb{E}} 
                             \left[ \|\widetilde{\varphi}_n(0) - \widetilde{\varphi}(0)\|^2 \right]
                    + \widetilde{C} \widetilde{\mathbb{E}}
                             \left[ \int_{-\rho}^{0}
                                           \|\widetilde{\varphi}_n(s) - \widetilde{\varphi}(s)\|^2 ds
                             \right]
            \right) e^{\widetilde{C}(t-\tau)} ,
 \end{align}
  where   $\widetilde{C}
 =\widetilde{C} (p,q, L)>0$ is a constant, 
 $\widehat{\mathbb{E}}$ is the mathematical expectation on 
 the product probability space
 $( \Omega \times \widetilde{\Omega},
    \mathcal{F} \times \widetilde{\mathcal{F}},
    \mathbb{P} \times \widetilde{\mathbb{P}}
  )$,
 and $\widetilde{\mathbb{E}}$ is the mathematical expectation on probability space
 $(\widetilde{\Omega}, \widetilde{\mathcal{F}}, \widetilde{\mathbb{P}})$.
By It\^o's formula and \eqref{lmvlds-2}, we obtain that for all $t \in [\tau,\tau+T]$,
 \begin{align}\label{consys-1}
        & \widehat{\mathbb{E}}
             \left[\|\hat{u}_n(t) - \hat{u}(t)\|^2 \right]
           + 2 \widehat{\mathbb{E}}
               \left[ \int_{\tau}^{t}
                         \langle A \hat{u}_n(s) - A \hat{u}(s),
                                \hat{u}_n(s) - \hat{u}(s)
                         \rangle ds
               \right]  
          + 2 \lambda 
                     \widehat{\mathbb{E}} 
                                           \left[ \int_{\tau}^{t} \|\hat{u}_n(s) - \hat{u}(s)\|^2 ds \right]  \nonumber\\
      & + 2 \widehat{\mathbb{E}}
            \left[ \int_{\tau}^{t}
                      \langle f(\hat{u}_n(s), \mathcal{L}_{\hat{u}_n(s)}) - f(\hat{u}(s), \mathcal{L}_{\hat{u}(s)}),
                              \hat{u}_n(s) - \hat{u}(s)
                      \rangle ds
            \right]  \nonumber\\
 \le & \widehat{\mathbb{E}} \left[ \|\widetilde{\varphi}_n(0) - \widetilde{\varphi}(0)\|^2 \right] \nonumber\\
      & + 2 \widehat{\mathbb{E}}
            \left[ \int_{\tau}^{t}
                      \langle F(\hat{u}_n(s-\rho), \mathcal{L}_{\hat{u}_n(s-\rho)}) - F(\hat{u}(s-\rho), \mathcal{L}_{\hat{u}(s-\rho)}),
                              \hat{u}_n(s) - \hat{u}(s)
                      \rangle ds
            \right]  \nonumber\\
      & + \widehat{\mathbb{E}}
             \left[ \int_{\tau}^{t}
                       \sum_{k \in \mathbb{N} }
                             \| \sigma_k(\hat{u}_n(s),\mathcal{L}_{\hat{u}_n(s)}) - \sigma_k(\hat{u}(s),\mathcal{L}_{\hat{u}(s)})
                             \|^2 ds
             \right]  \nonumber\\
      & + \widehat{\mathbb{E}}
             \left[ \int_{\tau}^{t} 
                          \int_{
                          y\in\mathbb{Y}
                          }
                             \| \widetilde{\sigma}(\hat{u}_n(s),\mathcal{L}_{\hat{u}_n(s)},y)
                                - \widetilde{\sigma}(\hat{u}(s),\mathcal{L}_{\hat{u}(s)},y)
                             \|^2
                          \nu(dy) ds
             \right].
 \end{align}

 For the fourth term on the left-hand side of \eqref{consys-1},
 by \eqref{f3}, we have
 \begin{align}\label{consys-2}
        & 2 \widehat{\mathbb{E}}
            \left[ \int_{\tau}^{t}
                      \langle f(\hat{u}_n(s), \mathcal{L}_{\hat{u}_n(s)}) - f(\hat{u}(s), \mathcal{L}_{\hat{u}(s)}),
                              \hat{u}_n(s) - \hat{u}(s)
                      \rangle ds
            \right]  \nonumber\\
   \ge & \lambda_2 \widehat{\mathbb{E}}
             \left[ \int_{\tau}^{t}
                       \sum_{i \in \mathbb{Z}}
                         \left( |\hat{u}_{n,i}(s)|^{p-2} + |\hat{u}_{i}(s)|^{p-2}
                         \right) |\hat{u}_{n,i}(s) - \hat{u}_i(s)|^2 ds
             \right]  \nonumber\\
        & - 2 ( \|\phi_4\|_{\ell^\infty} + \|\phi_4\|_1)
            \int_{\tau}^{t}
               \widehat{\mathbb{E}} \left[ \|\hat{u}_{n}(s) - \hat{u}(s)\|^2 \right] ds.
 \end{align}

 For the second term on the right-hand side of \eqref{consys-1},
 by \eqref{F2}, we have
 \begin{align}\label{consys-3}
        & 2 \widehat{\mathbb{E}}
            \left[ \int_{\tau}^{t}
                      \langle F(\hat{u}_n(s-\rho), \mathcal{L}_{\hat{u}_n(s-\rho)}) - F(\hat{u}(s-\rho), \mathcal{L}_{\hat{u}(s-\rho)}),
                          \hat{u}_n(s) - \hat{u}(s)
                      \rangle ds
            \right]  \nonumber\\
   \le & 2 \left( \|L_{F}\|_{\ell^\infty}^2 + \|L_{F}\|^2 \right)
            \widehat{\mathbb{E}}
                     \left[ \int_{-\rho}^{0}
                                  \| \widetilde{ \varphi}_n(s) - \widetilde{\varphi}(s) \|^2 ds
                     \right]  \nonumber\\
        & + \left( 2 \|L_{F}\|_{\ell^\infty}^2 + 2 \|L_{F}\|^2 + 1 \right)
             \int_{\tau}^{t}
                   \widehat{\mathbb{E}} \left[ \| \hat{u}_n(s) - \hat{u}(s) \|^2 \right] ds.
 \end{align}

  For the last two terms on the right-hand side of \eqref{consys-1},
  by \eqref{sigma4} with $\tilde{\theta} = 1$, we obtain
 \begin{align}\label{consys-4}
        & \widehat{\mathbb{E}}
             \left[ \int_{\tau}^{t}
                       \sum_{k \in \mathbb{N} }
                             \| \sigma_k(\hat{u}_n(s),\mathcal{L}_{\hat{u}_n(s)}) - \sigma_k(\hat{u}(s),\mathcal{L}_{\hat{u}(s)})\|^2 ds
             \right]  \nonumber\\
        & + \widehat{\mathbb{E}}
               \left[ \int_{\tau}^{t}  \int_{
               y\in\mathbb{Y}
               }
                               \| \widetilde{\sigma}(\hat{u}_n(s),\mathcal{L}_{\hat{u}_n(s)},y)
                                   - \widetilde{\sigma}(\hat{u}(s),\mathcal{L}_{\hat{u}(s)},y)
                               \|^2 \nu(dy) ds
               \right]  \nonumber\\
      \le & \frac{\lambda_2}{4}
             \widehat{\mathbb{E}}
                \left[\int_{\tau}^{t} \sum_{i \in \mathbb{Z}}
                            \left( |\hat{u}_{n,i}(s)|^{p-2} + |\hat{u}_i(s)|^{p-2}
                            \right)
                            |\hat{u}_{n,i}(s) - \hat{u}_i(s)|^2 ds
                \right]  \nonumber\\
        & + \left(  C_1
             + 2 \| L \|^2
            \right)
            \widehat{\mathbb{E}}
                        \left[ \int_{\tau}^{t}
                                     \|\hat{u}_n(s) - \hat{u}(s)\|^2 ds
                        \right],
 \end{align}
 where $C_1 = C_1(p, q, \|L\|)>0$
 is a constant.
 Then it follows from \eqref{consys-1}-\eqref{consys-4} that
 \begin{align}\label{consys-5}
        & \widehat{\mathbb{E}} \left[ \|\hat{u}_n(t) - \hat{u}(t)\|^2 \right]
          + 2^{2-p} \widehat{\mathbb{E}}
              \left[ \int_{\tau}^{t}
                        \| B (\hat{u}_n(s) -  \hat{u}(s) ) \|_p^p ds
              \right]
          + 2 \lambda \widehat{\mathbb{E}}
              \left[ \int_{\tau}^{t} \|\hat{u}_n(s) - \hat{u}(s)\|^2 ds
              \right]  \nonumber\\
        & + \frac{3\lambda_2}{4} \widehat{\mathbb{E}}
             \left[ \int_{\tau}^{t}
                       \sum_{i \in \mathbb{Z}}
                         \left( |\hat{u}_{n,i}(s)|^{p-2} + |\hat{u}_{i}(s)|^{p-2}
                         \right) |\hat{u}_{n,i}(s) - \hat{u}_{n,i}(s)|^2 ds
             \right]  \nonumber\\
   \le & \widehat{\mathbb{E}} \left[ \|\widetilde{\varphi}_n(0) - \widetilde{\varphi}(0)\|^2 \right]
          + 2 \left( \|L_{F}\|_{\ell^\infty}^2 + \|L_{F}\|^2 \right)
            \widehat{\mathbb{E}}
                     \left[ \int_{-\rho}^{0}
                                  \| \widetilde{ \varphi}_n(s) - \widetilde{\varphi}(s) \|^2 ds
                     \right]   \nonumber\\
        & + \widetilde{C} \int_{\tau}^{t}
                                \widehat{\mathbb{E}} \left[ \|\hat{u}_{n}(s) - \hat{u}(s)\|^2 \right] ds,
 \end{align}
 where
      $\widetilde{C} = 2 \|\phi_4\|_{\ell^\infty} + 2 \|\phi_4\|_1
                                   + 2 \|L_{F}\|_{\ell^\infty}^2 + 2 \|L_{F}\|^2 + 1
                                   + C_1 + 2 \| L \|^2
      $.
Then \eqref{jan23a}
follows from 
  \eqref{consys-5} and Gronwall's inequality
  immediately.

 {\bf Step 2:}  Prove   the inequality:
 \begin{align}\label{jan23b}
       \widehat{\mathbb{E}}
             \left[ \sup_{t \in [\tau, \tau +T]}
                            \| \hat{u}_n(t) - \hat{u}(t) \|^2 
             \right]
  \le C_T
              \widetilde{\mathbb{E}} 
                   \left[ \| \widetilde{\varphi}_n(0) - \widetilde{\varphi}(0) \|^2  
                            + \| \widetilde{\varphi}_n - \widetilde{\varphi} \|^2_{L^2(-\rho, 0; \ell^2)}     
                  \right],
 \end{align} 
 where  $C_T > 0$
 is a constant depending on $T$ but not on $n$ or $\rho \in (0, 1]$.
 
  For any $K \in \mathbb{N}$, define 
  a  stopping time by:
    $\tau_{K,n}
     = \inf
           \left\{ r \ge \tau:
                     \| (\hat{u}_n)_{r} (\cdot)\|_{\infty}
                     \vee
                     \|\hat{u}_{r}(\cdot)\|_{\infty}
                     > K
           \right\}
       \wedge (\tau + T)$.
 Then for each fixed  $n \in \mathbb{N}$, $\lim_{K \to +\infty} \tau_{K,n} = \tau + T$,  $\widetilde{\mathbb{P}}$-almost surely.
 
 By \eqref{lmvlds-2} and It\^o's formula, we have
  \begin{align}\label{consys-6}
         & \widehat{\mathbb{E}}
           \bigg[ \sup_{s \in [\tau,t]}
                  \Big( \| \hat{u}_n(s \wedge \tau_{K,n}) - \hat{u}(s \wedge \tau_{K,n}) \|^2
                         + 2^{2-p} \int_{\tau}^{s \wedge \tau_{K,n}}
                                       \| B (\hat{u}_n(r) - \hat{u}(r)) \|_p^p dr \nonumber\\
         & \qquad\qquad \
                         + 2 \lambda \int_{\tau}^{s \wedge \tau_{K,n}} \|\hat{u}_n(r) - \hat{u}(r)\|^2 dr  \nonumber\\
         & \qquad\qquad \
                         + 2 \int_{\tau}^{s \wedge \tau_{K,n}}
                                   \langle f(\hat{u}_n(r), \mathcal{L}_{\hat{u}_n(r)})
                                            - f(\hat{u}(r), \mathcal{L}_{\hat{u}(r)}),
                                           \hat{u}_n(r) - \hat{u}(r)
                                   \rangle dr
                   \Big)
            \bigg] \nonumber\\
     \le & \widehat{\mathbb{E}}[\|\widetilde{\varphi}_n(0) - \widetilde{\varphi}(0)\|^2] \nonumber\\
          & + 2 \widehat{\mathbb{E}}
                \left[ \sup_{s \in [\tau,t]}
                       \int_{\tau}^{s \wedge \tau_{K,n}}
                          \langle F(\hat{u}_n(r-\rho), \mathcal{L}_{\hat{u}_n(r-\rho)})
                                  - F(\hat{u}(r-\rho), \mathcal{L}_{\hat{u}(r-\rho)}),
                                  \hat{u}_n(r) - \hat{u}(r)
                          \rangle dr
                \right] \nonumber\\
          & + 2 \widehat{\mathbb{E}}
                   \left[ \sup_{s \in [\tau,t]}
                          \Big| \int_{\tau}^{s \wedge \tau_{K,n}}
                                   \sum_{k \in \mathbb{N} }
                   \sqrt{\varepsilon}
                    \left\langle \sigma_k(\hat{u}_n(r),\mathcal{L}_{\hat{u}_n(r)}) - \sigma_k(\hat{u}(r),\mathcal{L}_{\hat{u}(r)}),
                                                   \hat{u}_n(r) - \hat{u}(r)
                                      \right\rangle dW_k(r)
                          \Big|
                   \right]  \nonumber\\
          & + \widehat{\mathbb{E}}
                 \left[ \int_{\tau}^{t \wedge \tau_{K,n}}
                           \sum_{k \in \mathbb{N} }
                              \|\sigma_k(\hat{u}_n(r),\mathcal{L}_{\hat{u}_n(r)}) - \sigma_k(\hat{u}(r),\mathcal{L}_{\hat{u}(r)})\|^2 dr
                 \right]  \nonumber\\
          & + 2 \widehat{\mathbb{E}}
                 \Big[ \sup_{s \in [\tau,t]}
                           \int_{\tau}^{s \wedge \tau_{K,n}}  \int_{            y\in\mathbb{Y}}
                               \sqrt{\varepsilon}  \big\langle \widetilde{\sigma}(\hat{u}_n(r-),\mathcal{L}_{\hat{u}_n(r)},y)
                                              - \widetilde{\sigma}(\hat{u}(r-),\mathcal{L}_{\hat{u}(r)},y),
                                                   \nonumber\\
          & \qquad\qquad\qquad\qquad\qquad\qquad\qquad
                                              \hat{u}_n(r-) - \hat{u}(r-)
                                 \big\rangle  \widetilde{N}(dr,dy)
                 \Big]  \nonumber\\
          & + \widehat{\mathbb{E}}
                 \left[ \int_{\tau}^{t \wedge \tau_{K,n}}  \int_{y\in\mathbb{Y}
                           }
                              \| \widetilde{\sigma}(\hat{u}_n(r-),\mathcal{L}_{\hat{u}_n(r)},y)
                                 - \widetilde{\sigma}(\hat{u}(r-),\mathcal{L}_{\hat{u}(r)},y)
                              \|^2 N(dr,dy)
                 \right].
  \end{align}
 
   For the fourth term on the left-hand side of \eqref{consys-6},
   by \eqref{f3}, we have
  \begin{align}\label{consys-7}
         & 2 \int_{\tau}^{s \wedge \tau_{K,n}}
                   \langle f(\hat{u}_n(r), \mathcal{L}_{\hat{u}_n(r)})
                           - f(\hat{u}(r), \mathcal{L}_{\hat{u}(r)}),
                           \hat{u}_n(r) - \hat{u}(r)
                   \rangle dr  \nonumber\\
       \ge & 
       2 \lambda_2 \int_{\tau}^{s \wedge \tau_{K,n}}
                      \sum_{i \in \mathbb{Z}}
                         \left( |\hat{u}_{n,i}(r)|^{p-2} + |\hat{u}_{i}(r)|^{p-2}
                         \right)
                         \left( \hat{u}_{n,i}(r) - \hat{u}_{n,i}(r) \right)^2 dr  \nonumber\\
         & - 2 \|\phi_4\|_{\ell^\infty} \int_{\tau}^{s \wedge \tau_{K,n}}
                       \|\hat{u}_{n}(r) - \hat{u}(r)\|^2 dr - 2 \|\phi_4\|_1
                       \int_{\tau}^{s} \widehat{\mathbb{E}}
                             \left[ \|\hat{u}_{n}(r) - \hat{u}(r)\|^2 \right] dr.
  \end{align}
 
  For the second term on the right-hand side of \eqref{consys-6},
  by \eqref{F2} and Young's inequality, we get
  \begin{align}\label{consys-8}
       & 2 \widehat{\mathbb{E}}
                \left[ \sup_{s \in [\tau,t]}
                          \int_{\tau}^{s \wedge \tau_{K,n}}
                              \langle F(\hat{u}_n(r-\rho), \mathcal{L}_{\hat{u}_n(r-\rho)})
                                      - F(\hat{u}(r-\rho), \mathcal{L}_{\hat{u}(r-\rho)}),
                                      \hat{u}_n(r) - \hat{u}(r)
                              \rangle dr
                \right] \nonumber\\
 \le & \widehat{\mathbb{E}}
               \left[ \int_{\tau}^{t \wedge \tau_{K,n}}
                            \| F(\hat{u}_n(r-\rho), \mathcal{L}_{\hat{u}_n(r-\rho)})
                               - F(\hat{u}(r-\rho), \mathcal{L}_{\hat{u}(r-\rho)})\|^2 dr
               \right]  \nonumber\\
      & + \widehat{\mathbb{E}}
                 \left[ \int_{\tau}^{t \wedge \tau_{K,n}}
                              \| \hat{u}_n(r) - \hat{u}(r) \|^2 dr
                 \right]  \nonumber\\
 \le & 2 \|L_F\|_{\ell^\infty}^2 \widehat{\mathbb{E}}
              \left[ \int_{\tau}^{t \wedge \tau_{K,n}}
                            \|\hat{u}_n(r-\rho) - \hat{u}(r-\rho)\|^2 dr
              \right]  \nonumber\\
      & + 2 \|L_F\|^2 \widehat{\mathbb{E}}
                \left[ \int_{\tau}^{t \wedge \tau_{K,n}}
                             \mathbb{W}_2^2(\mathcal{L}_{\hat{u}_n(r-\rho)},\mathcal{L}_{\hat{u}(r-\rho)} )dr
                \right]
          + \widehat{\mathbb{E}}
                 \left[ \int_{\tau}^{t \wedge \tau_{K,n}}
                              \| \hat{u}_n(r) - \hat{u}(r) \|^2 dr
                 \right]  \nonumber\\
 \le & 2 \left( \|L_F\|_{\ell^\infty}^2 + \|L_F\|^2 \right)
              \widehat{\mathbb{E}}
              \left[ \int_{-\rho}^{0}
                           \|\widetilde{\varphi}_n(r) - \widetilde{\varphi}(r)\|^2 dr
              \right]  \nonumber\\
      & + \left( 2 \|L_F\|_{\ell^\infty}^2  
          + 1 \right)
              \widehat{\mathbb{E}}
                    \left[ \int_{\tau}^{t \wedge \tau_{K,n}}
                              \| \hat{u}_n(r) - \hat{u}(r) \|^2 dr 
                    \right]  \nonumber\\
      & + 2 \|L_F\|^2  
                \int_{\tau}^{t  }
                    \widehat{\mathbb{E}}
                         \left[ \| \hat{u}_n(r) - \hat{u}(r) \|^2 \right] dr.
    \end{align}
 
  For the third and fourth terms on the right-hand side of \eqref{consys-6},
  by 
  the BDG inequality and \eqref{sigma4}, we obtain
  \begin{align}\label{consys-9}
         & 2 \widehat{\mathbb{E}}
                   \left[ \sup_{s \in [\tau,t]}
                          \Big| \int_{\tau}^{s \wedge \tau_{K,n}}
                                   \sum_{k \in \mathbb{N} }
                                      \left\langle \sigma_k(\hat{u}_n(r),\mathcal{L}_{\hat{u}_n(r)}) - \sigma_k(\hat{u}(r),\mathcal{L}_{\hat{u}(r)}),
                                                   \hat{u}_n(r) - \hat{u}(r)
                                      \right\rangle dW_k(r)
                          \Big|
                   \right]  \nonumber\\
          & + \widehat{\mathbb{E}}
                 \left[ \int_{\tau}^{t \wedge \tau_{K,n}}
                           \sum_{k \in \mathbb{N} }
                              \|\sigma_k(\hat{u}_n(r),\mathcal{L}_{\hat{u}_n(r)}) - \sigma_k(\hat{u}(r),\mathcal{L}_{\hat{u}(r)})\|^2 dr
                 \right]  \nonumber\\
     \le & \frac{1}{8} \widehat{\mathbb{E}}
                        \left[ \sup_{r \in [\tau, t]}
                                   \| \hat{u}_n(r \wedge \tau_{K,n}) - \hat{u}(r \wedge \tau_{K,n}) \|^2
                        \right]  \nonumber\\
          & + \left( 1 + 8 c_1^2 \right)
              \widehat{\mathbb{E}}
                    \left[ \int_{\tau}^{t \wedge \tau_{K,n}}
                              \sum_{k \in \mathbb{N} }
                                    \| \sigma_k(\hat{u}_n(r),\mathcal{L}_{\hat{u}_n(r)}) - \sigma_k(\hat{u}(r),\mathcal{L}_{\hat{u}(r)})\|^2 dr
                    \right]  \nonumber\\
     \le & \frac{1}{8} \widehat{\mathbb{E}}
                        \left[ \sup_{r \in [\tau, t]}
                                   \| \hat{u}_n(r \wedge \tau_{K,n}) - \hat{u}(r \wedge \tau_{K,n}) \|^2
                        \right]  \nonumber\\
          & + \frac{\lambda_2}{4} \widehat{\mathbb{E}}
                  \left[ \int_{\tau}^{t \wedge \tau_{K,n}}
                            \sum_{i \in \mathbb{Z}}
                                \Big( |\hat{u}_{n,i}(s)|^{p-2} + |\hat{u}_i(s)|^{p-2} \Big)
                                | \hat{u}_{n,i}(s) - \hat{u}_i(s) |^2 ds
                  \right]  \nonumber\\
          & + C_7 
                 \widehat{\mathbb{E}}
                    \left[ \int_{\tau}^{t \wedge \tau_{K,n}}
                                 \|\hat{u}_n(s) - \hat{u}(s)\|^2 ds
                    \right]  \nonumber\\
          & + 2 \left( 1 + 8 c_1^2 \right) \| L \|^2
                \int_{\tau}^{t} \mathbb{W}_2^2(\mathcal{L}_{\hat{u}_n(s)},\mathcal{L}_{\hat{u}(s)}) ds,
 \end{align}
 where $C_7 = C_7(p, q, \|L\|) > 0$ is a constant.
 
  For the fifth and sixth terms on the right-hand side of \eqref{consys-6},
  by the
  BDG inequality and \eqref{sigma4},
  we obtain
  \begin{align}\label{consys-10}
         & 2 \widehat{\mathbb{E}}
                 \bigg[ \sup_{s \in [\tau,t]}
                           \int_{\tau}^{s \wedge \tau_{K,n}} 
                               \int_{ {y \in Y} }
                                   \sqrt{\varepsilon}  
                                        \big\langle   
                                            \widetilde{\sigma}(\hat{u}_n(r-), \mathcal{L}_{\hat{u}_n(r)},y)
                                             - \widetilde{\sigma}(\hat{u}(r-), \mathcal{L}_{\hat{u}(r)},y),
                                                   \nonumber\\
          & \qquad\qquad\qquad\qquad\qquad\qquad\
                                              \hat{u}_n(r-) - \hat{u}(r-)
                                           \big\rangle \widetilde{N}(dr,dy)
                 \bigg]  \nonumber\\
          & + \widehat{\mathbb{E}}
                 \left[ \int_{\tau}^{t \wedge \tau_{K,n}} 
                              \int_{y \in \mathbb{Y} }
                                  \| \widetilde{\sigma}(\hat{u}_n(r-), \mathcal{L}_{\hat{u}_n(r)}, y)
                                     - \widetilde{\sigma}(\hat{u}(r-), \mathcal{L}_{\hat{u}(r)}, y)
                                  \|^2 N(dr,dy)
                 \right]  \nonumber\\
    \le & 2 c_2 \widehat{\mathbb{E}}
               \bigg[ \int_{\tau}^{t\wedge\tau_{K,n}}
                            \int_{y \in \mathbb{Y} }
                                \| \widetilde{\sigma}(\hat{u}_n(r-), \mathcal{L}_{\hat{u}_n(r)}, y)
                                    - \widetilde{\sigma}(\hat{u}(r-), \mathcal{L}_{\hat{u}(r)}, y)
                                \|^2 \nonumber\\
          & \qquad\qquad\qquad \ \ \ \ \ \ \ \
                     \| \hat{u}_n(r-) - \hat{u}(r-) \|^2 N(dr, dy)  
                \bigg]^{\frac{1}{2}}  \nonumber\\
          & + \widehat{\mathbb{E}}
                 \left[ \int_{\tau}^{t \wedge \tau_{K,n}} 
                              \int_{y \in \mathbb{Y} }
                                   \| \widetilde{\sigma}(\hat{u}_n(r-), \mathcal{L}_{\hat{u}_n(r)}, y)
                                       - \widetilde{\sigma}(\hat{u}(r-), \mathcal{L}_{\hat{u}(r)}, y)
                                   \|^2 \nu(dy) dr
                 \right]  \nonumber\\
     \le & \frac{1}{8} \widehat{\mathbb{E}}
                        \left[ \sup_{r \in [\tau, t]}
                                   \| \hat{u}_n(r \wedge \tau_{K,n}) - \hat{u}(r \wedge \tau_{K,n}) \|^2
                        \right]  \nonumber\\
             & + \left(1 + 8 c_2^2 \right) \widehat{\mathbb{E}}
                         \left[ \int_{\tau}^{t\wedge\tau_{K,n}}  
                                      \int_{y \in \mathbb{Y} }
                                          \| \widetilde{\sigma}(\hat{u}_n(r), \mathcal{L}_{\hat{u}_n(r)}, y)
                                             - \widetilde{\sigma}(\hat{u}(r), \mathcal{L}_{\hat{u}(r)}, y)
                                          \|^2
                                   \nu(dy)dr
                         \right]  \nonumber\\
     \le & \frac{1}{8} \widehat{\mathbb{E}}
                        \left[ \sup_{r \in [\tau, t]}
                                   \| \hat{u}_n(r \wedge \tau_{K,n}) - \hat{u}(r \wedge \tau_{K,n}) \|^2
                        \right]  \nonumber\\
          & + \frac{\lambda_2}{4} \widehat{\mathbb{E}}
                  \left[ \int_{\tau}^{t \wedge \tau_{K,n}}
                            \sum_{i \in \mathbb{Z}}
                                \Big( |\hat{u}_{n,i}(s)|^{p-2} + |\hat{u}_i(s)|^{p-2} \Big)
                                | \hat{u}_{n,i}(s) - \hat{u}_i(s) |^2 ds
                  \right]  \nonumber\\
          & + C_8
                  \widehat{\mathbb{E}}
                       \left[ \int_{\tau}^{t \wedge \tau_{K,n}}
                                    \|\hat{u}_n(s) - \hat{u}(s)\|^2 ds
                       \right]  
             + 2 \left( 1 + 8 c_2^2 \right) \| L \|^2
                 \int_{\tau}^{t} \mathbb{W}_2^2(\mathcal{L}_{\hat{u}_n(s)},\mathcal{L}_{\hat{u}(s)}) ds.
  \end{align}
 
  From \eqref{consys-6}-\eqref{consys-10}, it follows
  \begin{align}\label{consys-11}
         & \widehat{\mathbb{E}}
              \Big[ \sup_{s \in [\tau, t]}
                    \Big( \|\hat{u}_n(s \wedge \tau_{K,n}) - \hat{u}(s \wedge \tau_{K,n})\|^2 \nonumber\\
         &          +  2\lambda_2 \int_{\tau}^{s \wedge \tau_{K,n}}
                         \sum_{i \in \mathbb{Z}}
                            \Big( |\hat{u}_{n,i}(r)|^{p-2} + |\hat{u}_{i}(r)|^{p-2} \Big) |\hat{u}_{n,i}(r) - \hat{u}_{n,i}(r)|^2 dr \Big)
              \Big]  \nonumber\\
    \le & \frac{1}{4} \widehat{\mathbb{E}}
              \left[ \sup_{r \in [\tau, t]}
                          \| \hat{u}_n(r \wedge \tau_{K,n}) - \hat{u}(r \wedge \tau_{K,n}) \|^2
              \right]  \nonumber\\
         & + \widehat{\mathbb{E}}[\| \widetilde{\varphi}_n(0) - \widetilde{\varphi}(0)\|^2]
           + 2 \left( \|L_F\|_{\ell^\infty}^2 + \|L_F\|^2 \right)
               \widehat{\mathbb{E}}
                  \left[ \int_{-\rho}^{0}
                            \|{\widetilde{\varphi}}_n(s) - \widetilde{\varphi}(s)\|^2 ds
                  \right]  \nonumber\\
         & + \left( 2 \|\phi_4\|_{\ell^\infty}
                    + 2 \|L_F\|_{\ell^\infty}^2 + 1
                    +C_7
                    +C_8 
             \right)
             \widehat{\mathbb{E}}
                      \left[ \int_{\tau}^{t \wedge \tau_{K,n}}
                                \|\hat{u}_{n}(s) - \hat{u}(s)\|^2 ds
                      \right]  \nonumber\\
         & + \left[ 2 \|\phi_4\|_1
                    + 2 \|L_F\|^2
                    + 4 \| L \|^2 + 16 \left( c_1^2 + c_2^2 \right) \| L \|^2
             \right]
             \int_{\tau}^{t} \widehat{\mathbb{E}} [\|\hat{u}_{n}(s) - \hat{u}(s)\|^2] ds.  \nonumber\\
         & + \frac{\lambda_2}{2} \widehat{\mathbb{E}}
                \left[\int_{\tau}^{t \wedge \tau_{K,n}} \sum_{i \in \mathbb{Z}}
                         \Big( |\hat{u}_{n,i}(s)|^{p-2} + |\hat{u}_i(s)|^{p-2} \Big)
                         |\hat{u}_{n,i}(s) - \hat{u}_i(s)|^2 ds
                \right].
  \end{align}
  Then by \eqref{consys-11}, we obtain
  \begin{align}\label{consys-12}
         & \frac{1}{2} \widehat{\mathbb{E}}
              \left[ \sup_{s \in [\tau, t]}
                        \|\hat{u}_n(s \wedge \tau_{K,n}) - \hat{u}(s \wedge \tau_{K,n})\|^2
              \right]  \nonumber\\
    \le & 2 \widehat{\mathbb{E}} \left[ \| {\widetilde{\varphi}}_n(0) - \widetilde{\varphi}(0)\|^2 \right]
           + 4 \left( \|L_{F}\|_{\ell^\infty}^2 + \|L_F\|^2 \right)
               \widehat{\mathbb{E}}
                  \left[ \int_{-\rho}^{0}
                               \|{\widetilde{\varphi}}_n(s) - \widetilde{\varphi}(s)\|^2 ds
                  \right]  \nonumber\\
         & + 2 \left( 2 \|\phi_4\|_{\ell^\infty}
                      + 2 \|L_{F}\|_{\ell^\infty}^2 + 1
                      +C_7
                      +C_8
               \right)
             \widehat{\mathbb{E}}
                      \left[ \int_{\tau}^{t \wedge \tau_{K,n}}
                                \|\hat{u}_{n}(s) - \hat{u}(s)\|^2 ds
                      \right]  \nonumber\\
         & + \left[ 4 \|\phi_4\|_1
                    + 4 \|L_F\|^2
                    + 8 \| L\|^2 + 32 \left( c_1^2 + c_2^2 \right) \| L \|^2
             \right]
             \int_{\tau}^{t} \widehat{\mathbb{E}}
                   \left[ \|\hat{u}_{n}(s) - \hat{u}(s)\|^2 \right] ds.
 \end{align}
 Let
     $ \hat{C} = 2 \left( 2 \|\phi_4\|_{\ell^\infty}
                                     + 2 \|L_{F}\|_{\ell^\infty}^2 + 1
                                     + C_7 + C_8
                            \right)
                         + 4 \|\phi_4\|_1 + 4 \|L_F\|^2
                         + 8 \| L \|^2 + 32 \left( c_1^2 + c_2^2 \right) \| L \|^2
     $.
 Then by \eqref{jan23a} and Gronwall's inequality,
 we get from  \eqref{consys-12} that for all $ t\in [\tau, \tau+T]$,
 \begin{align*}
       & \widehat{\mathbb{E}}
              \left[ \sup_{s \in [\tau, t]}
                           \|\hat{u}_n(s \wedge \tau_{K,n}) - \hat{u}(s \wedge \tau_{K,n})\|^2
              \right]  \nonumber\\
 \le & \bigg( 4 \widetilde{\mathbb{E}} 
                                \left[ \|{\widetilde{\varphi}}_n(0) - \widetilde{\varphi}(0)\|^2 \right]
                  + 2 \hat{C} \widetilde{\mathbb{E}}
                                          \left[ \int_{-\rho}^{0}
                                                        \|{\widetilde{\varphi}}_n(s) - \widetilde{\varphi}(s)\|^2 ds
                                         \right]  \nonumber\\
      & \  \
                 + 2 \hat{C} \int_{\tau}^{\tau + T}
                    \widehat{\mathbb{E}}
                         \left[ \|\hat{u}_n(r) - \hat{u}(r)\|^2 \right] dr
           \bigg) e^{2 \hat{C}(t-\tau)} \nonumber\\
 \le & \left[ 4 + 2 \hat{C} 
                       + 2 \widehat{C} T(1 + \widetilde{C} ) e^{\widetilde{C} T}
           \right] 
           e^{2 \hat{C}(t-\tau)}
             \widetilde{\mathbb{E}} 
                  \left[ \| {\widetilde{\varphi}}_n(0) - \widetilde{\varphi}(0) \|^2  
                            + \| {\widetilde{\varphi}}_n - \widetilde{\varphi} \|^2_{L^2(-\rho, 0; \ell^2)}     
                  \right].
 \end{align*}
 Then letting $K \to +\infty$, we obtain that for all $t \in [\tau, \tau+T]$,
 \begin{align*}
      & \widehat{\mathbb{E}}
             \left[ \sup_{s \in [\tau, t]}
                            \| \hat{u}_n(s) - \hat{u}(s) \|^2
             \right] \\
\le & \left[ 4 + 2 \hat{C} 
                     + 2 \widehat{C} T(1 + \widetilde{C} ) e^{\widetilde{C} T}
           \right] 
           e^{2 \hat{C}T}
           \widetilde{\mathbb{E}} 
                \left[ \| {\widetilde{\varphi}}_n(0) - \widetilde{\varphi}(0) \|^2  
                         + \| {\widetilde{\varphi}}_n - \widetilde{\varphi} \|^2_{L^2(-\rho, 0; \ell^2)}     
                \right],
 \end{align*}
 which implies 
 \eqref{jan23b} .

 {\bf Step 3:} Prove that $ (\hat{u}_{n})_t(\cdot) \to \hat{u}_{t}(\cdot) $ in distribution in $(D_{\rho}, d^0)$ for 
 all $ t \in [\tau+ {{} \rho}, \tau+T]$ with
  {{}$T > \rho$.}
  
 By assumption, we know that
 $ {\widetilde{\varphi} }_n \to  \widetilde{\varphi}$ in $(D_\rho, d^0)$ 
 $\widetilde{\mathbb{P}}$-almost surely,
 and hence we have,
 $\widetilde{\mathbb{P}}$-almost surely,
 $$ \sup_{n \in \mathbb{N}}
            \sup_{t \in [-\rho,0]}
                 \| {\widetilde{\varphi}}_n (t) \|
      + \sup_{t\in [-\rho,0]}
              \| \widetilde{\varphi }(t) \|
     < \infty,
$$
\begin{align} \label{jan23d}
 \lim_{n \to \infty}
     \| {\widetilde{\varphi} }_n  (0) - \widetilde{\varphi} (0)\|^2 = 0,
\end{align}
and
\begin{align} \label{jan23e}
 \lim_{n \to \infty}
      \| {\widetilde{\varphi}}_n - \widetilde{\varphi} \|^2_{L^2(-\rho, 0; \ell^2)}
  = 0.
\end{align}

On the other hand,
since  
$ \widetilde{\mathbb{E}} 
          \left[ \| {\widetilde{\varphi}} \|^\theta_\infty
          \right] 
     \le R$ and
 $ \widetilde{\mathbb{E}} 
          \left[ \| {\widetilde{\varphi}}_n\|^\theta_\infty
          \right] 
     \le R$
for all $n \in \mathbb{N}$,
we find that
\begin{align} \label{jan23ga}
 \widetilde{\mathbb{E}} 
       \left[ \|{\widetilde{\varphi}}_n(0) - \widetilde{\varphi}(0)\|^\theta \right]
  \le  2^\theta R, \quad 
               \forall \ n \in \mathbb{N},
\end{align}
and 
\begin{align} \label{jan23gb}
 \widetilde{\mathbb{E}} 
       \left[ \| {\widetilde{\varphi}}_n 
                    - \widetilde{\varphi}\|^\theta_{L^2(-\rho, 0; \ell^2)}  
       \right]
 \le 2^\theta  \rho^{\frac \theta{2}} R, \quad 
               \forall \ n\in \mathbb{N}.
\end{align}
Since  $\theta>2$, it follows from
\eqref{jan23ga}-\eqref{jan23gb}
that
the sequence 
$\{\|\widetilde{\varphi_n}(0) - \widetilde{\varphi}(0)\|^2\}_{n=1}^\infty$
and the sequence 
 $\{ \| \widetilde{\varphi_n} - \widetilde{\varphi} 
       \|^2_{L^2(-\rho, 0; \ell^2)}
   \}_{n=1}^\infty$
are uniformly integrable,
which along with \eqref{jan23d}-\eqref{jan23e}
and the  Vitali  theorem 
implies that   
\begin{align} \label{jan23f}
 \lim_{n \to \infty}
      \widetilde{\mathbb{E}} 
             \left[ \| \widetilde{\varphi}_n(0) - \widetilde{\varphi}(0) \|^2 \right]
   = 0,
\end{align}
 and 
\begin{align} \label{jan23g}
 \lim_{n \to \infty}
      \widetilde{\mathbb{E}} 
       \left[ \| {\widetilde{\varphi}}_n - \widetilde{\varphi} \|^2_{L^2(-\rho, 0; \ell^2)}     
      \right] 
  = 0.
\end{align}
 By
 \eqref{jan23b}
 and
 \eqref{jan23f}-\eqref{jan23g}  we obtain
 that  
\begin{align}\label{feb23a}
 \lim_{n\to \infty}
 \widehat{\mathbb{E}}
       \left[ \sup_{t \in [\tau, \tau +T]}
                            \| \hat{u}_n(t) - \hat{u}(t) \|^2
       \right]  
  = 0,
\end{align}
which  completes the proof.
  \end{proof}

\begin{remark} \label{contskorohod}
  By  \eqref{feb23a},
  we see that 
  if     initial data
  converge
  at initial time
  $\tau$  in
  the  Skorohod 
  space
  $(D([-\rho, 0], \ell^2), d^0)$
    $\mathbb{P}$-almost surely,
    then the second moment
    of the solutions
    of \eqref{lmvlds-2}
    in $\ell^2$
    converges 
   uniformly on the interval
   $[\tau, \tau +T]$.
   Consequently,
   if the equation has no time delay
   (i.e.,  $\rho = 0$), 
  then the cocycle $\{ \Phi(t,\tau) \}_{t \ge 0, \tau \in \mathbb{R}}$ associated with \eqref{lmvlds-2} is continuous on bounded subsets of $\mathcal{P}_{\theta}(\ell^2)$ 
  {\bf for  all $t \ge 0$}. 
  However, 
  if the equation has time delay
  (i.e.,  $\rho > 0$), 
  then by 
   Theorem \ref{consys},
   we find that
     the cocycle $\{ \Phi(t,\tau) \}_{t \ge 0, \tau \in \mathbb{R}}$ is continuous on bounded subsets of $\mathcal{P}_{\theta}
     (D([-\rho, 0], \ell^2), d^0)
     $ 
     {\bf only for $t \ge \rho$}, but not for $t<\rho$.

     This is due to the fact that
     the  Skorohod metric
     $d^0$ on
      $ D([-\rho, 0], \ell^2)$
      is quite different from
      the 
      uniform metric  $\|\cdot\|_\infty$
      on  $ D([-\rho, 0], \ell^2)$.
      In particular,
        the convergence in $
        (D([-\rho, 0], \ell^2), d^0)$ does not imply the convergence in 
        $(D([s, t], \ell^2), d^0)$ for $-\rho < s < t \le 0$,
        which is in sharp contrast
        with the uniform metric $\|\cdot\|_\infty$.
        The same issue also arises
        from the Feller property
        of the solutions, see   \cite{RRG2006SPA}
        where
        the authors prove the
        Feller property
        only for $t\ge  \rho$
         for
         the  distribution independent delay   equations driven by L\'{e}vy processes.
 \end{remark}

 \subsection{Existence of pullback absorbing sets}

 This section is devoted to the existence of pullback absorbing sets for the cocycle $\Phi$ associated with \eqref{lmvlds-2}, which is useful for proving the existence of 
 measure attractors  of $\Phi$.
 To that end, we  further assume:
 there exists $ \theta \in \left(2, \tfrac{2(p-2)}{q-2} \right) $
 such that 
 \begin{align}\label{dissip1}
    \lambda 
     > \|\phi_1\|_{\ell^\infty}
   	     +  \|\phi_1\|_{1}
   	     + { \frac{4}{\theta} 
   	             (\theta-1)^{\frac{\theta - 1}{\theta} }   }
   	          	            \left( \|L_{F}\|_{\ell^\infty} + \|L_F\|
   	          	            \right).
 \end{align}
 
Note that \eqref{dissip1} implies  that 
 \begin{align}\label{31_1}
  \lambda > \|\phi_1\|_{\ell^\infty} 
             	    + \|\phi_1\|_1 
 	                + 2 \left( \|L_{F}\|_{\ell^\infty}
 	                                + \|L_F\|
 	                      \right).
 \end{align}  
 Indeed, if we set
   $\xi (\theta)
           =\ln 
               \left( \frac{1}{\theta} (\theta-1)^{\frac{\theta - 1}{\theta} } 
                \right) 
           = \frac{\theta - 1}{\theta} \ln(\theta -1) - \ln \theta
         $ 
 for $\theta \ge 2$,
 then   we have
    $ \frac{d \xi (\theta) }{d \theta} 
       = \frac{\ln(\theta-1) }{\theta^2} 
       \ge 0
    $ 
    for $\theta \ge 2$,
    and hence  
  $\frac{1}{\theta} (\theta-1)^{\frac{\theta - 1}{\theta} }$
 is   increasing on $  [2, \infty)$, which along with
  \eqref{dissip1} yields 
 \eqref{31_1}.

By \eqref{dissip1} and \eqref{31_1}, 
we find that 
  there exists a number $\beta \in (0,1)$ such that
 \begin{align}\label{dissip1-1}
   5 \beta - \theta \lambda
 	            + \theta \|\phi_1\|_{\ell^\infty}
 	            + \theta \|\phi_1\|_{1}
 	            + 4(\theta-1)^{\frac{\theta - 1}{\theta} } 
 	                      e^{\frac{\beta \rho}{\theta} }
 	          	                \left( \|L_{F}\|_{\ell^\infty} + \|L_F\|
 	          	                \right) 
    < 0,
 \end{align}
 and 
  {
 \begin{align}\label{31_1-1}     
      2\beta - 2\lambda 
                  + 2\|\phi_1\|_{\ell^\infty} 
                  + 4 \|L_{F}\|_{\ell^\infty} e^{\frac{\beta}{2} \rho}
	   < 0.
 \end{align}
}

 We now fix $\beta \in (0,1)$ satisfying \eqref{dissip1-1} and \eqref{31_1-1}, and further assume the following condition on the functions  $g, h$, and $\widetilde{h}$:
 for  all  $\tau \in \mathbb{R}$,
 \begin{align}\label{dissip2}
    \int_{-\infty}^{\tau}
          e^{- \beta (\tau - s) }
          \left( \|g(s)\|^{\theta} + \|h(s)\|^{\theta}
                    + \|\widetilde{h}(s)\|^{\theta}_{L^{ \theta}(\mathbb{Y}, \nu; \ell^2)}
                  + \|\widetilde{h}(s)\|^{\theta}_{L^{ 2}(\mathbb{Y}, \nu; \ell^2)}   
          \right) ds
    <   \infty,
 \end{align}
which implies 
\begin{align}\label{34_1}
	\int_{-\infty}^{\tau} 
	         e^{- \beta(\tau - s )} 
	         \left( \|g(s)\|^2 + \|h(s)\|^2 
	                  + \|\widetilde{h}(s)\|^2 _{L^2(\mathbb{Y}, \nu; \ell^2)} 
	         \right) ds 
	  < \infty.
\end{align}

Recall that
 $ \mathcal{U} \subseteq \mathcal{P}_\theta(D_{\rho}) $ is  bounded in $ \mathcal{P}_\theta(D_{\rho}) $,
 if
 \begin{align*}
    \|\mathcal{U}\|_{\mathcal{P}_{\theta}(D_{\rho})}
    = \sup_{\mu \in \mathcal{U}}
         \Big( \int_{D_{\rho}} \|x\|_{\infty}^\theta \mu(dx)
         \Big)^{\frac{1}{\theta}} <  \infty.
 \end{align*}

 Let $D = \{ D(\tau) \subseteq \mathcal{P}_{\theta}(D_{\rho}): \tau \in \mathbb{R} \}$
 be a family of bounded nonempty subsets in $\mathcal{P}_{\theta}(D_{\rho})$
 such that
 \begin{align} \label{temper}
    \lim_{\tau \to -\infty}
          e^{\beta \tau}
          \|D(\tau)\|^{\theta}_{\mathcal{P}_{\theta}(D_{\rho})}
     = 0.
 \end{align}
 Denote by $\mathcal{D}$ the collection of all families of bounded subsets of $\mathcal{P}_{\theta}(D_{\rho})$
 satisfying \eqref{temper}.

 We now establish the pullback uniform estimates of solutions of \eqref{lmvlds-2} in $ L^{\theta}(\Omega, D_{\rho}) $,
 which are also valid for $\theta =2$.

 \begin{lemma}\label{pue}
   Suppose that {\bf(H1)}-{\bf(H4)}, \eqref{thetaassum2},  \eqref{dissip1} and \eqref{dissip2} hold.  
    Then  there exists
     $ \varepsilon^{*} = \varepsilon^{*}(\theta) \in (0, 1) $
     such that 
     for every  
    $\tau \in \mathbb{R}$ and $D = \{ D(\tau): \tau \in \mathbb{R} \} \in \mathcal{D}$,
   there exists $T = T(\tau, D) \ge { 3} \rho$  
   such that 
   the solution $u$ to \eqref{lmvlds-2} satisfies,
    for all $t \ge T$,
   $r\in [\tau - {2} \rho, \tau]$
    and $\varepsilon \in (0, \varepsilon^{*})$, 
   \begin{align*}
	    & \mathbb{E}
	             \left[ \sup_{r \in [\tau - { 2}\rho,\tau]}
	                                   \|u(r;\tau-t,\varphi)\|^{\theta}
    	         \right]  
    	     + \mathbb{E}
	 	                         \left[ \int_{\tau-t}^{r} e^{\beta (s-r)}
	 	                                       \|u(s;\tau-t, \varphi)\|^{\theta} ds
	 	                         \right]  \nonumber\\
	    & + \mathbb{E}
	 	                     \left[ \int_{\tau-t}^{r} e^{\beta (s-r)}
	        	                              \|u(s;\tau-t, \varphi)\|^{\theta-2}
	 	                                  \|u(s;\tau-t, \varphi)\|_p^{\,p} ds
	 	                     \right]  \nonumber\\
   \le & C_{\beta,\theta, \rho}
                  \int_{-\infty}^{\tau} e^{\beta (s-\tau)}
	                  \Big( \|g(s)\|^{\theta}
	                           + \|h(s)\|^{\theta}
	                           + \|\widetilde{h}(s)\|^{\theta}_{L^{\theta}(\mathbb{Y}, \nu; \ell^2)}
	                           + \|\widetilde{h}(s)\|^{\theta}_{L^{2}(\mathbb{Y}, \nu; \ell^2)}
	                  \Big) ds                           
	         + C_{\beta,\theta, \rho},
   \end{align*} 
   where 
   $\varphi \in L^{\theta}(\Omega, \mathcal{F}_{\tau - t}; D_{\rho})$ with law $\mathcal{L}_{\varphi} \in D(\tau-t)$,
   and 
   $C_{\beta, \theta, \rho} > 0$ is a constant depending on $\beta$, $\theta$ and $\rho$, but not on $\varepsilon$.
\end{lemma}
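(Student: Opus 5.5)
The plan is to apply It\^o's formula to $e^{\beta s}\|u(s)\|^{\theta}$, with $u(s)=u(s;\tau-t,\varphi)$, on $[\tau-t,r]$, and to localize with the stopping times $\tau_m$ used in Step 4 of Theorem \ref{wp-sol}. We first take expectations to obtain a pointwise-in-time bound, and afterwards upgrade it to a bound on $\sup_{r\in[\tau-2\rho,\tau]}$ via the BDG inequality. The expectation version reads
\begin{align*}
 e^{\beta r}\mathbb{E}\|u(r)\|^\theta
 +\theta\lambda_1\mathbb{E}\int_{\tau-t}^r e^{\beta s}\|u\|^{\theta-2}\|u\|_p^p\,ds
 \le e^{\beta(\tau-t)}\mathbb{E}\|\varphi(0)\|^\theta
 +\mathbb{E}\int_{\tau-t}^r e^{\beta s}\big(\beta-\theta\lambda+\cdots\big)\|u\|^\theta ds+\cdots .
\end{align*}
The terms are handled as in Step 4 of Theorem \ref{wp-sol}, but with constants tracked uniformly in time. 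The $p$-Laplacian contributes a nonnegative term, which we drop. The drift term is treated by \eqref{f1}; the resulting mean-field part $\|\phi_1\|_1\mathbb{E}\|u\|^2\|u\|^{\theta-2}$ is bounded by $\|\phi_1\|_1\mathbb{E}\|u\|^\theta$ via Jensen, since $(\mathbb{E}\|u\|^2)\,\mathbb{E}\|u\|^{\theta-2}\le\mathbb{E}\|u\|^\theta$. The forcing $g$ is handled by Young's inequality with a small parameter, which produces $\beta\mathbb{E}\|u\|^\theta+C\|g\|^\theta$.

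The delay term $\theta\|u(s)\|^{\theta-2}\langle F(u(s-\rho),\mathcal{L}_{u(s-\rho)}),u(s)\rangle$ is treated with \eqref{F1}. We apply Young's inequality with exponents $\theta/(\theta-1)$ and $\theta$, weighted so that shifting $s\mapsto s+\rho$ in the delayed integral costs a factor $e^{\beta\rho}$. This yields the coefficient $4(\theta-1)^{(\theta-1)/\theta}e^{\beta\rho/\theta}(\|L_F\|_{\ell^\infty}+\|L_F\|)$ appearing in \eqref{dissip1-1}. The shift also leaves a contribution from the initial segment, bounded by $Ce^{\beta(\tau-t)}\mathbb{E}\|\varphi\|_\infty^\theta$. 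The $\mathbb{W}_2$ part is controlled by $(\mathbb{E}\|u(s-\rho)\|^2)^{1/2}\le(\mathbb{E}\|u(s-\rho)\|^\theta)^{1/\theta}$.

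The noise terms are handled as follows. The Brownian It\^o correction and $I_1$ come from \eqref{sigma3} and \eqref{moment_gjh_1}: the superlinear part is absorbed into $\frac{\theta\lambda_1}{4}\|u\|^{\theta-2}\|u\|_p^p$, and every other term carries a factor $\varepsilon$, giving $\varepsilon C\|u\|^\theta$ plus forcing. The jump term $I_2$ is treated via \eqref{h_22_simple}, \eqref{thetaassum2} and the interpolation \eqref{I11}. There we use Young's inequality with the $\|u\|^{\theta-2}\|u\|_p^p$ term; because of the factor $\varepsilon^{\theta/2}$, the leftover coefficient of $\|u\|^\theta$ is $C\varepsilon^{\theta/(2(1-a))}$ or similar, which vanishes as $\varepsilon\to0$. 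This is why we choose $\varepsilon^*$ small: it makes all $\varepsilon$-dependent coefficients at most $\beta$, so that \eqref{dissip1-1} (which has the slack $5\beta$) ensures the total coefficient of $\mathbb{E}\int e^{\beta s}\|u\|^\theta$ is $\le-\beta$.

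Combining these estimates and multiplying by $e^{-\beta r}$ yields the bound on $\mathbb{E}\|u(r)\|^\theta$ and on the two weighted integrals. The initial-data part is $Ce^{-\beta t}e^{\beta(\tau-r)}\|D(\tau-t)\|^\theta$, which by \eqref{temper} is at most $1$ once $t\ge T(\tau,D)$, where $T\ge3\rho$ is chosen so that $r\ge\tau-2\rho$ stays inside $[\tau-t+\rho,\tau]$. To obtain $\mathbb{E}\sup_{r\in[\tau-2\rho,\tau]}\|u(r)\|^\theta$, we apply It\^o's formula on the window $[\tau-3\rho,r]$ and bound the martingale terms by BDG as in \eqref{uniform-2}, absorbing $\frac12\mathbb{E}\sup\|u\|^\theta$. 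The remaining integrals over a window of length $3\rho$ are controlled by the pointwise estimate already obtained, at the cost of a factor $e^{3\beta\rho}$; this is the source of the dependence of $C$ on $\rho$. All constants are independent of $\varepsilon$.

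The main obstacle is the jump contribution $I_2$: its $\|\widetilde\sigma\|^\theta$ growth of order $|u_i|^{q\theta/2}$ has to be dominated uniformly in time using only the dissipation $\|u\|^{\theta-2}\|u\|_p^p$. Getting the exponent bookkeeping of \eqref{I11} to give an $\varepsilon$-small coefficient rather than an $O(1)$ one is the delicate point, and it is exactly where $\theta<2(p-2)/(q-2)$ and the restriction $\varepsilon<\varepsilon^*$ are used.
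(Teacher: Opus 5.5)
Your proposal is correct and follows essentially the same route as the paper. You apply It\^o's formula to $e^{\beta s}\|u(s)\|^{\theta}$ and treat the terms as the paper does: Jensen for the mean-field part of \eqref{f1}; the $e^{\beta\rho}$-weighted Young inequality for the delay term, which produces the coefficient in \eqref{dissip1-1}; and the interpolation with $a=\frac{\theta(q-2)}{2(p-2)}$, which makes the jump leftover $\varepsilon$-small and fixes $\varepsilon^*$. You then bound the supremum by BDG on a short window near $\tau$, controlling the remaining integrals by the pointwise weighted bound, exactly as in the paper. The only differences are cosmetic: for example, you localize the supremum step on $[\tau-3\rho,r]$ rather than the paper's $[\tau-2\rho,r]$, which only requires the pointwise bound on a slightly longer window, available once $t$ is large.
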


\begin{proof}
	For convenience, we denote $u(r;\tau-t,\varphi)$ by $u(r)$.
	Applying It\^o's formula to \eqref{lmvlds-2}  we get 
	 that for all $r \ge \tau - t$,
\begin{align}\label{thetapue-1}
	& e^{\beta r} \|u(r)\|^\theta
	+ \theta \int_{\tau-t}^{r}
	e^{\beta s} \|u(s)\|^{\theta-2}
	\big\langle f\big(u(s), \mathcal{L}_{u(s)}\big), u(s) \big\rangle \, ds
	\nonumber\\
	\le\; &
	e^{\beta(\tau-t)} \|u(\tau-t)\|^{\theta}
	+ (\beta - \theta \lambda)
	\int_{\tau-t}^{r} e^{\beta s} \|u(s)\|^\theta \, ds
	\nonumber\\
	& \quad
	+ \theta \int_{\tau-t}^{r}
	e^{\beta s} \|u(s)\|^{\theta-2}
	\langle g(s), u(s) \rangle \, ds
	\nonumber\\
	& \quad
	+ \theta \int_{\tau-t}^{r}
	e^{\beta s} \|u(s)\|^{\theta-2}
	\big\langle F\big(u(s-\rho), \mathcal{L}_{u(s-\rho)}\big), u(s) \big\rangle
	\, ds
	\nonumber\\
	& \quad
	+ \sqrt{\varepsilon}\,\theta \int_{\tau-t}^{r}
	e^{\beta s} \|u(s)\|^{\theta-2}
	\big\langle
	\sum_{k \in \mathbb{N}}
	\big( \sigma_k(u(s), \mathcal{L}_{u(s)}) + h_k(s) \big) \, dW_k(s),
	u(s)
	\big\rangle
	\nonumber\\
	& \quad
	+ \frac{\theta(\theta-1)}{2}\,
	\varepsilon \int_{\tau-t}^{r}
	e^{\beta s} \|u(s)\|^{\theta-2}
	\sum_{k \in \mathbb{N}}
	\| \sigma_k(u(s), \mathcal{L}_{u(s)}) + h_k(s) \|^2 \, ds
	\nonumber\\
	& \quad
	+ \sqrt{\varepsilon}\,\theta 
	\int_{\tau-t}^{r} \int_{
	y\in\mathbb{Y}
	}
	e^{\beta s} \|u(s-)\|^{\theta-2}
	\big\langle
	\widetilde{\sigma}_k\big(u(s-), \mathcal{L}_{u(t)}, y\big)
	+ \widetilde{h}(s,y),
	u(s-)
	\big\rangle
	\,\widetilde{N}(ds,dy)
	\nonumber\\
	& \quad
	+  
	\int_{\tau-t}^{r} \int_{
	y\in\mathbb{Y}
	} e^{\beta s}
	\Big(
	\| u(s-)
	+ \sqrt{\varepsilon} \widetilde{\sigma}(u(s-), \mathcal{L}_{u(t)}, y)
	+ \sqrt{\varepsilon} \widetilde{h}(s,y) \|^\theta
	- \|u(s-)\|^\theta
	\nonumber\\
	& \qquad\qquad\qquad\qquad\quad
	- \theta \sqrt{\varepsilon}\, \|u(s-)\|^{\theta-2}
	\big\langle
	\widetilde{\sigma}(u(s-), \mathcal{L}_{u(t)}, y)
	+ \widetilde{h}(s,y),
	u(s-)
	\big\rangle
	\Big) N(ds,dy)
	\nonumber\\
	=\; &
	e^{\beta(\tau-t)} \|u(\tau-t)\|^{\theta}
	+ (\beta - \theta \lambda)
	\int_{\tau-t}^{r} e^{\beta s} \|u(s)\|^\theta \, ds
	+ \sum_{i=1}^{6} I_i .
\end{align}
 By \eqref{f1} and Young's inequality, we obtain 
 \begin{align}\label{moment_h1}
 	& \theta \int_{\tau-t}^{r}
 	e^{\beta s} \|u(s)\|^{\theta-2}
 	\big\langle f\big(u(s), \mathcal{L}_{u(s)}\big), u(s) \big\rangle \, ds
 	\nonumber\\
 	\ge\; &
 	\theta \int_{\tau-t}^{r}
 	e^{\beta s} \|u(s)\|^{\theta-2}
 	\sum_{i \in \mathbb{Z}}
 	\left(
 	\lambda_1 |u_i(s)|^p
 	- \phi_{1,i}
 	\left( 1 + |u_i(s)|^2 + \mathcal{L}_{u(s)}(\|\cdot\|^2) \right)
 	\right) ds
 	\nonumber\\
 	\ge\; &
 	\lambda_1 \theta
 	\int_{\tau-t}^{r}
 	e^{\beta s} \|u(s)\|^{\theta-2} \|u(s)\|_p^p \, ds
 	- \theta
 	\int_{\tau-t}^{r}
 	e^{\beta s} \|u(s)\|^{\theta-2} \|\phi_1\|_1
 	\bigl( 1 + \mathbb{E}[\|u(s)\|^2] \bigr) \, ds
 	\nonumber\\
 	& \quad
 	- \theta \|\phi_1\|_{\ell^\infty}
 	\int_{\tau-t}^{r}
 	e^{\beta s} \|u(s)\|^{\theta} \, ds
 	\nonumber\\
 	\ge\; &
 	\lambda_1 \theta
 	\int_{\tau-t}^{r}
 	e^{\beta s} \|u(s)\|^{\theta-2} \|u(s)\|_p^p \, ds
 	- 2 \left( \frac{\theta-2}{\beta} \right)^{\frac{\theta-2}{2}}
 	\|\phi_1\|_1^{\frac{\theta}{2}}
 	\frac{1}{\beta} e^{\beta r}
 	\nonumber\\
 	& \quad
 	- \left( \beta + \theta \|\phi_1\|_{\ell^\infty} \right)
 	\int_{\tau-t}^{r}
 	e^{\beta s} \|u(s)\|^{\theta} \, ds
 	- \theta \|\phi_1\|_1
 	\int_{\tau-t}^{r}
 	e^{\beta s} \|u(s)\|^{\theta-2}
 	\mathbb{E}\bigl[\|u(s)\|^{2}\bigr] \, ds .
 \end{align}

 Next, we will establish the estimates of $I_i$ in \eqref{thetapue-1}, where $i=1, \dots, 6$.
  For $I_1$, by Young's inequality, we have
\begin{align}\label{moment_h2}
	\mathbb{E}[I_1]
	& \le
	\frac{\beta}{2} \,
	\mathbb{E}\left[
	\int_{\tau-t}^{r}
	e^{\beta s} \|u(s)\|^{\theta} \, ds
	\right]
	+  \left( \frac{2(\theta-1)}{\beta} \right)^{\theta-1}
	\int_{\tau-t}^{r}
	e^{\beta s} \|g(s)\|^{\theta} \, ds .
\end{align}
  For $I_2$, by Young's inequality and \eqref{F3}, we obtain
 \begin{align}\label{moment_I2}
 	\mathbb{E}[I_2]
  \le & \theta \mathbb{E}
 	         \left[	\int_{\tau-t}^{r}
                        	e^{\beta s} \|u(s)\|^{\theta-1}
 	                      \big\| F\big( u(s-\rho), \mathcal{L}_{u(s-\rho)} \big) 
 	                      \big\| \, ds
          	\right]  \nonumber\\
 \le & \theta
 	         \mathbb{E}
 	                \left[ \int_{\tau-t}^{r}
 	                             e^{\beta s} \|u(s)\|^{\theta-1}
 	                             \Big( \sqrt{2}	\|F(0,\delta_0)\|
 	                                      + 2\|L_{F}\|_{\ell^\infty} \|u(s-\rho)\|
 	                                      +2 \|L_F\| \,\mathbb{W}_2\big(\mathcal{L}_{u(s-\rho)},\delta_0\big)
 	                             \Big) ds
                 	\right]  \nonumber\\
 \le & \frac{\beta}{2}
 	               \mathbb{E}
 	                      \left[ \int_{\tau-t}^{r}
 	                                    e^{\beta s} \|u(s)\|^{\theta} \, ds
 	                      \right]
 	         + \left( \frac{2(\theta-1)}{\beta} \right)^{\theta-1} 
 	               e^{\beta r} 2^{\frac{\theta}{2}} \frac{\|F(0,\delta_0)\|^{\theta}}{\beta}         \nonumber\\
      & + 2(\theta-1)^{\frac{\theta - 1}{\theta} } e^{\frac{\beta \rho}{\theta} }
     	        \|L_{F}\|_{\ell^\infty} 
 	            \mathbb{E}
 	                   \left[ \int_{\tau-t}^{r}
 	                                e^{\beta s} \|u(s)\|^{\theta} \, ds
 	                   \right]  \nonumber\\
 	  & + 2 [(\theta - 1) e^{-\beta \rho} ]^{\frac{\theta - 1}{\theta} } 
 	            \|L_{F}\|_{\ell^\infty}
 	            \mathbb{E}
 	                  \left[ \int_{\tau-t}^{r}
 	                               e^{\beta s} \|u(s-\rho)\|^{\theta} \, ds
                      \right]  \nonumber\\
 	  & + 2(\theta-1)^{\frac{\theta - 1}{\theta} } e^{\frac{\beta \rho}{\theta} } \|L_F\| 
 	         \mathbb{E}
 	                \left[ \int_{\tau-t}^{r} e^{\beta s} \|u(s)\|^{\theta} \, ds
 	               \right]  \nonumber\\
 	  & + 2 [(\theta - 1) e^{-\beta \rho} ]^{\frac{\theta - 1}{\theta} }  \|L_F\|
              \int_{\tau-t}^{r} e^{\beta s} 
                  \mathbb{E} \left[ \|u(s-\rho)\|^{\theta} \right] ds  \nonumber\\
 \le & \Big[ \tfrac{\beta}{2}
 	                + 4(\theta-1)^{\frac{\theta - 1}{\theta} } e^{\frac{\beta \rho}{\theta} }
 	                    \left( \|L_{F}\|_{\ell^\infty} + \|L_F\|
 	                    \right) 
 	       \Big]
 	        \mathbb{E}
 	               \left[ \int_{\tau-t}^{r}
 	                             e^{\beta s} \|u(s)\|^{\theta} \, ds
 	              \right]   \nonumber\\
 	   & + \left( \frac{2(\theta-1)}{\beta} \right)^{\theta-1}
 	           e^{\beta r} 2^{\frac{\theta}{2}} 
 	           \frac{\|F(0,\delta_0)\|^{\theta}}{\beta}   \nonumber\\
 	   & +  \frac{2 \rho}{\beta} 
 	           (\theta - 1)^{\frac{\theta - 1}{\theta} } 
 	           e^{\frac{\beta \rho}{\theta} } 
 	     	   \left( \|L_{F}\|_{\ell^\infty} + \|L_F\| \right)
 	     	    e^{\beta (\tau - t)} 
 	     	    \mathbb{E} \left[ \|\varphi\|_{\infty}^{\theta} \right].
 \end{align}

As for $I_4$, applying \eqref{sigma3} with $\tilde{\theta}$ replaced by $\theta-1$, we obtain
 \begin{align}\label{moment_h3}
 	\mathbb{E}[I_4]
 \le & \varepsilon \theta
          \mathbb{E}
                \left[ \sum_{k \in \mathbb{N}}
                            	\int_{\tau-t}^{r}
 	                                 e^{\beta s} \|u(s)\|^{\theta-2} (\theta-1)
                                	\| \sigma_k(u(s), \mathcal{L}_{u(s)}) \|^2 \, ds
 	            \right]   \nonumber\\
 	  & + \varepsilon \theta(\theta-1)
             \mathbb{E}
                   \left[ \int_{\tau-t}^{r}
                          	    e^{\beta s} \|u(s)\|^{\theta-2} \|h(s)\|^2 \, ds
 	               \right]  \nonumber\\
 \le & \varepsilon \theta
       	 \mathbb{E}
       	        \left[ \int_{\tau-t}^{r}
 	                         e^{\beta s} \|u(s)\|^{\theta-2}
 	                        \left( \frac{\lambda_1}{4} \|u(s)\|_{p}^{p}
 	                                  + 4(\theta-1)\|L\|^2 
 	                                      \mathbb{E} \left[ \|u(s)\|^2 \right] 
 	                                  + C_1
 	                       \right) ds
 	             \right]  \nonumber\\
 	   & + \varepsilon \theta(\theta-1)
 	          \mathbb{E}
 	                \left[ \int_{\tau-t}^{r}
 	                             e^{\beta s} \|u(s)\|^{\theta-2} \|h(s)\|^2 \, ds
 	                \right]   \nonumber\\
 \le & \frac{\lambda_1 \theta}{4}
           	\mathbb{E}
           	      \left[ \int_{\tau-t}^{r}
                             	e^{\beta s} \|u(s)\|^{\theta-2} \|u(s)\|_{p}^{p} \, ds
 	              \right]   \nonumber\\
 	     & + \varepsilon
 	            \Big( 4 \theta (\theta-1) \|L\|^2
 	                      + (\theta-2) C_1 
 	                      + (\theta-1)(\theta-2)
 	           \Big)
 	           \mathbb{E}
 	                  \left[ \int_{\tau-t}^{r}
 	                                e^{\beta s} \|u(s)\|^{\theta} \, ds
               	      \right]   \nonumber\\
 	     & + 2 C_1  \frac{e^{\beta r}}{\beta}
 	         + 2(\theta-1)
 	             \int_{\tau-t}^{r}
 	                 e^{\beta s} \|h(s)\|^{\theta} \, ds ,
 \end{align}
 where $C_1=C_1(p,q, L)>0$ is 
 a constant.
 
 For $I_6$, 
 by the argument of 
 \eqref{moment_gjh},
 we apply the Talor formula 
  to obtain the bound
 \[
 \mathbb{E}[I_6] \le \mathbb{E}[I_{6,1}] + \mathbb{E}[I_{6,2}],
 \]
 where $  I_{6,1}  $ and $ I_{6,2} $ 
 are given below:
 \begin{align}\label{moment_gjg}
	\mathbb{E}[I_{6,1}]
	:= & 
	\varepsilon C_\theta  \,
	\mathbb{E}\left[ 
	\int_{\tau-t}^{r}
	\int_{y\in\mathbb{Y}
	}
	e^{\beta s}
	\|u(s)\|^{\theta-2}
	\big\|
	\widetilde{\sigma}(u(s), \mathcal{L}_{u(s)}, y)
	+ \widetilde{h}(s,y)
	\big\|^2
	\nu(dy) \, ds
	\right]                                        \nonumber\\
	\le & 
	2 \varepsilon C_\theta \,
	\mathbb{E}\left[ 
	\int_{\tau-t}^{r}
	\int_{y\in\mathbb{Y}
	 }
	e^{\beta s}
	\|u(s)\|^{\theta-2}
	\big\|
	\widetilde{\sigma}(u(s), \mathcal{L}_{u(s)}, y)
	\big\|^2
	\nu(dy) \, ds
	\right]                                        \nonumber\\
	& + 2 \varepsilon C_\theta \,
	\mathbb{E}\left[
	\int_{\tau-t}^{r}
	e^{\beta s}
	\|u(s)\|^{\theta-2}
	\int_{y\in\mathbb{Y}}
	\|\widetilde{h}(s,y)\|^2 \,  \nu (dy) ds
	\right]                                      \nonumber\\[1ex]
	\le & 
	\frac{\theta \lambda_1}{8} \,
	\mathbb{E}\left[
	\int_{\tau-t}^{r}
	e^{\beta s}
	\|u(s)\|^{\theta-2} \|u(s)\|_p^p \, ds
	\right]  \nonumber\\
	& 
	+ 8 \varepsilon C_\theta \|L\|^2 \,
	\mathbb{E}\left[
	\int_{\tau-t}^{r}
	e^{\beta s}
	\|u(s)\|^{\theta-2}
	\mathbb{E}\bigl[\|u(s)\|^2\bigr] \, ds
	\right]                                      
	+ \varepsilon C_2   
	\mathbb{E}\left[
	\int_{\tau-t}^{r}
	e^{\beta s} \|u(s)\|^{\theta-2} \, ds
	\right]  \nonumber\\
	& + \varepsilon \frac{2 C_\theta (\theta-2)}{\theta} \,
	\mathbb{E}\left[
	\int_{\tau-t}^{r}
	e^{\beta s} \|u(s)\|^{\theta} \, ds
	\right]
	+ \frac{4 C_\theta}{\theta} \,
	\mathbb{E}\left[
	\int_{\tau-t}^{r}
	e^{\beta s} \|\widetilde{h}(s)\|^{\theta}
	_{L^2(\mathbb{Y}, \nu; \ell^2) 
	} \, ds
	\right]                                      \nonumber\\[1ex]
	\le &
	\frac{\theta \lambda_1}{8} \,
	\mathbb{E}\left[
	\int_{\tau-t}^{r}
	e^{\beta s} \|u(s)\|^{\theta-2} \|u(s)\|_p^p \, ds
	\right]  \nonumber\\
	& + \varepsilon \left(
	8 C_\theta \|L\|^2
	+ (\theta-2)
	\left(
	\frac{C_2}{\theta}
	+ \frac{2 C_\theta}{\theta}
	\right)
	\right)
	\mathbb{E}\left[
	\int_{\tau-t}^{r}
	e^{\beta s} \|u(s)\|^{\theta} \, ds
	\right]                                      \nonumber\\
	& + {\frac{ 2C_2}{\theta}}
	\frac{e^{\beta r}}{\beta}
	+ \frac{4 C_\theta}{\theta} \,
	\int_{\tau-t}^{r}
	e^{\beta s} \|\widetilde{h}(s)\|^{\theta}
	_{L^2(\mathbb{Y}, \nu; \ell^2) 
	}  \, ds ,
\end{align}
where $C_2=C_2 (p,q, L, \theta)>0
$ is a constant, and  we have used 
  Young's inequality and \eqref{sigma3} with
  $\lambda_0$ replaced by
  ${\frac 12} \theta \lambda_1$
  and  
$\tilde{\theta}$ replaced by 
$ 2C_{\theta}$ where $C_{\theta}$ is the same  
number as in \eqref{moment_gjh}.

By \eqref{h_22_simple} and Hölder's inequality, we  also have
\begin{align}\label{moment_h5}
	\mathbb{E}[I_{6,2}]
	:= &
	\varepsilon^{\frac{\theta}{2}} C_\theta
	\mathbb{E}\left[ 
	\int_{\tau-t}^{r}
	\int_{
	y\in\mathbb{Y}
	}
	e^{\beta s}
	\big\|
	\widetilde{\sigma}(u(s), \mathcal{L}_{u(s)}, y)
	+ \widetilde{h}(s,y)
	\big\|^\theta
	\nu(dy) \, ds
	\right] \nonumber\\
	\le &
	2^{\theta-1} C_{\theta}
	\int_{\tau-t}^{r}
	e^{\beta s} 
	\|\widetilde{h}(s)\|^{\theta}_{L^{\theta}(\mathbb{Y}, \nu; \ell^2)} ds
	 \nonumber\\
	& + \varepsilon^{\frac{\theta}{2}}
	2^{\theta-1} C_{\theta} 
	\mathbb{E}\left[
	\int_{\tau-t}^{r} 
	\int_{y \in \mathbb{Y} }
	e^{\beta s}
	\big\|
	\widetilde{\sigma}(u(s), \mathcal{L}_{u(s)}, y)
	\big\|^{\theta}
	\nu (dy) ds
	\right] \nonumber\\
	\le &
	2^{\theta-1} C_{\theta}
	\int_{\tau-t}^{r}
	e^{\beta s} 
	\|\widetilde{h}(s)\|^{\theta}_{L^{\theta}(\mathbb{Y}, \nu; \ell^2)} ds
	 \nonumber\\
	& + \varepsilon^{\frac{\theta}{2}}
	6^{\theta-1} 2^{\frac{\theta}{2}} C_{\theta}
	\left(
	  \int_{
	  	y\in\mathbb{Y}
	  }
	\left(
	\sum_{i \in \mathbb{Z}}
	\big( L_{\tilde{\sigma},i}(y) \big)^{\frac{\theta}{\theta-1}}
	\right)^{\theta-1}
	\nu(dy)
	\right)
	\mathbb{E}\left[
	\int_{\tau-t}^{r}
	e^{\beta s}
	\sum_{i \in \mathbb{Z}}
	|u_i(s)|^{\frac{q\theta}{2}} ds
	\right]  \nonumber\\
	& + \varepsilon^{\frac{\theta}{2}}
	6^{\theta-1} 2^{\frac{\theta}{2}} C_{\theta}
	\left(
	 \int_{y \in \mathbb{Y} }
	\left(
	\sum_{i \in \mathbb{Z}}
	L_{\tilde{\sigma}, i}(y)
	\right)^{\theta}
	\nu(dy)
	\right)
	\mathbb{E}\left[
	\int_{\tau-t}^{r}
	e^{\beta s} \|u(s)\|^{\theta} ds
	\right]  \nonumber\\
	& + \frac{1}{\beta} e^{\beta r}
	6^{\theta-1} 2^{\frac{\theta}{2}} C_{\theta}
	  \int_{y \in \mathbb{Y} }
	\left( \sum_{i \in \mathbb{Z} }
	\big|\widetilde{\sigma}_{i}(0,\delta_0,y)\big|
	\right)^{\theta}
	\nu(dy) .
\end{align}
As for the second term on the right-hand side of \eqref{moment_h5}, 
for $a = \tfrac{\theta(q-2)}{2(p-2)}$ and $b = (1-a) \theta$,
by Young's inequality we have 
\begin{align}\label{moment_h6}
	& \varepsilon^{\frac{\theta}{2}}
	6^{\theta-1} 2^{\frac{\theta}{2}} C_{\theta}
	\left(
	  \int_{
	  y\in\mathbb{Y}
	  }
	\left(
	\sum_{i \in \mathbb{Z}}
	\big( L_{\tilde{\sigma},i}(y)
	\big)^{\frac{\theta}{\theta-1}}
	\right)^{\theta-1} \nu(dy)
	\right)
	\mathbb{E}\left[
	\int_{\tau-t}^{r}
	e^{\beta s} 
	|u_i(s)|^{\frac{q\theta}{2}} ds
	\right] \nonumber\\
	\le &
	\varepsilon^{\frac{\theta}{2}}
	6^{\theta-1} 2^{\frac{\theta}{2}} C_{\theta}
	  \int_{
	  	  y\in\mathbb{Y}
	  }
	\left(
	\sum_{i \in \mathbb{Z}}
	\big( L_{\tilde{\sigma},i}(y)
	\big)^{\frac{\theta}{\theta-1}}
	\right)^{\theta-1} \nu(dy) \nonumber\\
	& \quad \cdot
	\mathbb{E}\left[
	\int_{\tau-t}^{r}
	e^{\beta s}
	\sum_{i \in \mathbb{Z}}
	\big( |u_i(s)|^{\theta-2} |u_i(s)|^{p} \big)^{a}
	|u_i(s)|^{b} ds
	\right]  \nonumber\\
	\le &
	\frac{\lambda_1 \theta}{8}
	\mathbb{E}\left[
	\int_{\tau-t}^{r}
	e^{\beta s}
	\|u(s)\|^{\theta-2} \|u(s)\|_p^{p} ds
	\right] \nonumber\\
	& \quad
	+ \varepsilon^{\frac{\theta}{2}} (1-a)
	\left(
	6^{\theta-1} 2^{\frac{\theta}{2}} C_{\theta}
  \int_{
  	  y\in\mathbb{Y}
  }
	\left(
	\sum_{i \in \mathbb{Z}}
	\big(L_{\tilde{\sigma},i}(y)\big)^{\frac{\theta}{\theta-1}}
	\right)^{\theta-1} \nu(dy)
	\right)^{\frac{1}{1-a}}  \nonumber\\
	& \quad \cdot
	\left( \frac{\lambda_1 \theta}{8 a} \right)^{-\frac{a}{1-a}}
	\mathbb{E}\left[
	\int_{\tau-t}^{r}
	e^{\beta s} \|u(s)\|^{\theta} ds
	\right].
\end{align}

 For convenience,  denote  by 
 \begin{align}\label{K-theta-eps}
	K(\theta,\varepsilon)
	:= & - \theta \lambda
	         + \theta \|\phi_1\|_{\ell^\infty}
	         + \theta \|\phi_1\|_{1}
	         + 4(\theta-1)^{\frac{\theta - 1}{\theta} } e^{\frac{\beta \rho}{\theta} }
	          	 \left( \|L_{F}\|_{\ell^\infty} + \|L_F\|
	          	 \right)   \nonumber\\
	& + 4 \varepsilon  \theta(\theta-1)\|L\|^2
	+ \varepsilon (\theta-2)
	 C_1
	+ \varepsilon (\theta-1)(\theta-2)                            \nonumber\\
	& + 8 \varepsilon C_{\theta} \|L\|^2
	+ \varepsilon (\theta-2)
	\left(
	\frac{C_2}
	 {\theta}
	+ \frac{2 C_{\theta}}{\theta}
	\right)                                                 \nonumber\\
	& + \varepsilon^{\frac{\theta}{2}} (1-a)
	\left(
	6^{\theta-1} 2^{\frac{\theta}{2}} C_{\theta}
  \int_{
  y\in \mathbb{Y}
  }
	\left(
	\sum_{i \in \mathbb{Z}}
	(L_{\tilde{\sigma},i}(y))^{\frac{\theta}{\theta-1}}
	\right)^{\theta-1}
	\nu(dy)
	\right)^{\frac{1}{1-a}}
	\left(
	\frac{\lambda_1 \theta}{8 a}
	\right)^{-\frac{a}{1-a}}                                   \nonumber\\
	& + \varepsilon^{\frac{\theta}{2}}
	6^{\theta-1} 2^{\frac{\theta}{2}} C_{\theta}
	\left(
	  \int_{ y\in \mathbb{Y}
	  }
	\left(
	\sum_{i \in \mathbb{Z}} L_{\tilde{\sigma},i}(y)
	\right)^{\theta}
	\nu(dy)
	\right).
 \end{align}
 Then by using a  stopping time if necessary,
 we obtain from \eqref{thetapue-1}-\eqref{moment_h6}  that 
 for $r \in [\tau - { 3} \rho, \tau]$ and $t \ge { 3} \rho$,
 \begin{align}\label{3.12_4}
 	   & \mathbb{E}
 	              \left[ e^{\beta r} \|u(r)\|^{\theta} \right]
 	       + \left( -3\beta - K(\theta,\varepsilon) \right)
              \int_{\tau-t}^{r}
 	              \mathbb{E}
 	                     \left[ e^{\beta s} \|u(s)\|^{\theta}
 	                     \right] ds  \nonumber\\
 	   & + \frac{\lambda_1 \theta}{2}
 	               \mathbb{E}
 	                     \left[ \int_{\tau-t}^{r}
 	                                  e^{\beta s}
                    	              \|u(s)\|^{\theta-2} \|u(s)\|_p^{p} \, ds
 	                     \right]  \nonumber\\
\le & e^{\beta (\tau-t)}
 	     \mathbb{E} \left[ \|\varphi(0)\|^{\theta} \right]
 	     + \frac{2 \rho}{\beta} 
 	        (\theta - 1)^{\frac{\theta - 1}{\theta} } 
 	      	e^{\frac{\beta \rho}{\theta} } 
 	      	 \left( \|L_{F}\|_{\ell^\infty} + \|L_F\| \right)
 	      	 e^{\beta (\tau - t)} 
 	      	 \mathbb{E} \left[ \|\varphi\|_{\infty}^{\theta} \right]  \nonumber\\
     & + C_3 
      	     \int_{\tau-t}^{r} e^{\beta s}
 	             \left( \|g(s)\|^{\theta} 
 	                      + \|h(s)\|^{\theta} 
 	                      + \| \widetilde{h}(s)\|_{L^{\theta}(\mathbb{Y},\nu; \ell^2)}^\theta
 	                        + \| \widetilde{h}(s)\|_{L^{2}(\mathbb{Y},\nu; \ell^2)}^\theta
 	             \right) ds
 	     + C_3  e^{\beta r},
 \end{align}
 where $C_3=C_3(\beta, \theta, p, q, L) > 0$ is a constant independent of $\varphi$,
 $\varepsilon$ and $\rho$.
 
 By \eqref{dissip1-1} and \eqref{K-theta-eps}, 
 we infer that there exists
 $\varepsilon^* =\varepsilon^* (\theta, \beta) \in (0,1)$ independent of $\rho$
 such that for all $\varepsilon \in (0, \varepsilon^*)$,
 \be\label{jan24a}
   5\beta + K(\theta, \varepsilon) <0.
\ee
   
 On the other hand, 
  since the distribution law $\mathcal{L}_{\varphi} \in D(\tau - t)$, we obtain
 \begin{align*}
 	& 
 	\mathbb{E} \left[   \|\varphi \|^{\theta}
 	_\infty  \right] e^{-\beta t} 
 	\le 
 	\mathbb{E} \left[ \|D(\tau-t)\|^{\theta}_{\mathcal{P}_{\theta}(D_{\rho})} \right] e^{-\beta t} \to 0, \ \text{as}\ t \to  \infty.
 \end{align*}
 Then, 
 it follows
 from \eqref{3.12_4}
 and \eqref{jan24a} that 
 there exists $T = T(\tau, D)  \ge { 3}\rho$ such that for all $t \ge T$,
 $\varepsilon \in (0, \varepsilon^*)$
  and $r \in [\tau - {3} \rho, \tau]$,  
\begin{align}\label{pue-11}
	  & \mathbb{E}
	            \left[ \|u(r; \tau-t, \varphi)\|^{\theta}
        	   \right]
	      + \beta \mathbb{E}
	                         \left[ \int_{\tau-t}^{r} e^{\beta (s-r)}
	                                       \|u(s;\tau-t, \varphi)\|^{\theta} ds
	                         \right]  \nonumber\\
	  & + \frac{\lambda_1 \theta}{2}
	              \mathbb{E}
	                     \left[ \int_{\tau-t}^{r} e^{\beta (s-r)}
       	                              \|u(s;\tau-t, \varphi)\|^{\theta-2}
	                                  \|u(s;\tau-t, \varphi)\|_p^{\,p} ds
	                     \right]   \nonumber\\
 \le & 2C_3  + C_3
	                       \int_{-\infty}^{\tau} e^{\beta (s-r)}
      	                                  \Big(	\|g(s)\|^{\theta}
	                                                + \|h(s)\|^{\theta}
	                                                + \|\widetilde{h}(s)\|^{\theta}_{L^{\theta}(\mathbb{Y}, \nu; \ell^2)}
	                                                  + \|\widetilde{h}(s)\|^{\theta}_{L^{2}(\mathbb{Y}, \nu; \ell^2)}
	                                      \Big) ds.                                      
\end{align}

 Similarly to  \eqref{thetapue-1} with $\beta =0$, 
 we have for all 
 $ r \in [\tau- { 2}\rho,\tau]$,
 \begin{align}\label{pue-9}
 	& 
 	\|u(r)\|^{\theta}
 	+ \theta \int_{\tau- { 2}\rho}^{r}
 	\|u(s)\|^{\theta-2}
 	\big\langle f\big(u(s), \mathcal{L}_{u(s)}\big), u(s) \big\rangle
 	\, ds                                                \nonumber\\
 	\le &
 	\|u(\tau- 2\rho)\|^{\theta}
    + \theta \int_{\tau - 2\rho}^{\tau}
 	\|u(s)\|^{\theta-2}
 	\big| \langle g(s), u(s) \rangle \big| \, ds       \nonumber\\
 	& + \theta \int_{\tau - 2\rho}^{\tau}
 	\|u(s)\|^{\theta-2}
 	\big|
 	\big\langle
 	F\big(u(s - \rho), \mathcal{L}_{u(s-\rho)}\big),
 	u(s)
 	\big\rangle
 	\big| \, ds                                        \nonumber\\
 	& + \sup_{r \in [\tau - 2\rho,\tau]}
 	\left|
 	\theta \int_{\tau - 2\rho}^{r}
 	\|u(s)\|^{\theta-2}
 	\big\langle
 	\sum_{k \in \mathbb{N}}
 	\big( \sigma_k(u(s), \mathcal{L}_{u(s)}) + h_k(s) \big)
 	\, dW_k(s),
 	u(s)
 	\big\rangle
 	\right|                                              \nonumber\\
 	& + \frac{\theta(\theta-1)}{2}
 	\int_{\tau - 2\rho}^{\tau}
 	\|u(s)\|^{\theta-2}
 	\sum_{k \in \mathbb{N}}
 	\|\sigma_k(u(s), \mathcal{L}_{u(s)}) + h_k(s)\|^2 \, ds
 	\nonumber\\
 	& + \sup_{r \in [\tau - 2\rho,\tau]}
 	\Bigg| 
 	\int_{\tau - 2\rho}^{r} \int_{
 	y\in \mathbb{Y}
 	}
 	\|u(s-)\|^{\theta-2}
 	\big\langle
 	\widetilde{\sigma}\big(u(s-), \mathcal{L}_{u(t)}, y\big)
 	+ \widetilde{h}(s,y),
 	u(s-)
 	\big\rangle
 	\,\widetilde{N}(ds,dy)
 	\Bigg|                                               \nonumber\\
 	& + \int_{\tau - 2\rho}^{\tau} \int_{y \in \mathbb{Y} } 
 	 \Big|
 	                \big\| u(s-)
 	                          + \sqrt{\varepsilon} 
 	                               \widetilde{\sigma}\big( u(s-), \mathcal{L}_{u(t)}, y \big)
 	                         + \sqrt{\varepsilon} \widetilde{h}(s,y)
 	                \big\|^{\theta}
 	               - \|u(s-)\|^{\theta}   \nonumber\\
 	& \qquad\qquad\qquad
 	               - \theta \|u(s-)\|^{\theta-2}
 	                       \sqrt{\varepsilon}
 	                           \big\langle
 	                                      \widetilde{\sigma}\big(u(s-), \mathcal{L}_{u(t)}, y\big)
 	                                     + \widetilde{h}(s,y),
 	                                     u(s-)
 	                           \big\rangle
 	  \Big| N(ds,dy) .
 \end{align}
 For the third term on the right-hand side of \eqref{pue-9}, 
 by \eqref{F3} we have for $t\ge 3\rho$,
 \begin{align}
 	& \mathbb{E}\Biggl[
 	\theta \int_{\tau - 2\rho}^{\tau}  \|u(s)\|^{\theta-2}
 	\bigl|
 	\langle F(u(s-\rho), \mathcal{L}_{u(s-\rho)}), u(s) \rangle 
 	\bigr| ds
 	\Biggr]
 	\nonumber \\
 	\le & \theta 
 	\mathbb{E}\Biggl[
 	\int_{\tau - 2\rho}^{\tau}
 	\|u(s)\|^{\theta-1}
 	\bigl\|F(u(s-\rho),\mathcal{L}_{u(s-\rho)})\bigr\|
 	ds\Biggr]
 	\nonumber \\
 	\le &
 	\theta \mathbb{E}\Biggl[
 	\int_{\tau - 2\rho}^{\tau} 
 	\|u(s)\|^{\theta-1}
 	\Bigl(
 \sqrt{2}	\|F(0,\delta_0)\|
 	+2 \|L_{F}\|_{\ell^\infty} \|u(s-\rho)\|
 	+2 \|L_{F}\|\, \mathbb{W}_2(\mathcal{L}_{u(s-\rho)}, \delta_0) 
 	\Bigr) ds
 	\Biggr]
 	\nonumber \\
 	\le &
 	\beta  
 	\mathbb{E}\Biggl[
 	\int_{\tau - 2\rho}^{\tau} \|u(s)\|^{\theta} ds \Biggr]
 	+ \Bigl( \tfrac{2(\theta - 1)}{\beta} \Bigr)^{\theta - 1}
 2^{\frac{\theta}{2}}	\|F(0,\delta_0)\|^{\theta}\,2\rho
 	\nonumber \\
 	& + \Bigl( \tfrac{2(\theta - 1)}{\beta} \Bigr)^{\theta - 1}
 2^{\theta}	\|L_{F}\|_{\ell^\infty}^{\theta}
 	\mathbb{E}\Biggl[
 	\int_{\tau - 2\rho}^{\tau} \|u(s-\rho)\|^{\theta} ds
 	\Biggr]
 	\nonumber \\
 	& + 2 (\theta-1)\|L_F\| \,
 	\mathbb{E}\Biggl[
 	\int_{\tau - 2\rho}^{\tau} \|u(s)\|^{\theta} ds \Biggr]
 	+ 2\|L_F\| \,
 	\mathbb{E}\Biggl[
 	\int_{\tau - 2\rho}^{\tau} \|u(s-\rho)\|^{\theta} ds \Biggr]
 	\nonumber \\
 	\le &
 	\bigl(\beta + 2(\theta -1)\|L_F\|\bigr)
 	\mathbb{E}\Biggl[
 	\int_{\tau - 2\rho}^{\tau} \|u(s)\|^{\theta} ds \Biggr]
    + \Bigl( \tfrac{2(\theta - 1)}{\beta} \Bigr)^{\theta - 1}2^{\frac{\theta}{2}}
 	\|F(0,\delta_0)\|^{\theta}\, 2\rho
 	\nonumber \\
 	& + \Bigl(
 	\Bigl( \tfrac{2(\theta - 1)}{\beta} \Bigr)^{\theta - 1}
 2^{\theta}	\|L_{F}\|_{\ell^\infty}^{\theta}
 	+2 \|L_{F}\|
 	\Bigr)
 	\mathbb{E}\Biggl[
 	\int_{\tau - 3\rho}^{\tau - \rho} \|u(s)\|^{\theta} ds \Biggr] .
 \end{align}
 By Young's inequality,
 the BDG inequality
 and 
 \eqref{sigma3},  
 we obtain the following
 estimate for the fourth term on the right-hand side of \eqref{pue-9}:
 \begin{align}
 	& \mathbb{E}\Biggl[
 	\sup_{r \in [\tau - 2\rho,\tau]}
 	\Biggl| \theta \int_{\tau - 2\rho}^{r} \|u(s)\|^{\theta-2}
 	\big\langle
 	\sum_{k \in \mathbb{N}}
 	\bigl( \sigma_k(u(s),\mathcal{L}_{u(s)}) + h_k(s) \bigr) dW_k(s),
 	u(s)
 	\big\rangle
 	ds \Biggr|
 	\Biggr]
 	\nonumber \\
 	&\le
 	\theta C_4 \,
 	\mathbb{E}\Biggl[
 	\biggl(
 	\int_{\tau - 2\rho}^{\tau}
 	\|u(s)\|^{2\theta-2}
 	\sum_{k \in \mathbb{N}}
 	\bigl\|\sigma_k(u(s),\mathcal{L}_{u(s)}) + h_k(s)\bigr\|^{2}
 	ds
 	\biggr)^{\frac12}
 	\Biggr]
 	\nonumber \\
 	&\le
 	\theta \,
 	\mathbb{E}\Biggl[
 	\biggl(
 	\int_{\tau - 2\rho}^{\tau}
 	\|u(s)\|^{2\theta-2}
 	\Bigl(
 	\tfrac{\lambda_1}{4\theta} \|u(s)\|_{p}^{p}
 	+ 8C_4 \|L\|^{2}\,\mathbb{E}\bigl[\|u(s)\|^{2}\bigr]
 	+ C_5
 	 + 2C_4 \|h(s)\|^{2}
 	\Bigr)
 	ds
 	\biggr)^{\frac12}
 	\Biggr]
 	\nonumber \\
 	&\le
 	\theta \,
 	\mathbb{E}\Biggl[
 	\Bigl(
 	\sup_{s \in [\tau- 2\rho,\tau]}
 	\|u(s)\|^{\theta}
 	\Bigr)^{\frac12}
 	\nonumber \\
 	& \qquad \qquad  \times
 	\biggl(
 	\int_{\tau - 2\rho}^{\tau}
 	\|u(s)\|^{\theta-2}
 	\Bigl(
 	\tfrac{\lambda_1}{4 \theta} \|u(s)\|_{p}^{p}
 	+ 8C_4 \|L\|^{2}\,\mathbb{E}\bigl[\|u(s)\|^{2}\bigr]
 	+ C_5 
 	+ 2C_4 \|h(s)\|^{2}
 	\Bigr)
 	ds
 	\biggr)^{\frac12}
 	\Biggr]
 	\nonumber \\
 	&\le
 	\frac{1}{4} \,
 	\mathbb{E}\Biggl[
 	\sup_{s \in [\tau - 2\rho,\tau]}
 	\|u(s)\|^{\theta}
 	\Biggr]
 	\nonumber \\
 	&\quad
 	+{\theta}^2
 	\mathbb{E}\Biggl[
 	\int_{\tau - 2\rho}^{\tau}
 	\|u(s)\|^{\theta-2}
 	\Bigl(
 	\tfrac{\lambda_1}{4 \theta} \|u(s)\|_{p}^{p}
 	+ 8C_4 \|L\|^{2}\,\mathbb{E}\bigl[\|u(s)\|^{2}\bigr]
 	+ C_5
 	+ 2C_4 \|h(s)\|^{2}
 	\Bigr)
 	ds
 	\Biggr]
 	\nonumber \\
 	&\le
 	\frac{1}{4} \,
 	\mathbb{E}\Biggl[
 	\sup_{s \in [\tau - 2\rho,\tau]}
 	\|u(s)\|^{\theta}
 	\Biggr]
 	+
 	\frac{\lambda_1 \theta}{4 }
 	\mathbb{E}\Biggl[
 	\int_{\tau - 2\rho}^{\tau}
 	\|u(s)\|^{\theta-2} \|u(s)\|_{p}^{p} ds
 	\Biggr]
 	\nonumber \\
 	&\quad
 	+ 8\theta^2 C_4 \|L\|^{2}
 	\mathbb{E}\Biggl[
 	\int_{\tau - 2\rho}^{\tau}
 	\|u(s)\|^{\theta} ds
 	\Biggr]
 	+ \theta  (\theta-2)
 	\Bigl(
 	C_5 + 2C_4
 	\Bigr)
 	\mathbb{E}\Biggl[
 	\int_{\tau - 2\rho}^{\tau}
 	\|u(s)\|^{\theta} ds
 	\Biggr]
 	\nonumber \\
 	&\quad
 	+ 4 \theta C_5
 	  \rho
 	+ 4 \theta C_4
 	\int_{\tau - 2\rho}^{\tau}
 	\|h(s)\|^{\theta} ds,
 \end{align}
 where we have used
 \eqref{sigma3}
 with  $\tilde{\theta} = 2C_4$.
By the BDG inequality
and  \eqref{sigma3}, we have
\begin{align}\label{pue-10}
	& \mathbb{E}\Biggl[
	\sup_{r \in [\tau - 2\rho,\tau]}
	\Bigl|
	\theta 
	 \int_{\tau - 2\rho}^{r} \int_{
	 \mathbb{Y}
	 } 
	\|u(s-)\|^{\theta-2}
	\big\langle
	\widetilde{\sigma}(u(s-),\mathcal{L}_{u(t)}, y) 
	+ \widetilde{h}(s),
	u(s-)
	\big\rangle 
	\widetilde{N}(ds,dy)
	\Bigr|
	\Biggr]
	\nonumber \\
	&\le
	\theta C_{6 }
	\mathbb{E}\Biggl[
	\Biggl(
	\int_{\tau - 2\rho}^{\tau} \int_{
	 \mathbb{Y}
	} 
	\|u(s-)\|^{2\theta-2}
	\bigl\|
	\widetilde{\sigma}(u(s-),\mathcal{L}_{u(s)}, y) 
	+ \widetilde{h}(s,y)
	\bigr\|^{2}
	N(ds, dy)
	\Biggr)^{\frac{1}{2}}
	\Biggr]
	\nonumber \\
	&   
	\le \frac{1}{4} 
	          \mathbb{E}
	            \left[\sup_{[\tau - 2\rho, \tau]} \|u(s)\|^\theta
	            \right]
	     + \theta^2 C_6^2
	                \mathbb{E}
	                       \Biggl[ \int_{\tau - 2\rho}^{\tau} 
	                                       \int_{ \mathbb{Y} } 
	                                                 \|u(s)\|^{\theta-2}
	                                                 \bigl\| \widetilde{\sigma}(u(s),\mathcal{L}_{u(s)}, y) 
	                                                                + \widetilde{h}(s,y)
                                                   	 \bigr\|^{2}
	                                        \nu(dy) ds
	                      \Biggr]   
	\nonumber \\	
	&\le 
	        \frac{1}{4}
	             \mathbb{E}
	                    \Biggl[ \sup_{s \in [\tau - 2\rho,\tau]}
  	                                     \|u(s)\|^{\theta}
	                    \Biggr]
	       + \frac{\lambda_1 \theta}{16 }
	                \mathbb{E}
	                       \Biggl[ \int_{\tau - 2\rho}^{\tau}
	                                       \|u(s)\|^{\theta-2}
	                                       \|u(s)\|_{p}^{p} ds
	                       \Biggr]  
	                       \nonumber \\
	&\quad  
	   + 8\theta^2 C_6^2 \|L\|^2 
	        \mathbb{E}
	             \Biggl[ \int_{\tau - 2\rho}^{\tau}
                             	\|u(s)\|^{\theta} ds
	            \Biggr]
	   + \theta(\theta-2)
	           \Bigl( C_7 + 2 C_6^2
	           \Bigr)
	           \mathbb{E}
	                 \Biggl[ \int_{\tau - 2\rho}^{\tau}
	                                 \|u(s)\|^{\theta} ds
	                \Biggr]   
	                \nonumber \\
	&\quad  
	   + 2 \theta C_7 \rho
	   + 4 \theta C_6^2
	         \int_{\tau - 2\rho}^{\tau}
	             \| \widetilde{h}(s) \|^{\theta}_{L^2(\mathbb{Y}, \nu; \ell^2) } 
	             ds,   
\end{align}
where we have used
 \eqref{sigma3}
 with  $\tilde{\theta} = 2C_6^2$.

Following the argument of  \eqref{moment_h1}-\eqref{moment_h2} and \eqref{moment_h3}-\eqref{moment_h6},
  by \eqref{pue-9}-\eqref{pue-10}  we obtain that for all $t \ge {  3}\rho$,  
 \begin{align}\label{pue-12}
 	&  
 	\mathbb{E}\left[
 	\sup_{r \in [\tau - 2\rho,\tau]} \|u(r)\|^{\theta}
 	\right] 
 	\le  C_8
 	\mathbb{E}\left[ \|u(\tau - 2\rho)\|^{\theta} \right]
 	+ C_8 
 	\mathbb{E}\left[
 	\int_{\tau- 3\rho}^{\tau}
 	\|u(s)\|^{\theta} ds
 	\right]
 	 \nonumber\\
 	&\quad
 	+C_8
 	+C_8 
 	   \int_{\tau - 2\rho}^{\tau}
 	\big(
 	\|g(s)\|^{\theta}
 	+ \|h(s)\|^{\theta}
 	+ \|\widetilde{h}(s)\|^{\theta}_{L^\theta(\mathbb{Y}, \nu; \ell^2)}
 	+ \|\widetilde{h}(s)\|^{\theta}_{L^{2}(\mathbb{Y}, \nu; \ell^2)}
 	\big) ds,
 	 \end{align}
 where $C_8
 =C_8(\theta, \beta, p,q, L)>0$ 
 is a constant.
 
 By \eqref{pue-11} and \eqref{pue-12}
 we infer that
 there exists $T_1 = T_1(\tau, D) 
 \ge T$ such that for all $t \ge T_1$ and $r \in [\tau -2\rho, \tau]$,
\begin{align} \label{pue-13}
	& \mathbb{E}\left[
	\sup_{r \in [\tau - 2\rho,\tau]}
	\|u(r;\tau-t,\varphi)\|^{\theta}
	\right]   \nonumber\\
	\le\;&
	C_9 (1+\rho)
  \Bigg(
	1
	+ \int_{-\infty}^{\tau} e^{\beta(s-r)}
	\Big(
	\|g(s)\|^{\theta}
	+ \|h(s)\|^{\theta}
	+ \|\widetilde{h}(s)\|^{\theta}_{L^{\theta}(\mathbb{Y}, \nu; \ell^2)}
	+ \|\widetilde{h}(s)\|^{\theta}_{L^{2}(\mathbb{Y}, \nu; \ell^2)}
	\Big) ds                       
	\Bigg)   \nonumber\\
	& \quad
  +C_8 
 	   \int_{\tau - 2\rho}^{\tau}
 	\big(
 	\|g(s)\|^{\theta}
 	+ \|h(s)\|^{\theta}
 	+ \|\widetilde{h}(s)\|^{\theta}_{L^{\theta}(\mathbb{Y}, \nu; \ell^2)}
 	+ \|\widetilde{h}(s)\|^{\theta}_{L^{2}(\mathbb{Y}, \nu; \ell^2)}
 	\big) ds,
 	 \end{align}
 where $C_9
 =C_9(\theta, \beta, p,q, L)>0$ 
 is a constant independent of $\rho$,
 $\varepsilon$, $D$  and $\tau$.
 By  \eqref{pue-11} and \eqref{pue-13}, we complete 
  the proof.
\end{proof}

 The next lemma
 is concerned 
 with  the existence of $ \mathcal{D} $-pullback absorbing set
 for the non-autonomous cocycle $\Phi$ associated with \eqref{lmvlds-2}.

\begin{lemma}\label{pas}
   Suppose that {\bf(H1)}-{\bf(H4)}, \eqref{thetaassum2}, \eqref{dissip1} and \eqref{dissip2} hold.
   Then, for all $\varepsilon \in (0,\varepsilon^{*})$, the non-autonomous cocycle $\Phi$ has a closed $\mathcal{D}$-pullback absorbing set
       $K = \{ K(\tau): \tau \in \mathbb{R} \} \in \mathcal{D}$
   which is given by
   \begin{align*}
      K(\tau) =
                \left\{ \mu \in \mathcal{P}_\theta(D_{\rho}):
                                \int_{D_{\rho}} \| \xi \|_{\infty}^{\theta} \mu(d \xi)
                                \le R_{\tau}
                \right\},
   \end{align*}
   where $  	R_{\tau} >0$ is a  number given by 
        \begin{align}
        	R_{\tau} = &C_{\beta,\theta,
        	\rho}+ C_{\beta,\theta, \rho} \int_{-\infty}^{\tau} e^{\beta (s-\tau) }
        \left( \|g(s)\|^{\theta} + \|h(s)\|^{\theta} 
       + \|\widetilde{h}(s)\|^{\theta}_{L^{\theta}(\mathbb{Y}, \nu; \ell^2)} 
        \right) ds  , 
        \end{align}
   and $C_{\beta,\theta, \rho}$ and $\varepsilon^*$ are 
   the same
    constants as in Lemma \ref{pue}.
\end{lemma}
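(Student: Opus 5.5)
The plan is to read Lemma \ref{pas} directly off the uniform estimate in Lemma \ref{pue}, and then check the three properties of $K$ in turn: membership in $\mathcal{D}$, closedness, and pullback absorption.

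\emph{Absorption.} Fix $\tau\in\mathbb{R}$ and $D\in\mathcal{D}$, and let $T=T(\tau,D)\ge 3\rho$ be given by Lemma \ref{pue}. Take $t\ge T$, $\mu\in D(\tau-t)$, and $\varphi\in L^\theta(\Omega,\mathcal{F}_{\tau-t};D_\rho)$ with $\mathcal{L}_\varphi=\mu$. By definition of $\Phi$, the measure $\Phi(t,\tau-t)\mu$ is the law of the segment $u_\tau(\cdot;\tau-t,\varphi)$. Since $\rho>0$ and $d^0(\xi,0)=\|\xi\|_\infty$, we have
\begin{align*}
\int_{D_\rho}\|\xi\|_\infty^\theta\,\Phi(t,\tau-t)\mu(d\xi)=\mathbb{E}\Big[\sup_{s\in[\tau-\rho,\tau]}\|u(s;\tau-t,\varphi)\|^\theta\Big]\le \mathbb{E}\Big[\sup_{s\in[\tau-2\rho,\tau]}\|u(s;\tau-t,\varphi)\|^\theta\Big].
\end{align*}
Lemma \ref{pue} bounds the last quantity by $R_\tau$, uniformly in $t\ge T$, $\mu\in D(\tau-t)$ and $\varepsilon\in(0,\varepsilon^*)$. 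This gives $\Phi(t,\tau-t)D(\tau-t)\subseteq K(\tau)$ for all $t\ge T$.

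One point needs care: the bound in Lemma \ref{pue} also contains the $\|\widetilde h\|_{L^2(\mathbb{Y},\nu;\ell^2)}^\theta$ term, which the formula for $R_\tau$ omits. I will either include that term in $R_\tau$ or note that it is already covered by the constant through \eqref{dissip2}. In either case $R_\tau<\infty$ by \eqref{dissip2}.

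\emph{Membership in $\mathcal{D}$.} First, $\|K(\tau)\|^\theta_{\mathcal{P}_\theta(D_\rho)}\le R_\tau$, and $K(\tau)$ is nonempty since $\delta_0\in K(\tau)$. For the temperedness condition \eqref{temper}, note that
\begin{align*}
e^{\beta\tau}R_\tau=C_{\beta,\theta,\rho}\,e^{\beta\tau}+C_{\beta,\theta,\rho}\int_{-\infty}^{\tau}e^{\beta s}\big(\|g(s)\|^\theta+\cdots\big)\,ds.
\end{align*}
The first term tends to $0$ as $\tau\to-\infty$. The second term is the tail of a convergent integral, because \eqref{dissip2} at any fixed $\tau_0$ gives $\int_{-\infty}^{\tau_0}e^{\beta s}(\cdots)\,ds<\infty$; hence it also tends to $0$ by dominated convergence.

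\emph{Closedness.} This is the step I expect to need the most care, since $\mathcal{P}_\theta(D_\rho)$ with $d_{\mathcal{P}(D_\rho)}$ is not complete and closedness must be checked relative to weak convergence. Suppose $\mu_n\in K(\tau)$ and $\mu_n\to\mu$ in $\mathcal{P}_\theta(D_\rho)$, that is, weakly. The map $\xi\mapsto\|\xi\|_\infty=d^0(\xi,0)$ is continuous on $(D_\rho,d^0)$. So for each $M>0$ the function $\xi\mapsto\|\xi\|_\infty^\theta\wedge M$ is bounded and continuous, and weak convergence gives
\begin{align*}
\int_{D_\rho}\big(\|\xi\|_\infty^\theta\wedge M\big)\,\mu(d\xi)=\lim_{n\to\infty}\int_{D_\rho}\big(\|\xi\|_\infty^\theta\wedge M\big)\,\mu_n(d\xi)\le R_\tau.
\end{align*}
Letting $M\to\infty$ and applying monotone convergence yields $\int_{D_\rho}\|\xi\|_\infty^\theta\,\mu(d\xi)\le R_\tau$, so $\mu\in K(\tau)$. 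Hence $K(\tau)$ is closed.
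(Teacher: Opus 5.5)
Your proposal is correct and is precisely the argument the paper leaves implicit when it calls the lemma an immediate consequence of Lemma~\ref{pue}; your truncation argument for closedness even shows that $K(\tau)$ is closed in all of $\mathcal{P}(D_\rho)$, which is the form actually used at the end of the proof of Lemma~\ref{pac}. For the $\|\widetilde h(s)\|^\theta_{L^2(\mathbb{Y},\nu;\ell^2)}$ term, only your first option is valid and it must be added to $R_\tau$, where it is finite and tempered by \eqref{dissip2}: its weighted integral depends on $\tau$, so it cannot be absorbed into the $\tau$-independent constant $C_{\beta,\theta,\rho}$, and for a general $\sigma$-finite $\nu$ the $L^2(\mathbb{Y},\nu)$ norm is not controlled by the $L^\theta(\mathbb{Y},\nu)$ norm.
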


\begin{proof}
This is an immediate consequence of
  Lemma \ref{pue}, and  the
  details  are omitted.
 \end{proof}

\subsection{Pullback asymptotic compactness}
 
 In this subsection,   we 
  prove the $\mathcal{D}$-pullback asymptotic compactness of 
  the cocycle $\Phi$, 
  which along with
  the existence of
  $\mathcal{D}$-pullback
  absorbing sets implies
  the existence of
  measure attractors.
   
 From now on, we always assume
 $2 < \theta < {\tfrac {2(p-2)}{q-2} }$ and
 $\varepsilon \in (0, \varepsilon^*)$
 where $\varepsilon^*$ is the  same constant  as in Lemma \ref{pue}.

 Next, we show the uniform pullback estimates on the tails of solutions to \eqref{lmvlds-2} in $L^\infty(\tau-2\rho, \tau; L^2(\Omega, \ell^2) )$
 with $\tau
 \in\mathbb{R}$.

\begin{lemma}\label{upte}
   Suppose that {\bf(H1)}-{\bf(H4)}, \eqref{thetaassum2}, \eqref{dissip1} and \eqref{dissip2} hold.
   Then for any $\tau \in \mathbb{R}$, $\delta > 0$ and $D = \{D(t): t \in \mathbb{R} \} \in \mathcal{D}$,
   there exist constants $T = T(\tau, \delta, D)  \ge { 3}\rho$ 
   and $m_0 = m_0(\tau, \delta) \ge 1$
   such that for all $t \ge T$ and $m \ge m_0$, the solution $u$ of \eqref{lmvlds-2} satisfies
   \begin{align*}
      \sup_{r \in [\tau-2\rho, \tau]} \sum_{|i| \ge m}
            \mathbb{E} \left[ |u_i(r; \tau-t, \varphi)|^2 \right]
      < \delta,
   \end{align*}
   where $\varphi \in L^\theta(\Omega, \mathcal{F}_{\tau-t}; D_{\rho})$ with law $\mathcal{L}_{\varphi} \in D(\tau-t)$.
\end{lemma}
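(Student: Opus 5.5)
We use the standard cut-off function technique for lattice systems. Fix a smooth $\zeta:\mathbb{R}\to[0,1]$ with $\zeta(s)=0$ for $|s|\le 1$, $\zeta(s)=1$ for $|s|\ge 2$, and set $\zeta_m=(\zeta(i/m))_{i\in\mathbb{Z}}$. We apply It\^o's formula to $e^{\beta r}\sum_{i}\zeta_{m,i}|u_i(r)|^2$ on $[\tau-t,r]$ with $r\in[\tau-2\rho,\tau]$, and take expectations, so the $dW$ and $\widetilde{N}$ martingale terms vanish (after localization by a stopping time). The goal is an inequality of the form
\begin{align*}
\mathbb{E}\Big[\sum_i \zeta_{m,i}|u_i(r)|^2\Big]
\le e^{\beta(\tau-t-r)}\mathbb{E}\big[\|\varphi\|_\infty^2\big]
+ \frac{C}{m}\int_{\tau-t}^{r} e^{\beta(s-r)}\mathbb{E}\big[\|u(s)\|^2+\|u(s)\|_p^p+1\big]ds
+ C\int_{-\infty}^{\tau} e^{\beta(s-\tau)}\,\mathrm{Tail}_m(s)\,ds + C\,\mathrm{Tail}_m,
\end{align*}
where the tail terms collect $\sum_{|i|\ge m}$ of $g,h,\widetilde h,\phi_1,F(0,\delta_0),L_F,\alpha,\beta,L_{\sigma},L_{\tilde\sigma}$.

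The terms are handled as follows. For the $p$-Laplacian, $\langle Au,\zeta_m u\rangle=\sum_i |(Bu)_i|^{p-2}(Bu)_i\,(B(\zeta_m u))_i$. Writing $B(\zeta_m u)_i=\zeta_{m,i+1}(Bu)_i+(\zeta_{m,i+1}-\zeta_{m,i})u_i$, the first part is nonnegative. Since $|\zeta_{m,i+1}-\zeta_{m,i}|\le C/m$, the second part is bounded by $\frac{C}{m}(\|Bu\|_p^p+\|u\|_p^p)\le \frac{C}{m}\|u\|_p^p$. The drift $f$ is treated via \eqref{f1}: it gives $-\lambda_1\sum\zeta_{m,i}|u_i|^p+\sum_{|i|\ge m}|\phi_{1,i}|(1+|u_i|^2+\mathbb{E}\|u\|^2)$, where $\sum_{|i|\ge m}|\phi_{1,i}|\to0$ and $\mathbb{E}\|u\|^2$ is bounded via Lemma \ref{pue} (the case $\theta=2$ suffices). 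The damping $\lambda$ and the condition \eqref{31_1-1} absorb the $\|\phi_1\|_{\ell^\infty}$ part and the delay term $F$. For $F$ we use $|F_i(s,\mu)|\le |F_i(0,\delta_0)|+L_{F,i}(|s|+\mathbb{W}_2(\mu,\delta_0))$. This yields $\|L_F\|_{\ell^\infty}\sum\zeta_{m,i}|u_i(s-\rho)|^2$, which after the shift $s\mapsto s+\rho$ costs a factor $e^{\beta\rho}$ (this is exactly where \eqref{31_1-1} is used), plus the tail terms $\sum_{|i|\ge m}(|F_i(0,\delta_0)|^2+L_{F,i}^2)\,\mathbb{E}\|u\|^2$ and an initial-segment contribution bounded by $e^{\beta(\tau-t)}\mathbb{E}\|\varphi\|_\infty^2$. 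For the noise quadratic variations we use \eqref{siglinear-1}--\eqref{siglinear-2} restricted to $|i|\ge m$: the term $\varepsilon\sum_{|i|\ge m}L^2_{\sigma,k,i}|u_i|^q$ is absorbed into $\frac{\lambda_1}{2}\sum\zeta_{m,i}|u_i|^p$ plus a constant times $\sum_{|i|\ge m}(\text{coefficient})^{p/(p-q)}$, which is a tail of a summable sequence because $L\in\ell^2$ and $p/(p-q)\ge1$ — using $\sum_k L_{\sigma,k,i}^2\le\|L\|^2$ bounded makes this fine. The remaining terms are tails of $\alpha,\beta,h,\widetilde h$.

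Gronwall is applied in the weighted form, as in \eqref{3.12_4}. The first term on the right-hand side vanishes as $t\to\infty$ by temperedness \eqref{temper}. The $\frac{C}{m}$ term is bounded uniformly via Lemma \ref{pue}, which controls $\int e^{\beta(s-r)}\mathbb{E}\|u\|_p^p$ through the $\theta=2$ version of the estimate (or via Hölder from the $\theta$ version), and so is small for large $m$. The tail integrals of $g,h,\widetilde h$ go to $0$ by dominated convergence using \eqref{34_1}. The main obstacle is the delay term. The shifted integral extends to $[\tau-t-\rho,\tau-t]$, which must be controlled by the initial segment, and the uniformity in $r\in[\tau-2\rho,\tau]$ must be kept. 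This is handled by bounding $e^{\beta(s-r)}\le e^{2\beta\rho}e^{\beta(s-\tau)}$ so that all constants are independent of $r$. A second delicate point is ensuring that the cut-off commutator from the $p$-Laplacian is genuinely $O(1/m)$ times a quantity already bounded in Lemma \ref{pue}.
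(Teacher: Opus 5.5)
Your proposal is correct and follows essentially the same route as the paper: a smooth cut-off and It\^o's formula for the weighted tail $e^{\beta r}\sum_i \zeta_{m,i}|u_i(r)|^2$ (the paper uses $\|\vartheta_m u\|^2$); the $O(1/m)\,\|u\|_p^p$ commutator bound for the discrete $p$-Laplacian; the delay term handled by a shifted Young inequality whose coefficients are dominated by \eqref{31_1-1}; Young's inequality to turn the $|u_i|^q$ noise contributions into cut-off-weighted $|u_i|^p$ terms plus summable tails; and Lemma \ref{pue} with $\theta=2$ to bound $\int_{\tau-t}^{r} e^{\beta(s-r)}\mathbb{E}\big[\|u(s)\|^2+\|u(s)\|_p^p\big]ds$ uniformly in $t$. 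Two small points: when you absorb the $|u_i|^q$ terms, keep the weight $\zeta_{m,i}$ rather than replacing it by the indicator of $|i|\ge m$, so that they are genuinely absorbed by $\lambda_1\sum_i\zeta_{m,i}|u_i|^p$; and no Gronwall step is actually needed, since \eqref{31_1-1} makes the coefficient of $\int e^{\beta s}\mathbb{E}\big[\sum_i\zeta_{m,i}|u_i(s)|^2\big]ds$ negative and that term can simply be dropped, exactly as in the paper.
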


\begin{proof}
 Consider a smooth function $\vartheta: \mathbb{R} \to [0,1]$ satisfying
 \begin{align}\label{smoothf}
    \vartheta(s) = 0\ \text{for}\ |s| \le 1; \ \text{and}\
    \vartheta(s) = 1\ \text{for}\ |s| \ge 2.
 \end{align}
 For given $m \in \mathbb{N}$,
 denote by $\vartheta_m = ( \vartheta(\frac{i}{m}) )_{i \in \mathbb{Z}}$
 and $\vartheta_m u = (\vartheta(\frac{i}{m}) u_i)_{i \in \mathbb{Z}}$
 for $u = (u_i)_{i \in \mathbb{Z}} \in \ell^2$.
 Let $\vartheta_m u(r) = \vartheta_m u(r; \tau - t,\varphi)$ for $r \ge \tau-t$.
 Then by \eqref{lmvlds-2} we have
 \begin{align}\label{upte-1}
       & d \vartheta_m u(r)
          + \vartheta_m Au(r) dr
          + \lambda \vartheta_m u(r) dr
          + \vartheta_m f(u(r),\mathcal{L}_{u(r)}) dr \nonumber \\
   = & \vartheta_m g(r) dr
          + \vartheta_m F( u(r - \rho), \mathcal{L}_{u(r - \rho)} ) dr
          + \sqrt{\varepsilon} 
                 \sum_{k \in \mathbb{N} } 
                     \left( \vartheta_m \sigma_k(u(r), \mathcal{L}_{u(r)}) 
                               + \vartheta_m h_k(r) 
                    \right) dW_k(r)  \nonumber \\
      & + \sqrt{\varepsilon}
                  \int_{y \in \mathbb{Y} }
                   \left( \vartheta_m \widetilde{\sigma}(u(r-), \mathcal{L}_{u(r)}, y) 
                            + \vartheta_m \widetilde{h}(r, y) 
                   \right) \widetilde{N}(dr,dy).
 \end{align}
 Applying It\^o's formula to \eqref{upte-1}, we get that for all $r \in [\tau-2\rho, \tau]$ and $t \ge 3\rho$,
 \begin{align}\label{upte-2}
        & e^{\beta r} \mathbb{E} \left[ \| \vartheta_m  u(r)\|^2 \right]
          + 2 \mathbb{E}
                  \left[ \int_{\tau-t}^{r} e^{\beta s}
                               \langle \vartheta_m A u(s), \vartheta_m u(s) \rangle ds
                  \right]  \nonumber\\
        & + 2 \mathbb{E}
                 \left[ \int_{\tau-t}^{r} e^{\beta s}
                              \langle \vartheta_m f(u(s), \mathcal{L}_{u(s)}),
                                      \vartheta_m u(s)
                              \rangle ds
                 \right]  \nonumber\\
      = & e^{\beta (\tau-t)} \mathbb{E} \left[ \|\vartheta_m \varphi(0)\|^2 \right]
          + (\beta - 2 \lambda) \mathbb{E}
                \left[ \int_{\tau-t}^{r} e^{\beta s} \|\vartheta_m  u(s)\|^2 ds \right]
          + 2 \mathbb{E}
                 \left[ \int_{\tau-t}^{r} e^{\beta s}
                              \langle \vartheta_m g(s), \vartheta_m u(s) \rangle ds
                 \right]  \nonumber\\
        & + 2 \mathbb{E}
                 \left[ \int_{\tau-t}^{r} e^{\beta s}
                              \langle \vartheta_m F(u(s-\rho), \mathcal{L}_{u(s-\rho)}),
                                      \vartheta_m u(s)
                              \rangle ds
                 \right]  \nonumber \\
        & + \varepsilon \mathbb{E}
               \left[ \sum_{k \in \mathbb{N} } \int_{\tau-t}^{r} e^{\beta s}
                            \|\vartheta_m \sigma_k(u(s), \mathcal{L}_{u(s)})
                              + \vartheta_m h_k(s)\|^2 ds
               \right]  \nonumber \\
        & + \varepsilon \mathbb{E}
               \left[ \int_{\tau-t}^{r} \int_{y \in \mathbb{Y} } e^{\beta s}
                            \|\vartheta_m \widetilde{\sigma} (u(s), \mathcal{L}_{u(s)},y)
                                        + \vartheta_m \widetilde{h}(s, y)\|^2
                       \nu(dy) ds
               \right].
 \end{align}

 For the second term on the left-hand side of \eqref{upte-2},
 by the Young inequality, we have
 \begin{align}\label{upte-3}
        & 2 \mathbb{E}
               \left[ \int_{\tau-t}^{r}  e^{\beta s}
                            \langle \vartheta_m A u(s), \vartheta_m u(s) \rangle ds
               \right]  \nonumber\\
      = & 2 \mathbb{E}
               \left[ \int_{\tau-t}^{r}  e^{\beta s}
                  \sum_{i \in \mathbb{Z}} |(Bu(s))_i|^{p-2} (Bu(s))_i
                        (B (\vartheta_m^2 u(s) ) )_i ds
               \right]  \nonumber\\
      = & 2 \mathbb{E}
               \left[ \int_{\tau-t}^{r}  e^{\beta s}
                  \sum_{i \in \mathbb{Z}}
                        |u_{i+1}(s)- u_i(s)|^{p-2}
                        (u_{i+1}(s)- u_i(s) )
                        ({\vartheta}^2(\frac{i+1}{m}) u_{i+1}(s) -{\vartheta}^2(\frac{i}{m}) u_{i}(s) ) ds
               \right]  \nonumber\\
   \ge & 2 \mathbb{E}
               \left[ \int_{\tau-t}^{r}  e^{\beta s}
                  \sum_{i \in \mathbb{Z}}
                        |u_{i+1}(s)- u_i(s)|^{p-2}
                        (u_{i+1}(s)- u_i(s) ) u_{i+1}(s)
                        ({\vartheta}^2(\frac{i+1}{m}) -{\vartheta}^2(\frac{i}{m}) ) ds
               \right]  \nonumber\\
   \ge & - 4 \mathbb{E}
               \left[ \int_{\tau-t}^{r}  e^{\beta s}
                  \sum_{i \in \mathbb{Z}}
                        |u_{i+1}(s)- u_i(s)|^{p-1}
                        |u_{i+1}(s)|
                        | \vartheta(\frac{i+1}{m}) - \vartheta(\frac{i}{m}) | ds
               \right]  \nonumber\\
   \ge & -\frac{2^{p+1} c_1}{m}
              \int_{\tau-t}^{r} e^{\beta s} \mathbb{E} \left[ \|u(s)\|_p^p \right] ds,
 \end{align}
 where $c_1>0$ is
 an upper  bound of the derivative of $\theta(\cdot)$ and is independent of $m$.

 For the third term on the left-hand side of \eqref{upte-2},
 by \eqref{f1}, we have
 \begin{align}\label{upte-4}
        & 2\mathbb{E}
               \left[ \int_{\tau-t}^{r} e^{\beta s}
                           \langle \vartheta_m f(u(s), \mathcal{L}_{u(s)}), \vartheta_m u(s) \rangle ds
               \right]  \nonumber\\
      = & 2\mathbb{E}
               \left[ \int_{\tau-t}^{r} e^{\beta s}
                            \langle{\vartheta}^2_m f(u(s), \mathcal{L}_{u(s)}),  u(s) \rangle ds
               \right]  \nonumber\\
      = & 2\mathbb{E}
               \left[ \int_{\tau-t}^{r} e^{\beta s}
                         \sum_{i \in \mathbb{Z}}
                              {\vartheta}^2(\frac{i}{m}) f_i(u_i(s), \mathcal{L}_{u(s)}) u_i(s) ds
               \right]  \nonumber\\
   \ge & 2\mathbb{E}
               \left[ \int_{\tau-t}^{r} e^{\beta s}
                            \sum_{i \in \mathbb{Z}}
                                 {\vartheta}^2(\frac{i}{m})
                                  \Big( \lambda_1 |u_i(s)|^p - \phi_{1,i} \left( 1 + |u_i(s)|^2 + \mathcal{L}_{u(s)}(\|\cdot\|^2) \right)
                                  \Big) ds
               \right]  \nonumber\\
   \ge & 2\lambda_1 \mathbb{E}
              \left[ \int_{\tau-t}^{r} e^{\beta s}
                        \sum_{i \in \mathbb{Z}}
                             {\vartheta}^2(\frac{i}{m}) |u_i(s)|^p ds
              \right]
          - \frac{2}{\beta} (e^{\beta r} - e^{\beta(\tau-t)}) \sum_{|i| \ge m} |\phi_{1,i}|  \nonumber\\
        & - 2 \| \phi_1 \|_{\ell^\infty}
              \mathbb{E}
                 \left[ \int_{\tau-t}^{r}  e^{\beta s} \|\vartheta_m u(s)\|^2 ds
                 \right]
          - 2 \sum_{|i| \ge m} |\phi_{1,i}|
                   \mathbb{E}
                      \left[ \int_{\tau-t}^{r} e^{\beta s} \|u(s)\|^2 ds
                      \right].
 \end{align}

 For the third term on the right-hand side of \eqref{upte-2}, by Young's inequality, we get
 \begin{align}\label{upte-5}
        2 \mathbb{E}
             \left[ \int_{\tau-t}^{r} e^{\beta s}
                        \langle \vartheta_m g(s), \vartheta_m u(s) \rangle ds
             \right]
    \le \beta \int_{\tau-t}^{r}  e^{\beta s}
                  \mathbb{E} \left[ \|\vartheta_m u(s)\|^2 \right] ds
          + \frac{1}{\beta} \int_{\tau-t}^{r} e^{\beta s} \sum_{|i| \ge m} |g_i(s)|^2 ds.
 \end{align}

 For the fourth term on the right-hand side of \eqref{upte-2},
 by \eqref{F1}, we obtain that for $t \ge 3\rho$,
 \begin{align}\label{upte-6}
        & 2 \mathbb{E}
                 \left[ \int_{\tau-t}^{r} e^{\beta s}
                              \langle \vartheta_m F(u(s-\rho), \mathcal{L}_{u(s-\rho)}),
                                      \vartheta_m u(s)
                              \rangle ds
                 \right]  \nonumber\\
   \le & 2 \|L_{F}\|_{\ell^\infty} e^{\frac{\beta}{2} \rho}
          \mathbb{E}
            \left[ \int_{\tau-t}^{r} e^{\beta s} \|\vartheta_m u(s)\|^2 ds
            \right]  \nonumber\\
        & + \frac{1}{ 2 \|L_{F}\|_{\ell^\infty} } e^{-\frac{\beta \rho}{2} }
            \mathbb{E}
               \left[ \int_{\tau-t}^{r} e^{\beta s}
                           \| \vartheta_m F(u(s-\rho),\mathcal{L}_{u(s-\rho)})\|^2 ds
               \right]  \nonumber\\
   \le & 2 \|L_{F}\|_{\ell^\infty} e^{\frac{\beta}{2} \rho}
             \mathbb{E}
                    \left[ \int_{\tau-t}^{r} e^{\beta s} \|\vartheta_m u(s)\|^2 ds
                    \right]
             + \frac{1}{\beta \|L_{F}\|_{\ell^\infty} } 
                     e^{-\frac{\beta \rho}{2} } 
                          \|\vartheta_m F(0,\delta_0)\|^2
                               (e^{\beta r} - e^{\beta(\tau-t)})  \nonumber\\
        & + 2 \|L_{F}\|_{\ell^\infty} e^{-\frac{\beta}{2} \rho}
            \mathbb{E}
               \left[ \int_{\tau-t}^{r} e^{\beta s} \|\vartheta_m u(s-\rho)\|^2 ds 
               \right]  \nonumber\\
        & + \frac{2}{\|L_{F}\|_{\ell^\infty} } 
                   e^{- \frac{\beta}{2} \rho} \|\vartheta_m L_{F}\|^2
                           \int_{\tau-t}^{r} e^{\beta s}
                               \mathbb{E} \left[ \|u(s-\rho)\|^2 \right] ds  \nonumber\\
   \le & 4 \|L_{F}\|_{\ell^\infty} e^{\frac{\beta}{2} \rho}
               \mathbb{E}
                      \left[ \int_{\tau-t}^{r} e^{\beta s} \|\vartheta_m u(s)\|^2 ds
                      \right]
           + \frac{1}{\beta \|L_{F}\|_{\ell^\infty} } 
                  e^{- \frac{\beta \rho}{2} } 
                           \|\vartheta_m F(0,\delta_0)\|^2
                                 (e^{\beta r} - e^{\beta(\tau-t)})  \nonumber\\
        & + 2 \|L_{F}\|_{\ell^\infty} e^{\beta(\frac{\rho}{2}+\tau-t) }
            \mathbb{E}
               \left[ \int_{-\rho}^{0} e^{\beta s} \|\vartheta_m \varphi(s)\|^2 ds \right]  \nonumber\\
        & + \frac{2}{\|L_{F}\|_{\ell^\infty} } 
                   e^{\frac{\beta \rho}{2} } 
                         \|\vartheta_m L_{F}\|^2
                             \int_{\tau-t}^{r} e^{\beta s}
                                  \mathbb{E} \left[ \|u(s)\|^2 \right] ds  \nonumber\\            
        & + \frac{2}{ \beta \|L_{F}\|_{\ell^\infty} } 
                     e^{\beta (\frac{\rho}{2} + \tau -t)} 
                            \|\vartheta_m L_{F}\|^2
                                  \mathbb{E}
                                         \left[ \sup_{r \in [-\rho, 0]}\|\varphi(r)\|^2 
                                         \right].
 \end{align}

 For the last two terms on the right-hand side of \eqref{upte-2},
 by \eqref{sigma3} with $\tilde{\theta} = 2$, we obtain
 \begin{align}\label{upte-6}
        & \varepsilon \mathbb{E}
               \left[ \sum_{k \in \mathbb{N} } \int_{\tau-t}^{r} e^{\beta s}
                            \|\vartheta_m \sigma_k(u(s), \mathcal{L}_{u(s)})
                              + \vartheta_m h_k(s)\|^2 ds
               \right]  \nonumber \\
        & + \varepsilon \mathbb{E}
               \left[ \int_{\tau-t}^{r} \int_{y \in \mathbb{Y} } e^{\beta s}
                            \|\vartheta_m \widetilde{\sigma}(u(s), \mathcal{L}_{u(s)},y)
                                        + \vartheta_m \widetilde{h}(s, y)\|^2
                       \nu(dy) ds
               \right]  \nonumber \\
  \le & 2 \sum_{|i|\ge m}
                 \int_{\tau-t}^{r} e^{\beta s}
                     \left( \sum_{k \in \mathbb{N} } |h_{k,i}(s)|^2 
                               + \int_{y \in \mathbb{Y} } |\widetilde{h}_{i}(s,y)|^2 \nu(dy)
                     \right) ds
         + 2 \sum_{|i| \ge m} 
                      \left( \sum_{k \in \mathbb{N} } \alpha_{k,i}^2
                                + \beta_i^2
                      \right)
                      \int_{\tau-t}^{r} e^{\beta s} ds   \nonumber\\
       & + \frac{\lambda_1}{4} \int_{\tau-t}^{r} e^{\beta s}
                                  \mathbb{E}
                                     \left[ \sum_{i \in \mathbb{Z}}
                                                 {\vartheta}^2(\frac{i}{m}) |u_i(s)|^p
                                     \right] ds  \nonumber\\
       & + 2^{\frac{3p+2q}{p-q} } \frac{p-q}{p}
                        \sum_{|i| \ge m} 
                              \Big( \sum_{k \in \mathbb{N} } L^2_{\sigma,k,i} 
                                       + \int_{y \in \mathbb{Y} } (L_{\tilde{\sigma},i}(y) )^2 \nu(dy)
                              \Big)^{ \frac{p}{p-q} }
                              \Big( \frac{\lambda_1 p}{q} \Big)^{-\frac{q}{p-q} }
                        \int_{\tau-t}^{r} e^{\beta s} ds  \nonumber\\
       & + 8 \sum_{|i| \ge m} 
                       \left( \sum_{k \in \mathbb{N} } L_{\sigma, k,i}^2 
                                 + \int_{y \in \mathbb{Y} } (L_{\tilde{\sigma},i}(y) )^2 \nu(dy)
                       \right)
               \int_{\tau-t}^{r} e^{\beta s} \mathbb{E} \left[ \|u(s)\|^2 \right] ds  \nonumber\\                        
  \le & 2 \sum_{|i|\ge m}
                   \int_{\tau-t}^{r} e^{\beta s}
                       \left( \sum_{k \in \mathbb{N} } |h_{k,i}(s)|^2 
                                 + \int_{y \in \mathbb{Y} } |\widetilde{h}_{i}(s,y)|^2 \nu(dy)
                       \right) ds  
            + \frac{\lambda_1}{4} \int_{\tau-t}^{r} e^{\beta s}
                                  \mathbb{E}
                                     \left[ \sum_{i \in \mathbb{Z}}
                                                 {\vartheta}^2(\frac{i}{m}) |u_i(s)|^p
                                     \right] ds  \nonumber\\
       & + C_{p,q} \sum_{|i| \ge m} 
                                 \left[ \sum_{k \in \mathbb{N} } \alpha_{k,i}^2
                                           + \beta_i^2
                                           + \Big( \sum_{k \in \mathbb{N} } L^2_{\sigma,k,i} 
                                                        + \int_{y \in \mathbb{Y} } (L_{\tilde{\sigma},i}(y) )^2 \nu(dy)
                                              \Big)^{ \frac{p}{p-q} }
                                 \right]
              ( e^{\beta r} - e^{\beta(\tau-t)} )  \nonumber\\
       & + 8 \sum_{|i| \ge m} 
                                \left( \sum_{k \in \mathbb{N} } L_{\sigma, k,i}^2 
                                          + \int_{y \in \mathbb{Y} } (L_{\tilde{\sigma},i}(y) )^2 \nu(dy)
                                \right)
                       \int_{\tau-t}^{r} e^{\beta s} \mathbb{E} \left[ \|u(s)\|^2 \right] ds,
 \end{align}
 where $C_{p,q} = \frac{1}{\beta}
                  \Big(2 + 2^{\frac{3p+2q}{p-q} } \frac{p-q}{p} 
                                      \Big( \frac{\lambda_1 p}{q} \Big)^{-\frac{q}{p-q} }
                  \Big)$.

 From \eqref{upte-2}-\eqref{upte-6}, it follows that for all $r \in [\tau-2\rho, \tau]$ and $t \ge 3\rho$,
 \begin{align}\label{upte-7}
        & \mathbb{E} \left[ \| \vartheta_m  u(r)\|^2 \right]
           + \frac{7}{4} \lambda_1 \mathbb{E}
                   \left[ \int_{\tau-t}^{r} e^{\beta (s-r)}
                                \sum_{i \in \mathbb{Z}}
                                      \vartheta^2(\frac{i}{m}) |u_i(s)|^p ds
                   \right]  \nonumber\\
   \le & e^{\beta (\tau-t-r)} \mathbb{E} \left[ \| \vartheta_m \varphi(0) \|^2 \right]
            + 2 \|L_{F}\|_{\ell^\infty} e^{\beta(\frac{\rho}{2}+\tau-t) }
                      \mathbb{E}
                         \left[ \int_{-\rho}^{0} e^{\beta (s-r)} \|\vartheta_m \varphi(s)\|^2 ds \right]  \nonumber\\
        & + \frac{2^{p+1} c_1}{m}
              \int_{\tau-t}^{r} e^{\beta (s-r)} \mathbb{E} \left[ \|u(s)\|_p^p \right] ds  \nonumber\\
        & + \left( 2\beta - 2\lambda 
                                    + 2\|\phi_1\|_{\ell^\infty} 
                                    + 4 \|L_{F}\|_{\ell^\infty} e^{\frac{\beta}{2} \rho} 
               \right)
               \int_{\tau-t}^{r} e^{\beta (s-r)} \mathbb{E}
                    \left[ \|\vartheta_m u(s)\|^2 \right] ds  \nonumber\\
        & + \frac{2}{\beta} (1 - e^{\beta(\tau-t-r)}) \sum_{|i| \ge m} |\phi_{1,i}|
            + \frac{1}{\beta \|L_{F}\|_{\ell^\infty} } 
                           e^{- \frac{\beta \rho}{2} } 
                                    \|\vartheta_m F(0,\delta_0)\|^2
                                          (1 - e^{\beta(\tau-t-r)}) \nonumber\\
        & + C_{p,q} \sum_{|i| \ge m}
               \left[ \sum_{k \in \mathbb{N} } \alpha_{k,i}^2
                        + \beta_i^2
                        + \Big( \sum_{k \in \mathbb{N} } L^2_{\sigma,k,i} 
                                     + \int_{y \in \mathbb{Y} } (L_{\tilde{\sigma},i}(y) )^2 \nu(dy)
                           \Big)^{ \frac{p}{p-q} }
               \right]
              (1 - e^{\beta(\tau-t-r)} )  \nonumber\\
        & + \frac{1}{\beta} \int_{\tau-t}^{r} e^{\beta (s-r)} \sum_{|i| \ge m} |g_i(s)|^2 ds
            + 2 \sum_{|i|\ge m}
                 \int_{\tau-t}^{r} e^{\beta (s-r)} 
                          \left( \sum_{k \in \mathbb{N} } |h_{k,i}(s)|^2 
                                    + \int_{y \in \mathbb{Y} } |\widetilde{h}_{i}(s,y)|^2 \nu(dy)
                          \right) ds   \nonumber\\
        & + \sum_{|i| \ge m} 
                     \bigg( 2 |\phi_{1,i}|
                                + \frac{2}{\|L_{F}\|_{\ell^\infty} } 
                                                 e^{\frac{\beta \rho}{2} } 
                                                        L_{F,i}^2
                                + 8\sum_{k \in \mathbb{N} } L_{\sigma, k,i}^2 
                                + 8 \int_{y \in \mathbb{Y} } (L_{\tilde{\sigma},i}(y) )^2 \nu(dy)
                     \bigg)
                  \mathbb{E} \left[ \int_{\tau-t}^{r}  e^{\beta (s-r)} \| u(s)\|^2 ds \right]  \nonumber\\
        & + \frac{2}{ \beta \|L_{F}\|_{\ell^\infty} } 
                   e^{\beta (\frac{5\rho}{2} - t)} 
                          \|\vartheta_m L_{F}\|^2
                                 \mathbb{E} \left[ \sup_{r \in [-\rho, 0]}\|\varphi(r)\|^2 \right].
 \end{align}

 By \eqref{31_1-1}, Lemma \ref{pue} with $\theta =2$ and \eqref{upte-7},
 we obtain that there exists $T_1 = T_1(\tau, D) \ge 3\rho$ such that for all $t \ge T_1$ and $r \in [\tau-2\rho, \tau]$,
 \begin{align}\label{upte-8}
        & \mathbb{E} \left[ \| \vartheta_m  u(r)\|^2 \right]
          + \frac{7}{4} \lambda_1 \mathbb{E}
                          \left[ \int_{\tau-t}^{r} e^{\beta (s-r)}
                                    \sum_{i \in \mathbb{Z}}
                                         {\vartheta}^2(\frac{i}{m}) |u_i(s)|^p ds
                          \right]  \nonumber\\
   \le & \left( 1 + \frac{2}{\beta} \|L_{F}\|_{\ell^\infty} e^{\frac{\beta \rho}{2} }
                        + \frac{2}{ \beta \|L_{F}\|_{\ell^\infty} } e^{\frac{\beta \rho}{2} } \|L_{F}\|^2
            \right)
            e^{\beta (2\rho-t)}
                    \mathbb{E} \left[ \sup_{r \in [-\rho, 0]}\|\varphi(r)\|^2 \right]  \nonumber\\
        & + \frac{2}{\beta} \sum_{|i| \ge m} |\phi_{1,i}|
            + \frac{1}{\beta \|L_{F}\|_{\ell^\infty} } \sum_{|i| \ge m} |F_i(0,\delta_0)|^2
                   \nonumber\\
        & + C_{p,q} \sum_{|i| \ge m}
                \left[ \sum_{k \in \mathbb{N} } \alpha_{k,i}^2
                                   + \beta_i^2
                                   + \Big( \sum_{k \in \mathbb{N} } L^2_{\sigma,k,i} 
                                                + \int_{y \in \mathbb{Y} } (L_{\tilde{\sigma},i}(y) )^2 \nu(dy)
                                      \Big)^{ \frac{p}{p-q} }
                \right]  \nonumber\\
        & + e^{\beta \rho} \int_{-\infty}^{\tau} e^{\beta (s-\tau)}
               \sum_{|i| \ge m}
                     \left( \frac{1}{\beta} |g_i(s)|^2 ds
                               + 2 \sum_{k \in \mathbb{N} } |h_{k,i}(s)|^2 
                               + 2 \int_{y \in \mathbb{Y} } |\widetilde{h}_{i}(s,y)|^2 \nu(dy)
                     \right) ds  \nonumber\\
        & + \bigg[ \frac{2^{p+1} c_1}{m}
                          + \sum_{|i| \ge m} 
                                   \bigg( 2 |\phi_{1,i}|
                                              + \frac{2}{\|L_{F}\|_{\ell^\infty} } 
                                                                 e^{\frac{\beta \rho}{2} } 
                                                                        L_{F,i}^2
                                              + 8 \sum_{k \in \mathbb{N} } L_{\sigma, k,i}^2 
                                              + 8 \int_{y \in \mathbb{Y} } (L_{\tilde{\sigma},i}(y) )^2 \nu(dy)
                                   \bigg)
                 \bigg]  \nonumber\\
        &\quad
            \cdot \left( C_{\beta, 2, \rho} + C_{\beta, 2, \rho} \int_{-\infty}^{\tau}
                                                          e^{\beta (s-\tau) }
                                                          \left( \|g(s)\|^2 + \|h(s)\|^2 
                                                                    + \|\widetilde{h}(s)\|^2_{L^2(\mathbb{Y}, \nu; \ell^2)}
                                                         \right) ds
                       \right).
 \end{align}

 Since $\mathcal{L}_{\varphi} \in D(\tau-t)$,
 it follows from \eqref{upte-8} that for every $\delta > 0$,
 there exist constants $T = T(\tau, \delta, D) \ge T_1$ and $ m_0 = m_0(\tau, \delta) \ge 1$
 such that for all $t \ge T$ and $m \ge m_0$,
 \begin{align*}
    \sup_{r \in [\tau-2\rho, \tau]}
          \sum_{|i| \ge 2m} \mathbb{E}\left[ |u_i(r; \tau-t, \varphi)|^2 \right]
    \le \sup_{r \in [\tau-2\rho, \tau]}
             \mathbb{E} \left[ \|\vartheta_m u(r; \tau-t, \varphi)\|^2 \right]
    < \delta,
 \end{align*}
 as desired.
\end{proof}

 Based on Lemma \ref{upte}, we obtain the following pullback tail-estimates in $L^2(\Omega, D_\rho) \bigcap L^p(\Omega, L^p(\tau-\rho, \tau; \ell^p) )$.

\begin{lemma}\label{momentpte}
   Suppose that {\bf(H1)}-{\bf(H4)}, \eqref{thetaassum2}, \eqref{dissip1} and \eqref{dissip2} hold.
   Then for every 
   $\tau \in \mathbb{R}$, $\delta > 0$ and $D = \{ D(t): t \in \mathbb{R} \} \in \mathcal{D}$,
   there exist constants $T = T(\tau, \delta, D) \ge 3\rho$ and $ m_1 = m_1(\tau, \delta) \ge 1$
   such that for all $t \ge T$ and $m \ge m_1$,
   the solution $u$ of \eqref{lmvlds-2} satisfies
   \begin{align*}
      \mathbb{E}
         \Big[ \sup_{s \in [\tau-\rho, \tau]}
                   \sum_{|i| \ge m}
                      |u_i(s; \tau-t, \varphi)|^2
         \Big]
      + \mathbb{E}
           \Big[ \int_{\tau-\rho}^{\tau}
                    \sum_{|i| \ge m} |u_i(s; \tau-t, \varphi)|^p ds
           \Big]
      < \delta,
   \end{align*}
   where $\varphi \in L^\theta(\Omega, \mathcal{F}_{\tau-t};D_{\rho})$ with law $\mathcal{L}_{\varphi} \in D(\tau-t)$.
\end{lemma}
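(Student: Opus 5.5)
The plan is to upgrade the pointwise-in-time tail estimate of Lemma \ref{upte} to a pathwise supremum estimate on $[\tau-\rho,\tau]$. We apply It\^o's formula to $\|\vartheta_m u(s)\|^2$ on the short interval $[\tau-2\rho,\tau]$, with no exponential weight, and take $\mathbb{E}\sup_{s\in[\tau-\rho,\tau]}$. Since $\vartheta_m=1$ for $|i|\ge 2m$, it suffices to bound
\[
\mathbb{E}\Big[\sup_{s\in[\tau-\rho,\tau]}\|\vartheta_m u(s)\|^2\Big]
+\mathbb{E}\Big[\int_{\tau-\rho}^{\tau}\sum_{i}\vartheta^2(\tfrac im)|u_i(s)|^p ds\Big],
\]
and then replace $m$ by $2m$ at the end. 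To keep the initial term small, we start the It\^o identity at an arbitrary $r_0\in[\tau-2\rho,\tau-\rho]$ (or simply at $\tau-2\rho$). By Lemma \ref{upte}, $\mathbb{E}\|\vartheta_m u(\tau-2\rho)\|^2$ is less than $\delta$ once $t\ge T$ and $m$ is large.

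The drift terms are handled exactly as in \eqref{upte-3}--\eqref{upte-6}. The $p$-Laplacian commutator gives an error $\frac{2^{p+1}c_1}{m}\,\mathbb{E}\int_{\tau-2\rho}^{\tau}\|u\|_p^p\,ds$, which is small by Lemma \ref{pue} with $\theta=2$; that lemma controls $\mathbb{E}\int e^{\beta(s-r)}\|u\|_p^p$, hence the unweighted integral over a window of length $2\rho\le2$. The monotone term \eqref{f1} supplies $2\lambda_1\,\mathbb{E}\int\sum_i\vartheta^2(\tfrac im)|u_i|^p$ on the left, at the cost of $\sum_{|i|\ge m}|\phi_{1,i}|$ times moments bounded by Lemma \ref{pue}. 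The terms involving $g$, $F$ and the noise quadratic variations are bounded by tails $\sum_{|i|\ge m}(\cdots)$ of the summable sequences $\phi_1$, $L_F$, $F(0,\delta_0)$, $\alpha$, $\beta$, $L_{\sigma}$, $L_{\tilde\sigma}$, by tails of $g,h,\widetilde h$ integrated over $[\tau-2\rho,\tau]$ (these vanish as $m\to\infty$ by \eqref{34_1} and dominated convergence), and by $\int_{\tau-2\rho}^{\tau}\mathbb{E}\|\vartheta_m u(s)\|^2 ds$. The last quantity is $<2\rho\delta$ by Lemma \ref{upte}. For the delay term, $u(s-\rho)$ with $s\in[\tau-2\rho,\tau]$ ranges over $[\tau-3\rho,\tau-\rho]$, which Lemma \ref{upte} does not quite cover. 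We handle this either by rerunning Lemma \ref{upte} on $[\tau-3\rho,\tau]$ (the same proof works since $T\ge3\rho$ allows enlarging the window) or by applying It\^o only on $[\tau-\rho,\tau]$, so that the delay argument stays in $[\tau-2\rho,\tau]$. We choose the latter.

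The main obstacle is the two martingale terms under the supremum, in particular the L\'evy term with superlinear $\widetilde\sigma$. For these we use the BDG inequality, as in \eqref{consys-9}--\eqref{consys-10}, bounding
\[
2\sqrt{\varepsilon}\,\mathbb{E}\sup\Big|\int\langle\vartheta_m(\sigma_k+h_k),\vartheta_m u\rangle dW_k\Big|
\le\tfrac14\mathbb{E}\sup\|\vartheta_m u\|^2+C\,\mathbb{E}\int\sum_k\|\vartheta_m(\sigma_k+h_k)\|^2ds,
\]
and similarly for the $\widetilde N$-integral. We pass the $N(ds,dy)$ quadratic variation to $\nu(dy)ds$ by taking expectation, after localizing with stopping times as in Step 2 of Theorem \ref{consys}. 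The resulting localized quantities are then estimated through the tail version of \eqref{sigma3}: the superlinear part $L^2_{\cdot,i}|u_i|^q\vartheta^2$ is absorbed by Young's inequality into $\frac{\lambda_1}{4}\sum_i\vartheta^2(\tfrac im)|u_i|^p$ plus $\sum_{|i|\ge m}(\sum_k L_{\sigma,k,i}^2+\int L_{\tilde\sigma,i}^2d\nu)^{p/(p-q)}$, exactly as in the second display of \eqref{upte-6}. This Young step uses the factor $\vartheta^2\le\vartheta^{2q/p}$ since $\vartheta\in[0,1]$.

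After absorbing the $\tfrac14$-terms and the $p$-power terms into the left side, every remaining term is less than $C\delta$ for $t\ge T$ and $m\ge m_1$. We then let the localization go away with Fatou's lemma, which gives the claim with $\delta$ rescaled.
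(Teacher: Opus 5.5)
Your proposal is correct and follows essentially the same route as the paper. Both apply It\^o's formula to $\|\vartheta_m u\|^2$ on $[\tau-\rho,\tau]$ under the supremum. The initial and delay terms are controlled by Lemma~\ref{upte} on $[\tau-2\rho,\tau]$, and the $p$-Laplacian commutator and moment terms by Lemma~\ref{pue} with $\theta=2$. The two martingale terms are handled by BDG together with the tail version of \eqref{sigma3}, with Young's inequality absorbing the $|u_i|^q$ part into $\lambda_1\sum_i\vartheta^2(\tfrac im)|u_i|^p$. Your explicit stopping-time localization and the final passage from $\vartheta_m$ to the tail $|i|\ge 2m$ are details the paper leaves implicit.
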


\begin{proof}
 For convenience, we denote by $u_i(r) = u_i(r; \tau-t, \varphi)$ for any $r \ge \tau -t$.
 It follows from \eqref{upte-1} and It\^o's formula that for $t \ge 
 3 \rho$
 and $r\in [\tau -\rho, \tau]$,
   \begin{align}\label{momentpte-1}
        &    \|\vartheta_m u(r)\|^2
                       + 2 \int_{\tau-\rho}^{r}
                              \langle f(u(s),\mathcal{L}_{u(s)} ),
                                     {\vartheta}^2_m u(s)
                              \rangle ds
                \nonumber\\ 
   \le &  \|\vartheta_m u(\tau-\rho)\|^2 
          + 2  \sup_{r \in [\tau-\rho,\tau]}
                     \left (   \int_{\tau-\rho}^{r}
                     \langle -\vartheta_m A u(s), \vartheta_m u(s)
                     \rangle ds
                     \right )
              \nonumber \\  
        & + 2 
                  \int_{\tau-\rho}^{\tau}
                              \left|
                                     \langle \vartheta_m g(s), \vartheta_m u(s)
                                     \rangle
                              \right| ds
         + 2  \int_{\tau-\rho}^{\tau}
                                  \left|
                                         \langle \vartheta_m F(u(s-\rho), \mathcal{L}_{u(s-\rho)}),
                                                 \vartheta_m u(s)
                                         \rangle
                                  \right| ds
                        \nonumber\\
         & + 2   \sup_{r \in [\tau-\rho,\tau]}
                                \Big| \int_{\tau-\rho}^{r} \sum_{k \in \mathbb{N} }
                                      \langle \vartheta_m \sigma_k(u(s), \mathcal{L}_{u(s)}) + \vartheta_m h_k(s),
                                              \vartheta_m u(s)
                                      \rangle dW_k(s)
                                \Big|
             \nonumber\\
        & + \int_{\tau-\rho}^{\tau}
                         \sum_{k \in \mathbb{N} }
                            \|\vartheta_m \sigma_k(u(s), \mathcal{L}_{u(s)})
                               + \vartheta_m h_k(s) \|^2 ds
             \nonumber\\
        & + 2  
        \sup_{r\in [\tau-\rho,\tau]}
                         \Big| \int_{\tau-\rho}^{r}
                                      \int_{y \in \mathbb{Y} }
                                        \langle \vartheta_m \widetilde{\sigma}(u(s-), \mathcal{L}_{u(s)}, y) + \vartheta_m \widetilde{h}(s, y) ),
                                                \vartheta_m u(s-)
                                        \rangle \widetilde{N}(ds,dy)
                         \Big|
              \nonumber\\
        & +   \int_{\tau-\rho}^{\tau}
                            \int_{y \in \mathbb{Y} }
                            \| \vartheta_m \widetilde{\sigma}(u(s-), \mathcal{L}_{u(s)}, y) 
                                + \vartheta_m \widetilde{h}(s, y) \|^2 N(ds,dy)  .
 \end{align}
By \eqref{f1}, we have  for all $r \in [\tau-\rho, \tau]$ and $t \ge 
3\rho$,
 \begin{align}\label{momentpte-2}
       2 \int_{\tau-\rho}^{r}
                  \langle f(u(s), \mathcal{L}_{u(s)}),
                         {\vartheta}^2_m u(s)
                  \rangle ds 
   \ge & 2 \int_{\tau-\rho}^{r}
                     \sum_{i \in \mathbb{Z}}
                          {\vartheta}^2(\frac{i}{m}) f_i(u_i(s), \mathcal{L}_{u(s)} ) u_i(s)
                 ds  \nonumber\\
  \ge & 2 \lambda_1
           \int_{\tau-\rho}^{r}
               \sum_{i \in \mathbb{Z} }
                    {\vartheta}^2(\frac{i}{m}) |u_i(s)|^p ds
           - 2 \|\phi_1\|_{\ell^\infty}
                       \int_{\tau-\rho}^{r} \|\vartheta_m u(s)\|^2 ds  \nonumber\\
       & - 2 \sum_{|i| \ge m} |\phi_{1,i}|
                      \int_{\tau-\rho}^{r} \mathbb{E} \left[ \|u(s)\|^2 \right] ds
          - 2 \rho \sum_{|i| \ge m} |\phi_{1,i}|.
 \end{align}
 By \eqref{momentpte-1} and \eqref{momentpte-2}, we 
 obtain that
 for all $r \in [\tau-\rho, \tau]$ and $t \ge 3\rho$,
 \begin{align}\label{momentpte-3}
        & \mathbb{E}
          \left[ \sup_{r \in [\tau-\rho,\tau]}
                 \Big( \|\vartheta_m u(r)\|^2
                       + 2 \lambda_1
                           \int_{\tau-\rho}^{r}
                              \sum_{i \in \mathbb{Z} }
                                  {\vartheta}^2(\frac{i}{m}) |u_i(s)|^p ds
                 \Big)
          \right]  \nonumber\\
   \le & \mathbb{E} \left[ \|\vartheta_m u(\tau-\rho)\|^2 \right]
          + 2 \|\phi_1\|_{\ell^\infty}
              \int_{\tau-\rho}^{\tau} \mathbb{E} \left[ \|\vartheta_m u(s)\|^2 \right] ds
          + 2 \sum_{|i| \ge m} |\phi_{1,i}|
              \int_{\tau-\rho}^{\tau} \mathbb{E} \left[ \|u(s)\|^2 \right] ds  \nonumber\\
        & + 2 \rho \sum_{|i| \ge m} |\phi_{1,i}|
          + 2 \mathbb{E}
              \left[ \sup_{r \in [\tau-\rho,\tau]}
                        \int_{\tau-\rho}^{r}
                     \langle -\vartheta_m A u(s), \vartheta_m u(s)
                     \rangle ds
              \right]  \nonumber \\
        & + 2 \mathbb{E}
                 \left[ \int_{\tau-\rho}^{\tau}
                              \left|
                                     \langle \vartheta_m g(s), \vartheta_m u(s)
                                     \rangle
                              \right| ds
                 \right]
          + 2 \mathbb{E}
                        \left[ \int_{\tau-\rho}^{\tau}
                                  \left|
                                         \langle \vartheta_m F(u(s-\rho), \mathcal{L}_{u(s-\rho)}),
                                                 \vartheta_m u(s)
                                         \rangle
                                  \right| ds
                        \right]  \nonumber\\
        & + 2 \mathbb{E}
                 \left[ \sup_{r \in [\tau-\rho,\tau]}
                                \Big| \int_{\tau-\rho}^{r} \sum_{k \in \mathbb{N} }
                                      \langle \vartheta_m \sigma_k(u(s),\mathcal{L}_{u(s)}) + \vartheta_m h_k(s),
                                              \vartheta_m u(s)
                                      \rangle dW_k(s)
                                \Big|
                 \right]  \nonumber\\
        & + \mathbb{E}
               \left[ \int_{\tau-\rho}^{\tau}
                         \sum_{k \in \mathbb{N} }
                            \|\vartheta_m \sigma_k(u(s),\mathcal{L}_{u(s)})
                               + \vartheta_m h_k(s) \|^2 ds
               \right]  \nonumber\\
        & + 2 \mathbb{E}
               \left[ \sup_{r\in [\tau-\rho,\tau]}
                         \Big| \int_{\tau-\rho}^{r}
                                      \int_{y \in \mathbb{Y} }
                                          \langle \vartheta_m 
                                                             \widetilde{\sigma}_k(u(s-), \mathcal{L}_{u(s)}, y)
                                                       + \vartheta_m \widetilde{h}(s, y),
                                                \vartheta_m u(s-)
                                        \rangle \widetilde{N}(ds,dy)
                         \Big|
              \right]  \nonumber\\
        & + \mathbb{E}
               \left[ \int_{\tau-\rho}^{\tau}
                            \int_{y \in \mathbb{Y} }
                                \| \vartheta_m \widetilde{\sigma}(u(s-), \mathcal{L}_{u(s)}, y) 
                                   + \vartheta_m \widetilde{h}(s, y) 
                                \|^2 N(ds,dy)
               \right].
 \end{align}

 For the first two terms on the right-hand side of \eqref{momentpte-3},
 by Lemma \ref{upte}, we find that for any $\delta > 0$,
 there exist constants $T_1 = T_1(\tau, \delta, D) \ge 3\rho$ and $m_0 = m_0(\tau, \delta) > 0$ such that for all $t \ge T_1$ and $m \ge m_0$,
 \begin{align}\label{momentpte-4}
       & \mathbb{E} \left[ \|\vartheta_m u(\tau-\rho)\|^2 \right]
                 + 2 \|\phi_1\|_{\ell^\infty}
                     \int_{\tau-\rho}^{\tau} \mathbb{E} \left[ \|\vartheta_m u(s)\|^2 \right] ds
                          \nonumber\\
  \le & \left( 1+ 2\rho \|\phi_1\|_{\ell^\infty} \right)
           \sup_{s \in[\tau - 2\rho, \tau]}
                \sum_{|i| \ge m}
                       \mathbb{E} \left[ |u_i(s)|^2 \right]
    < \left( 1+ 2\rho \|\phi_1\|_{\ell^\infty} \right) \delta.
 \end{align}

 We now consider the third term on the right-hand side of \eqref{momentpte-3}.
 It follows from Lemma \ref{pue} that there exists $T_2 = T_2(\tau, D) \ge 3\rho$ such that for all $t \ge T_2$,
 \begin{align}\label{momentpte-5}
    \mathbb{E}
       \left[ \sup_{r \in [\tau-2\rho, \tau]} \|u(r; \tau-t, \varphi)\|^2 \right]
    + \mathbb{E}
         \left[ \int_{\tau-2\rho}^{\tau} \|u(s; \tau-t, \varphi)\|^p_p ds \right]
   \le (1 + e^{2\rho\beta}) R(\tau),
 \end{align}
 where $R(\tau) = C_{\beta,2,\rho} + C_{\beta, 2, \rho} \int_{-\infty}^{\tau}
                               e^{\beta (s-\tau) }
                               \left( \|g(s)\|^2 + \|h(s)\|^2 
                                                        + \|\widetilde{h}(s)\|_{L^2(\mathbb{Y}, \nu; \ell^2)}^2
                               \right) ds$,
from which we get
 \begin{align}\label{momentpte-6}
    2 \sum_{|i| \ge m} |\phi_{1,i}|
         \int_{\tau-\rho}^{\tau} \mathbb{E} \left[ \|u(s)\|^2 \right] ds
    \le 2 \rho (1 + e^{2\rho\beta}) R(\tau) \sum_{|i| \ge m} |\phi_{1,i}|.
 \end{align}

 For the fifth term on the right-hand side of \eqref{momentpte-3},
 by the  argument 
 of  \eqref{upte-3}, 
 we obtain from   \eqref{momentpte-5} 
  that for all $t \ge T_2$,
 \begin{align}\label{momentpte-7}
       2 \mathbb{E}
                 \left[ \sup_{r \in [\tau-\rho,\tau]}
                               \int_{\tau-\rho}^{r}
                                   \langle -\vartheta_m A u(s), \vartheta_m u(s)
                                   \rangle ds
                 \right] 
   \le & \frac{2^{p+1} c_1}{m}
                \mathbb{E} \left[ \int_{\tau-\rho}^{\tau} \|u(s)\|_p^p ds \right]  \nonumber\\
   \le & \frac{2^{p+1} c_1}{m} (1 + e^{2\rho\beta}) R(\tau).
 \end{align}

 For the sixth term on the right-hand side of \eqref{momentpte-3},
 by \eqref{momentpte-4}, we obtain that for all $t \ge T_1$ and $m \ge m_0$,
 \begin{align}\label{momentpte-8}
          2 \mathbb{E}
                 \left[ \int_{\tau-\rho}^{\tau}
                              \left|
                                     \langle \vartheta_m g(s), \vartheta_m u(s)
                                     \rangle
                              \right| ds
                 \right]
   \le & \int_{\tau-\rho}^{\tau}
             \sum_{|i| \ge m}
                \mathbb{E} \left[ |u_i(s)|^2 \right] ds
          + \int_{\tau-\rho}^{\tau}
               \sum_{|i| \ge m} |g_i(s)|^2 ds  \nonumber\\
   \le & \rho \delta
          + \int_{\tau-\rho}^{\tau} \sum_{|i|\ge m} |g_i(s)|^2 ds.
 \end{align}

 For the seventh term on the right-hand side of \eqref{momentpte-3},
 it follows from \eqref{F1}, \eqref{momentpte-4} and \eqref{momentpte-5}
 that for all $t \ge T_1 \vee T_2$ and $m \ge m_0$,
 \begin{align}\label{momentpte-9}
       & 2 \mathbb{E}
               \left[ \int_{\tau-\rho}^{\tau}
                         \left|
                               \langle \vartheta_m F(u(s-\rho), \mathcal{L}_{u(s-\rho)}),
                                       \vartheta_m u(s)
                               \rangle
                         \right| ds
               \right]  \nonumber\\
  \le & \mathbb{E}
                  \left[ \int_{\tau -\rho}^{\tau} \|\vartheta_m u(s)\|^2 ds \right]
           + \mathbb{E}
                      \left[ \int_{\tau -\rho}^{\tau} \vartheta_m^2
                                    \|F(u(s-\rho),\mathcal{L}_{u(s-\rho)})\|^2 ds 
                      \right]  \nonumber\\
  \le & \mathbb{E}
            \left[ \int_{\tau -\rho}^{\tau} \|\vartheta_m u(s)\|^2 ds \right]
           + 2 \rho \sum_{|i| \ge m} | F_i(0,\delta_0)|^2  \nonumber\\
       & + 4 \|L_F\|_{\ell^\infty}^2 \mathbb{E}
             \left[ \int_{\tau -\rho}^{\tau} \|\vartheta_m u(s-\rho)\|^2 ds \right]
           + 4 \|\vartheta_mL_F\|^2 \int_{\tau -\rho}^{\tau}
                  \mathbb{E} \left[ \|u(s-\rho)\|^2 \right] ds \nonumber\\
  \le & \rho \left( 1 + 4 \|L_F\|_{\ell^\infty}^2 \right) \delta
           + 2 \rho \sum_{|i| \ge m} | F_i(0,\delta_0)|^2
           + 4 \rho (1 + e^{2\rho\beta}) R(\tau) \sum_{|i| \ge m} |L_{F,i}|^2.
 \end{align}

 For the eighth and ninth terms on the right-hand side of \eqref{momentpte-3},
 by BDG's inequality, Young's inequality, \eqref{sigma3} and \eqref{momentpte-5},
 we obtain that for all $t \ge T_2$,
 \begin{align}\label{momentpte-10}
        & 2 \mathbb{E}
               \left[ \sup_{r \in [\tau-\rho,\tau]}
                         \Big| \int_{\tau-\rho}^{r} \sum_{k \in \mathbb{N} }
                                     \langle \vartheta_m \sigma_k(u(s),\mathcal{L}_{u(s)}) + \vartheta_m h_k(s),
                                             \vartheta_m u(s)
                                     \rangle dW_k(s)
                         \Big|
                 \right]  \nonumber\\
        & + \mathbb{E}
               \left[ \int_{\tau-\rho}^{\tau}
                         \sum_{k \in \mathbb{N} }
                            \|\vartheta_m \sigma_k(u(s),\mathcal{L}_{u(s)})
                               + \vartheta_m h_k(s) \|^2 ds
               \right]  \nonumber\\
  \le & \frac{1}{8} \mathbb{E}
            \left[ \sup_{s \in [\tau-\rho,\tau]} \|\vartheta_m u(s)\|^2 \right]
         + (1 + 8c_1^2)
           \mathbb{E}
              \left[ \int_{\tau-\rho}^{\tau}
                        \sum_{k \in \mathbb{N} }
                           \| \vartheta_m \sigma_k(u(s),\mathcal{L}_{u(s)})
                               + \vartheta_m h_k(s) \|^2 ds
              \right]  \nonumber\\
  \le & \frac{1}{8} \mathbb{E}
            \left[ \sup_{s \in [\tau-\rho,\tau]} \|\vartheta_m u(s)\|^2 \right]
         + 2(1 + 8c_1^2) \int_{\tau-\rho}^{\tau}
                            \sum_{k \in \mathbb{N} }
                               \sum_{|i| \ge m} |h_{k,i}(s)|^2 ds  \nonumber\\
       & + \frac{\lambda_1}{4} \int_{\tau-\rho}^{\tau}
                                  \mathbb{E}
                                     \left[ \sum_{i \in \mathbb{Z}}
                                                 {\vartheta}^2(\frac{i}{m}) |u_i(s)|^p
                                     \right] ds
         + 2(1 + 8c_1^2) \rho \sum_{k \in \mathbb{N} } \sum_{|i| \ge m} \alpha_{k,i}^2  \nonumber\\
       & + 8(1 + 8c_1^2) \rho (1 + e^{2\rho\beta}) R(\tau)
           \sum_{k \in \mathbb{N} }
              \sum_{|i| \ge m} L_{\sigma,k,i}^2  \nonumber\\
       & + \rho \frac{p-q}{p} 
              \Big[ 8(1 + 8c_1^2) \Big]^{ \frac{p}{p-q} }
              \Big( \frac{\lambda_1 p}{4q} \Big)^{-\frac{q}{p-q} }
                   \sum_{|i| \ge m} 
                         \Big( \sum_{k \in \mathbb{N} } L^2_{\sigma,k,i} 
                         \Big)^{ \frac{p}{p-q} }.
 \end{align}

 For the last two terms on the right-hand side of \eqref{momentpte-3},
 similar to the argument of \eqref{momentpte-10}, 
 by \eqref{sigma2} and \eqref{siglinear-2}, 
 we can get that for all $t \ge T_2$,
 \begin{align}\label{momentpte-11}
       & 2 \mathbb{E}
               \left[ \sup_{r\in [\tau-\rho,\tau]}
                         \Big| \int_{\tau-\rho}^{r}
                                      \int_{y \in \mathbb{Y} }
                                        \langle \vartheta_m \widetilde{\sigma}(u(s-), \mathcal{L}_{u(s)}, y) 
                                                    + \vartheta_m \widetilde{h}(s, y),
                                                    \vartheta_m u(s-)
                                        \rangle \widetilde{N}(ds,dy)
                         \Big|
              \right]  \nonumber\\
       & + \mathbb{E}
               \left[ \int_{\tau-\rho}^{\tau}
                             \int_{y \in \mathbb{Y} }
                            \| \vartheta_m \widetilde{\sigma}(u(s-), \mathcal{L}_{u(s)}, y) 
                               + \vartheta_m \widetilde{h}(s, y) \|^2 N(ds,dy)
               \right] \nonumber\\
  \le &  \frac{1}{8} \mathbb{E}
              \left[ \sup_{s \in [\tau-\rho,\tau]} \|\vartheta_m u(s)\|^2 \right]
                   \nonumber\\
       & + (1 + 8c_2^2) \mathbb{E}
                           \left[ \int_{\tau-\rho}^{\tau}
                                         \int_{y \in \mathbb{Y} }
                                        \| \vartheta_m \widetilde{\sigma}(u(s-), \mathcal{L}_{u(s)}, y) 
                                           + \vartheta_m \widetilde{h}(s, y) \|^2 N(ds,dy)
                           \right] \nonumber\\                
   \le & \frac{1}{8} \mathbb{E}
                 \left[ \sup_{s \in [\tau-\rho,\tau]} \|\vartheta_m u(s)\|^2 \right]
           + 2 (1 + 8c_2^2) 
               \int_{\tau-\rho}^{\tau}
                   \int_{y \in \mathbb{Y} }
                           \| \vartheta_m \widetilde{h}(s, y) \|^2  \nu(dy) ds  \nonumber\\
        &  + 2 (1 + 8c_2^2) 
                        \int_{\tau-\rho}^{\tau}
                            \int_{y \in \mathbb{Y} }
                                \|\vartheta_m \widetilde{\sigma}(u(s), \mathcal{L}_{u(s)}, y) \| ^2 \nu(dy)  ds \nonumber\\
  \le & \frac{1}{8} \mathbb{E}
                   \left[ \sup_{s \in [\tau-\rho,\tau]} \|\vartheta_m u(s)\|^2 \right]
           + 2 (1 + 8c_2^2) 
                \int_{\tau-\rho}^{\tau}
                     \int_{y \in \mathbb{Y} }
                             \| \vartheta_m \widetilde{h}(s, y) \|^2  \nu(dy) ds  \nonumber\\
       & + 2 (1 + 8c_2^2) \rho \| \vartheta_m \beta \|^2  \nonumber\\                      
       & + 8 (1 + 8c_2^2) 
                                \int_{\tau-\rho}^{\tau}   
                                    \sum_{i \in \mathbb{Z}} \vartheta^2(\frac{i}{m})
                                          \int_{y\in \mathbb{Y} }
                                                           \left( L_{ \widetilde{\sigma}, i}(y)
                                                           \right)^2 \nu(dy)   
                                   \left(  |u_i(s)|^q  
                                             + \mathbb{E}\left[ \|u(s)\|^2 \right]
                                   \right)
                                ds
                \nonumber\\ 
  \le & \frac{1}{8} \mathbb{E}
            \left[ \sup_{s \in [\tau-\rho,\tau]} \|\vartheta_m u(s)\|^2 \right]
            + 2 (1 + 8c_2^2) 
                \int_{\tau-\rho}^{\tau}
                    \sum_{|i| \ge m}
                               \int_{y \in \mathbb{Y} }
                                       | \widetilde{h}_i(s, y) |^2  \nu(dy) ds  \nonumber\\
       & + \frac{\lambda_1}{4} \int_{\tau-\rho}^{\tau}
                                  \mathbb{E}
                                     \left[ \sum_{i \in \mathbb{Z}}
                                                 {\vartheta}^2(\frac{i}{m}) |u_i(s)|^p
                                     \right] ds
           + 2(1 + 8c_2^2) \rho \sum_{|i| \ge m} \beta_i^2  \nonumber\\
       & + 8(1 + 8c_2^2) \rho (1 + e^{2\rho\beta}) R(\tau)
               \int_{y \in \mathbb{Y} } 
                    \sum_{|i| \ge m} 
                            \left( L_{\tilde{\sigma}, i}(y) \right)^2 \nu(dy)  \nonumber\\
       & + \rho \frac{p-q}{p} 
              \left[ 8(1 + 8c_2^2) \right]^{\frac{p}{p-q} }
                 \Big( \frac{\lambda_1 p}{4q} \Big)^{-\frac{q}{p-q} }
                  \sum_{|i| \ge m} 
                        \Big( \int_{y \in \mathbb{Y} }
                                     \left( L_{\tilde{\sigma}, i}(y) \right)^2 
                                           \nu(dy)
                        \Big)^{ \frac{p}{p-q} }.
 \end{align}

 Then From \eqref{momentpte-3}, \eqref{momentpte-4} and \eqref{momentpte-6}-\eqref{momentpte-11},
 it follows that for all $ t \ge T_1 \vee T_2 $ and $ m \ge m_0$,
  \begin{align}\label{momentpte-12}
        & \frac{1}{2} \mathbb{E}
             \Big[ \sup_{r \in [\tau-\rho,\tau]} \|\vartheta_m u(r)\|^2
             \Big]
          + \lambda_1
            \mathbb{E}
               \Big[ \int_{\tau-\rho}^{\tau}
                         \sum_{i \in \mathbb{Z} }
                               {\vartheta}^2(\frac{i}{m}) |u_i(s)|^p ds
               \Big]  \nonumber\\
   \le & 2 \left( 1+ 2\rho \|\phi_1\|_{\ell^\infty} \right) \delta
            + 4 \rho (1 + e^{2\rho\beta}) R(\tau) \sum_{|i| \ge m} |\phi_{1,i}|
            + 4 \rho \sum_{|i| \ge m} |\phi_{1,i}|  \nonumber\\
        & + \frac{2^{p+2} c_1}{m} (1 + e^{2\rho\beta}) R(\tau)
            + 2 \rho \delta
            + 2 \int_{\tau-\rho}^{\tau} \sum_{|i|\ge m} |g_i(s)|^2 ds  \nonumber\\
        & + 2 \rho \left( 1 + 4 \|L_F\|_{\ell^\infty}^2 \right) \delta
            + 4 \rho \sum_{|i| \ge m} | F_i(0,\delta_0)|^2
            + 8 \rho (1 + e^{2\rho\beta}) R(\tau) \sum_{|i| \ge m} |L_{F,i}|^2  \nonumber\\               
        & + 4(1 + 8c_1^2 + 8c_2^2) \int_{\tau-\rho}^{\tau}
                   \sum_{|i| \ge m} 
                         \left( |h_{k,i}(s)|^2 
                                   + \int_{y \in \mathbb{Y} } |\widetilde{h}_i(s,y)|^2 \nu(dy) 
                         \right) ds  \nonumber\\
        & + 4(1 + 8c_1^2 + 8c_2^2) \rho  
               \sum_{|i| \ge m} 
                     \Big( \sum_{k \in \mathbb{N} } \alpha_{k,i}^2 
                               + \beta_i^2
                     \Big)
            + 16(1 + 8c_1^2) \rho (1 + e^{2\rho\beta}) R(\tau)
                       \sum_{k \in \mathbb{N} }
                             \sum_{|i| \ge m} L_{\sigma,k,i}^2   \nonumber\\
        & + 16(1 + 8c_2^2) \rho (1 + e^{2\rho\beta}) R(\tau)
                       \int_{y \in \mathbb{Y} } 
                            \sum_{|i| \ge m} 
                                    \left( L_{\tilde{\sigma}, i}(y) \right)^2 \nu(dy)  \nonumber\\
        & + 2\rho \frac{p-q}{p} 
                      \Big[ 8(1 + 8c_1^2) \Big]^{ \frac{p}{p-q} }
                      \Big( \frac{\lambda_1 p}{4q} \Big)^{-\frac{q}{p-q} }
                           \sum_{|i| \ge m} 
                                 \Big( \sum_{k \in \mathbb{N} } L^2_{\sigma,k,i} 
                                 \Big)^{ \frac{p}{p-q} } \nonumber\\
        & + 2\rho \frac{p-q}{p} 
                      \left[ 8(1 + 8c_2^2) \right]^{\frac{p}{p-q} }
                         \Big( \frac{\lambda_1 p}{4q} \Big)^{-\frac{q}{p-q} }
                          \sum_{|i| \ge m} 
                                \Big( \int_{y \in \mathbb{Y} }
                                             \left( L_{\tilde{\sigma}, i}(y) \right)^2 
                                                   \nu(dy)
                                \Big)^{ \frac{p}{p-q} }.
 \end{align}
 By \eqref{momentpte-12}, we  
  find that there exists $m_1 = m_1(\tau, \delta) \ge m_0$
 such that for all $t \ge T_1 \vee T_2$ and $m \ge m_1$,
 \begin{align*}
    \frac{1}{2} \mathbb{E}
             \Big[ \sup_{r \in [\tau-\rho,\tau]} \|\vartheta_m u(r)\|^2
             \Big]
     + \lambda_1
          \mathbb{E}
             \Big[ \int_{\tau-\rho}^{\tau}
                       \sum_{i \in \mathbb{Z} }
                             {\vartheta}^2(\frac{i}{m}) |u_i(s)|^p ds
             \Big]
    \le C(\tau) \delta,
 \end{align*}
 where $C(\tau)>0$ is a constant  depending
 only  on $\tau$,
 which completes the proof.
\end{proof}

 We are now 
 in a position to
   show the pullback asymptotic compactness of $\Phi$,  
   which is
    different from  that  in \cite{SSL2024arXiv, LW2024JDE, CWZ2025JNS, SSW2026JDE, BCS2026JDE, mll2024AMO}.

\begin{lemma}\label{pac}
   Suppose that {\bf(H1)}-{\bf(H4)}, \eqref{thetaassum2}, \eqref{dissip1} and \eqref{dissip2}  hold.
   Then 
   the cocyle $\Phi$ is $\mathcal{D}$-pullback asymptotically compact in $\mathcal{P}_{\theta}(D_{\rho})$.
\end{lemma}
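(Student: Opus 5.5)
To prove Lemma \ref{pac}, I take $\tau\in\mathbb{R}$, $D\in\mathcal{D}$, $t_n\to\infty$ and $\mu_n\in D(\tau-t_n)$. I must show that $\{\Phi(t_n,\tau-t_n)\mu_n\}$ is precompact in $(\mathcal{P}_\theta(D_\rho),d_{\mathcal{P}(D_\rho)})$. Equivalently, the laws of the segments $u_\tau^n:=u_\tau(\cdot;\tau-t_n,\varphi_n)$, where $\mathcal{L}_{\varphi_n}=\mu_n$, should be tight in $(D_\rho,d^0)$, and any weak limit should lie in $\mathcal{P}_\theta(D_\rho)$. The second point is easy. By Lemma \ref{pue}, $\sup_n\mathbb{E}\|u_\tau^n\|_\infty^\theta\le R_\tau$ for $n$ large. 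The map $\xi\mapsto\|\xi\|_\infty^\theta=d^0(\xi,0)^\theta$ is lower semicontinuous, so by Prokhorov and Fatou any limit point has $\theta$-moment at most $R_\tau$. Everything therefore reduces to tightness in the Skorohod space.

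For tightness I use Jakubowski's criterion in $D([\tau-\rho,\tau],\ell^2)$. It needs two ingredients: a compact containment condition, and tightness of the real-valued projections $\langle u^n(\cdot),e\rangle$ in $D([\tau-\rho,\tau],\mathbb{R})$ for $e$ in a separating family, say finitely supported sequences. For compact containment I set $K_{M,\{m_j\}}=\{v\in\ell^2:\|v\|\le M,\ \sum_{|i|\ge m_j}|v_i|^2\le 1/j\ \forall j\}$, which is compact in $\ell^2$. Chebyshev together with Lemma \ref{pue} (the sup bound on $[\tau-2\rho,\tau]$) gives $\mathbb{P}(\sup_s\|u^n(s)\|>M)\le R_\tau/M^2$. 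Lemma \ref{momentpte} applied with $\delta=\eta 2^{-j}/j$ chooses $m_j$ so that $\mathbb{P}(\sup_{s}\sum_{|i|\ge m_j}|u^n_i(s)|^2>1/j)\le\eta2^{-j}$ for all large $n$. Summing over $j$ yields $\mathbb{P}(u^n(s)\in K\ \forall s\in[\tau-\rho,\tau])\ge1-2\eta$. The finitely many small $n$ are handled trivially, since a single càdlàg process is tight.

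For the one-dimensional projections I apply Aldous' criterion. Here $Y^n(s)=\langle u^n(s),e\rangle$ is real-valued, so the compact containment part is immediate. It remains to show that for stopping times $\sigma_n\in[\tau-\rho,\tau]$ and $h_n\downarrow0$ one has $Y^n(\sigma_n+h_n)-Y^n(\sigma_n)\to0$ in probability. I write the increment from the equation. Since $e$ has finite support, $|\langle Au,e\rangle|+|\langle f(u,\mathcal{L}_u),e\rangle|\le C_e(1+\|u\|_p^{p-1}+\mathbb{W}_2(\mathcal{L}_u,\delta_0))$, and $\|u\|_p^{p-1}\le\|u\|^{p-1}$. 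The drift part is then bounded by $h_n^{1/p}\,\mathbb{E}(\int\|u\|_p^p)^{(p-1)/p}$ plus similar terms, all controlled by Lemma \ref{pue}; the delay term $F(u(s-\rho),\cdot)$ is bounded by the sup over $[\tau-2\rho,\tau]$. For the Brownian and Lévy integrals I use the strong Markov/optional form of the It\^o isometry: $\mathbb{E}|\int_{\sigma_n}^{\sigma_n+h_n}\cdots|^2$ is at most the expected integral of $\sum_k\langle\sigma_k+h_k,e\rangle^2+\int_{\mathbb{Y}}\langle\widetilde\sigma+\widetilde h,e\rangle^2\nu(dy)$ over $[\sigma_n,\sigma_n+h_n]$. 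Because $e$ is finitely supported, this integrand is bounded by $C_e(1+\sup_s\|u\|^q+\mathbb{E}\|u\|^2)+\|h\|^2+\|\widetilde h\|^2$.

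The main obstacle is the superlinear $q$-power in this Lévy integrand. Simply integrating $\sup|u_i|^q$ over a short interval does not give uniform smallness in expectation. I would handle it by localization on $\{\sup_{s\in[\tau-2\rho,\tau]}\|u^n(s)\|\le M\}$, whose complement has probability $\le R_\tau/M^2$ uniformly in $n$. On this event the integrand is bounded by $C_{e,M}$, so the stopped martingale increment has second moment at most $C_{e,M}h_n+\int_{\sigma_n}^{\sigma_n+h_n}(\|h\|^2+\|\widetilde h\|^2)\to0$ by absolute continuity of the integral, using (H1). This localization is why I reduce to the scalar setting first; controlling the whole $\ell^2$-valued jump increments in $d^0$ directly would fail for superlinear noise. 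Combining Aldous and Jakubowski gives tightness, hence precompactness, so $\Phi$ is $\mathcal{D}$-pullback asymptotically compact.
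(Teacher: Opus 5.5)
Your proof is correct. It shares the paper's overall architecture: Jakubowski's criterion to reduce to scalar projections, Aldous' criterion for those projections, and closedness of $K(\tau)$ (equivalently, lower semicontinuity of the $\theta$-moment) to keep limits in $\mathcal{P}_\theta(D_\rho)$. It differs from the paper in two technical choices.

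\emph{Compact containment.} The paper never verifies compact containment for $u^n$ itself. It truncates to $\widetilde u^k_\tau=(1_{[-k,k]}(i)u_{\tau,i})_{i}$, which takes values in a finite-dimensional subspace, so compact containment reduces to the bound on $\mathbb{E}\sup\|u\|^2$. It applies Jakubowski--Aldous to $\widetilde u^k_\tau$, and only afterwards uses Lemma \ref{momentpte} in a total-boundedness argument in $d_{\mathcal{P}(D_\rho)}$ to pass from $\widetilde u^{k_0}_\tau$ to $u_\tau$. You put Lemma \ref{momentpte} directly into the compact set $K_{M,\{m_j\}}$ and apply Jakubowski once, which is shorter. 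One detail: the time threshold in Lemma \ref{momentpte} depends on $\delta_j$, so the finitely many exceptional $n$ depend on $j$. You must therefore enlarge each $m_j$ separately, using that a single c\`adl\`ag path has relatively compact range in $\ell^2$, rather than dispose of the small $n$ once.

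\emph{Noise terms.} The paper does not localize. It applies the BDG inequality in $L^1$ and bounds $\sum_i\big(\int_{\mathbb{Y}}L_{\tilde\sigma,i}^2\,d\nu\big)|u_i|^q$ by $\big(\sum_i(\int_{\mathbb{Y}}L_{\tilde\sigma,i}^2\,d\nu)^{p/(p-q)}\big)^{(p-q)/p}\|u\|_p^q$, and similarly for $\sigma_k$. It then uses H\"older in time against $\mathbb{E}\int_{\tau-\rho}^{\tau}\|u\|_p^p\,ds$. This gives an explicit $L^1$ bound of order $\delta_n^{1/p}+\delta_n^{(p-q)/(2p)}$, uniformly over stopping times and for every $\xi\in\ell^2$. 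So the superlinear $q$-power is not actually an obstruction to estimates in expectation: integrating $|u_i(s)|^q$ pointwise in time, rather than its supremum, is small thanks to $q<p$ and the dissipative $\ell^p$ bound.

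Your localization is nonetheless valid and does not use $q<p$ at this step, though it yields only convergence in probability. It should be written with the stopping time $\varsigma_M=\inf\{s\ge\tau-2\rho:\|u^n(s)\|>M\}$, since the event $\{\sup\|u^n\|\le M\}$ itself is not adapted. On $(\tau-2\rho,\varsigma_M]$, both $u(s-)$ and, for a.e.\ $s$, $u(s)$ are bounded by $M$. Moreover, $\mathbb{P}(\varsigma_M<\tau)\le\mathbb{P}(\sup_{[\tau-2\rho,\tau]}\|u^n\|>M)$.
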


\begin{proof}
 Let $\tau \in \mathbb{R}$, $D \in \mathcal{D}$
 and  $t_n \to  \infty$.
 Without loss of generality, 
 we  assume $t_n \ge 3\rho$ for all $n \in \mathbb{N}$.
 Given  $\mu_n \in D(\tau - t_n)$,
 let $\varphi_n \in L^{\theta}(\Omega, \mathcal{F}_{\tau-t_n}; D_{\rho} )$ such that its law $\mathcal{L}_{\varphi_n} = \mu_n$.
 We will prove the sequence
    $\{ \Phi(t_n, \tau-t_n) \mu_n \}_{n=1}^{\infty}
    = \{ \mathcal{L}_{u_\tau(\cdot; \tau-t_n, \varphi_n) } \}_{n=1}^{\infty}$
 is precompact  in
    $(\mathcal{P}_{\theta}(D_{\rho}), d_{\mathcal{P}(D_{\rho})})$.

      For every
  $k \in \mathbb{N}$, we decompose the segment process of
  the  solution $u$ of  \eqref{lmvlds-2}
  with initial value $\varphi$ at initial time $\tau-t$
   as
 \begin{align}\label{feb24a}
    u_{\tau}(\cdot; \tau-t,\varphi)
    = \widetilde{u}_\tau^k(\cdot; \tau-t,\varphi)
      + \widehat{u}_\tau^k(\cdot; \tau-t,\varphi),
 \end{align}
 where
 \begin{align}\label{feb24b}
    \widetilde{u}_\tau^k(\cdot; \tau-t,\varphi)
    = \left( {1}_{[-k,k]}(i) u_{\tau,i}(\cdot; \tau-t,\varphi) \right)_{i \in \mathbb{Z}},
 \end{align}
 and
 \begin{align}\label{feb24c}
    \widehat{u}_\tau^k(\cdot;\tau-t,\varphi)
    = \left( (1 - {1}_{[-k,k]}(i)) u_{\tau,i}(\cdot; \tau-t,\varphi) \right)_{i \in \mathbb{Z}}.
 \end{align}
     We first
     prove for every  fixed $k\in \mathbb{N}$,
     the sequence
     $ \{ \mathcal{L}_{ 
      \widetilde{u}_\tau^k
     (\cdot; \tau-t_n, \varphi_n) } \}_{n=1}^{\infty}$
 is precompact  in
    $(\mathcal{P}_{\theta}(D_{\rho}), d_{\mathcal{P}(D_{\rho})})$.
 
 \textbf{Step 1:} Prove that for every
  $k \in \mathbb{N}$ and 
  $\delta> 0$,
 there exists a compact subset $\mathcal{E}^\delta_k$ of $\ell^2$
 such that for all $n \in \mathbb{N}$,
    \begin{align}\label{feb25a}
         \mathbb{P} 
                   \left( 
                    \widetilde{u}_\tau^k
                   (r; \tau-t_n, \varphi_n) \in \mathcal{E}^\delta_k, \  
                             \forall \  r \in [-\rho, 0]
                   \right) 
           > 1 - \delta.
\end{align}
  
 By
  Lemma \ref{pue}  we see 
  that  there exists $T_1 =T_1 (\tau, D) \ge 3\rho$
  such that for all $t\ge T_1$,
 \begin{align} \label{feb25a_0}
       \mathbb{E}
              \left[ \sup_{ r \in [-2\rho, 0]} 
                          \| u_\tau(r; \tau-t, \varphi) \|^2 
                      + \int^\tau_{\tau - \rho}
                             ( \| u(s; \tau-t, \varphi)\|^2  
                               + \| u(s; \tau-t, \varphi)\|^p_p
                              ) ds 
              \right]
       \le c_1,
 \end{align}
 where $\varphi \in L^{\theta}(\Omega, \mathcal{F}_{\tau-t}; D_{\rho})$ with  law $\mathcal{L}_{\varphi} = \mu$, and 
 $c_1= c_1(\tau, \rho)>0$ is a constant
 independent of $D$ and 
 $ \varphi$.
 Since $t_n \to \infty$,
 there exists $N_1 =N_1(\tau, D)\ge 1$ such that
 $t_n \ge T_1$ for all $n\ge N_1$.
 Then by \eqref{feb25a_0} we have,
 for all $n\ge N_1$,
 \begin{align} \label{feb25a_1}
   \mathbb{E}
          \left[ \sup_{ r \in [-2\rho, 0]} 
                          \|  u_\tau(r; \tau-t_n, \varphi_n) \|^2 
                   + \int^\tau_{\tau - \rho}
                          ( \| u(s; \tau-t_n, \varphi_n) \|^2  
                             + \| u(s; \tau-t_n, \varphi_n)\|^p_p
                           ) ds
         \right]
   \le c_1.
\end{align}
 It follows from \eqref{feb25a_1}
 that
  for 
 each  $\delta > 0$,
  there exists  $R_{
 \delta} = R(\delta, \tau, \rho) > 0$
 such that
  for all $n\ge N_1$,
 \begin{align}\label{feb25c}
    \mathbb{P}
       \Big(
            \big\{ \sup_{ r \in [-\rho, 0]} 
                          \|  {u}_\tau(r; \tau-t_n, \varphi_n)\|
                       > R_{\delta}
           \big\}
       \Big)
   <  \delta. 
 \end{align}
 By \eqref{feb24b} and \eqref{feb25c} we see that
 for all  $k\in \mathbb{N}$ and $n\ge N_1$,
 \begin{align}\label{feb25d}
    \mathbb{P}
       \Big(
            \big\{ \sup_{ r \in [-\rho, 0]} 
                          \|  \widetilde{u}
                          ^k_\tau(r; \tau-t_n, \varphi_n)\|
                       > R_{\delta}
           \big\}
       \Big)
   <  \delta. 
 \end{align}
 Denote by
 $$ \mathcal{E}_{1, k, \delta}
 =   \left\{ z =(z_i)_{i\in
 \mathbb{Z}}
  \in \ell^2 : \; 
                    z_i = 0 \  \text{for} \ | i | >  k
                    \  \text{and} \
                    \| z \| \le R_{\delta} 
      \right\}.
      $$
      Then
      $ \mathcal{E}_{1, k, \delta}$
      is a compact subset of $\ell^2$ and by
      \eqref{feb25d} we have  for all  $n\ge N_1$,
 \begin{align}\label{feb25e}
    \mathbb{P}
       \Big(
            \big\{     \widetilde{u}
                ^k_\tau(r; \tau-t_n, \varphi_n)\in
                          \mathcal{E}_{1, k, \delta},\
                          \forall \ r\in [-\rho, 0]
           \big\}
       \Big)
   > 1-  \delta. 
 \end{align}
 We now consider the 
 terms $ \widetilde{u}
                ^k_\tau(r; \tau-t_n, \varphi_n)$
                for $n \le  N_1$.
                Note that for any $n, m\in \mathbb{N}$,
                we have
                $$
                \left \{ \omega \in \Omega: 
                \sup_{r\in [-\rho, 0]}
                  \|  {u}_\tau(r; \tau-t_n, \varphi_n)
                  (\omega) \|
                  \le m \right \}
                  \subseteq  \left  \{  \omega \in \Omega: 
                 \sup_{r\in [-\rho, 0]}
                  \|  {u}_\tau(r; \tau-t_n, \varphi_n) (\omega) \|
                  \le m+1\right \},
                  $$
                  and
                  $$
                  \bigcup_{m=1}^\infty
                      \left \{  \omega \in \Omega:  \sup_{r\in [-\rho, 0]}
                  \|  {u}_\tau(r; \tau-t_n, \varphi_n)(\omega) \|
                  \le m \right \}
                  =\Omega,
                  $$
                  which shows that for every
                  fixed $n\in \mathbb{N}$,
\be\label{feb25f}
                  \lim_{m
                  \to \infty}
                  \mathbb{P}
                   \big 
                  (
                      \big  \{
                        \sup_{r\in [-\rho, 0]}
                  \|  {u}_\tau(r; \tau-t_n, \varphi_n)\|
                  \le m
                 \big  \}
                 \big  ) =1.
               \ee
 By \eqref{feb25f} we infer that
 for every  $\delta>0$, there exists
 $m_\delta=m_\delta (\delta, \tau, D)
  \ge 1 $ such that
  for all  $n \le N_1$,
  \be\label{feb25g}
                  \mathbb{P}
                  \big 
                  (
                         \big  \{
                        \sup_{r\in [-\rho, 0]}
                  \|  {u}_\tau(r; \tau-t_n, \varphi_n)\|
                  \le m_\delta
                    \big  \}
                    \big  ) >1-\delta.
               \ee
               By \eqref{feb24b}
               and \eqref{feb25g} we have
               for all $k\in \mathbb{N}$
               and $n\le N_1$,
                \be\label{feb25h}
                  \mathbb{P}
                  \big
                  (
                       \big \{
                        \sup_{r\in [-\rho, 0]}
                  \|  \widetilde{u}^k_\tau(r; \tau-t_n, \varphi_n)\|
                  \le m_\delta
                    \big  \}
                      \big ) >1-\delta.
               \ee
 Denote by
 $ \mathcal{E}_{2, k, \delta}
     = \left\{ z =(z_i)_{i \in \mathbb{Z}} \in \ell^2 : \; 
                                          z_i = 0 \  \text{for} \ | i | >  k
                                          \  \text{and} \
                                          \| z \| \le m_\delta
         \right\}.
  $
      Then
      $ \mathcal{E}_{2, k, \delta}$
      is a compact subset of $\ell^2$, and by
      \eqref{feb25h} we have  for all  $n\le N_1$,
 \begin{align}\label{feb25i}
    \mathbb{P}
       \Big(
            \big\{     \widetilde{u}
                ^k_\tau(r; \tau-t_n, \varphi_n)\in
                          \mathcal{E}_{2, k, \delta},\
                          \forall \ r\in [-\rho, 0]
           \big\}
       \Big)
   > 1-  \delta. 
 \end{align}
 Let
 $\mathcal{E}^\delta_k
 =  \mathcal{E}_{1, k, \delta}\cup
   \mathcal{E}_{2, k, \delta}$.
   Then $\mathcal{E}^\delta_k$ is a compact subset
   of $\ell^2$
   and  for all  $  n\in\mathbb{N}$,
$$
    \mathbb{P}
       \Big(
            \big\{     \widetilde{u}
                ^k_\tau(r; \tau-t_n, \varphi_n)\in
                          \mathcal{E}^\delta_k ,\
                          \forall \ r\in [-\rho, 0]
           \big\}
       \Big)
   > 1-  \delta,
$$ and hence \eqref{feb25a} is valid.

      {\bf Step 2:}
      Prove that  for every 
      $k\in \mathbb{N}$, $\xi \in \ell^2$,
      any stopping time $\theta_n \in [\tau - \rho, \tau]$ and 
      any $\delta_n \in [0, \rho]$ with $\delta_n \to 0$,
 \begin{align} \label{feb25j}
       \lim_{n \to \infty} 
            \langle \widetilde{u}^k(\theta_n + \delta_n; \tau-t_n, \varphi_n) 
                          - \widetilde{u}^k(\theta_n; \tau-t_n, \varphi_n), 
                          \xi 
            \rangle = 0, \ \ \  \text{in probability},
 \end{align}
 where $\theta_n + \delta_n = \tau$ if $\theta_n + \delta_n > \tau$.

   For convenience, we write
   $u^n (t) = u(t; \tau -t_n, \varphi_n)$. 
By
  \eqref{lmvlds-2} and It\^o's formula, we obtain
 \begin{align} \label{pac-9}
       & \langle u^n( \theta_n + \delta_n )
                       - u^n(\theta_n), 
                       \xi
           \rangle
           + \int_{\theta_n}^{\theta_n + \delta_n} 
                   \langle A u^n(s), \xi \rangle ds  \nonumber\\
       & + \lambda 
                  \int_{\theta_n}^{\theta_n + \delta_n} 
                     \langle u^n(s), \xi \rangle ds
           + \int_{\theta_n}^{\theta_n + \delta_n} 
                   \langle f(u^n(s), \mathcal{L}_{u^n(s)}), 
                               \xi 
                   \rangle ds  \nonumber \\
   = & \int_{\theta_n}^{\theta_n + \delta_n} 
                 \langle g(s), \xi \rangle ds
           + \int_{\theta_n}^{\theta_n + \delta_n} 
                 \langle F(u^n(s-\rho), \mathcal{L}_{u^n(s-\rho)}), 
                              \xi 
                 \rangle ds  \nonumber \\
       & + \sqrt{\varepsilon} \sum_{k \in \mathbb{N} } 
                   \int_{\theta_n}^{\theta_n + \delta_n} 
                        \langle \sigma_k(u^n(s), \mathcal{L}_{u^n(s)}) + h_k(s),
                                     \xi
                        \rangle dW_k(s)  \nonumber \\
       & + \sqrt{\varepsilon} 
                   \int_{\theta_n}^{\theta_n + \delta_n} 
                        \int_{y \in \mathbb{Y} }
                            \langle
                                        \widetilde{\sigma}(u^n(s-), \mathcal{L}_{u^n(s)}, y) 
                                        +  \widetilde{h}(s, y), 
                                        \xi 
                            \rangle
                       \widetilde{N}(ds, dy).
 \end{align}
 
 For the second term on the left-hand side of \eqref{pac-9}, 
by the embedding
$\ell^p \subseteq \ell^{2p-2}$ 
for $p \ge 2$, we have
 \begin{align} \label{pac-10}
       \mathbb{E} 
                 \left[ \Big|
                              \int_{\theta_n}^{\theta_n + \delta_n} 
                                     \langle A u^n(s), \xi \rangle ds 
                           \Big|
                 \right] 
    = & \mathbb{E} 
                  \left[ \Big|
                                 \int_{\theta_n}^{\theta_n + \delta_n} 
                                        \sum_{i \in \mathbb{Z} }
                                            | (B u^n)_i(s)|^{p-2} (B u^n)_i(s) (B\xi)_i  ds 
                              \Big|
                    \right]   \nonumber\\
  \le & \mathbb{E} 
                  \left[ \int_{\theta_n}^{\theta_n + \delta_n} 
                                        \left( \sum_{i \in \mathbb{Z} }
                                                     | (B u^n)_i(s)|^{2p-2} 
                                        \right)^{\frac{1}{2} }
                                        \| B\xi\|  ds
                    \right]   \nonumber\\     
   = & \mathbb{E} 
                  \left[ \int_{\theta_n}^{\theta_n + \delta_n} 
                                        \| B u^n(s) \|_{2p-2}^{p - 1}
                                        \| B\xi\|  ds
                    \right]   \nonumber\\  
 \le & \mathbb{E} 
                   \left[ \int_{\theta_n}^{\theta_n + \delta_n} 
                                         \| B u^n(s) \|_p^{p - 1}
                                         \| B\xi\|  ds
                     \right]   \nonumber\\                                                                              
    \le & \|B\xi \| 
             \left( \mathbb{E} 
                             \bigg[ \int_{\theta_n}^{\theta_n + \delta_n} 
                                         \|B u^n(s)\|_p^p ds 
                              \bigg] 
             \right)^{\frac{p-1}{p} } 
             \delta_n^{\frac{1}{p} }  \nonumber\\ 
     \le &2^{p-1} \|B\xi \| 
                  \left( \mathbb{E} 
                                  \bigg[ \int_{\theta_n}^{\theta_n + \delta_n} 
                                               \|u^n(s)\|_p^p ds 
                                   \bigg] 
                  \right)^{\frac{p-1}{p} } 
                  \delta_n^{\frac{1}{p} }  \nonumber\\
     \le &2^{p-1} \|B\xi \| 
                  \left( \mathbb{E} 
                                  \bigg[ \int_{\tau - \rho}^{\tau} 
                                                 \|u^n(s)\|_p^p ds 
                                   \bigg] 
                  \right)^{\frac{p-1}{p} } 
                  \delta_n^{\frac{1}{p} }.                                                        
 \end{align}
 By \eqref{feb25a_1} and
 \eqref{pac-10} we have,
 for all $n\ge N_1$,
  \begin{align} \label{pac-12}
        \mathbb{E} 
                  \left[ \Big|
                               \int_{\theta_n}^{\theta_n + \delta_n} 
                                      \langle A u^n(s), \xi \rangle ds 
                            \Big|
                  \right] 
  \le & 2^{p-1} 
          c_1 ^{\frac{p-1}{p} } 
               \delta_n^{\frac{1}{p} }\|B\xi \| .                               
 \end{align}
  For the third term on the left-hand side of \eqref{pac-9}, 
 by \eqref{feb25a_1}, we obtain that for all $n \ge N_1$, 
 \begin{align} \label{pac-13}
      & \lambda 
             \mathbb{E} 
                    \left[ 
                             \Big| \int_{\theta_n}^{\theta_n + \delta_n} 
                                          \langle u^n(s), \xi \rangle ds
                             \Big|
                    \right]  
  \le   \lambda 
                \mathbb{E} 
                       \left[ \int_{\theta_n}^{\theta_n + \delta_n} 
                                         \| u^n(s) \|  \| \xi \| ds
                       \right]     \nonumber\\
 & \le   \lambda \| \xi \|
                \mathbb{E} 
                       \left[ \int_{\theta_n}^{\theta_n + \delta_n} 
                                         \| u^n(s) \| ds
                       \right]  
  \le   \lambda \| \xi \|
                \left( \mathbb{E} 
                                \left[ \int_{\tau - \rho}^\tau
                                                  \| u^n(s) \|^2 ds
                                \right]
                \right)^{\frac{1}{2} }
                    \delta_n^{\frac{1}{2} }  
  \le   \lambda \| \xi \| c_1^{\frac{1}{2} }
                    \delta_n^{\frac{1}{2} }.                                                                                 
 \end{align} 
 For the fourth term on the left-hand side of \eqref{pac-9}, 
 by \eqref{f4}, we have
 \begin{align} \label{pac-14}
       & \mathbb{E}
                  \left[ 
                           \Big| \int_{\theta_n}^{\theta_n + \delta_n} 
                                           \langle f(u^n(s), \mathcal{L}_{u^n(s)}), 
                                                        \xi 
                                           \rangle ds 
                            \Big|
                  \right]  \nonumber \\
  \le & \mathbb{E}
                   \left[ \int_{\theta_n}^{\theta_n + \delta_n} 
                                           \| f(u^n(s), \mathcal{L}_{u^n(s)}) \|
                                           \| \xi \| 
                              ds 
                    \right]  \nonumber \\   
  \le & \| \xi \| 
           \mathbb{E}
                  \left[ 
                           \int_{\theta_n}^{\theta_n + \delta_n} 
                                \left( \sum_{i \in \mathbb{Z} }
                                               \left( | \phi_{2,i} | 
                                                          \big( \big| (u^n)_i(s) \big| 
                                                                   + \big| (u^n)_i(s) \big|^{p-1} 
                                                          \big) 
                                                         + | \phi_{3,i} | 
                                                             \sqrt{\mathbb{E} \left[ \| u^n(s) \|^2\right] } 
                                                \right)^2
                                \right)^{\frac{1}{2} }                                     
                            ds 
                  \right]  \nonumber \\ 
  \le & \| \xi \| 
           \mathbb{E}
                  \left[ \int_{\theta_n}^{\theta_n + \delta_n} 
                               \left( 3 \| \phi_2 \|_{\ell^\infty}^2 \| u^n(s) \|^2 
                                        + 3 \sum_{i \in \mathbb{Z} } 
                                                     | \phi_{2,i} |^{2}
                                                     \big| (u^n)_i(s) \big|^{2p-2} 
                                        + 3 \|\phi_{3}\|^2 
                                           \mathbb{E} \left[ \| u^n(s) \|^2\right]                                     
                                \right)^{\frac{1}{2} } ds 
                  \right]  \nonumber \\  
  \le & \sqrt{3} \| \xi \| 
           \mathbb{E}
                  \left[ \int_{\theta_n}^{\theta_n + \delta_n} 
                               \| \phi_2 \|_{\ell^\infty} \| u^n(s) \| ds
                  \right]  
            + \sqrt{3} \| \xi \| 
                    \mathbb{E}
                           \bigg[ \int_{\theta_n}^{\theta_n + \delta_n}
                                                \left( \sum_{i \in \mathbb{Z} } 
                                                                  | \phi_{2,i} |^{2}
                                                                  \big| (u^n)_i(s) \big|^{2p-2} 
                                               \right)^{\frac{1}{2} }
                                       ds
                            \bigg]  \nonumber\\
        & + \sqrt{3} \| \xi \| 
                     \mathbb{E}
                            \left[ \int_{\theta_n}^{\theta_n + \delta_n}
                                             \|\phi_{3}\| 
                                             \sqrt{\mathbb{E} \left[ \| u^n(s) \|^2\right] }                                    
                                        ds 
                             \right]  \nonumber \\  
  \le & \sqrt{3} \| \xi \|  \| \phi_2 \|_{\ell^\infty}
           \left( \mathbb{E}
                           \left[ \int_{\theta_n}^{\theta_n + \delta_n} 
                                    \| u^n(s) \|^2 ds
                           \right] 
           \right)^{\frac{1}{2} }
           \delta_n^{\frac{1}{2} } 
           + \sqrt{3} \| \xi \| 
                    \mathbb{E}
                           \left[ \int_{\theta_n}^{\theta_n + \delta_n}
                                             \| \phi_{2} \|_{\ell^\infty}
                                             \| u^n(s) \|_{2p-2}^{p - 1}                        
                                         ds
                           \right]  \nonumber\\
       & + \sqrt{3} \|\phi_{3}\| \| \xi \| 
                    \left( \mathbb{E}
                                        \left[ \int_{\theta_n}^{\theta_n + \delta_n}
                                                      \mathbb{E} \left[ \| u^n(s) \|^2\right]                                 
                                                  ds 
                                        \right]
                     \right)^{\frac{1}{2} } 
                     \delta_n^{\frac{1}{2} }  \nonumber\\  
  \le & \sqrt{3} \| \xi \|  \| \phi_2 \|_{\ell^\infty}
                \left( \mathbb{E}
                                \left[ \int_{\theta_n}^{\theta_n + \delta_n} 
                                        \| u^n(s) \|^2 ds
                                \right] 
                \right)^{\frac{1}{2} }
                \delta_n^{\frac{1}{2} } 
           + \sqrt{3} \| \xi \|  \| \phi_{2} \|_{\ell^\infty}                                   
                      \mathbb{E}
                             \left[ \int_{\theta_n}^{\theta_n + \delta_n}
                                               \| u^n(s) \|_{p}^{p - 1}                        
                                     ds
                             \right]  \nonumber\\
       & + \sqrt{3} \| \xi \| \|\phi_{3}\| 
                    \left( \mathbb{E}
                                    \left[ \int_{\theta_n}^{\theta_n + \delta_n}
                                                         \mathbb{E} \left[ \| u^n(s) \|^2\right]                                 
                                                ds 
                                    \right]
                    \right)^{\frac{1}{2} } 
                                  \delta_n^{\frac{1}{2} }  \nonumber\\    
  \le & \sqrt{3} \| \xi \|  
                \left( \| \phi_2 \|_{\ell^\infty}
                           + \|\phi_{3}\|
                \right)        
                    \left( \mathbb{E}
                                    \left[ \int_{\tau - \rho}^\tau
                                                 \| u^n(s) \|^2 ds
                                    \right] 
                    \right)^{\frac{1}{2} }
                        \delta_n^{\frac{1}{2} } \nonumber\\
       & + \sqrt{3} \| \xi \| \| \phi_{2} \|_{\ell^\infty}
                    \left( \mathbb{E}
                                    \left[ \int_{\tau - \rho}^\tau
                                                          \| u^n(s) \|_{p}^{p}                        
                                                  ds
                                    \right] 
                    \right)^{\frac{p-1}{p} }
                        \delta_n^{\frac{1}{p} }.                                                                                       
  \end{align} 
 By \eqref{feb25a_1} and \eqref{pac-14}, we obtain that for all $n \ge N_1$,
 \begin{align} \label{pac-15}
     &  \mathbb{E}
                  \left[ 
                           \Big| \int_{\theta_n}^{\theta_n + \delta_n} 
                                           \langle f(u^n(s), \mathcal{L}_{u^n(s)}), 
                                                        \xi 
                                           \rangle ds 
                            \Big|
                  \right]  \nonumber\\
  \le & \sqrt{3} \| \xi \|  
                \left( \| \phi_2 \|_{\ell^\infty}
                           + \|\phi_{3}\|
                \right)        
                c_1^{\frac{1}{2} }
                \delta_n^{\frac{1}{2} }  
        + \sqrt{3} \| \xi \| \| \phi_{2} \|_{\ell^\infty}
          c_1^{\frac{p-1}{p} }
                        \delta_n^{\frac{1}{p} }.                                                                                       
  \end{align}  
 By \eqref{F3} and \eqref{feb25a_1},
 we have for all $n\ge N_1$,
 \begin{align} \label{pac-16}   
       & \mathbb{E} 
                 \left[ 
                         \Big| \int_{\theta_n}^{\theta_n + \delta_n} 
                                      \langle g(s), \xi \rangle ds
                         \Big|
                 \right]
           + \mathbb{E} 
                      \left[ 
                               \Big| \int_{\theta_n}^{\theta_n + \delta_n} 
                                             \langle F(u^n(s-\rho), \mathcal{L}_{u^n(s-\rho)}), 
                                                         \xi 
                                            \rangle ds
                               \Big|
                      \right]  \nonumber\\
  \le & \| \xi \| 
              \mathbb{E} 
                     \left[ \int_{\theta_n}^{\theta_n + \delta_n} 
                                         \| g(s) \|ds
                     \right]
            + \| \xi \| 
                 \mathbb{E} 
                        \left[ \int_{\theta_n}^{\theta_n + \delta_n} 
                                                \| F(u^n(s-\rho), \mathcal{L}_{u^n(s-\rho)}) \| 
                                            ds
                        \right]  \nonumber\\
  \le & \| \xi \| 
              \left( \mathbb{E} 
                              \left[ \int_{\tau - \rho}^\tau \| g(s) \|^2 ds
                              \right]  
              \right)^{\frac{1}{2} }
                  \delta_n^{\frac{1}{2} } 
           + \| \xi \| 
                \left( \mathbb{E} 
                                \left[ \int_{\theta_n}^{\theta_n + \delta_n} 
                                                  \| F(u^n(s-\rho), \mathcal{L}_{u^n(s-\rho)}) \|^2 
                                            ds
                                \right]
               \right)^{\frac{1}{2} }  
                    \delta_n^{\frac{1}{2} }  \nonumber\\   
  \le & \| \xi \| 
              \left( \mathbb{E} 
                              \left[ \int_{\tau - \rho}^\tau \| g(s) \|^2 ds
                              \right]  
              \right)^{\frac{1}{2} }
                  \delta_n^{\frac{1}{2} }   \nonumber\\
       & + \| \xi \| 
                \left( \mathbb{E} 
                                \left[ \int_{\theta_n}^{\theta_n + \delta_n} 
                                              \left( 2 \|F(0,\delta_0)\|^2
                                                       + 4 \|L_{F}\|_{\ell^\infty}^2 \| u^n(s - \rho) \|^2
                                                       + 4 \|L_{F}\|^2 \mathbb{E} \left[ \| u^n(s - \rho) \|^2 \right]
                                              \right) 
                                           ds
                                \right]
               \right)^{\frac{1}{2} }  
                    \delta_n^{\frac{1}{2} }  \nonumber\\ 
  \le & \| \xi \| 
              \left( \mathbb{E} 
                              \left[ \int_{\tau - \rho}^\tau \| g(s) \|^2 ds
                              \right]  
              \right)^{\frac{1}{2} }
                  \delta_n^{\frac{1}{2} } 
           + \sqrt{2 } \| \xi \|  \|F(0,\delta_0)\| \delta_n  \nonumber\\
       & + 2 \| \xi \| \|L_{F}\|_{\ell^\infty}
                  \left( \mathbb{E} 
                                   \left[ \int_{\tau - \rho}^\tau
                                                 \| u^n(s - \rho) \|^2 ds
                                   \right]
                   \right)^{\frac{1}{2} }
                       \delta_n^{\frac{1}{2} } 
           + 2 \| \xi \| \|L_{F}\| 
                  \left(  \int_{\tau - \rho}^\tau
                                 \mathbb{E} \left[ \| u^n(s - \rho) \|^2 \right] ds
                  \right)^{\frac{1}{2} }  
                       \delta_n^{\frac{1}{2} }  \nonumber\\      
  \le & \| \xi \| 
              \left( \mathbb{E} 
                              \left[ \int_{\tau - \rho}^\tau \| g(s) \|^2 ds
                              \right]  
              \right)^{\frac{1}{2} }
                  \delta_n^{\frac{1}{2} } 
           + \sqrt{2 } \| \xi \|  \|F(0,\delta_0)\| \delta_n  \nonumber\\
       & + 2 \| \xi \| 
                 \left( \|L_{F}\|_{\ell^\infty}
                          + \|L_{F}\|
                 \right)  \rho^{\frac{1}{2} }
                 \left( \sup_{s \in [\tau - 2\rho, \tau - \rho] } 
                                      \mathbb{E} \left[ \| u^n(s) \|^2 \right]
                 \right)^{\frac{1}{2} }  
                     \delta_n^{\frac{1}{2} }  \nonumber  \\     
 \le & \| \xi \| 
              \left( \mathbb{E} 
                              \left[ \int_{\tau - \rho}^\tau \| g(s) \|^2 ds
                              \right]  
              \right)^{\frac{1}{2} }
                  \delta_n^{\frac{1}{2} } 
           + \sqrt{2 } \| \xi \|  \|F(0,\delta_0)\| \delta_n  
     + 2 \| \xi \| 
                 \left( \|L_{F}\|_{\ell^\infty}
                          + \|L_{F}\|
                 \right)  \rho^{\frac{1}{2} }
             c_1^{\frac{1}{2} }    
                      \delta_n^{\frac{1}{2} }.                                                                                            
 \end{align}
 
 For the third term on the right-hand side of \eqref{pac-9},
 by the BDG inequality, \eqref{siglinear-1}, and
 the  H\"{o}lder inequality,
 we have 
 \begin{align} \label{pac-17}  
       & \sqrt{\varepsilon} 
           \mathbb{E} 
                  \left[ 
                           \Big| \sum_{k \in \mathbb{N} } 
                                          \int_{\theta_n}^{\theta_n + \delta_n} 
                                               \langle \sigma_k(u^n(s), \mathcal{L}_{u^n(s)}) + h_k(s),
                                                           \xi
                                               \rangle dW_k(s)
                           \Big|
                   \right]  \nonumber \\
 \le & \sqrt{\varepsilon} c_1
           \mathbb{E} 
                  \left[ 
                          \Big( \sum_{k \in \mathbb{N} } 
                                         \int_{\theta_n}^{\theta_n + \delta_n} 
                                              \| \sigma_k(u^n(s), \mathcal{L}_{u^n(s)}) + h_k(s) \|^2
                                              \| \xi \|^2 ds
                          \Big)^\frac{1}{2}
                  \right]  \nonumber \\     
 \le & \sqrt{2\varepsilon} c_1  \| \xi \|
           \mathbb{E} 
                  \left[ 
                          \Big( \sum_{k \in \mathbb{N} } 
                                         \int_{\theta_n}^{\theta_n + \delta_n} 
                                             \left( \| \sigma_k(u^n(s), \mathcal{L}_{u^n(s)}) \|^2
                                                      + \| h_k(s) \|^2
                                             \right)
                                         ds
                          \Big)^\frac{1}{2}
                  \right]  \nonumber \\ 
 \le & \sqrt{2\varepsilon} c_1  \| \xi \|
                \left( \mathbb{E} 
                               \left[ \sum_{k \in \mathbb{N} } 
                                               \int_{\theta_n}^{\theta_n + \delta_n} 
                                                        \| \sigma_k(u^n(s), \mathcal{L}_{u^n(s)}) \|^2
                                                ds
                               \right] 
                  \right)^{\frac{1}{2} }  \nonumber\\
       & + \sqrt{2\varepsilon} c_1  \| \xi \|
                    \left( \mathbb{E}
                                    \left[ \sum_{k \in \mathbb{N} } 
                                                     \int_{\theta_n}^{\theta_n + \delta_n} 
                                                              \| h_k(s) \|^2
                                                      ds
                                    \right]
                    \right)^{\frac{1}{2} }  \nonumber \\       
 \le & \sqrt{2\varepsilon} c_1  \| \xi \|
                \left( \mathbb{E} 
                                \left[ \| \alpha \|^2 \delta_n
                                         + \sum_{k \in \mathbb{N} } 
                                                   \int_{\theta_n}^{\theta_n + \delta_n} 
                                                        \sum_{i \in \mathbb{Z} }
                                                              4 L_{\sigma, k, i}^2
                                                                  \left( |(u^n)_i(s)|^q
                                                                           + \mathbb{E} \left[ \| u^n(s) \|^2 \right]
                                                                  \right)
                                                    ds
                                \right] 
                  \right)^{\frac{1}{2} }  \nonumber\\
       & + \sqrt{2\varepsilon} c_1  \| \xi \|
                    \left( \mathbb{E}
                                    \left[ \int_{\theta_n}^{\theta_n + \delta_n} 
                                                      \| h(s) \|^2
                                               ds
                                    \right]
                    \right)^{\frac{1}{2} }  \nonumber \\ 
 \le & \sqrt{2\varepsilon} c_1  \| \xi \|
                \| \alpha \| \delta_n^{\frac{1}{2} } 
          + \sqrt{2\varepsilon} c_1  \| \xi \|
                   \left( \mathbb{E} 
                                   \left[ \sum_{k \in \mathbb{N} } 
                                                  \int_{\theta_n}^{\theta_n + \delta_n} 
                                                       \sum_{i \in \mathbb{Z} }
                                                            4 L_{\sigma, k, i}^2
                                                             |(u^n)_i(s)|^q
                                                    ds
                                   \right]  
                  \right)^{\frac{1}{2} }   \nonumber\\ 
       & + \sqrt{2\varepsilon} c_1  \| \xi \|
                    \left( \mathbb{E} 
                                    \left[ \sum_{k \in \mathbb{N} } 
                                                   \int_{\theta_n}^{\theta_n + \delta_n} 
                                                            \sum_{i \in \mathbb{Z} }
                                                                  4 L_{\sigma, k, i}^2
                                                                  \mathbb{E} \left[ \| u^n(s) \|^2 \right]
                                                     ds
                                    \right]                           
                   \right)^{\frac{1}{2} }  \nonumber\\
       & + \sqrt{2\varepsilon} c_1  \| \xi \|
                    \left( \mathbb{E}
                                    \left[ \int_{\theta_n}^{\theta_n + \delta_n} 
                                                      \| h(s) \|^2
                                               ds
                                    \right]
                    \right)^{\frac{1}{2} }  \nonumber \\ 
 \le & \sqrt{2\varepsilon} c_1  \| \xi \|
                \| \alpha \| \delta_n^{\frac{1}{2} } 
          + 2 \sqrt{2\varepsilon} c_1  \| \xi \|
                   \left( \mathbb{E} 
                                   \left[ \int_{\theta_n}^{\theta_n + \delta_n} 
                                                 \bigg( \sum_{i \in \mathbb{Z} }
                                                                   \Big( \sum_{k \in \mathbb{N} } 
                                                                                       L_{\sigma, k, i}^2
                                                                    \Big)^{\frac{p}{p-q} }
                                                 \bigg)^\frac{p-q}{p} 
                                                  \| u^n(s) \|_p^q
                                              ds
                                   \right]  
                  \right)^{\frac{1}{2} }   \nonumber\\                       
       & + 2 \sqrt{2\varepsilon} c_1  \| \xi \| \|L\|
                    \left( \mathbb{E} 
                                    \left[ \int_{\theta_n}^{\theta_n + \delta_n} 
                                                      \mathbb{E} \left[ \| u^n(s) \|^2 \right]
                                              ds
                                    \right]                           
                   \right)^{\frac{1}{2} }  \nonumber\\
       & + \sqrt{2\varepsilon} c_1  \| \xi \|
                    \left( \mathbb{E}
                                    \left[ \int_{\theta_n}^{\theta_n + \delta_n} 
                                                      \| h(s) \|^2
                                               ds
                                    \right]
                    \right)^{\frac{1}{2} }  \nonumber \\    
 \le & \sqrt{2\varepsilon} c_1  \| \xi \|
                \| \alpha \| \delta_n^{\frac{1}{2} } 
          + 2 \sqrt{2\varepsilon} c_1  \| \xi \| 
                      \bigg( \sum_{i \in \mathbb{Z} }
                                       \Big( \sum_{k \in \mathbb{N} } L_{\sigma, k, i}^2
                                       \Big)^{\frac{p}{p-q} }
                      \bigg)^\frac{p-q}{2p} 
                           \left( \mathbb{E} 
                                        \left[ \int_{\tau -\rho}^\tau
                                                          \| u^n(s) \|_p^p
                                                  ds
                                        \right]  
                          \right)^{\frac{q}{2p} } 
                              \delta_n^{\frac{p-q}{2p} }  \nonumber\\                        
       & + 2 \sqrt{2\varepsilon} c_1  \| \xi \| \|L\|
                      \Big( \sup_{s \in [\tau-\rho, \tau] } \mathbb{E} \left[ \| u^n(s) \|^2 \right]
                      \Big)^{\frac{1}{2} }
                           \delta_n^{\frac{1}{2} }  
          + \sqrt{2\varepsilon} c_1  \| \xi \|
                    \left( \mathbb{E} 
                                    \left[ \int_{\theta_n}^{\theta_n + \delta_n} 
                                                      \| h(s) \|^2
                                               ds
                                    \right]
                    \right)^{\frac{1}{2} }.                           
 \end{align}
 By \eqref{feb25a_1} and \eqref{pac-17}, we obtain that for all $n \ge N_1$,
 \begin{align} \label{pac-18}  
       & \sqrt{\varepsilon} 
           \mathbb{E} 
                  \left[ \Big| \sum_{k \in \mathbb{N} } 
                                          \int_{\theta_n}^{\theta_n + \delta_n} 
                                               \langle \sigma_k(u^n(s), \mathcal{L}_{u^n(s)}) + h_k(s),
                                                           \xi
                                               \rangle dW_k(s)
                           \Big|
                   \right]  \nonumber \\
 \le & \sqrt{2\varepsilon} c_1  \| \xi \|
                \| \alpha \| \delta_n^{\frac{1}{2} } 
          + 2 \sqrt{2\varepsilon} c_1  \| \xi \| 
                   \bigg( \sum_{i \in \mathbb{Z} }
                                    \Big( \sum_{k \in \mathbb{N} } L_{\sigma, k, i}^2
                                    \Big)^{\frac{p}{p-q} }
                   \bigg)^\frac{p-q}{2p} 
                   c_1^{\frac{q}{2p} } 
                   \delta_n^{\frac{p-q}{2p} }  \nonumber\\                        
       & + 2 \sqrt{2\varepsilon} c_1  \| \xi \| \|L\|
                 c_1^{\frac{1}{2} } 
                           \delta_n^{\frac{1}{2} }  
           + \sqrt{2\varepsilon} c_1  \| \xi \|
                    \left( \mathbb{E} 
                                    \left[ \int_{\theta_n}^{\theta_n + \delta_n} 
                                                      \| h(s) \|^2
                                               ds
                                    \right]
                    \right)^{\frac{1}{2} }.                           
 \end{align} 
 
 For the fourth term on the right-hand side of \eqref{pac-9},
 by the BDG inequality, \eqref{siglinear-2}, and 
 the H\"{o}lder inequality,
 we have 
 \begin{align} \label{pac-19}  
       & \sqrt{\varepsilon} 
           \mathbb{E} 
                  \left[ 
                          \Big| \int_{\theta_n}^{\theta_n + \delta_n} 
                               \int_{y \in \mathbb{Y} }
                                   \langle
                                          \widetilde{\sigma}(u^n(s-), \mathcal{L}_{u^n(s)}, y) 
                                          +  \widetilde{h}(s, y), 
                                          \xi 
                                  \rangle
                           \widetilde{N}(ds, dy) 
                       \Big| 
                   \right]  \nonumber\\
  \le & \sqrt{\varepsilon} c_2
                \mathbb{E} 
                       \left[
                                \Big( \int_{\theta_n}^{\theta_n + \delta_n} 
                                             \int_{y \in \mathbb{Y} }
                                                      \| \widetilde{\sigma}(u^n(s-), \mathcal{L}_{u^n(s)}, y)
                                                         + \widetilde{h}(s, y)
                                                      \|^2 
                                                      \| \xi \|^2 
                                             N(ds, dy) 
                               \Big)^{\frac{1}{2} }
                      \right] \nonumber\\
  \le & \sqrt{\varepsilon} c_2
                 \left( \mathbb{E} 
                                   \left[ \int_{\theta_n}^{\theta_n + \delta_n} 
                                                 \int_{y \in \mathbb{Y} }
                                                          \| \widetilde{\sigma}(u^n(s-), \mathcal{L}_{u^n(s)}, y)
                                                             + \widetilde{h}(s, y)
                                                          \|^2 
                                                          \| \xi \|^2 
                                                   \nu(dy) ds
                                  \right] 
                  \right)^{\frac{1}{2} }  \nonumber\\
  \le & \sqrt{2 \varepsilon} c_2 \| \xi \|
                 \left( \mathbb{E} 
                                   \left[ \int_{\theta_n}^{\theta_n + \delta_n} 
                                                 \int_{y \in \mathbb{Y} }
                                                          \| \widetilde{\sigma}(u^n(s-), \mathcal{L}_{u^n(s)}, y)
                                                          \|^2       
                                                   \nu(dy) ds 
                                  \right] 
                  \right)^{\frac{1}{2} }  \nonumber\\    
        & + \sqrt{2 \varepsilon} c_2 \| \xi \|
                  \left( \mathbb{E} 
                                   \left[ \int_{\theta_n}^{\theta_n + \delta_n} 
                                                 \int_{y \in \mathbb{Y} }
                                                          \| \widetilde{h}(s, y) \|^2        
                                                   \nu(dy) ds 
                                  \right] 
                  \right)^{\frac{1}{2} }  \nonumber\\   
  \le & \sqrt{2 \varepsilon} c_2 \| \xi \|
                 \left( \mathbb{E} 
                                   \left[ \int_{\theta_n}^{\theta_n + \delta_n} 
                                               \sum_{i \in \mathbb{Z} }
                                                     \left( \beta_i^2
                                                              + 4 \int_{y \in \mathbb{Y} }
                                                                  \left( L_{ \widetilde{\sigma}, i}(y) \right)^2 \nu(dy)
                                                                  \left( | (u^n)_i(s) |^q 
                                                                           + \mathbb{E} \left[ \|u^n(s)\|^2 \right] 
                                                                  \right) 
                                                    \right)     
                                              ds 
                                  \right] 
                  \right)^{\frac{1}{2} }  \nonumber\\    
        & + \sqrt{2 \varepsilon} c_2 \| \xi \|
                     \left( \mathbb{E} 
                                     \left[ \int_{\theta_n}^{\theta_n + \delta_n} 
                                                       \| \widetilde{h}(s) \|_{L^2(\mathbb{Y}, \nu; \ell^2) }^2 
                                                ds 
                                    \right] 
                  \right)^{\frac{1}{2} }  \nonumber\\                     
  \le & \sqrt{2 \varepsilon} c_2 \| \xi \| 
                \| \beta \| \delta_n^{\frac{1}{2} }
           + 2 \sqrt{2 \varepsilon} c_2 \| \xi \|
                       \left( \mathbb{E} 
                                       \left[ \int_{\theta_n}^{\theta_n + \delta_n} 
                                                          \| L_{ \widetilde{\sigma}} \|_{L^2(\mathbb{Y}, \nu; \ell^2) }^2 
                                                               \mathbb{E} \left[ \|u^n(s)\|^2 \right]     
                                                 ds 
                                      \right] 
                      \right)^{\frac{1}{2} }  \nonumber\\                
        & + 2 \sqrt{2 \varepsilon} c_2 \| \xi \| 
                       \left( \mathbb{E} 
                                       \left[ \int_{\theta_n}^{\theta_n + \delta_n} 
                                                   \sum_{i \in \mathbb{Z} }
                                                          \int_{y \in \mathbb{Y} }
                                                                    \left( L_{ \widetilde{\sigma}, i}(y) \right)^2 \nu(dy)
                                                                     | (u^n)_i(s) |^q      
                                                 ds 
                                      \right] 
                       \right)^{\frac{1}{2} }  \nonumber\\ 
        & + \sqrt{2 \varepsilon} c_2 \| \xi \|
                  \left( \mathbb{E} 
                                   \left[ \int_{\theta_n}^{\theta_n + \delta_n} 
                                                     \| \widetilde{h}(s) \|_{L^2(\mathbb{Y}, \nu; \ell^2) }^2  
                                             ds 
                                  \right] 
                  \right)^{\frac{1}{2} }  \nonumber\\   
  \le & \sqrt{2 \varepsilon} c_2 \| \xi \| 
                \| \beta \| \delta_n^{\frac{1}{2} }
            + 2 \sqrt{2 \varepsilon} c_2 \| \xi \| \|L\|
                         \left( \sup_{s \in [\tau -\rho, \tau] } 
                                       \mathbb{E} \left[ \|u^n(s)\|^2 \right]  
                         \right)^{\frac{1}{2} }
                              \delta_n^{\frac{1}{2} }  \nonumber\\                    
       & + 2 \sqrt{2 \varepsilon} c_2 \| \xi \| 
                  \bigg( \sum_{i \in \mathbb{Z} }
                                   \Big( \int_{y \in \mathbb{Y} }
                                                 \left( L_{ \widetilde{\sigma}, i}(y) \right)^2 \nu(dy)
                                   \Big)^{\frac{p}{p-q} }
                  \bigg)^{\frac{p-q}{2p} }
                       \left( \mathbb{E} 
                                        \left[ \int_{\theta_n}^{\theta_n + \delta_n} 
                                                   \| u^n(s) \|_p^q      
                                                 ds 
                                       \right] 
                       \right)^{\frac{1}{2} }  \nonumber\\  
       & + \sqrt{2 \varepsilon} c_2 \| \xi \|
                    \left( \mathbb{E} 
                                    \left[ \int_{\theta_n}^{\theta_n + \delta_n} 
                                                      \| \widetilde{h}(s) \|_{L^2(\mathbb{Y}, \nu; \ell^2) }^2 
                                              ds 
                                    \right] 
                    \right)^{\frac{1}{2} }  \nonumber\\    
  \le & \sqrt{2 \varepsilon} c_2 \| \xi \| 
                \| \beta \| \delta_n^{\frac{1}{2} }
           + 2 \sqrt{2 \varepsilon} c_2 \| \xi \| \|L\|
                         \left( \sup_{s \in [\tau -\rho, \tau] } 
                                       \mathbb{E} \left[ \|u^n(s)\|^2 \right]  
                         \right)^{\frac{1}{2} }
                              \delta_n^{\frac{1}{2} }  \nonumber\\                
       & + 2 \sqrt{2 \varepsilon} c_2 \| \xi \| 
                  \bigg( \sum_{i \in \mathbb{Z} }
                                   \Big( \int_{y \in \mathbb{Y} }
                                                 \left( L_{ \widetilde{\sigma}, i}(y) \right)^2 \nu(dy)
                                   \Big)^{\frac{p}{p-q} }
                  \bigg)^{\frac{p-q}{2p} }
                       \left( \mathbb{E} 
                                        \left[ \int_{\tau - \rho}^\tau 
                                                      \| u^n(s) \|_p^p    
                                                 ds 
                                       \right] 
                       \right)^{\frac{q}{2p} }
                       \delta_n^{\frac{p-q}{2p} }  \nonumber\\      
       & + \sqrt{2 \varepsilon} c_2 \| \xi \|
                  \left( \mathbb{E} 
                                  \left[ \int_{\theta_n}^{\theta_n + \delta_n} 
                                                    \| \widetilde{h}(s) \|_{L^2(\mathbb{Y}, \nu; \ell^2) }^2        
                                             ds 
                                  \right]
                  \right)^{\frac{1}{2} }.        
 \end{align} 
 By \eqref{feb25a_1} and \eqref{pac-19}, we obtain that for all $n \ge N_1$, 
 \begin{align} \label{pac-20}  
       & \sqrt{\varepsilon} 
           \mathbb{E} 
                  \left[ 
                          \Big| \int_{\theta_n}^{\theta_n + \delta_n} 
                               \int_{y \in \mathbb{Y} }
                                   \langle
                                          \widetilde{\sigma}(u^n(s-), \mathcal{L}_{u^n(s)}, y) 
                                          +  \widetilde{h}(s, y), 
                                          \xi 
                                  \rangle
                           \widetilde{N}(ds, dy) 
                       \Big| 
                   \right]  \nonumber\\   
  \le & \sqrt{2 \varepsilon} c_2 \| \xi \| 
                \| \beta \| \delta_n^{\frac{1}{2} }
           + 2 \sqrt{2 \varepsilon} c_2 \| \xi \| \|L\|
                        c_1^{\frac{1}{2} }
                              \delta_n^{\frac{1}{2} }  \nonumber\\                
       & + 2 \sqrt{2 \varepsilon} c_2 \| \xi \| 
                  \bigg( \sum_{i \in \mathbb{Z} }
                                   \Big( \int_{y \in \mathbb{Y} }
                                                 \left( L_{ \widetilde{\sigma}, i}(y) \right)^2 \nu(dy)
                                   \Big)^{\frac{p}{p-q} }
                  \bigg)^{\frac{p-q}{2p} }
                      c_1^{\frac{q}{2p} }
                       \delta_n^{\frac{p-q}{2p} }  \nonumber\\      
       & + \sqrt{2 \varepsilon} c_2 \| \xi \|
                  \left( \mathbb{E} 
                                 \left[ \int_{\theta_n}^{\theta_n + \delta_n} 
                                                   \| \widetilde{h}(s) \|_{L^2(\mathbb{Y}, \nu; \ell^2) }^2        
                                            ds
                                 \right]
                  \right)^{\frac{1}{2} }. 
 \end{align} 
 
 From \eqref{pac-9}, \eqref{pac-12}, \eqref{pac-13}, \eqref{pac-15}, \eqref{pac-16},
 \eqref{pac-18} and \eqref{pac-20}, 
we obtain  that for all $n \ge N_1$, 
  \begin{align} \label{pac-21}
        & \mathbb{E} 
                  \left[ 
                           \Big|  
                                    \langle u^n( \theta_n + \delta_n)
                                                 - u^n(\theta_n), 
                                                 \xi
                                   \rangle
                           \Big| 
                  \right]  
  \le  c_3 \| \xi\|
           \left( \delta_n^{\frac{1}{p} } + \delta_n^{\frac{p-q}{2p} } \right)
           \nonumber\\
          &+ c_3  \| \xi \|
                     \left( \mathbb{E} 
                                     \left[  \int_{\theta_n}^{\theta_n + \delta_n} 
                                                        \| h(s) \|^2
                                                ds 
                                     \right]
                     \right)^{\frac{1}{2} }    
       + c_3 \| \xi \|
                  \left( \mathbb{E} 
                                  \left[ \int_{\theta_n}^{\theta_n + \delta_n} 
                                                    \| \widetilde{h}(s) \|_{L^2(\mathbb{Y}, \nu; \ell^2) }^2        
                                            ds
                                  \right]
                   \right)^{\frac{1}{2} },                                                                             
 \end{align} 
 where $c_3> 0$ is a constant depending 
   $\tau$ but  not on  $\xi$ or $n$.

 By the  absolute continuity of the Lebesgue integral,
 we obtain
 \begin{align*}
       \lim_{n\to \infty }
              \mathbb{E} 
                     \left[ \int_{\theta_n}^{\theta_n + \delta_n} 
                                                              \| h(s) \|^2
                                                       ds  
                              + \int_{\theta_n}^{\theta_n + \delta_n} 
                                                               \| \widetilde{h}(s) \|_{L^2(\mathbb{Y}, \nu; \ell^2) }^2        
                                                       ds  
                     \right]
            = 0,   
 \end{align*} 
 which along with \eqref{pac-21}  implies that
 \begin{align} \label{pac-22}
       \lim_{n\to \infty} 
                 \mathbb{E} 
                        \left[ 
                                 \Big|  
                                          \langle u (
             \theta_n + \delta_n; \tau-t_n,
             \varphi_n  )
                        - u(\theta_n;
                        \tau-t_n, \varphi_n  ), 
                                                       \xi
                                          \rangle
                                 \Big| 
                        \right]  
          = 0.
 \end{align}
 Given $\xi =(\xi_i)_{i\in \mathbb{Z}} \in \ell^2$,  
 we get from \eqref{pac-22} with $\xi$ replaced by 
 $(1_{[-k,k]} (i) \xi_i)_{i \in \mathbb{Z} } \in \ell^2$,
  $$
       \lim_{n\to \infty} 
                 \mathbb{E} 
                        \left[ 
                                 \Big|  
                                          \langle \widetilde{u}^k
                                          (
             \theta_n + \delta_n; \tau -t_n,
             \varphi_n  )
                        - u^n(\theta_n;
                        \tau- t_n,
                        \varphi_n   ), 
                                                       \xi
                                          \rangle
                                 \Big| 
                        \right]  
          = 0,
          \quad \forall \ k\in\mathbb{N},
$$
 which implies \eqref{feb25j}.
 
 {\bf Step 3:} Prove
 for every  fixed  $k\in
 \mathbb{N}$,   
 $\{ \mathcal{L}_{
 \widetilde{u}^k_\tau
                (\cdot; \tau -t_n,
             \varphi_n  )
 }\}_{n=1}^\infty$
 is precompact
 in $ (\mathcal{P}
 (D_\rho), d_{
 \mathcal{P}
 (D_\rho)
 })$.

 Since $\ell^2$ is
 a
  separable Hilbert space, the dual space 
  of $\ell^2$ separates the points of $\ell^2$.
  Then by \eqref{feb25a},
  \eqref{feb25a_1}
  and \eqref{feb25j}
  it follows from   \cite[Theorem 3.1]{J1986AIHP} and \cite[Theorem 1]{A1978AOP}  that 
  the sequence 
  $\{ \mathcal{L}_{
 \widetilde{u}^k_\tau
                (\cdot; \tau -t_n,
             \varphi_n  )
 }\}_{n=1}^\infty$
 of probability measures
 on $(D([-\rho, 0],
 \ell^2), d^0)$ 
 is  tight,
 as desired.

  {\bf Step 4:} Prove  the sequence 
 $\{ \mathcal{L}_{
  {u}_\tau
                (\cdot; \tau -t_n,
             \varphi_n  )
 }\}_{n=1}^\infty$
 is precompact
 in $ (\mathcal{P}_\theta
 (D_\rho), d_{
 \mathcal{P}
 (D_\rho)
 })$.
 
     By 
  Lemma \ref{momentpte},  we 
  find   that
  for every  
  $\delta>0$,  
 there exist 
       $N_2 = N_2(\delta , \tau, D)  \ge 1$ 
 and 
      $k_{0} = k(\delta , \tau) \ge 1$
 such that for all $n \ge N_2$,
 \begin{align}\label{feb26a2}
    \mathbb{E}
       \left[ \sup_{r \in [-\rho,0]}
                    \| \widehat{u}_\tau^{k_{0} }(r; \tau-t_n, \varphi_n) \|^2
       \right]
    \le {\frac   19} \delta^2  .
 \end{align}
    By Step 3,
      we see that the sequence
  $\{ \mathcal{L}_{
 \widetilde{u}^{k_0}_\tau
                (\cdot; \tau -t_n,
             \varphi_n  )
 }\}_{n= N_2}^\infty$
 is tight, and hence  
   there exist
 $n_1,\cdots , n_l\ge N_2$ such that
 \be\label{feb26a3}
 \{ \mathcal{L}_{
 \widetilde{u}^{k_0}_\tau
                (\cdot; \tau -t_n,
             \varphi_n  )
 }\}_{n= N_2}^\infty
 \subseteq
\bigcup_{i=1}^{l}
 B\left (
 \mathcal{L}_{
 \widetilde{u}^{k_0}_\tau
                (\cdot; \tau -t_{n_i},
             \varphi_{n_i}  )},\ 
 \ {\frac 13} \delta
 \right ),
 \ee
where
$B  (
 \mathcal{L}_{
 \widetilde{u}^{k_0}_\tau
                (\cdot; \tau -t_{n_i},
             \varphi_{n_i}  )},
   {\frac 13} \delta
   )
  $ is the neighborhood in
    $ (\mathcal{P}
 (D_\rho), d_{
 \mathcal{P}
 (D_\rho)
 })$
    with center 
 $\mathcal{L}_{
 \widetilde{u}^{k_0}_\tau
                (\cdot; \tau -t_{n_i},
             \varphi_{n_i}  )}$
             and  
             radius 
             ${\frac 13} \delta$, where
               $ (\mathcal{P}
 (D_\rho), d_{
 \mathcal{P}
 (D_\rho)
 })$
 is the space of
 all probability measures
 in 
  the Skorohov space $(D([-\rho, 0],
 \ell^2), d^0)$.

  Next, we prove:
  \be\label{feb26a4}
 \{ \mathcal{L}_{
  {u} _\tau
                (\cdot; \tau -t_n,
             \varphi_n  )
 }\}_{n=N_2}^\infty
 \subseteq
\bigcup_{i=1}^{l}
 B\left (
 \mathcal{L}_{
 {u}_\tau
                (\cdot; \tau -t_{n_i},
             \varphi_{n_i}  )},\ 
 \   \delta
 \right ).
 \ee
   If  $n\ge N_2$, 
  then by
\eqref{feb26a3} we find that
there exists $i\in \{1,\cdots, l\}$ such that
  \be\label{feb26a5} 
 d_{
 \mathcal{P}
 (D_\rho)
 }
  \left (
   \mathcal{L}_{
 \widetilde{u}^{k_0}_\tau
                (\cdot; \tau -t_n,
             \varphi_n  )
 }, \
 \mathcal{L}_{
 \widetilde{u}^{k_0}_\tau
                (\cdot; \tau -t_{n_i},
             \varphi_{n_i}  )}
  \right )
  <{\frac 13}\delta.
  \ee  
   By \eqref{feb24a}-\eqref{feb24c},
  \eqref{feb26a2}
 and \eqref{feb26a5}
 we get  
\begin{align*}
 & d_{
 \mathcal{P}
 (D_\rho)
 }
 \left (
 \mathcal{L}_{   u_\tau   ( \cdot; \tau - t_n,  \varphi_n   )},
 \
 \mathcal{L}_{  u _\tau  ( \cdot; \tau - t_{n_i},  \varphi_{n_i}   )}
 \right )  \nonumber\\
 = &
 \sup_{\psi \in L_b
 (D_\rho), \|\psi\|_{L_b}
 \le 1}
 \left |
 \int_ {D_\rho}
 \psi   d \mathcal{L}_{   u_\tau   ( \cdot; \tau - t_n,  \varphi_n   )}
 -\int_ {D_\rho}
 \psi  d\mathcal{L}_{  u_\tau   ( \cdot; \tau - t_{n_i},  \varphi_{n_i}   )}
 \right |  \nonumber\\
 = &
 \sup_{\psi \in L_b(
 {D_\rho}
 ), \|\psi\|_{L_b}
 \le 1}
 \left |
 \E \left[
 \psi    (  u _\tau  ( \cdot; \tau - t_n,  \varphi_n   ) )
 \right]
 -\E \left[
 \psi  (  u_\tau   ( \cdot; \tau - t_{n_i},  \varphi_{n_i}   ))
 \right]
 \right |  \nonumber\\
 \le &
 \sup_{\psi \in L_b( {D_\rho}), \|\psi\|_{L_b}
 \le 1}
 \left |
 \E \left[
 \psi    (  u_\tau   ( \cdot; \tau - t_n,  \varphi_n   ) )
 \right]
 -\E \left[
 \psi  (  \widetilde{u}
 _\tau^{k_0}   ( \cdot; \tau - t_{n},  \varphi_{n}   ))
 \right]
 \right |  \nonumber\\
& +
 \sup_{\psi \in L_b( {D_\rho}), \|\psi\|_{L_b}
 \le 1}
 \left |
 \E \left[
 \psi( \widetilde{ u}
 _\tau^{k_0}   ( \cdot; \tau - t_n,  \varphi_n   ) )
 \right]
 -\E \left[
 \psi ( \widetilde{u}_\tau^{k_0}( \cdot; \tau - t_{n_i},  \varphi_{n_i} ) )
 \right]
 \right |  \nonumber\\
& +
 \sup_{\psi \in L_b(
 {D_\rho}
 ), \|\psi\|_{L_b}
 \le 1}
 \left |
 \E \left[
 \psi ( \widetilde{ u}_\tau^{k_0}
    ( \cdot; \tau - t_{n_i},  \varphi_{n_i}   ))
 \right]
 -
 \E \left[
 \psi( u_\tau    ( \cdot; \tau - t_{n_i},  \varphi_{n_i}   ))
 \right]
 \right |  \nonumber\\
 \le &
 \E \left[
d^0  (  u_\tau   ( \cdot; \tau - t_n,  \varphi_n  )
,\     \widetilde{u}
 _\tau^{k_0}   ( \cdot; \tau - t_{n},  \varphi_{n}    )
 \right]  \nonumber\\
& +  
d_{
 \mathcal{P}
 (D_\rho)
 }
 \left (
 \mathcal{L}_{   \widetilde{u}
 ^{k_0}
 _\tau   ( \cdot; \tau - t_n,  \varphi_n   )},
 \
 \mathcal{L}_{  \widetilde{u}
 ^{k_0}
  _\tau  ( \cdot; \tau - t_{n_i},  \varphi_{n_i}   )}
 \right )  \nonumber\\
& + 
 \E \left[
d^0  ( \widetilde{ u}_\tau^{k_0}
    ( \cdot; \tau - t_{n_i},  \varphi_{n_i}   ),
    \    u_\tau    ( \cdot; \tau - t_{n_i},  \varphi_{n_i}  ))
 \right]  \nonumber\\
 \le &
 \E \bigg[
 \sup_{r\in [-\rho, 0]}
\| u_\tau   ( r; \tau - t_n,  \varphi_n  )
-   \widetilde{u}
 _\tau^{k_0}   ( r; \tau - t_{n},  \varphi_{n}   )\|
 \bigg]  \nonumber\\
& +  
d_{ \mathcal{P}(D_\rho) }
 \left (
 \mathcal{L}_{ \widetilde{u}^{k_0}_\tau( \cdot; \tau - t_n,  \varphi_n   )},
 \
 \mathcal{L}_{ \widetilde{u}^{k_0}_\tau( \cdot; \tau - t_{n_i},  \varphi_{n_i}   )}
 \right )  \nonumber\\
& + 
 \E \bigg[
  \sup_{r\in [-\rho, 0]}
\| \widetilde{ u}_\tau^{k_0}
    (r; \tau - t_{n_i},  \varphi_{n_i}   )
    -
      u_\tau    ( r; \tau - t_{n_i},  \varphi_{n_i}  )\|
 \bigg]  \nonumber\\
= &
 \E \bigg[
 \sup_{r\in [-\rho, 0]}
\|  
   \widehat{u}
 _\tau^{k_0}   ( r; \tau - t_{n},  \varphi_{n}   )\|
 \bigg] 
  +  
d_{
 \mathcal{P}
 (D_\rho)
 }
 \left (
 \mathcal{L}_{   \widetilde{u}
 ^{k_0}
 _\tau   ( \cdot; \tau - t_n,  \varphi_n   )},
 \
 \mathcal{L}_{  \widetilde{u}
 ^{k_0}
  _\tau  ( \cdot; \tau - t_{n_i},  \varphi_{n_i}   )}
 \right )  \nonumber\\
& + 
 \E \bigg[
  \sup_{r\in [-\rho, 0]}
\| \widehat{ u}_\tau^{k_0}
    (r; \tau - t_{n_i},  \varphi_{n_i}   )
     \|
 \bigg] 
  <{\frac 13} \delta
 + {\frac 13} \delta +
 {\frac 13} \delta =\delta,
\end{align*}
and hence  \eqref{feb26a4}
is valid, which shows that
   the sequence
    $\{ \mathcal{L}_{
       u_\tau   ( \cdot; \tau - t_n,  \varphi_n   )  }
       \}_{n=1}^\infty$
 is tight in $ {\mathcal{P} (D_\rho)} $.
 Therefore, 
 there exists $\nu \in \mathcal{P} (D_\rho)$ such that,
 up to a subsequence,
 \be\label{feb26a10}
 \mathcal{L}_{
       u _\tau  ( \cdot; \tau - t_n,  \varphi_n   )  }
 \to \nu \ \text{weakly}.
 \ee

We now prove   $\nu \in \mathcal{P}_\theta (D_\rho)$.
 Let $K=\{K(\tau): \tau \in \R \}$
 be the closed $\cald$-pullback
 absorbing set of $\Phi$ given by 
 Theorem \ref{pas}.
  Then there exists
 $N_3=N_3 (\tau, D) \ge 1$ such that for all
 $n\ge N_3$,
 \be\label{feb26a11}
 \mathcal{L}_{  u_\tau (\cdot; \tau -t_n, \varphi_n)}
 \in  K(\tau).
\ee
By \eqref{feb26a10}-\eqref{feb26a11}
and the closedness of 
 $K(\tau)$,   we infer that 
  $\nu \in K(\tau)$,
  which implies 
 $\nu \in \mathcal{P}_\theta (D_\rho)$.
Consequently, we find that
the sequence
$\{\mathcal{L}_{
       u _\tau  ( \cdot; \tau - t_n,  \varphi_n   )  }
       \}_{n=1}^\infty$
       has a convergent subsequence
       in  $(\mathcal{P}_{\theta}(D_{\rho}), d_{\mathcal{P}(D_{\rho})})$,
       and thus the cocycle $\Phi$
       is asymptotically compact,
       which completes the proof.
 \end{proof}

 \begin{remark}
   If the stochastic equation
    \eqref{lmvlds-1} is   driven by Brownian motion
    only without jump noise, 
    then the tightness of solutions can be 
    proved  by the equi-continuity 
    of solutions in probability
    based on 
    the  continuity 
    of nonlinear terms 
     as in \cite{CWZ2025JNS, wwm2019SCL, ll2022DCDSB, csw2025PRSEA}. 
     However, 
    when  system  \eqref{lmvlds-1} 
    is driven by  multiplicative L\'{e}vy noise,
    it is difficult
    to establish the equi-continuity 
    of solution in probability
    by 
  the arguments
  of the aforementioned articles
  because, in this case,
  the necessary
  uniform estimates
  of the  higher-order moments of 
  the L\'{e}vy integral are unavailable.
   In the present paper,
   we employ  the tightness
   arguments of  \cite{J1986AIHP, A1978AOP} to 
   prove the asymptotic
   tightness of  a sequence of solutions in
   terms of the  Skorohod
   topology    driven by superlinear L\'{e}vy noise, 
  which  are   distinct from 
  the arguments  in \cite{RRG2006SPA, bc2016Stoch} 
  for the  delay   equations with linearly
  growing   L\'{e}vy noise. 
  The arguments of this paper are 
 also applicable  
 to establish the 
  pullback asymptotic compactness
  of 
 other types
   of stochastic delay equations driven by Brownian motion 
   as well as   L\'{e}vy noise  
    without using higher-order moment estimates.
 \end{remark}

 Next, we 
 present the  main result
 of the paper  on 
 the existence of 
 measure attractors
  of 
  the cocycle
  $ \Phi$ in
  the space  $(\mathcal{P}_{\theta}(D_{\rho}),
  d_{\mathcal{P}( D_{\rho})})$.

\begin{theorem}\label{pis}
   Suppose that {\bf(H1)}-{\bf(H4)}, \eqref{thetaassum2}, \eqref{dissip1} and \eqref{dissip2} hold
   with  $\theta \in \left(2,\, \tfrac{2(p-2)}{q-2}\right)$.
   Then 
   the cocycle $ \Phi $ associated with
   system  \eqref{lmvlds-2} has
   a unique $\mathcal{D}$-pullback measure attractor
   $\mathcal{A} = \{ \mathcal{A}(\tau) : \tau \in \mathbb{R} \} \in \mathcal{D}$ 
   in   $(\mathcal{P}_{\theta}(D_{\rho}),
  d_{\mathcal{P}( D_{\rho})})$
  in the sense that:
  
   (i)   $\mathcal{A}(\tau)$ is nonempty and
   compact for every
   $ \tau \in \mathbb{R}$.
   
   (ii)  {$ \mathcal{A} $ is invariant:
   $\Phi (t, \tau )  \mathcal{A}(\tau) 
   = \mathcal{A}(t+ \tau)$ for all $t \ge 0$
   and $\tau \in \mathbb{R}$. }
   
   (iii) $ \mathcal{A} $ pullback  attracts   every element in $\mathcal{D}$.

\end{theorem}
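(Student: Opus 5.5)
We plan to build $\mathcal{A}$ as the pullback $\omega$-limit set of the closed absorbing family $K$ from Lemma \ref{pas}:
\[
\mathcal{A}(\tau)=\bigcap_{s\ge 0}\overline{\bigcup_{t\ge s}\Phi(t,\tau-t)K(\tau-t)},
\]
with the closure taken in $(\mathcal{P}_\theta(D_\rho),d_{\mathcal{P}(D_\rho)})$. Equivalently, $\nu\in\mathcal{A}(\tau)$ if and only if there are $t_n\to\infty$ and $\mu_n\in K(\tau-t_n)$ with $\Phi(t_n,\tau-t_n)\mu_n\to\nu$.

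\emph{Compactness, nonemptiness, attraction.} Lemma \ref{pac} says every sequence $\Phi(t_n,\tau-t_n)\mu_n$ with $\mu_n\in K(\tau-t_n)$ has a convergent subsequence, and the limit lies in $K(\tau)$ because $K(\tau)$ is closed and absorbing. This gives nonemptiness of $\mathcal{A}(\tau)$ and $\mathcal{A}(\tau)\subseteq K(\tau)$, so $\mathcal{A}\in\mathcal{D}$. A diagonal argument then shows $\mathcal{A}(\tau)$ is compact.

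Pullback attraction of each $D\in\mathcal{D}$ follows by contradiction. Suppose there are $\delta>0$, $t_n\to\infty$ and $\mu_n\in D(\tau-t_n)$ with $\operatorname{dist}(\Phi(t_n,\tau-t_n)\mu_n,\mathcal{A}(\tau))\ge\delta$. Fix $T_0$ so that $K$ absorbs $D$ after time $T_0$, and write
\[
\Phi(t_n,\tau-t_n)\mu_n=\Phi(s_n,\tau-s_n)\,\Phi(t_n-s_n,\tau-t_n)\mu_n
\]
with $s_n\to\infty$ and $t_n-s_n\ge T_0$. Then $\Phi(t_n-s_n,\tau-t_n)\mu_n\in K(\tau-s_n)$, so a subsequence converges to a point of $\mathcal{A}(\tau)$, a contradiction. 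Uniqueness is standard: any two compact, invariant, $\mathcal{D}$-attracting families in $\mathcal{D}$ attract each other, and invariance then forces them to coincide.

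\emph{Invariance (main obstacle).} Theorem \ref{consys} only gives continuity of $\Phi(t,\tau)$ on bounded sets for $t\ge\rho$, not for all $t\ge0$. First take $t\ge\rho$. For $\nu\in\mathcal{A}(\tau)$ with $\nu=\lim\Phi(t_n,\tau-t_n)\mu_n$, all these measures lie in $K(\tau)$, a bounded set. Continuity on bounded sets together with the cocycle property gives
\[
\Phi(t,\tau)\nu=\lim\Phi(t+t_n,\tau-t_n)\mu_n\in\mathcal{A}(t+\tau),
\]
so $\Phi(t,\tau)\mathcal{A}(\tau)\subseteq\mathcal{A}(t+\tau)$.

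For the reverse inclusion, take $\nu\in\mathcal{A}(t+\tau)$ with $\nu=\lim\Phi(t_n,t+\tau-t_n)\mu_n$. Write this as $\Phi(t,\tau)\,\Phi(t_n-t,\tau-(t_n-t))\mu_n$. The inner measures have a subsequence converging to some $\eta\in\mathcal{A}(\tau)$, and continuity for $t\ge\rho$ gives $\nu=\Phi(t,\tau)\eta$. So invariance holds for all $t\ge\rho$.

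For $0\le t<\rho$ we cannot use continuity of $\Phi(t,\tau)$ directly. Instead we pick an integer $m\ge1$ with $t+m\rho\ge\rho$ and use the cocycle identity
\[
\Phi(t+m\rho,\tau-m\rho)=\Phi(t,\tau)\circ\Phi(m\rho,\tau-m\rho).
\]
Invariance for the times $m\rho\ge\rho$ and $t+m\rho\ge\rho$ gives $\mathcal{A}(\tau)=\Phi(m\rho,\tau-m\rho)\mathcal{A}(\tau-m\rho)$. Hence
\[
\Phi(t,\tau)\mathcal{A}(\tau)=\Phi(t+m\rho,\tau-m\rho)\mathcal{A}(\tau-m\rho)=\mathcal{A}(t+\tau),
\]
which needs no continuity for small times. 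The case $t=0$ is trivial since $\Phi(0,\tau)=I$.

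The delicate points are checking that the limits stay inside a fixed bounded set, so that Theorem \ref{consys} applies, and confirming that the $\omega$-limit construction does not itself require invariance at short times.
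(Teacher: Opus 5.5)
Your proposal is correct. The only substantive difference from the paper is how you get around the lack of continuity of $\Phi(t,\tau)$ for $t<\rho$ in the invariance step.

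\textbf{How the paper does it.} The paper never treats $t\ge\rho$ as a separate case. For $\mu\in\mathcal{A}(\tau)$ it splits $\Phi(t_n,\tau-t_n)=\Phi(\rho,\tau-\rho)\circ\Phi(t_n-\rho,\tau-t_n)$. Lemma \ref{pac} gives a limit $\nu\in\mathcal{A}(\tau-\rho)$ of the inner sequence, so $\mu=\Phi(\rho,\tau-\rho)\nu$. It then passes to the limit through the long-time map $\Phi(t+\rho,\tau-\rho)$, which yields $\Phi(t,\tau)\mu\in\mathcal{A}(t+\tau)$ for every $t\ge0$ at once. The reverse inclusion is handled the same way: it factors through $\Phi(t+\rho,\tau-\rho)$ and then uses the forward inclusion at time $\rho$.

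\textbf{How you do it.} You prove both inclusions for $t\ge\rho$ using continuity of $\Phi(t,\tau)$ itself. You then get $0\le t<\rho$ from the purely set-theoretic identity $\Phi(t,\tau)\mathcal{A}(\tau)=\Phi(t+m\rho,\tau-m\rho)\mathcal{A}(\tau-m\rho)$; taking $m=1$ already suffices.

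\textbf{Comparison.} Both arguments use the same ingredients: Theorem \ref{consys}, Lemma \ref{pac}, the cocycle property, and boundedness and closedness of $K$. Your version is more modular. It shows explicitly that invariance for all $t\ge0$ follows algebraically from invariance for $t\ge\rho$. The paper's version needs only one limit extraction per inclusion and no separate bootstrapping step.

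\textbf{A small flaw in your attraction argument.} There is in general no single $T_0$ after which $K$ absorbs $D$ at every base point $\tau-s_n$. The absorption time at base point $\sigma$ depends on $\sigma$ and need not stay bounded as $\sigma\to-\infty$. The standard repair is as follows.
\begin{itemize}
\item Fix $s\ge0$. For all large $n$ you have $\Phi(t_n,\tau-t_n)\mu_n=\Phi(s,\tau-s)\Phi(t_n-s,\tau-t_n)\mu_n\in\Phi(s,\tau-s)K(\tau-s)$.
\item Apply Lemma \ref{pac} directly to $D$ to get a convergent subsequence.
\item Its limit then lies in $\overline{\bigcup_{t\ge s}\Phi(t,\tau-t)K(\tau-t)}$ for every $s$, hence in $\mathcal{A}(\tau)$, which gives the contradiction.
\end{itemize}
Alternatively, choose $s_n\to\infty$ slowly enough that $t_n-s_n$ exceeds the absorption time at $\tau-s_n$.
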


\begin{proof}
  By Lemma \ref{pas},  we know that
  $\Phi$ has a closed $\mathcal{D}$-pullback absorbing set
          $K = \{ K(\tau): \tau \in \mathbb{R} \} \in \mathcal{D}$.
  For  every  $\tau \in \mathbb{R}$, 
  let 
  $$
  \mathcal{A}(\tau)
     = \bigcap_{s \ge  0 } \overline{\bigcup_{t \ge s} \Phi(t, \tau-t) K(\tau - t)},
     $$
     where the closure
     is taken
     in  
   the space $(\mathcal{P}_{\theta}(D_{\rho}), 
   d_{\mathcal{P}(D_{\rho} ) } )$. 
   Note that  $ \mathcal{A}
   =\{  \mathcal{A}(\tau): \tau \in
   \mathbb{R}\}$ is the 
   $\mathit{\Omega}$-limit set of $K$ in 
    $(\mathcal{P}_{\theta}(D_{\rho}), 
   d_{\mathcal{P}(D_{\rho} ) } )$.  
   By the arguments of  \cite{bwang2012JDE}, 
   we find that
   for every
   $\tau \in \mathbb{R}$,
    $\mathcal{A}(\tau)$ is nonempty and
   compact, and  for every  
   $ D
   =\{  D(\tau): \tau \in
   \mathbb{R}\} \in \mathcal{D}$,
   $$
   \lim_{t\to \infty}
   d_{\mathcal{P}(D_{\rho} )}
   (\Phi (t, \tau -t) D(\tau-t), \
   \mathcal{A}(\tau) ) =0,
   $$ 
   where the distance between
   two sets is understood as
   the Hausdorff 
   semi-distance,
   which shows that
   $\mathcal{A}$
    satisfies conditions (i) and (iii).
    
     Next, we prove  $ \mathcal{A} $ 
      satisfies conditions (ii).
 Given $\tau \in \mathbb{R}$ and $t \ge 0$, 
 we will prove $\Phi(t, \tau) \mathcal{A}(\tau) = \mathcal{A}(\tau+t)$.
 
 Firstly, we need to show $\Phi(t, \tau) \mathcal{A}(\tau) \subseteq \mathcal{A}(\tau+t)$.
 For any $\mu \in \mathcal{A}(\tau)$, by the definition of $\mathcal{A}(\tau)$,
 we obtain that there exist $t_n \to +\infty$ and $\mu_n \in K(\tau - t_n) $ such that
 \begin{align}\label{invar-1}
       \Phi(t_n , \tau  - t_n) \mu_n \rightarrow \mu,
       \quad \text{as } \ n \to +\infty.
  \end{align}
 Then by \eqref{invar-1} and the cocycle property of $\Phi$, 
 we obtain that for any $t \ge 0$, 
  \begin{align} \label{invar-2}
     \Phi(t, \tau) \mu
     = & \Phi(t, \tau)  
              \lim_{n \to +\infty} \Phi(t_n , \tau  - t_n) \mu_n \nonumber\\
      =& \Phi(t, \tau)  
             \lim_{n \to +\infty} 
                   \Phi(\rho, \tau-\rho) \circ \Phi(t_n -\rho , \tau  - t_n) \mu_n.
 \end{align}
 
 By Lemma \ref{pac}, 
 we see that  $\{ \Phi(t_n -\rho , \tau  - t_n) \mu_n \}_{n \in \mathbb{N} }$ is precompact, 
 and hence there exists a subsequence 
  $\{ \Phi(t_{n_m} - \rho , \tau  - t_{n_m}) \mu_{n_m} \}_{m \in \mathbb{N} }$ and $\nu \in \mathcal{A}(\tau - \rho)$ such that
 $\Phi(t_{n_m} - \rho , \tau - t_{n_m}) \mu_{n_m}$ converges to $\nu$ in $(\mathcal{P}_{\theta}(D_{\rho}), d_{\mathcal{P}(D_{\rho} ) } )$ as $m \to +\infty$.
 
%
%
 
 By \eqref{invar-2}
 and the continuity of $\Phi(\rho, \tau-\rho)$ and $\Phi(t+\rho, \tau-\rho)$
 with $t\ge 0$  due to Theorem \ref{consys}, 
 we have
 \begin{align*}
       \Phi(t, \tau) \mu    
      =& \Phi(t, \tau) \circ 
                  \Phi(\rho, \tau-\rho) 
                      \lim_{m \to +\infty} \Phi(t_{n_m} -\rho , \tau - t_{n_m} ) \mu_{n_m}  \nonumber\\
     = & \Phi(t+\rho, \tau-\rho) 
                   \lim_{m \to +\infty} \Phi(t_{n_m} -\rho , \tau - t_{n_m} ) \mu_{n_m}  \nonumber\\       
     = & \lim_{m \to +\infty}
                \Phi(t+\rho, \tau-\rho) 
                     \circ \Phi(t_{n_m} -\rho , \tau - t_{n_m} ) \mu_{n_m} \nonumber\\   
     = & \lim_{m \to +\infty}
                \Phi(t_{n_m} +t, \tau - t_{n_m}) \mu_{n_m}  \nonumber\\                     
     = & \lim_{m \to +\infty}
                \Phi(t_{n_m} +t, \tau +t - (t_{n_m} + t) ) \mu_{n_m}
                \nonumber\\
                = & \lim_{m \to +\infty}
                \Phi(r_m, \tau +t - r_m ) \mu_{n_m},                                                                                                                    
                \ \ r_m \to  +\infty,
 \end{align*}
 which shows $\Phi(t, \tau) \mu \in  \mathcal{A}(\tau+t)$, 
 and hence 
 \begin{align} \label{invar-3}
       \Phi(t, \tau) \mathcal{A}(\tau) \subseteq \mathcal{A}(\tau+t),
             \quad \text{for all} \  t \ge 0 \  \text{and} \  \tau \in \mathbb{R}.
 \end{align}

  Next, we show the converse of \eqref{invar-3}.
 Note that for any $\bar{\mu} \in \mathcal{A}(\tau+t)$,
 there exists $t_n \to +\infty$ and $\bar{\mu}_n \in K(\tau + t - t_n)$ such that
 \begin{align}\label{invar-4}
       \Phi(t_n , \tau + t - t_n) \bar{\mu}_n \rightarrow \bar{\mu},
       \quad \text{as } \ n \to +\infty.
 \end{align}
  By the cocycle property of $\Phi$, we have
 \begin{align}\label{invar-5}
       \Phi(t_n , \tau + t - t_n) \bar{\mu}_n 
       = \Phi(t+\rho, \tau-\rho) \circ  \Phi(t_n - t - \rho , \tau + t - t_n) \bar{\mu}_n.
 \end{align}
   By Lemma \ref{pac},  
  $\{ \Phi(t_n - t -\rho, \tau + t - t_n) \bar{\mu}_n \}_{n \in \mathbb{N} }$ is precompact
  and hence
   there exist a subsequence  $\{ \Phi(t_{n_m} - t - \rho , \tau + t - t_{n_m}) \bar{\mu}_{n_m} \}_{m \in \mathbb{N} }$ and $\bar{\nu} \in \mathcal{A}(\tau - \rho)$ such that
  $\Phi(t_{n_m} - t - \rho , \tau + t - t_{n_m}) \bar{\mu}_{n_m}$ converges to $\bar{\nu}$ in $(\mathcal{P}_{\theta}(D_{\rho}), d_{\mathcal{P}(D_{\rho} ) } )$ as $m \to +\infty$.
  Then by \eqref{invar-5} and the continuity of $\Phi(t+\rho, \tau-\rho)$
  with $t\ge 0$  due to Theorem \ref{consys}, we have
 \begin{align}\label{invar-6}
       \lim_{m \to +\infty} \Phi(t_{n_m}, \tau + t - t_{n_m}) \bar{\mu}_{n_m}
       = & \Phi(t+\rho, \tau-\rho) 
                \lim_{m \to +\infty} 
                     \Phi(t_{n_m} - t - \rho , \tau + t - t_{n_m}) \bar{\mu}_{n_m}  \nonumber\\
       = & \Phi(t+\rho, \tau-\rho) \bar{\nu}  \nonumber\\
       = & \Phi(t, \tau) \circ \Phi(\rho, \tau - \rho) \bar{\nu}.                   
 \end{align}  
 By \eqref{invar-4} and \eqref{invar-6}, we obtain 
 $\bar{\mu} = \Phi(t, \tau) \circ \Phi(\rho, \tau - \rho) \bar{\nu}$.
 
 Noting $\bar{\nu} \in \mathcal{A}(\tau - \rho)$,  
 we find by \eqref{invar-3} that 
       $\Phi(\rho, \tau - \rho) \bar{\nu} \in \mathcal{A}(\tau)$.
 Then $\bar{\mu} \in \Phi(t, \tau) \mathcal{A}(\tau)$,
 which shows $\mathcal{A}(\tau+t) \subseteq \Phi(t, \tau) \mathcal{A}(\tau)$, and thus
  $\Phi(t, \tau) \mathcal{A}(\tau) = \mathcal{A}(\tau+t)$ for all $\tau \in \mathbb{R}$ and $t \ge 0$.
 \end{proof}

\begin{remark} \label{difference}
 Due to  time delay in \eqref{lmvlds-1},
 the solution $u(t; \tau, \varphi)$ of \eqref{lmvlds-1} is continuous 
 in initial values $\varphi$ in bounded subsets 
 of $\mathcal{P}_{\theta}(D_{\rho})$
 only for   $t \ge \tau + \rho$ as stated 
  in Theorem \ref{consys}.
 Consequently,  \cite[Proposition 2.2]{SSW2026JDE} 
 can not be directly
  applied  to obtain the existence of 
 measure attractors for \eqref{lmvlds-1}  in  
  $\mathcal{P}_{\theta}(D_{\rho})$ .
   
 If the equation \eqref{lmvlds-1} doesn't have time delay (i.e., $\rho =0$), 
 then the solutions are continuous in initial values in bounded subsets of $\mathcal{P}_{\theta}(\ell^2)$ for all $t \ge \tau$, 
 and thus \eqref{lmvlds-1} has a unique $\mathcal{D}$-pullback measure attractor in space $\mathcal{P}_\theta(\ell^2)$ as in \cite{BCS2026JDE}.
 \end{remark}

\section{Vanishing noise limit and optimal convergence rate of measure attractors} \label{5}

 In this section, we investigate the limiting behavior of measure attractors of
 the stochastic equation \eqref{lmvlds-2} as the noise intensity approaches zero.

 The limiting system of \eqref{lmvlds-2} for $\varepsilon = 0$ is given by:
 \begin{equation}\label{lds}
  \left\{
  \begin{aligned}
        & d u (t)
              + Au(t) dt
              + \lambda u(t) dt
              + f(u(t),\mathcal{L}_{u(t)}) dt 
          = g(t) dt + F( u(t-\rho), \mathcal{L}_{u(t-\rho) } ) dt, \ \ \  t > \tau,  \\
       & u(s) = \varphi(s-\tau) := ( \varphi_i(s-\tau) )_{i \in \mathbb{Z}},\  s \in  [\tau-\rho, \tau].
  \end{aligned}
  \right.
 \end{equation}

 Based on the results of the previous sections, 
 we know that for every $T>0$ and
       $\xi \in L^{\theta}(\Omega,\mathcal{F}_\tau; D_\rho)$, 
 \eqref{lmvlds-2} with $\varepsilon \in [0, 1]$ has a unique solution
       $u^\varepsilon(t; \tau, \xi) \in L^{\theta}(\Omega, \mathcal{F}_t; \ell^2)$
 defined for all $t \in [\tau, \tau +T]$,
 from which one can define a non-autonomous cocycle $\Phi^\varepsilon$ as in Section \ref{4} by 
        $\Phi^\varepsilon(t,\tau) \mu = P^{\ast,\varepsilon}_{\tau, \tau + t} \mu$
  for $\mu \in \mathcal{P}_\theta(D_\rho)$.
 Moreover, for small noise intensity $\varepsilon \in [0, \varepsilon^*)$ where $\varepsilon^*$ is the same constant as in Lemma \ref{pue},
 $\Phi^\varepsilon$ has a unique $\mathcal{D}$-pullback measure attractor
     $\mathcal{A}^\varepsilon = \{ \mathcal{A}^\varepsilon(\tau) : \tau \in \mathbb{R} \}$
 in $\mathcal{P}_{\theta}(D_\rho)$.
 
 In what follows,
 we will investigate the limiting behavior of $\mathcal{A}^\varepsilon$ as   $\varepsilon
 \to 0$   as well as the convergence rate
 in the case 
 where  $\mathcal{A}^\varepsilon$
 is a singleton.

\subsection{Vanishing noise limit of measure attractors}

 We first establish the convergence rate of solutions of \eqref{lmvlds-2} as $\varepsilon \to 0$.
 
 \begin{lemma}\label{solcon}
   Suppose that ({\bf H1})-({\bf H4}) hold.
   Then for any $\tau \in \mathbb{R}$, $T > 0$, and $R > 0$,
   there exists a constant $C = C(\tau, T, R) > 0$ such that 
   for all $t \in [\tau, \tau+T]$ and $\varepsilon \in (0,1]$,
   \begin{align*}
      \sup_{\varphi \in \mathbb{K}_R}
         \mathbb{E} 
                \left[ \| u^\varepsilon_{t}(\cdot; \tau, \varphi) 
                             - u^0_{t}(\cdot; \tau, \varphi) \|_{\infty}^2 
                \right]
      \le C \varepsilon,
   \end{align*}
   where
        $\mathbb{K}_R
          = \{ \varphi \in L^{2}(\Omega, \mathcal{F}_{\tau}; D_{\rho}) :
                  \mathbb{E} \left[ \|\varphi\|^{2}_{\infty} \right] \le R
            \}$.
 \end{lemma}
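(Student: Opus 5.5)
We compare the two solutions on the same probability space with the same initial segment $\varphi$. Write $u^\varepsilon(t)=u^\varepsilon(t;\tau,\varphi)$, $u^0(t)=u^0(t;\tau,\varphi)$ and $w=u^\varepsilon-u^0$. Since both start from $\varphi$ on $[\tau-\rho,\tau]$, we have $w\equiv 0$ there. Hence for $t\in[\tau,\tau+T]$,
\[
\|u^\varepsilon_t-u^0_t\|_\infty^2\le \sup_{s\in[\tau,\tau+T]}\|w(s)\|^2 .
\]
The lemma therefore reduces to the bound $\mathbb{E}[\sup_{s\in[\tau,\tau+T]}\|w(s)\|^2]\le C\varepsilon$, uniformly over $\varphi\in\mathbb{K}_R$.

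We apply It\^o's formula to $\|w\|^2$, localized by the stopping times $\tau_K=\inf\{t\ge\tau:\|u^\varepsilon(t)\|\vee\|u^0(t)\|>K\}\wedge(\tau+T)$, exactly as in Step~2 of Theorem~\ref{consys}. The deterministic terms are handled as in \eqref{consys-2}--\eqref{consys-3}:
\begin{itemize}
\item The monotonicity of $A$ gives a nonnegative contribution.
\item Assumption \eqref{f3} yields $\lambda_2\sum_i(|u^\varepsilon_i|^{p-2}+|u^0_i|^{p-2})|w_i|^2$ minus $\phi_4$-terms controlled by $\|w\|^2$ and $\mathbb{W}_2^2(\mathcal{L}_{u^\varepsilon(s)},\mathcal{L}_{u^0(s)})\le \mathbb{E}\|w(s)\|^2$.
\item The delay term is bounded via \eqref{F2} by $\int_\tau^t \mathbb{E}\|w(s-\rho)\|^2ds\le \int_\tau^t\mathbb{E}\|w(s)\|^2ds$, using $w=0$ on the initial segment.
\end{itemize}

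The noise terms are the key difference: only $u^\varepsilon$ carries noise. We therefore do not take differences of $\sigma$. Instead, the It\^o corrections are bounded by
\[
\varepsilon\,\mathbb{E}\int_\tau^{t}\Big(\sum_k\|h_k+\sigma_k(u^\varepsilon,\mathcal{L}_{u^\varepsilon})\|^2+\int_{\mathbb{Y}}\|\widetilde h+\widetilde\sigma(u^\varepsilon,\mathcal{L}_{u^\varepsilon},y)\|^2\nu(dy)\Big)ds .
\]
By \eqref{sigma3} this is at most
\[
\varepsilon\, C\Big(\mathbb{E}\int_\tau^{\tau+T}\|u^\varepsilon\|_p^p\,ds+\sup_s\mathbb{E}\|u^\varepsilon(s)\|^2+T+\int_\tau^{\tau+T}\big(\|h\|^2+\|\widetilde h\|^2_{L^2(\mathbb{Y},\nu;\ell^2)}\big)ds\Big).
\]
The estimate \eqref{secondmoment} of Theorem~\ref{wp-sol} bounds the bracket by a constant depending only on $\tau,T,R$ and independent of $\varepsilon$. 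This is exactly where the factor $\varepsilon$ originates and where uniformity over $\mathbb{K}_R$ enters.

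The stochastic integrals $2\sqrt\varepsilon\int\langle\cdot,w\rangle dW_k$ and $2\sqrt\varepsilon\int\!\int\langle\cdot,w(s-)\rangle\widetilde N$ are treated by the BDG inequality (for the jump part, \cite[Theorem 3.50]{PZ2007} as in \eqref{ue-8}). This gives $\frac14\mathbb{E}\sup\|w\|^2$ plus $C\varepsilon$ times the same quantity as above, via Young's inequality.

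Collecting terms yields
\[
\mathbb{E}\sup_{s\le t\wedge\tau_K}\|w(s)\|^2\le C\int_\tau^t\mathbb{E}\sup_{r\le s}\|w(r)\|^2ds+C\varepsilon .
\]
Gronwall's inequality followed by $K\to\infty$ (Fatou) gives the claim.

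The step I expect to be the main obstacle is the superlinear diffusion. We cannot absorb the $|u^\varepsilon_i|^q$ growth into the $w$-dissipation because the noise coefficient is not a difference. Instead it must be absorbed by the a priori $L^p$ bound $\mathbb{E}\int\|u^\varepsilon\|_p^p$ from \eqref{secondmoment} via Young's inequality with $q<p$. This requires that constant to be uniform in $\varepsilon\in(0,1]$ and in $\varphi\in\mathbb{K}_R$, which \eqref{secondmoment} indeed provides.
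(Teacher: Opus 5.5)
Your argument is correct, and its skeleton is the same as the paper's: It\^o's formula for $\|u^\varepsilon-u^0\|^2$ with the supremum inside, BDG for both martingale terms, Gronwall, and the $\varepsilon$-uniform a priori bound \eqref{secondmoment}. The one real difference is how you treat the noise coefficient.

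The paper rewrites the difference equation as \eqref{scp-2}, adding and subtracting $\sigma_k(u^0,\mathcal{L}_{u^0})$ and $\widetilde\sigma(u^0,\mathcal{L}_{u^0},y)$. It absorbs the differences $\sigma_k(u^\varepsilon,\mathcal{L}_{u^\varepsilon})-\sigma_k(u^0,\mathcal{L}_{u^0})$, and their jump analogues, into the $\lambda_2$-dissipation from \eqref{f3} via \eqref{sigma4}. It then estimates only the remainder $h_k+\sigma_k(u^0,\mathcal{L}_{u^0})$, using \eqref{sigma3} and the moment bound for the deterministic solution $u^0$.

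You skip this splitting and estimate $h_k+\sigma_k(u^\varepsilon,\mathcal{L}_{u^\varepsilon})$ directly, through \eqref{sigma3} and the bound on $\mathbb{E}\int_\tau^{\tau+T}\|u^\varepsilon\|_p^p\,ds$. The $\lambda_2$-term from \eqref{f3} can then simply be dropped, and \eqref{sigma4} is not needed at all. This is shorter, and it is exactly what the paper itself does later in \eqref{scr-4}.

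The paper's splitting buys one thing: its $O(\varepsilon)$ source term involves only $u^0$, so it would survive without moment bounds for $u^\varepsilon$ that are uniform in $\varepsilon$. Since \eqref{secondmoment} is uniform in $\varepsilon\in(0,1]$ and in $\varphi\in\mathbb{K}_R$, your route loses nothing here. You also make explicit a step the paper leaves implicit: $w\equiv 0$ on $[\tau-\rho,\tau]$ turns the bound on $\mathbb{E}\sup_{s\in[\tau,\tau+T]}\|w(s)\|^2$ into the segment bound.

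One point of order concerns your final inequality. Its left-hand side is stopped at $\tau_K$, but the right-hand side is not; the $\mathbb{W}_2$-terms are deterministic in $s$ and hence unstopped in any case. So Gronwall cannot be applied before the localization is removed. First let $K\to\infty$ by monotone convergence; all quantities are finite by \eqref{secondmoment} applied to both $u^\varepsilon$ and $u^0$. Then apply Gronwall to $t\mapsto\mathbb{E}\sup_{s\in[\tau,t]}\|w(s)\|^2$.
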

 
 \begin{proof}
  It follows from Theorem \ref{wp-sol} that for every
      $\tau \in \mathbb{R}$, $T> 0$ and $R > 0$,
  there exists a constant $C_1 = C_1(\tau, T, R) > 0$
  such that for all $\varphi \in \mathbb{K}_R$ and $\varepsilon \in [0,1]$,
  \begin{align} \label{scp-1}
        \mathbb{E}
              \left[ \sup_{s \in [\tau,\tau+T]}
                           \| u^\varepsilon(s; \tau, \varphi) \|^2
              \right]
           + \mathbb{E}
                \left[ \int_{\tau}^{\tau+T}
                             \| u^\varepsilon(s; \tau, \varphi) \|^p_p ds
                \right]
    \le C_1.
  \end{align}
 
  For simplicity,
  we write $u^\varepsilon(s; \tau, \varphi)$ as $u^\varepsilon(s)$ and $u^0(s; \tau, \varphi)$ as $u^0(s)$, respectively.
  By \eqref{lmvlds-2} and \eqref{lds} we have
  \begin{align}\label{scp-2}
     & d \Big( u^\varepsilon(t) - u^0(t) \Big)
       + \nu \left( A u^\varepsilon(t) - A u^0(t) \right) dt \nonumber\\
     & + \lambda \left(u^\varepsilon(t) - u(t) \right)  dt
       + \left( f(u^\varepsilon(t),\mathcal{L}_{u^\varepsilon(t)})
                 - f(u^0(t),\mathcal{L}_{u^0(t)} ) \right)dt \nonumber \\
   = & \left( F(u^\varepsilon(t -\rho), \mathcal{L}_{u^\varepsilon(t -\rho)} )
               - F(u^0(t-\rho), \mathcal{L}_{u^0(t-\rho)} ) \right) dt \nonumber\\
      & + \sqrt{\varepsilon} \sum_{k \in \mathbb{N} }
                \left( \sigma_k(u^\varepsilon(t), \mathcal{L}_{u^\varepsilon(t)})
                         - \sigma_k(u^0(t), \mathcal{L}_{u^0(t)})
                \right) dW_k(t)  \nonumber\\
      & + \sqrt{\varepsilon} \sum_{k \in \mathbb{N} }
                   \left( h_k(t)
                            + \sigma_k(u^0(t), \mathcal{L}_{u^0(t)}) 
                   \right) dW_k(t)  \nonumber \\
      & + \sqrt{\varepsilon} \int_{y \in \mathbb{Y} }
                  \left( \widetilde{\sigma}(u^\varepsilon(t-), \mathcal{L}_{u^\varepsilon(t)}, y)
                            - \widetilde{\sigma}(u^0(t), \mathcal{L}_{u^0(t)}, y)
                  \right) \widetilde{N}(dt,dy)  \nonumber \\
      & + \sqrt{\varepsilon} \int_{y \in \mathbb{Y} }
                    \left( \widetilde{h}(t,y) 
                             + \widetilde{\sigma}(u^0(t), \mathcal{L}_{u^0(t)}, y)
                    \right) \widetilde{N}(dt,dy).
  \end{align}
 
  Applying It\^o's formula to \eqref{scp-2} and then by the BDG inequality, we obtain that for all $t \in [\tau, \tau+T]$ and $\varepsilon \in (0,1]$,
  \begin{align}\label{scp-3}
         & \mathbb{E}
              \left[ \sup_{r \in [\tau, t]}
                        \bigg( \| u^\varepsilon(r) - u^0(r) \|^2 
                              +2 \int_{\tau}^{r}
                                   \langle f(u^\varepsilon(s), \mathcal{L}_{u^\epsilon(s)}) - f(u^{0}(s), \mathcal{L}_{u^{0}(s)}),
                                           u^\varepsilon(s) - u^{0}(s)
                                   \rangle ds
                        \bigg)
              \right]  \nonumber\\
   \le & 2 \mathbb{E}
               \left[ \int_{\tau}^{t}
                      \left| \langle F( u^\varepsilon(s-\rho), \mathcal{L}_{u^\varepsilon(s-\rho)}) - F( u^{0}(s-\rho), \mathcal{L}_{u^{0}(s-\rho)}),
                                     u^\varepsilon(s) - u^{0}(s)
                             \rangle
                       \right| ds
               \right]  \nonumber\\
        & + \frac{1}{4} \mathbb{E} \left[ \sup_{r \in [\tau, t]} \| u^\varepsilon(r) - u^0(r) \|^2 \| \right]  \nonumber\\
        & + (1+16c_1^2)\varepsilon \mathbb{E}
            \left[ \int_{\tau}^{t}
                      \sum_{k \in \mathbb{N} }
                            \|\sigma_k(u^\varepsilon(s), \mathcal{L}_{u^\varepsilon(s)}) - \sigma_k(u^{0}(s), \mathcal{L}_{u^{0}(s)}) \|^2 ds
            \right]  \nonumber\\
        & + (1+16c_1^2) \varepsilon \mathbb{E}
            \left[ \int_{\tau}^{t}
                      \sum_{k \in \mathbb{N} }
                            \| h_k(s) + \sigma_k(u^0(s), \mathcal{L}_{u^0(s)}) \|^2 ds
            \right]  \nonumber\\
        & + (1+16c_2^2)\varepsilon \mathbb{E}
            \left[ \int_{\tau}^{t}
                         \int_{y \in \mathbb{Y} }
                            \| \widetilde{\sigma}(u^\varepsilon(s-), \mathcal{L}_{u^\varepsilon(s)},y)
                              - \widetilde{\sigma}(u^{0}(s), \mathcal{L}_{u^{0}(s)},y) \|^2 \nu(dy) ds
            \right]  \nonumber\\
        & + (1+16c_2^2)\varepsilon \mathbb{E}
            \left[ \int_{\tau}^{t}
                         \int_{y \in \mathbb{Y} }
                            \| \widetilde{h}(s,y) 
                               + \widetilde{\sigma}(u^0(s), \mathcal{L}_{u^0(t)}, y) \|^2 \nu(dy) ds
            \right].
   \end{align}
 
  Similar to argument of \eqref{consys-12},
  by \eqref{scp-3} we can deduce that for all $t \in [\tau, \tau+T]$ and $\varepsilon \in (0,1]$,
  \begin{align}\label{scp-4}
         & \frac{1}{2} \mathbb{E} \left[ \sup_{r \in [\tau, t]} \| u^\varepsilon(r) - u^0(r) \|^2 \right]  \nonumber\\
         & + \lambda_2 \mathbb{E}
              \left[ \int_{\tau}^{t}
                        \sum_{i \in \mathbb{Z}}
                          \left( |u^\varepsilon_i(s)|^{p-2} + |u^0_i(s)|^{p-2}
                          \right) |u^\varepsilon_i(s) - u^0_i(s)|^2 ds
              \right]  \nonumber\\
 \le & C_2 \int_{\tau}^{t}
                            \mathbb{E} \left[ \|u^\varepsilon(s) -  u^0(s)\|^2 \right] ds  \nonumber\\
        & + 2(1+16c_1^2) \varepsilon \mathbb{E}
                  \left[ \int_{\tau}^{t}
                               \sum_{k \in \mathbb{N} }
                                     \| h_k(s) + \sigma_k(u^{0}(s), \mathcal{L}_{u^{0}(s)}) \|^2 ds
                  \right]  \nonumber\\
        & + 2(1+16c_2^2) \varepsilon \mathbb{E}
                \left[ \int_{\tau}^{t}
                             \int_{y \in \mathbb{Y} }
                                                      \| \widetilde{h}(s,y) 
                                                         + \widetilde{\sigma}(u^0(t), \mathcal{L}_{u^0(t)}, y)
                                                      \|^2 \nu(dy) ds
                \right],
  \end{align}
  where $ {C}_2 =C_2 (L, L_F)>0$ is a constant.
 
  By \eqref{sigma3} and \eqref{scp-1}, we get that for all $t \in [\tau, \tau+T]$,
  \begin{align}\label{scp-5}
         &2(1+16c_1^2) \varepsilon \mathbb{E}
              \left[ \int_{\tau}^{t}
                        \sum_{k \in \mathbb{N} }
                             \| h_k(s) + \sigma_k(u^0(s), \mathcal{L}_{u^0(s)}) \|^2 ds
              \right]  \nonumber \\
         & + 2(1+16c_2^2) \varepsilon \mathbb{E}
               \left[ \int_{\tau}^{t}  
                            \int_{y \in \mathbb{Y} }
                                 \| \widetilde{h}(s,y) 
                                    + \widetilde{\sigma}(u^0(s), \mathcal{L}_{u^0(s)},y) \|^2
                       \nu(dy) ds
               \right]  \nonumber \\
    \le & \frac{\lambda_1}{2} \varepsilon
              \mathbb{E} \left[ \int_{\tau}^{t} \| u^0(s) \|^p_p ds \right]
           + 16 \left( 1 + 16c_1^2 + 16c_2^2 \right) \varepsilon \|L\|^2
                  \int_{\tau}^{t} \mathbb{E} \left[ \|u^0(s)\|^2 \right]
                  ds  \nonumber\\
         & + 2 C_3 \varepsilon (t - \tau)  
             + 4 (1 + 16c_1^2 + 16c_2^2) \varepsilon 
                       \int_{\tau}^{t}
                           \left( \| h(s) \|^2 + \| \widetilde{h}(s) \|_{L^2(\mathbb{Y}, \nu; \ell^2) }^2
                           \right) ds  \nonumber \\                
    \le &C_4  \varepsilon
             + 4 (1 + 16c_1^2 + 16c_2^2) \varepsilon 
                        \int_{\tau}^{\tau+T}
                            \left( \| h(s) \|^2 + \| \widetilde{h}(s) \|_{L^2(\mathbb{Y}, \nu; \ell^2) }^2
                            \right) ds,
  \end{align}
  where $C_3 = C_3(\lambda_1, L) > 0$ and $C_4 = C_4(\tau, T, R, \lambda_1, L)>0 $ are constants.
 
  From \eqref{scp-4}, \eqref{scp-5} and Gronwall's inequality,
  it follows that for all $t \in [\tau, \tau+T]$ and $\varepsilon \in (0,1]$,
  \begin{align*}
         & \mathbb{E} \left[ \sup_{r \in [\tau, t]} \| u^\varepsilon(r) - u^0(r) \|^2 \right]  \nonumber\\
    \le &2 \left[ C_4  
                                 + 4 (1 + 16c_1^2 + 16c_2^2)
                                     \int_{\tau}^{\tau+T}
                                         \left( \| h(s) \|^2
                                                  + \| \widetilde{h}(s) \|_{L^2(\mathbb{Y}, \nu; \ell^2) }^2
                                        \right) ds
                \right] \varepsilon e^{2 C_2 t},
  \end{align*}
  as desired.
 \end{proof}
  
 Now, we show the convergence of measure attractors as the noise intensity $\varepsilon$ goes to $0$.

\begin{theorem}\label{uppercont}
   Suppose that {\bf(H1)}-{\bf(H4)}, \eqref{thetaassum2}, \eqref{dissip1} and \eqref{dissip2} hold.
   Then for every $\tau \in \mathbb{R}$,
   \begin{align*}
      \lim_{\varepsilon \to 0}
         d_{\mathcal{P}(D_\rho)}
           \left( \mathcal{A}^\varepsilon(\tau), \mathcal{A}^0(\tau) \right) = 0,
   \end{align*}
   where $d_{\mathcal{P}(D_\rho)}$ is the Hausdorff semi-metric between subsets of $\mathcal{P}(D_\rho)$.
\end{theorem}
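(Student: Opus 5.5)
We argue by contradiction, using the standard upper semicontinuity scheme for attractors. The three inputs are uniform absorbing sets (Lemma \ref{pas}), convergence of solutions on finite time intervals (Lemma \ref{solcon}), and asymptotic compactness of the perturbed family.

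Suppose the claim fails for some $\tau$. Then there exist $\delta_0>0$, $\varepsilon_n\to 0$ and $\mu_n\in\mathcal{A}^{\varepsilon_n}(\tau)$ with $d_{\mathcal{P}(D_\rho)}(\mu_n,\mathcal{A}^0(\tau))\ge\delta_0$.

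\textbf{Step 1: uniform bounds on the attractors.} Since $\mathcal{A}^{\varepsilon}\subseteq K$ by invariance and absorption, Lemma \ref{pas} with $\varepsilon$-independent constants gives $\mathcal{A}^{\varepsilon}(s)\subseteq K(s)$ for all $s$ and all $\varepsilon\in[0,\varepsilon^*)$. Here $K(s)$ is a ball in $\mathcal{P}_\theta(D_\rho)$ of radius $R_s$, and $R_s$ does not depend on $\varepsilon$.

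\textbf{Step 2: pull back along a fixed time window.} Fix a large time $t\ge 3\rho$. By invariance (Theorem \ref{pis}(ii)) we can write $\mu_n=\Phi^{\varepsilon_n}(t,\tau-t)\nu_n$ with $\nu_n\in\mathcal{A}^{\varepsilon_n}(\tau-t)\subseteq K(\tau-t)$. Let $\varphi_n$ be random variables with law $\nu_n$. Lemma \ref{solcon} (taking $R=R_{\tau-t}^{2/\theta}$) then gives
\[
\mathbb{E}\big[\|u^{\varepsilon_n}_\tau(\cdot;\tau-t,\varphi_n)-u^0_\tau(\cdot;\tau-t,\varphi_n)\|_\infty^2\big]\le C\varepsilon_n .
\]
Since $d^0\le\|\cdot\|_\infty$ and $\|\psi\|_{Lip}\le 1$, this yields
\[
d_{\mathcal{P}(D_\rho)}\big(\mu_n,\Phi^0(t,\tau-t)\nu_n\big)\le \sqrt{C\varepsilon_n}\to 0 .
\]

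\textbf{Step 3: identify the limit in $\mathcal{A}^0(\tau)$.} We must show $d(\Phi^0(t,\tau-t)\nu_n,\mathcal{A}^0(\tau))\to 0$ along a suitable choice of $t$. This is the main obstacle, because $t$ must be chosen uniformly in $n$. The plan is to use pullback attraction of $K$ by $\mathcal{A}^0$ (Theorem \ref{pis}(iii) with $D=K\in\mathcal{D}$). Given $\delta>0$, there is $T_\delta$ such that for all $t\ge T_\delta$,
\[
\sup_{\nu\in K(\tau-t)}d\big(\Phi^0(t,\tau-t)\nu,\mathcal{A}^0(\tau)\big)<\delta .
\]
Take $\delta=\delta_0/2$ and fix one such $t$. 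The constant $C$ in Step 2 depends on $t$, but $t$ is now fixed, so choosing $n$ large makes $\sqrt{C\varepsilon_n}<\delta_0/2$. Combining the two estimates gives $d(\mu_n,\mathcal{A}^0(\tau))<\delta_0$, a contradiction.

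Two points need checking. First, the absorbing set $K$ and the time $T_\delta$ must genuinely not depend on $\varepsilon$. This holds because all constants in Lemma \ref{pue} are $\varepsilon$-independent for $\varepsilon<\varepsilon^*$, and the attraction statement is applied only to $\Phi^0$. Second, $K(\tau-t)$ lies in the bounded set $\mathbb{K}_R$ required by Lemma \ref{solcon}. This follows from $\mathbb{E}\|\varphi\|_\infty^2\le(\mathbb{E}\|\varphi\|_\infty^\theta)^{2/\theta}$.

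With this route, asymptotic compactness of the $\varepsilon$-family (via Lemma \ref{pac}) is needed only if one also wants convergence of subsequences of $\mu_n$. The plain semicontinuity statement follows from the argument above alone.
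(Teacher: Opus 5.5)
Your argument is correct and uses exactly the paper's ingredients. These are the $\varepsilon$-independent closed absorbing set $K$ containing every $\mathcal{A}^\varepsilon$, invariance to write points of $\mathcal{A}^\varepsilon(\tau)$ as $\Phi^\varepsilon(T,\tau-T)\nu$ with $\nu\in K(\tau-T)$, Lemma \ref{solcon} on a fixed time window, and pullback attraction of $K$ by $\mathcal{A}^0$. The only difference is cosmetic: the paper argues directly, obtaining $d_{\mathcal{P}(D_\rho)}(\mathcal{A}^\varepsilon(\tau),\mathcal{A}^0(\tau))\le\sqrt{C\varepsilon}+\varepsilon'$ for each fixed $\varepsilon'>0$, whereas you phrase the same estimate as a contradiction.
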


\begin{proof}
 Let $ {K} = \{K(\tau): \tau \in \mathbb{R} \}$
 with
           $ K(\tau)      = \{\mu \in \mathcal{P}_{\theta}(D_\rho):
                               \int_{D_\rho} \| \xi \|_{\infty}^{\theta} \mu(d \xi)
                               \le R_{\tau}
                    \}$ 
 where $R_\tau$ is given by Lemma \ref{pas}.
 Then $ {K}$ is also a $\mathcal{D}$-pullback absorbing set of $\Phi^0$ in $
 (\mathcal{P}_{\theta}(D_\rho), d_{\mathcal{P}(D_\rho)})$.
 Since $\mathcal{A}^0$ is the $\mathcal{D}$-pullback measure attractor of $\Phi^0$,
 it follows that for any $\varepsilon' > 0$, there exists $T = T(\varepsilon',\tau) > 0$ such that
 \begin{align}\label{uppercont-1}
    d_{ \mathcal{P}(D_\rho) }
          \left( \Phi^0(T,\tau-T) K(\tau-T),
                 \mathcal{A}^0(\tau)
          \right)
    < \varepsilon'.
 \end{align}
 Since $ \mathcal{A}^\varepsilon(\tau - T) \subseteq K(\tau-T)$,
 it follows from \eqref{uppercont-1} that 
 \begin{align}\label{uppercont-2}
    \sup_{\mu \in \mathcal{A}^\varepsilon(\tau-T)}
          d_{\mathcal{P}(D_\rho) }
                 (\Phi^0(T, \tau - T) \mu, \mathcal{A}^0(\tau))
    \le \varepsilon'.
 \end{align}
 By the invariance of $\mathcal{A}^\varepsilon$ and \eqref{uppercont-2},
 we obtain
 \begin{align} \label{uppercont-3}
        d_{\mathcal{P}(D_\rho) }
              \left( \mathcal{A}^\varepsilon(\tau),
                     \mathcal{A}^0(\tau)
              \right)
    = & d_{ \mathcal{P}(D_\rho) }
              \left( \Phi^\varepsilon(T,\tau-T) \mathcal{A}^\varepsilon(\tau-T),
                     \mathcal{A}^0(\tau)
              \right)  \nonumber\\
    = & \sup_{\nu^\varepsilon \in \mathcal{A}^\varepsilon(\tau-T) }
              d_{\mathcal{P}(D_\rho) }
                  \left( \Phi^\varepsilon(T,\tau-T) \nu^\varepsilon,
                         \mathcal{A}^0(\tau)
                  \right)  \nonumber\\
 \le & \sup_{ \nu^\varepsilon \in \mathcal{A}^\varepsilon(\tau-T) }
                     d_{ \mathcal{P}(D_\rho) }
                              \left( \Phi^\varepsilon(T,\tau-T) \nu^\varepsilon, \Phi^0(T,\tau-T) \nu^\varepsilon
                              \right)  \nonumber\\
      & + \sup_{ \nu^\varepsilon \in \mathcal{A}^\varepsilon(\tau-T) }
                     d_{ \mathcal{P}(D_\rho) }
                              \left( \Phi^0(T,\tau-T) \nu^\varepsilon, \mathcal{A}^0(\tau)
                              \right)  \nonumber\\
 \le & \sup_{ \nu^\varepsilon \in \mathcal{A}^\varepsilon(\tau-T) }
                     d_{ \mathcal{P}(D_\rho) }
                              \left( \Phi^\varepsilon(T,\tau-T) \nu^\varepsilon, \Phi^0(T,\tau-T) \nu^\varepsilon
                              \right)
        + \varepsilon'.
 \end{align}

 For any $\mu^\varepsilon \in \mathcal{A}^\varepsilon(\tau-T)$,
 let $\xi^\varepsilon \in L^{\theta}(\Omega,\mathcal{F}_{\tau-T}; D_\rho)$ such that 
 its  law $\mathcal{L}_{\xi^\varepsilon} = \mu^\varepsilon$.
 Then
 $\mathbb{E} \left[ \| \xi^\varepsilon \|_{\infty}^{\theta} \right] \le R_{\tau-T}$.
By  Lemma \ref{solcon}, there exists a constant $C = (\tau, T, \varepsilon') > 0$ independent of 
$\varepsilon$
and $\mu^\varepsilon$ 
such that
for all $\varepsilon>0$
and
 $\mu^\varepsilon \in \mathcal{A}^\varepsilon(\tau-T)$,
 \begin{align*}
        & d_{\mathcal{P}(D_\rho)}
             \left( \Phi^\varepsilon(T, \tau-T) \mu^\varepsilon,
                    \Phi^{0}(T, \tau-T) \mu^\varepsilon
             \right) \nonumber\\
      = & \sup_{\psi \in L_b(D_\rho),\|\psi\|_{L_b} \le 1}
             \left| \langle \psi,\ \Phi^\varepsilon(T, \tau-T) \mu^\varepsilon \rangle
                    - \langle \psi,\Phi^{0}(T, \tau-T) \mu^\varepsilon \rangle
             \right| \nonumber\\
   \le & \sup_{\psi \in L_b(D_\rho),\|\psi\|_{L_b} \le 1}
           \mathbb{E}
              \left[ | \psi(u^\varepsilon_\tau(\cdot; \tau-T, \xi^\varepsilon) )
                       - \psi(u^0_\tau(\cdot; \tau-T, \xi^\varepsilon) ) |
              \right]  \nonumber\\
   \le & \mathbb{E}
              \left[ \| (u^\varepsilon_\tau(\cdot; \tau-T, \xi^\varepsilon)
                          - u^0_\tau(\cdot; \tau-T, \xi^\varepsilon)
                       \|_{\infty}
              \right] 
     < \sqrt{C \varepsilon},
 \end{align*}
 which implies that for all $\varepsilon>0$,
 \begin{align} \label{uppercont-4}
    \sup_{ \nu^\varepsilon \in \mathcal{A}^\varepsilon(\tau-T) }
       d_{\mathcal{P}(D_\rho) }
          \left( \Phi^\varepsilon(T, \tau-T) \mu^\varepsilon,
                    \Phi^{0}(T, \tau-T) \mu^\varepsilon
          \right)
    \le \sqrt{C \varepsilon}.
 \end{align}
 By \eqref{uppercont-3} and \eqref{uppercont-4},
 we obtain that for every
 $\varepsilon'>0$ and
 $\varepsilon \in (0, \varepsilon^{*})$,
 \begin{align*}
    d_{\mathcal{P}(D_\rho) }
          \left( \mathcal{A}^\varepsilon(\tau),
                 \mathcal{A}^0(\tau)
          \right)
    \le \sqrt{C \varepsilon} + \varepsilon',
 \end{align*}
which shows that
 $\limsup_{\varepsilon \to 0 }d_{ \mathcal{P}(D_\rho) }
          \left( \mathcal{A}^\varepsilon(\tau),
                 \mathcal{A}^0(\tau)
          \right)
    \le \varepsilon',
 $
as desired.
\end{proof}

  Next,
  we consider the 
   optimal rate of convergence
   of measure attractors
   as $\varepsilon \to 0$
   in the  case where 
   every measure attractor
   $ \mathcal{A}^\varepsilon$
   is a singleton, which   implies the optimal convergence rate of an evolution family of measures for \eqref{lmvlds-2}
   (see the related definitions of the evolution family of measures in \cite{DR2007, LL2025CMP}).

\subsection{Optimal convergence rate of singleton measure attractors}
In this subsection,
under a stronger dissipative
condition, 
 we first prove 
 the measure attractor 
 of \eqref{lmvlds-2} is a singleton,
 and then establish the   optimal convergence rate of 
 measure  attractors
 as  the  noise intensity
 approaches zero. 
 
 We  now further assume:
 \begin{align}\label{dissip1-2}
	\lambda
	 > \|\phi_{4}\|_{\ell^\infty} + \|\phi_{4}\|_1
	    + \sqrt{2}\|L_{F}\|_{\ell^\infty}
	    + \sqrt{2}\|L_{F}\|.
 \end{align}
  Then for given $\theta \in \left(2, \tfrac{2(p-2)}{q-2} \right)$, 
  there exist $\beta > 0$ and $\delta \in (0, 1)$ such that
  for all $\rho \in [0,1]$,
   \begin{align}\label{dissip1-2-1}
                \beta - 2\lambda + 2\|\phi_{4}\|_{\ell^\infty} + 2\|\phi_{4}\|_1
   	                     + \sqrt{2}( \|L_{F}\|_{\ell^\infty} + \|L_{F}\| )
   	                         (1+e^{\beta \rho} )
   	                    < 0,
  \end{align}
  and 
  \begin{align}\label{dissip1-3}
 	   & \theta \lambda 
 	                       - 2 \beta 
 	                       - \theta \left( \|\phi_4\|_{\ell^\infty} +  \|\phi_4\|_1 \right) 
 	                       - \sqrt{2}
 	                                   \big( \|L_{F}\|_{\ell^\infty} + \|L_{F}\| \big) 
 	                                   \big( e^{\beta \rho} + (\theta - 1) \big)  \nonumber\\
 	    & - \frac{(\theta - 1)(\theta - 2)}{2} (1 + 2^{\theta-2}) \delta
 	     > 0.
  \end{align}
 
 We now fix a constant $\beta$ satisfying \eqref{dissip1-1}-\eqref{dissip2}, \eqref{dissip1-2-1} and \eqref{dissip1-3}.
  Next, we show the continuity of solutions of \eqref{lmvlds-2} in $L^2(\Omega, D_\rho)$ with respect to initial data.

\begin{lemma}\label{sdsec}
 Suppose that ({\bf H1})-({\bf H4}), \eqref{31_1}, \eqref{34_1}  and \eqref{dissip1-2} hold.
 Then there exists $\varepsilon_0 \in (0, \varepsilon^*]$ such that for every 
  $\varepsilon \in (0, \varepsilon_0)$, 
  $\tau \in \mathbb{R}$,
 $\rho \in [0,1]$ 
 and $t \ge 2\rho$, 
 the solution $u^{\varepsilon} (\cdot; \tau-t, \varphi_i)$ of \eqref{lmvlds-2} satisfies
 \begin{align*}
       \mathbb{E} 
    	             \left[ \| u^\varepsilon_\tau(\cdot; \tau -t, \varphi_1) 
    	                         - u^\varepsilon_\tau(\cdot; \tau -t, \varphi_2)  
    	                      \|_{\infty}^2
    	             \right] 
   \le \mathfrak{C}
                 e^{ - \beta t} 
    	              \mathbb{E}
    	                     \left[ \| \varphi_1 - \varphi_2 \|_\infty^2
    	                     \right],
 \end{align*}
 where $\varphi_i \in L^2(\Omega, \mathcal{F}_{\tau-t}; D_\rho )$,
 and $\mathfrak{C}$ is a positive constant independent of $\varepsilon$, $\varphi_i$, $\tau$, $t$ and $\rho \in [0,1]$.
\end{lemma}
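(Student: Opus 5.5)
Write $u^j(r)=u^\varepsilon(r;\tau-t,\varphi_j)$ for $j=1,2$, and set $w=u^1-u^2$. The plan has two parts. First we prove a pointwise-in-time exponential decay of $\mathbb{E}[\|w(r)\|^2]$ with weight $e^{\beta r}$. Then we upgrade it to a supremum over the final window $[\tau-\rho,\tau]$ using a BDG argument on an interval of length $\rho$ (or $2\rho$), in the same way Lemma~\ref{pue} was upgraded via \eqref{pue-12}.

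\textbf{Step 1 (weighted mean-square decay).} Apply It\^o's formula to $e^{\beta r}\|w(r)\|^2$ on $[\tau-t,r]$ and take expectations, which kills the martingale terms after a stopping-time localization. The nonlinear terms are handled as in \eqref{consys-1}--\eqref{consys-5}. The $p$-Laplacian contributes a nonnegative term. By \eqref{f3}, $f$ contributes $\lambda_2\sum_i(|u^1_i|^{p-2}+|u^2_i|^{p-2})|w_i|^2$ minus $2(\|\phi_4\|_{\ell^\infty}+\|\phi_4\|_1)\mathbb{E}\|w\|^2$, using $\mathbb{W}_2^2(\mathcal{L}_{u^1},\mathcal{L}_{u^2})\le\mathbb{E}\|w\|^2$. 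By \eqref{sigma4}, the noise terms carry a factor $\varepsilon$: they are absorbed into a fraction of the $\lambda_2$-term plus $\varepsilon C\,\mathbb{E}\|w\|^2$, so $\varepsilon_0$ is chosen small to keep that contribution below the dissipation gap. The delay term is bounded by Young's inequality with weight $\sqrt2(\|L_F\|_{\ell^\infty}+\|L_F\|)$, using \eqref{F2} in the form $\|F(\cdot)-F(\cdot)\|\le\sqrt2\|L_F\|_{\ell^\infty}\|w\|+\sqrt2\|L_F\|\mathbb{W}_2$. The shift $s\mapsto s-\rho$ produces the factor $e^{\beta\rho}$, and the part falling on $[\tau-t-\rho,\tau-t]$ is bounded by $\rho e^{\beta(\tau-t)}\mathbb{E}\|\varphi_1-\varphi_2\|_\infty^2$. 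By \eqref{dissip1-2-1} and the choice of $\varepsilon_0$, the coefficient of $\int e^{\beta s}\mathbb{E}\|w\|^2\,ds$ is negative. This yields
\[
\mathbb{E}\|w(r)\|^2+c\int_{\tau-t}^{r}e^{\beta(s-r)}\mathbb{E}\|w(s)\|^2\,ds\le C e^{-\beta(r-\tau+t)}\mathbb{E}\|\varphi_1-\varphi_2\|_\infty^2
\]
for all $r\ge\tau-t$, with $C$ independent of $\rho\in[0,1]$, $\tau$, $t$ and $\varepsilon$.

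\textbf{Step 2 (supremum over the segment).} For $t\ge 2\rho$, repeat the pathwise It\^o argument of Step~2 in the proof of Theorem~\ref{consys} on $[\tau-2\rho,\tau]$, without the exponential weight. Take $\sup_r$ and use BDG for both the Brownian and the compensated Poisson integrals, each absorbing $\frac18\mathbb{E}\sup\|w\|^2$. As in \eqref{consys-9}--\eqref{consys-10}, \eqref{sigma4} absorbs the superlinear parts into the $\lambda_2$-term. The delay term is controlled by $\int_{\tau-3\rho}^{\tau-\rho}\mathbb{E}\|w\|^2$, or by the initial segment if $\tau-t$ lies in that window. This gives
\[
\mathbb{E}\sup_{r\in[\tau-2\rho,\tau]}\|w(r)\|^2\le C\Big(\mathbb{E}\|w(\tau-2\rho)\|^2+\int_{\tau-3\rho}^{\tau}\mathbb{E}\|w(s)\|^2\,ds\Big),
\]
where the integral is understood to include the initial data part when $t<3\rho$. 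Step~1 bounds every term on the right by $Ce^{-\beta t}e^{3\beta\rho}\mathbb{E}\|\varphi_1-\varphi_2\|_\infty^2$. Since $\rho\le1$, the factor $e^{3\beta\rho}$ is a uniform constant, giving $\mathfrak C$.

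\textbf{Main obstacle.} I expect the hardest point to be the constant bookkeeping in Step~1. The delay splitting must be done so that the threshold is exactly \eqref{dissip1-2-1}, with the $\sqrt2$ factors and $(1+e^{\beta\rho})$ emerging from Young's inequality with the right weights. In addition, the $\varepsilon$-dependent terms from \eqref{sigma4}, including the measure part $2\tilde\theta\|L\|^2\mathbb{W}_2^2$, must be shown to be small uniformly in $\rho$, which is what fixes $\varepsilon_0$. Some care is also needed for the localization in Step~1, where expectations of the $\mathbb{W}_2$ terms are deterministic, and for the case $2\rho\le t<3\rho$, where the delay reaches into the initial segment.
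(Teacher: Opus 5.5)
Your proposal is correct and follows essentially the same route as the paper: a weighted It\^o estimate for $e^{\beta r}\mathbb{E}\|w(r)\|^2$ using \eqref{f3}, \eqref{F2}, \eqref{sigma4} and \eqref{dissip1-2-1} (with $\varepsilon_0$ small) gives pointwise exponential decay, and a BDG estimate on a final window then controls $\mathbb{E}\sup\|w\|^2$. The only difference is that the paper runs the supremum estimate on $[\tau-\rho,\tau]$ rather than $[\tau-2\rho,\tau]$. For $t\ge 2\rho$ the delayed argument then stays in $[\tau-2\rho,\tau-\rho]\subseteq[\tau-t,\tau]$, so no separate treatment of the initial segment is needed.
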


\begin{proof}
 For convenience, denote by 
        $u^{i}(r) = u^{\varepsilon}(r; \tau-t, \varphi_i)$, $i = 1,2$.
 By \eqref{lmvlds-2} and It\^{o}'s formula, 
 we obtain that for any $r \in [\tau-2\rho, \tau]$ and $t \ge 2\rho$,
 \begin{align}\label{sdsec-1}
       & e^{\beta r} \mathbb{E} \left[ \| u^1(r) - u^2(r) \|^2 \| \right]
          + 2 \mathbb{E} \left[ \int_{\tau-t}^{r} e^{\beta s}
                                 \|B(u^1(s) - u^2(s))\|_p^p ds \right] \nonumber\\
       & + (2 \lambda -\beta  \mathbb{E}
                     \left[ \int_{\tau-t}^{r} e^{\beta s} \|u^1(s) - u^2(s)\|^2 ds \right]  \nonumber\\
       & + 2 \mathbb{E}
              \left[ \int_{\tau-t}^{r} e^{\beta s}
                           \langle f(u^1(s), \mathcal{L}_{u^1(s) } ) - f(u^2(s), \mathcal{L}_{u^2(s) } ),
                                       u^1(s) - u^2(s)
                           \rangle ds
              \right]  \nonumber\\
  \le & e^{\beta(\tau-t)} \mathbb{E} \left[ \|  \varphi_1
  (0) - \varphi_2 (0) \|^2 \right]  \nonumber \\
       & + 2 \mathbb{E}
                   \left[ \int_{\tau-t}^{r} e^{\beta s}
                                 \langle F( u^1(s-\rho), \mathcal{L}_{u^1(s-\rho)}) - F(  u^2(s), \mathcal{L}_{u^2(s)}),
                                             u^1(s) - u^2(s)
                                 \rangle ds
                   \right]  \nonumber\\
       & + 2 \varepsilon \mathbb{E}
                  \left[ \int_{\tau-t}^{r} e^{\beta s}
                              \sum_{k \in \mathbb{N} }
                                     \|\sigma_k(u^1(s), \mathcal{L}_{u^1(s)}) -  \sigma_k(u^2(s), \mathcal{L}_{u^2(s)}) \|^2 ds
                  \right]  \nonumber\\
       & + 2 \varepsilon \mathbb{E}
                      \left[ \int_{\tau-t}^{r} e^{\beta s}
                                   \int_{ y \in \mathbb{Y} }
                                          \| \widetilde{\sigma}(u^1(s),  \mathcal{L}_{u^1(s)},y)
                                             - \widetilde{\sigma}(u^2(s), \mathcal{L}_{u^2(s)},y) 
                                           \|^2 \nu(dy) ds
                      \right].
 \end{align}

 For the fourth term on left-hand side of \eqref{sdsec-1},
 by \eqref{f3}, we obtain
 \begin{align}\label{sdsec-2}
       & 2 \mathbb{E}
             \left[ \int_{\tau-t}^{r} e^{\beta s}
                        \langle f(u^1(s), \mathcal{L}_{u^1(s)}) - f(u^2(s), \mathcal{L}_{u^2(s)}),
                                u^1(s) - u^2(s)
                        \rangle ds
             \right]  \nonumber\\
  \ge & \lambda_2 \mathbb{E}
             \left[ \int_{\tau-t}^{r} e^{\beta s} 
                          \sum_{ i \in \mathbb{Z}} 
             	                                    \left( |u_{i}^1(s)|^{p-2} + |u_{i}^2(s)|^{p-2} \right)    
             	                                    |u_{i} ^1(s) - u_{i}^2(s)|^2 ds
             \right]  \nonumber\\
         & - 2 \left( \|\phi_{4}\|_{\ell^\infty} + \|\phi_{4}\|_1 \right)
             \int_{\tau-t}^{r} e^{\beta s} \mathbb{E} \left[ \|u^1(s) - u^2(s) \|^2 \right] ds.
\end{align}

For the second term on right-hand side of \eqref{sdsec-1},
by \eqref{F2}, we obtain
\begin{align}\label{sdsec-3}
	& 2 \mathbb{E}
	            \left[ \int_{\tau-t}^{r} e^{\beta s}
	                         \langle F( u^1(s-\rho), \mathcal{L}_{u^2(s-\rho)}) 
	                                    - F(u^2(s-\rho), \mathcal{L}_{u^2(s-\rho)}),
	                                    u^1(s) - u^2(s)
	                         \rangle ds
	            \right]  \nonumber\\
	\le & 2 \mathbb{E}
	                  \left[ \int_{\tau-t}^{r} e^{\beta s}
	                               \|F( u^1(s-\rho), \mathcal{L}_{u^1(s-\rho)}) 
	                                  - F( u^2(s-\rho), \mathcal{L}_{u^2(s-\rho)}) \|
	                                  \|u^1(s) - u^2(s)\| ds
	                  \right]  \nonumber\\ 
	 \le & 
	          2  \mathbb{E}
	                \bigg[ \int_{\tau-t}^{r}  e^{\beta s}
	                        \left( 2 \|L_{F}\|_{\ell^\infty}^2  \| u^1(s-\rho) - u^2(s-\rho) \|^2 
	                        + 2 \|L_{F}\|^2\mathbb{E} \left[ \| u^1(s-\rho) - u^2(s-\rho) \|^2 \right]
	                        \right) ^{\frac{1}{2}}
	                       \nonumber \\
	              & \qquad\quad\ \
	               \cdot      \| u^1(s) - u^2(s) \|
	                        ds
	                \bigg]  \nonumber\\                          
	    = & \sqrt{2}( \|L_{F}\|_{\ell^\infty} +\|L_{F}\|)  e^{\beta (\rho+\tau-t)}
                    \mathbb{E}
                         \left[ \int_{-\rho}^{0}
                                      \| \varphi_1(s) - \varphi_2(s) \|^2 ds
                         \right]  \nonumber\\
           & + \sqrt{2}(\|L_{F}\|_{\ell^\infty}+\|L_{F}\|) (1+e^{\beta \rho})
                     \int_{\tau-t}^{r} e^{\beta^{*} s}
                          \mathbb{E}
                               \left[ \| u^1(s) - u^2(s) \|^2 
                               \right] 
                     ds. 
\end{align} 

For the third and fourth terms on right-hand side of \eqref{sdsec-1},
by \eqref{sigma4}, we obtain
\begin{align}\label{sdsec-4}
	  & 2 \varepsilon \mathbb{E}
               \left[ \int_{\tau-t}^{r} e^{\beta s}
            	            \sum_{k \in \mathbb{N} }
	                               \| \sigma_k(u^1(s), \mathcal{L}_{u^1(s)}) 
	                                  -  \sigma_k(u^2(s), \mathcal{L}_{u^2(s)}) \|^2 ds
         	   \right]  \nonumber\\
	  & + 2 \varepsilon \mathbb{E}
             	  \left[ \int_{\tau-t}^{r} e^{\beta s}
	                           \int_{y \in \mathbb{Y} }
	                                 \| \widetilde{\sigma}(u^1(s), \mathcal{L}_{u^1(s)},y)
	                                    - \widetilde{\sigma}(u^2(s), \mathcal{L}_{u^2(s)},y) \|^2 
	                                 \nu(dy) ds
	              \right]  \nonumber\\
 \le &  \frac{\lambda_2 \varepsilon}{2} 
	         \mathbb{E}
  	                \left[  \int_{\tau-t}^{r} e^{\beta s}
	                              \sum_{ i \in \mathbb{Z}} 
	                                    \left( |u_{i}^1(s)|^{p-2} + |u_{i}^2(s)|^{p-2} \right)    
	                                    |u_{i} ^1(s) - u_{i}^2(s)|^2 ds
	                \right]   \nonumber \\
	  & + 4 \varepsilon \| L\|^2 
	              \int_{\tau-t}^{r} e^{\beta s}
	                  \mathbb{W}_2^2(\mathcal{L}_{u^1(s)},  \mathcal{L}_{u^2(s)} ) ds
	        + 2 \varepsilon C_3
	              \mathbb{E}
	                    \left[  \int_{\tau-t}^{r} e^{\beta s} 
	                                   \|u^{\varepsilon_1}(s) - u^{\varepsilon_2}(s) \|^2 ds
	                    \right]  \nonumber\\
 \le &  \frac{\lambda_2}{2} 
	        \mathbb{E}
	              \left[  \int_{\tau-t}^{r} e^{\beta s}
	                            \sum_{ i \in \mathbb{Z}} 
	                                  \left( |u_{i}^1(s)|^{p-2} + |u_{i}^2(s)|^{p-2} \right)    
	                                  |u_{i} ^1(s) - u_{i}^2(s)|^2 ds
	             \right]   \nonumber \\
	  & + \varepsilon 
	              \left( 4 \| L \|^2 + 2 C_3
                 \right)
	                 \mathbb{E}
	                        \left[ \int_{\tau-t}^{r} e^{\beta s} 
	                                     \|u^1(s) - u^2(s) \|^2 ds
	                        \right].	               
\end{align}
 
 From \eqref{sdsec-1}-\eqref{sdsec-4}, it follows that
 \begin{align}\label{sdsec-5}
 	   & e^{\beta r} \mathbb{E}
 	            \left[ \| u^1(r) - u^2(r) \|^2 \| \right]
 	       + \frac{\lambda_2}{2} \mathbb{E}
                 	\left[ \int_{\tau-t}^{r} e^{\beta s}
                 	             \sum_{ i \in \mathbb{Z}} 
         	                            \left( |u_{i}^1(s)|^{p-2} + |u_{i}^2(s)|^{p-2} \right)    
         	                            |u_{i}^1(s) - u_{i}^2(s)|^2
         	                  ds
 	                \right]  \nonumber\\
 \le & e^{\beta (\tau-t)} \mathbb{E} \left[ \|  \varphi_1
 (0)  - \varphi_2 (0)  \|^2 \right]
            + \sqrt{2}( \|L_{F}\|_{\ell^\infty}+ \|L_{F}\| ) 
               e^{ \beta(\rho+\tau-t)}
                     \mathbb{E}
                             \left[ \int_{-\rho}^{0} \| \varphi_1(s) - \varphi_2(s) \|^2 ds
                            \right]  \nonumber\\
        & + \left[ \beta - 2\lambda + 2\|\phi_{4}\|_{\ell^\infty} + 2\|\phi_{4}\|_1
 	                     + \sqrt{2}( \|L_{F}\|_{\ell^\infty} + \|L_{F}\| )
 	                         (1+e^{\beta \rho} )
 	                     + \varepsilon
 	                              \left( 4 \| L \|^2 + 2 C_3
 	                              \right)   
 	            \right]  \nonumber\\
 	    & \quad  
 	            \cdot \int_{\tau-t}^{r} e^{\beta s} 
 	                          \mathbb{E} 
                                     \left[ \|u^1(s) - u^2(s) \|^2 \right] ds.
 \end{align}
 
 By \eqref{dissip1-2-1} and \eqref{sdsec-5}, 
 we obtain that there exists $\varepsilon_0 \in (0, \varepsilon^*]$ such that  for all 
       $\varepsilon \in (0, \varepsilon_0)$, 
       $\rho \in [0,1]$,
       $r \in [\tau-2\rho, \tau]$ and $t \ge 2\rho$,
 \begin{align}\label{sdsec-6}
       & \mathbb{E} \left[ \| u^1(r) - u^2(r) \|^2 \| \right]   
           + \frac{\lambda_2}{2} \mathbb{E}
                   \left[ \int_{\tau-t}^{r} e^{\beta (s-r)}
                                \sum_{ i \in \mathbb{Z}} 
                                       \left( |u_{i}^1(s)|^{p-2} + |u_{i}^2(s)|^{p-2} \right)    
                                       |u_{i} ^1(s) - u_{i}^2(s)|^2
                             ds
                   \right]  \nonumber\\
\le & e^{\beta (\tau-t-r)} \mathbb{E} \left[ \|  \varphi_1
(0)  - \varphi_2 (0) \|^2 \right]
 	       + \sqrt{2}( \|L_{F}\|_{\ell^\infty} + \|L_{F}\|)
 	          e^{ \beta (\rho+\tau - t - r)} 
        	       \mathbb{E}
           	              \left[ \int_{-\rho}^{0}
 	                                    \| \varphi_1(s) - \varphi_2(s) \|^2 ds
             	         \right]  \nonumber\\        
 \le & \mathfrak{C}_1 e^{ -\beta t} 
  	           \mathbb{E}
  	                 \left[ \| \varphi_1 - \varphi_2 \|_{\infty}^2
  	                 \right],   
 \end{align}
 where $ \mathfrak{C}_1$ is positive constant independent of $\varepsilon$, $D, \varphi_i$, $\tau$, $t$, $r$ and $\rho \in [0,1]$.
 
 By \eqref{lmvlds-1}, It\^{o}'s formula and BDG's inequality, 
 we obtain for all $t \ge 2\rho$,
 \begin{align}\label{sdsec-8}
 	   & \mathbb{E} 
 	           \bigg[ \sup_{r \in [\tau-\rho, \tau] } 
 	                    \Big( \| u^1(r) - u^2(r) \|^2  \nonumber\\
 	   & \qquad\qquad\quad                  
 	                             + 2 \int_{\tau-\rho}^{r}
 	                                   \langle f(u^1(s), \mathcal{L}_{u^1(s)}) 
 	                                                - f(u^2(s), \mathcal{L}_{u^2(s)}),
 	                                               u^1(s) - u^2(s)
 	                                  \rangle ds
 	                     \Big)
 	             \bigg]  \nonumber\\
\le & \mathbb{E} 
                  \left[ \|  u^1(\tau-\rho) - u^2(\tau-\rho) \|^2 \right]  \nonumber\\
       & + 2 \mathbb{E}
               \left[ \int_{\tau-\rho}^{\tau}
                            \left|
                                 \langle F( u^1(s-\rho),\mathcal{L}_{u^1(s-\rho)})
                                             - F( u^2(s), \mathcal{L}_{u^2(s)}),
                                             u^1(s) - u^2(s)
                                \rangle
                            \right| ds
                \right]  \nonumber\\
       & + \frac{1}{4} \mathbb{E} 
                     \left[ \sup_{r \in [\tau-\rho, \tau] }  \|u^1(r) - u^2(r) \|^2 
                     \right]  \nonumber\\
       & + \left(8 c_1^2 + 1\right) \varepsilon
                \mathbb{E}
                      \left[ \int_{\tau-\rho}^{\tau}
                                   \sum_{k \in \mathbb{N} }
                                               \| \sigma_k(u^1(s),  \mathcal{L}_{u^1(s)}) 
                                                    - \sigma_k(u^2(s),  \mathcal{L}_{u^2(s)})
                                               \|^2 
                                ds
                      \right]   \nonumber\\
       & + \left(8 c_2^2 + 1 \right) \varepsilon 
                  \mathbb{E}
                       \left[ \int_{\tau-\rho}^{\tau}
                                    \int_{ y \in \mathbb{Y} }
                                        \| \widetilde{\sigma}_k(u^{1}(s),  \mathcal{L}_{u^{1}(s)}, y) 
                                           - \widetilde{\sigma}_k(u^{2}(s),  \mathcal{L}_{u^{_2}(s)}, y) \|^2
                                 \nu(dy) ds
                       \right].     	               
 \end{align}
 
 By \eqref{F2}, \eqref{sdsec-2} and \eqref{sdsec-8}, we get that for all $t \ge 2 \rho$,
  \begin{align}\label{sdsec-9}
  	    & \mathbb{E} 
  	           \bigg[ \sup_{r \in [\tau-\rho, \tau] } 
  	                    \Big( \| u^1(r) - u^2(r) \|^2  \nonumber\\
  	    & \qquad\qquad\quad \                 
  	                             + \lambda_2 \int_{\tau-t}^{r}
  	                                                        \sum_{ i \in \mathbb{Z}} 
  	                                                               \left( |u_{i}^1(s)|^{p-2} + |u_{i}^2(s)|^{p-2} \right)    
  	                                                                |u_{i} ^1(s) - u_{i}^2(s)|^2 
  	                                                     ds
  	                     \Big)
  	             \bigg]  \nonumber\\
 \le & \mathbb{E} 
                   \left[ \|  u^1(\tau-\rho) - u^2(\tau-\rho) \|^2 \right] 
             + 2 \left( \|\phi_{4}\|_{\ell^\infty} + \|\phi_{4}\|_1 \right)
                     	                                          \int_{\tau-\rho}^{\tau} 
                     	                                               \mathbb{E} \left[ \|u^1(s) - u^2(s) \|^2 \right] 
                     	                                           ds  \nonumber\\
      & + 2 \left( \|L_{F}\|_{\ell^\infty}^2 + \|L_{F}\|^2 \right)
                              \int_{\tau-\rho}^{\tau} 
                                   \mathbb{E}
                                        \left[ \| u^1(s-\rho) - u^2(s-\rho) \|^2 
                                        \right] 
                              ds 
           +  \int_{\tau-\rho}^{\tau} 
                                             \mathbb{E}
                                                     \left[ \| u^1(s) - u^2(s) \|^2 
                                                     \right] 
                   ds  \nonumber\\
       & + \frac{1}{4} \mathbb{E} 
                     \left[ \sup_{r \in [\tau-\rho, \tau] }  \|u^1(r) - u^2(r) \|^2 
                     \right]  \nonumber\\
       & + \left(8 c_1^2 + 1 \right) \varepsilon
                \mathbb{E}
                      \left[ \int_{\tau-\rho}^{\tau}
                                   \sum_{k \in \mathbb{N} }
                                               \| \sigma_k(u^1(s), \mathcal{L}_{u^1(s)}) 
                                                    - \sigma_k(u^2(s), \mathcal{L}_{u^2(s)})
                                               \|^2 
                                ds
                      \right]   \nonumber\\
       & + \left(8 c_2^2 + 1 \right) \varepsilon 
                  \mathbb{E}
                       \left[ \int_{\tau-\rho}^{\tau}
                                    \int_{ y \in \mathbb{Y} }
                                        \| \widetilde{\sigma}(u^1(s), \mathcal{L}_{u^1(s)}, y) 
                                           - \widetilde{\sigma}(u^2(s), \mathcal{L}_{u^2(s)}, y) \|^2
                                 \nu(dy) ds
                       \right].
  \end{align}
 
  Similar to argument of \eqref{sdsec-4}, we have
 \begin{align}\label{sdsec-10}
 	   & \left(8 c_1^2 + 1 \right) \varepsilon \mathbb{E}
                \left[ \int_{\tau-\rho}^{\tau} 
             	            \sum_{k \in \mathbb{N} }
 	                               \| \sigma_k(u^1(s), \mathcal{L}_{u^1(s)}) 
 	                                  -  \sigma_k(u^2(s), \mathcal{L}_{u^2(s)}) \|^2 ds
          	   \right]  \nonumber\\
 	   & + (8 c_2^2 + 1 ) \varepsilon \mathbb{E}
              	  \left[ \int_{\tau-\rho}^{\tau}
 	                           \int_{ y \in \mathbb{Z} }
 	                                 \| \widetilde{\sigma}(u^1(s), \mathcal{L}_{u^1(s)}, y)
 	                                    - \widetilde{\sigma}(u^2(s), \mathcal{L}_{u^2(s)}, y) \|^2 
 	                                 \nu(dy) ds
 	              \right]  \nonumber\\
 \le &  \frac{\lambda_2 \varepsilon}{2} 
 	         \mathbb{E}
   	                \left[  \int_{\tau-\rho}^{\tau}
 	                              \sum_{ i \in \mathbb{Z}} 
 	                                    \left( |u_{i}^1(s)|^{p-2} + |u_{i}^2(s)|^{p-2} \right)    
 	                                    |u_{i} ^1(s) - u_{i}^2(s)|^2 ds
 	                \right]   \nonumber \\
 	    & + \varepsilon 
 	           \left[ 8 c_1^2 + 8 c_2^2 + 2 ) \| L \|^2 
 	                    + 2C_3
               \right]
 	          \mathbb{E}
 	                \left[  \int_{\tau-\rho}^{\tau} 
 	                              \|u^1(s) - u^2(s) \|^2 ds
 	                \right].
 \end{align}
 
 By \eqref{sdsec-9}-\eqref{sdsec-10}, we obtain that for all $t \ge 2\rho$,
 \begin{align} \label{sdsec-12}
  	   & \frac{1}{2} \mathbb{E} 
  	             \left[ \sup_{r \in [\tau-\rho, \tau] } 
  	                         \| u^1(r) - u^2(r) \|^2  
  	             \right]  \nonumber\\
\le & 2 \mathbb{E} 
                   \left[ \|  u^1(\tau-\rho) - u^2(\tau-\rho) \|^2 \right] 
             + 4 \left( \|\phi_{4}\|_{\ell^\infty} + \|\phi_{4}\|_1 \right)
                     	                                          \int_{\tau-\rho}^{\tau} 
                     	                                               \mathbb{E} \left[ \|u^1(s) - u^2(s) \|^2 \right] 
                     	                                           ds  \nonumber\\
        & + 4 \left( \|L_{F}\|_{\ell^\infty}^2 + \|L_{F}\|^2 \right)
                              \int_{\tau-\rho}^{\tau} 
                                   \mathbb{E}
                                        \left[ \| u^1(s-\rho) - u^2(s-\rho) \|^2 
                                        \right] 
                              ds 
            + 2 \int_{\tau-\rho}^{\tau} 
                                                \mathbb{E}
                                                     \left[ \| u^1(s) - u^2(s) \|^2 
                                                     \right] 
                   ds  \nonumber\\     
  	   & + 2 \varepsilon 
  	    	           \left[ 8 c_1^2 + 8 c_2^2 + 2 ) \| L \|^2 
  	    	                    + 2C_3
  	                  \right]
  	           \mathbb{E}
  	                \left[  \int_{\tau-\rho}^{\tau} 
  	                              \|u^1(s) - u^2(s) \|^2 ds
  	                \right].               
 \end{align}
 Then by \eqref{sdsec-6} and \eqref{sdsec-12}, we obtain that for all $t \ge 2\rho$ and $\rho \in [0,1]$, 
  \begin{align*}
   	    \mathbb{E} 
   	             \left[  \sup_{r \in [\tau-\rho, \tau] } \| u^1(r) - u^2(r) \|^2  
   	             \right] 
   \le \mathfrak{C}_2 
                e^{ -\beta t} 
    	                    \mathbb{E}
    	                           \left[ \| \varphi_1 - \varphi_2 \|_\infty^2
    	                           \right],           
  \end{align*}
 where $\mathfrak{C}_2$ is positive constant independent of $\varepsilon$, $\varphi_i$, $\tau$, $t$ and $\rho \in [0,1]$.
\end{proof}

As
an applications of Lemma \ref{sdsec}, 
we now 
prove that the  measure attractor of \eqref{lmvlds-2}
is a singleton, 
which is an evolution family of 
probability measures in $\mathcal{P}_{\theta}(D_\rho)$ with pullback mixing property.

\begin{theorem}\label{sin-attractor}
   Suppose that {\bf(H1)}-{\bf(H4)}, \eqref{thetaassum1}, \eqref{thetaassum2}, \eqref{dissip1}, \eqref{dissip2} 
   and \eqref{dissip1-2} hold.
   Then for every $\varepsilon \in (0, \varepsilon_0)$,
   the 
   $\mathcal{D}$-pullback measure attractor $\mathcal{A}^\varepsilon$ of \eqref{lmvlds-2} 
   is a singleton, 
   which is actually an evolution family of 
   probability measures $\{ \mu_\tau^\varepsilon \}_{\tau \in \mathbb{R} }  \subseteq \mathcal{P}_{\theta}(D_\rho)$.
   Moreover, for 
   every  $\tau \in \mathbb{R}$, $\mu_\tau^\varepsilon$ is pullback mixing in the sense that
   $ \lim_{t \to \infty}
          \mathbb{W}_2
                     \left( P_{\tau-t, \tau}^{\ast,\varepsilon} \nu,
                              \mu_\tau^\varepsilon
                     \right)
      = 0 $
   for all $\nu \in \mathcal{P}_{\theta}(D_\rho)$.
\end{theorem}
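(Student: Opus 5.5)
The plan is to deduce everything from the exponential contraction estimate of Lemma \ref{sdsec}, combined with the existence of the attractor $\mathcal{A}^\varepsilon$ from Theorem \ref{pis} and its invariance.

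\textbf{Step 1: $\mathcal{A}^\varepsilon(\tau)$ is a singleton.} Take $\mu,\nu\in\mathcal{A}^\varepsilon(\tau)$. By invariance, for every $t\ge 2\rho$ there exist $\mu_t,\nu_t\in\mathcal{A}^\varepsilon(\tau-t)$ with $\mu=\Phi^\varepsilon(t,\tau-t)\mu_t$ and $\nu=\Phi^\varepsilon(t,\tau-t)\nu_t$. Choose $\varphi_1,\varphi_2\in L^\theta(\Omega,\mathcal{F}_{\tau-t};D_\rho)$ with laws $\mu_t,\nu_t$; independence of $\mathcal{F}_{\tau-t}$ from the noise increments after $\tau-t$ permits any coupling, so we may take them on a common space. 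Then
\[
\mathbb{W}_2^{D_\rho}(\mu,\nu)^2\le \mathbb{E}\big[\|u^\varepsilon_\tau(\cdot;\tau-t,\varphi_1)-u^\varepsilon_\tau(\cdot;\tau-t,\varphi_2)\|_\infty^2\big]\le \mathfrak{C}e^{-\beta t}\,\mathbb{E}\big[\|\varphi_1-\varphi_2\|_\infty^2\big].
\]
Here $\mathbb{W}_2^{D_\rho}$ denotes the Wasserstein distance with cost $\|\cdot\|_\infty$, which dominates the $d^0$-Wasserstein distance and hence $d_{\mathcal{P}(D_\rho)}$. Since $\mathcal{A}^\varepsilon\in\mathcal{D}$, we have
\[
\mathbb{E}\big[\|\varphi_1-\varphi_2\|_\infty^2\big]\le 4\|\mathcal{A}^\varepsilon(\tau-t)\|^2_{\mathcal{P}_\theta(D_\rho)}=o\big(e^{-2\beta t/\theta}\big)
\]
by \eqref{temper} and Hölder's inequality. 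This holds only with the $\beta$ of \eqref{temper}; if the $\beta$ of Lemma \ref{sdsec} differs, I would instead use the uniform bound $\|\mathcal{A}^\varepsilon(\tau-t)\|^\theta\le R_{\tau-t}$ from Lemma \ref{pas} and check that $e^{-\beta t}R_{\tau-t}^{2/\theta}\to0$ using \eqref{dissip2}. Letting $t\to\infty$ gives $\mu=\nu$. Write $\mathcal{A}^\varepsilon(\tau)=\{\mu^\varepsilon_\tau\}$. Invariance then reads $P^{*,\varepsilon}_{\tau,\tau+t}\mu^\varepsilon_\tau=\mu^\varepsilon_{\tau+t}$, so $\{\mu^\varepsilon_\tau\}$ is an evolution family in $\mathcal{P}_\theta(D_\rho)$.

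\textbf{Step 2: pullback mixing.} Fix $\nu\in\mathcal{P}_\theta(D_\rho)$ and $t\ge2\rho$. Write
\[
\mu^\varepsilon_\tau=P^{*,\varepsilon}_{\tau-t,\tau}\mu^\varepsilon_{\tau-t},
\]
and couple $\varphi_1\sim\nu$, $\varphi_2\sim\mu^\varepsilon_{\tau-t}$ as $\mathcal{F}_{\tau-t}$-measurable variables. The law of $\varphi_1$ is fixed, so $\mathbb{E}\|\varphi_1\|_\infty^2$ is constant in $t$. The quantity $\mathbb{E}\|\varphi_2\|_\infty^2$ is controlled as in Step 1. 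Lemma \ref{sdsec} then gives
\[
\mathbb{W}_2\big(P^{*,\varepsilon}_{\tau-t,\tau}\nu,\mu^\varepsilon_\tau\big)^2\le 2\mathfrak{C}e^{-\beta t}\big(\mathbb{E}\|\varphi_1\|_\infty^2+\mathbb{E}\|\varphi_2\|_\infty^2\big)\to0,
\]
where $\mathbb{W}_2$ is understood with the sup-norm cost on $D_\rho$, or via its projection to $\ell^2$ marginals.

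\textbf{Main obstacle.} The delicate point is matching exponents. Lemma \ref{sdsec} gives decay $e^{-\beta t}$, but the tempered growth of $\|\mathcal{A}^\varepsilon(\tau-t)\|^2_{\mathcal{P}_\theta}$ is controlled only through \eqref{temper} and \eqref{dissip2}, possibly with a different $\beta$. I would first try to show that $R_{\tau-t}\le C e^{\beta t}\int_{-\infty}^{\tau}e^{\beta(s-\tau)}(\cdots)\,ds$, which gives $R_{\tau-t}^{2/\theta}=O(e^{2\beta t/\theta})$. Since $\theta>2$, this exponent is below $\beta t$, so the product tends to $0$.

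A second technical issue is that the Wasserstein distance on $D_\rho$ needs a metric. I would use $\|\cdot\|_\infty$, which is measurable with respect to the Skorohod Borel $\sigma$-algebra on $D_\rho$ because it is the supremum over rational times together with $0$.
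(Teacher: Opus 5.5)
Your proposal is correct and follows the paper's proof: pull back through invariance to $\mathcal{A}^\varepsilon(\tau-t)$, apply Lemma \ref{sdsec}, and use $\theta>2$ to beat the tempered growth of $\|\mathcal{A}^\varepsilon(\tau-t)\|_{\mathcal{P}_\theta(D_\rho)}$; the paper fixes a single $\beta$ for both \eqref{temper} and Lemma \ref{sdsec}, so your concern about matching exponents does not arise. One sign slip in Step 1 needs fixing: \eqref{temper} gives $\|\mathcal{A}^\varepsilon(\tau-t)\|^2_{\mathcal{P}_\theta(D_\rho)}=o(e^{2\beta t/\theta})$, not $o(e^{-2\beta t/\theta})$, and this growth rate is the one your final paragraph actually uses.
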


\begin{proof}
 Given $\mu_\tau, \nu_\tau \in \mathcal{A}^\varepsilon(\tau)$ with $\varepsilon \in (0, \varepsilon_0)$,
 it follows from the invariance of measure attractors that
 for any sequence $t_m \to \infty$,
 there are $\mu_{\tau-t_m}$ and $\nu_{\tau-t_m} \in \mathcal{A}^\varepsilon(\tau-t_m)$
 such that
     $\mu_\tau = \Phi (t_m, \tau-t_m) \mu_{\tau-t_m} 
                       = P_{\tau-t_m, \tau}^{\ast, \varepsilon} \mu_{\tau-t_m}$
 and
     $\nu_\tau = \Phi (t_m, \tau-t_m) \nu_{\tau-t_m} 
                      = P_{\tau-t_m, \tau}^{\ast,\varepsilon} \nu_{\tau-t_m}.$

 Assume $\xi_{m}, \zeta_{m} \in L^{\theta}(\Omega, \mathcal{F}_{\tau-t_m}; D_\rho)$
 such that $\mathcal{L}_{\xi_m} = \mu_{\tau-t_m}$ and $\mathcal{L}_{\zeta_m} = \nu_{\tau-t_m}$.
 Let $u^{\varepsilon}(\tau; \tau-t_m, \xi_m)$ and $v^{\varepsilon}(\tau; \tau-t_m, \zeta_m)$
 be 
 the solutions of \eqref{lmvlds-2}
 with initial data $\xi_m$ and $\zeta_m$ at initial time $\tau-t_m$, respectively.
 Then
     $\mu_\tau = \mathcal{L}_{u^\varepsilon_\tau(\cdot; \tau-t_m, \xi_m) }$
 and
     $\nu_\tau = \mathcal{L}_{v^\varepsilon_\tau(\cdot; \tau-t_m, \zeta_m) }$.

 From Lemma \ref{sdsec}, 
 it follows that  
 there exists
 $N=N(\tau, \mathcal{A}^\varepsilon)>0$
 such that for all $m\ge N$,
 \begin{align} \label{sa-1}
      d_{\mathcal{P}(D_\rho)} \left( \mu_\tau, \nu_\tau \right)
    = & \sup_{ \psi \in L_b(D_\rho), \|\psi\|_{L_b} \le 1 }
                \left| \mathbb{E}
                               \left[ \psi( u^\varepsilon_\tau(\cdot; \tau-t_m, \xi_m) )
                               \right]
                         - \mathbb{E}
                                  \left[ \psi( v^\varepsilon_\tau(\cdot; \tau-t_m, \zeta_m) )
                                  \right]
               \right|  \nonumber\\
 \le & \left( \mathbb{E}
                         \left[ \| u^\varepsilon_\tau(\cdot; \tau-t_m, \xi_m )
                                      - v^\varepsilon_\tau(\cdot; \tau-t_m, \zeta_m )
                                  \|_{\infty}^2
                         \right]
              \right)^{\frac{1}{2} }  \nonumber\\
  \le & \sqrt{\mathfrak{C} } 
                 \left( \mathbb{E} \left[ \|  \xi_m- \zeta_m \|_{\infty}^2 \right] 
                 \right)^{\frac{1}{2} }  
                    e^{-\frac{\beta }{2} t_m}   \nonumber\\
  \le & \sqrt{2 \mathfrak{C} } 
               e^{-\frac{\beta }{2} t_m} 
                     \left( \int_{D_\rho} \|y\|_{\infty}^2 \mu_{\tau-t_m}(dy)
                               + \int_{D_\rho} \|z\|_{\infty}^2 \nu_{\tau-t_m}(dz)
                     \right)^{\frac{1}{2} }  \nonumber\\
  \le & 2 \sqrt{2 \mathfrak{C} }
                  e^{-\frac{\beta}{\theta} t_m} 
                         \| \mathcal{A}^\varepsilon(\tau-t_m) \|_{\mathcal{P}_{\theta}(D_\rho) }  \nonumber\\ 
    = & 2 \sqrt{2 \mathfrak{C} }
                   e^{-\frac{\beta }{\theta} \tau} e^{\frac{\beta}{\theta} (\tau-t_m)} 
                      \| \mathcal{A}^\varepsilon(\tau-t_m) \|_{\mathcal{P}_{\theta}(D_\rho) }
                      \to 0, \ \text{ as } m \to \infty, 
 \end{align} 
 which shows that
   $\mu_\tau = \nu_\tau$ for all $\tau\in\mathbb{R}$, and hence 
 $\mathcal{A}^\varepsilon$ is a singleton,  denote  
 $\{\mu_\tau^\varepsilon\}_{\tau\in\mathbb{R}}$. 

 Next, we will show $\mu_\tau^\varepsilon$ is pullback mixing in the sense of the  Wasserstein metric.
 For any $\mu \in \mathcal{P}_{\theta}(D_\rho)$ and $t > 0$,
 let $\xi$ and $\zeta \in L^{\theta}(\Omega, \mathcal{F}_{\tau-t}; D_\rho)$ such that the laws
 $\mathcal{L}_{\xi} = \mu$ and $\mathcal{L}_{\zeta} = \mu_{\tau-t}^\varepsilon$.
 Then
   by Lemma \ref{sdsec},
 we  find that
 there exists
 $T=T(\tau,  \mu, \mathcal{A}^\varepsilon)>0$
 such that for all $t\ge T$,
 \begin{align*} 
   \mathbb{W}_2
                              \left( P_{\tau-t, \tau}^{\ast,\varepsilon} \mu,
                                       \mu_\tau^\varepsilon
                              \right)
 \le & \mathbb{E}
                   \left[ \| u^\varepsilon_\tau(\cdot; \tau-t, \xi) 
                               - u^\varepsilon_\tau(\cdot; \tau-t, \zeta)
                            \|_\infty^2
                   \right]^{\frac{1}{2}}  \nonumber \\
 \le & \sqrt{\mathfrak{C} } e^{-\frac{\beta }{2} t}
                  \left( \mathbb{E}
                                  \left[ \| \xi - \zeta \|_{\infty}^2
                                  \right]
                  \right)^{\frac{1}{2} }  \nonumber\\
 \le & \sqrt{2 \mathfrak{C} } 
               e^{-\frac{\beta }{\theta} t}
                     \left(
                              \Bigl( \int_{D_\rho} \|y\|_{\infty}^2 \mu(dy) 
                              \Bigr)^{\frac{1}{2}}
                              + \| \mathcal{A}^\varepsilon(\tau-t) \|_{\mathcal{P}_{\theta}(D_\rho) }
                    \right)  \nonumber\\
 \rightarrow & \ 0, \ \ \ \text{as} \  t \to \infty.
 \end{align*}
 This completes the proof.
\end{proof}

 Since 
 $\{ \mu_\tau^\varepsilon \}_{\tau \in \mathbb{R} }  
            \subset \mathcal{P}_{\theta}(D_\rho)$,
 it is natural to investigate the convergence of 
    $\{ \mu_\tau^\varepsilon \}_{\tau \in \mathbb{R} }$ 
 in Wasserstein metric of order $\theta$
 as the noise intensity $\varepsilon \to 0$.  
To that  end,
 we will show the convergence of the segments of solutions to \eqref{lmvlds-1} in $L^\theta(\Omega, D_\rho)$ with respect to initial data and parameter $\varepsilon \in (0, \varepsilon^*)$.
 The main difficulty
 is to deal   with
 the  superlinear noise terms.
 
\begin{lemma}\label{scr}
 Suppose that {\bf (H1)}-{\bf (H4)}, \eqref{31_1}, \eqref{34_1}  and \eqref{dissip1-2} hold.
 Then for every $\tau \in \mathbb{R}$ and and $\theta \in \left(2, \tfrac{2(p-2)}{q-2} \right)$,
 there exists $\varepsilon_0 = \varepsilon(\theta)  \in (0, \varepsilon^*)$ such that for all $\varepsilon \in (0, \varepsilon_0)$, $\rho \in [0,1]$ and $t \ge 3\rho$,
 the solution $u^{\varepsilon} (r; \tau-t, \varphi)$ of \eqref{lmvlds-1} with parameter $\varepsilon$ and the solution $u^0(\tau; \tau - t, \psi)$ of \eqref{lds} satisfy
 \begin{align}\label{jul12a1}
       & \mathbb{E} 
              \left[ \sup_{s \in [-\rho, 0] } 
                            \| u^{\varepsilon}_\tau(s; \tau - t, \varphi) 
                               - u^0_\tau(s; \tau - t, \psi) 
                            \|^\theta 
              \right]  \nonumber\\             
  \le & \mathfrak{C}_{\theta} e^{-\beta t}
                                          \mathbb{E}
                                                 \left[ \| \varphi - \psi \|_\infty^\theta
                                                 \right]  
            + \mathfrak{C}_{\theta} \varepsilon^{\frac{\theta}{2} }   
                      e^{-\beta t}
                             \mathbb{E} \left[ \|\varphi\|_{\infty}^{\theta} \right] 
            + \mathfrak{C}_\theta \varepsilon^{\frac{\theta}{2} } \nonumber\\
       & + \mathfrak{C}_\theta \varepsilon^{\frac{\theta}{2} } 
              \int_{-\infty}^{\tau} e^{\beta (s-\tau)}
                  \left( \|g(s)\|^{\theta} 
                           + \|h(s)\|^{\theta} 
                           + \| \widetilde{h}(s)\|_{L^{\theta}(\mathbb{Y},\nu; \ell^2)}^\theta
                           + \| \widetilde{h}(s)\|_{L^{2}(\mathbb{Y},\nu; \ell^2)}^\theta
                  \right) ds,                                                                    
 \end{align}  
 where $\varphi, \psi \in L^\theta(\Omega, \mathcal{F}_{\tau-t}; D_\rho)$, 
 and $\mathfrak{C}_{\theta} > 0$ is a constant depending on $\theta$.  
\end{lemma}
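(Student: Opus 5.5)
We will work with the difference $w(r)=u^{\varepsilon}(r;\tau-t,\varphi)-u^0(r;\tau-t,\psi)$. It satisfies the equation obtained by subtracting \eqref{lds} from \eqref{lmvlds-2}. The noise on its right-hand side splits, as in \eqref{scp-2}, into two parts. The first is a Lipschitz part $\sqrt{\varepsilon}\,(\sigma_k(u^\varepsilon,\mathcal{L}_{u^\varepsilon})-\sigma_k(u^0,\mathcal{L}_{u^0}))$, together with its jump analogue. The second is a forcing part $\sqrt{\varepsilon}\,(h_k+\sigma_k(u^0,\mathcal{L}_{u^0}))$, with its jump analogue. The plan is to mimic the proof of Lemma \ref{pue}, with $\|u\|^\theta$ replaced by $\|w\|^\theta$, in two stages.

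\textbf{Stage 1: a weighted pointwise bound.} We apply It\^o's formula to $e^{\beta r}\|w(r)\|^\theta$ on $[\tau-t,r]$. The monotonicity terms are handled by \eqref{f3} and the inequality for the discrete $p$-Laplacian. This produces a good term $\lambda_2\theta\,\|w\|^{\theta-2}\sum_i(|u^\varepsilon_i|^{p-2}+|u^0_i|^{p-2})|w_i|^2$, with losses $\theta(\|\phi_4\|_{\ell^\infty}+\|\phi_4\|_1)\|w\|^\theta$; for the measure part we use $\mathbb{W}_2^2(\mathcal{L}_{u^\varepsilon},\mathcal{L}_{u^0})\le\mathbb{E}\|w\|^2\le(\mathbb{E}\|w\|^\theta)^{2/\theta}$. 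The delay term is treated exactly as in \eqref{moment_I2}, via \eqref{F2} and Young's inequality. The shift $s\mapsto s-\rho$ produces the factor $e^{\beta\rho}$ and a boundary contribution $e^{\beta(\tau-t)}\mathbb{E}\|\varphi-\psi\|_\infty^\theta$; this accounts for the term $\sqrt2(\|L_F\|_{\ell^\infty}+\|L_F\|)(e^{\beta\rho}+\theta-1)$ in \eqref{dissip1-3}.

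The Lipschitz noise part is estimated by \eqref{sigma4} with $\tilde\theta$ proportional to $\varepsilon(\theta-1)$, which is absorbed by the good $\lambda_2$-term for $\varepsilon$ small. Its jump contribution is handled by the Taylor expansion \eqref{moment_gjhjan20}. To separate $w$ from the forcing we use $\|a+b\|^{\theta-2}\|b\|^2\le 2^{\theta-3}(\|a\|^{\theta-2}\|b\|^2+\|b\|^\theta)$, and then Young's inequality in the form $\|w\|^{\theta-2}X\le\delta\|w\|^\theta+C_\delta X^{\theta/2}$. This is what creates the $\tfrac{(\theta-1)(\theta-2)}{2}(1+2^{\theta-2})\delta$ loss in \eqref{dissip1-3}. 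The forcing part then contributes
$$\varepsilon^{\theta/2}\,\mathbb{E}\Big[\Big(\sum_k\|h_k+\sigma_k(u^0,\cdot)\|^2\Big)^{\theta/2}+\int_{\mathbb{Y}}\|\widetilde h+\widetilde\sigma(u^0,\cdot,y)\|^\theta\nu(dy)+\Big(\int_{\mathbb{Y}}\|\widetilde h+\widetilde\sigma(u^0,\cdot,y)\|^2\nu(dy)\Big)^{\theta/2}\Big].$$
Here lies the \emph{main obstacle}. The superlinear growth gives terms like $\sum_i|u^0_i|^{q\theta/2}$ and $(\|u^0\|_q^q)^{\theta/2}$, and these must be controlled, in the exponentially weighted time integral, by quantities that Lemma \ref{pue} bounds. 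My first attempt is the interpolation already used in \eqref{I11} and \eqref{moment_h6}. With $a=\tfrac{\theta(q-2)}{2(p-2)}<1$ this gives $\sum_i|u_i|^{q\theta/2}\le(\|u\|^{\theta-2}\|u\|_p^p)^a\|u\|^{(1-a)\theta}$, hence a bound by $\|u^0\|^{\theta-2}\|u^0\|_p^p+\|u^0\|^\theta$. Now apply Lemma \ref{pue} with $\varepsilon=0$, starting from $\mathcal{L}_\varphi$, where the dependence on $\varphi$ is made explicit from \eqref{3.12_4}. The weighted integral $\int_{\tau-t}^r e^{\beta(s-r)}(\cdots)\,ds$ is then bounded by $e^{-\beta t}\mathbb{E}\|\varphi\|_\infty^\theta+C+C\int_{-\infty}^\tau e^{\beta(s-\tau)}(\|g\|^\theta+\cdots)\,ds$.

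Strictly speaking the lemma involves $u^0$ started from $\psi$, so the roles of $\varphi$ and $\psi$ would need to be swapped (or we add and subtract). Since the statement involves $\mathbb{E}\|\varphi\|_\infty^\theta$, I will instead write the forcing in terms of $u^\varepsilon$, that is, $\sqrt\varepsilon(h_k+\sigma_k(u^\varepsilon,\cdot))$ with the Lipschitz difference placed on the deterministic side, and use the $\varepsilon$-uniform estimate of Lemma \ref{pue}. Choosing $\beta$ and $\delta$ as in \eqref{dissip1-3} and then $\varepsilon_0$ small, the coefficient of $\int e^{\beta s}\mathbb{E}\|w\|^\theta$ becomes negative. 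This yields, for $r\in[\tau-3\rho,\tau]$,
$$\mathbb{E}\|w(r)\|^\theta\le C e^{-\beta t}\mathbb{E}\|\varphi-\psi\|_\infty^\theta+C\varepsilon^{\theta/2}\big(e^{-\beta t}\mathbb{E}\|\varphi\|_\infty^\theta+1+\textstyle\int_{-\infty}^\tau\cdots\big).$$

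\textbf{Stage 2: the supremum.} We repeat the computation without weights on $[\tau-2\rho,\tau]$, as in \eqref{pue-9}. The martingale terms are estimated by the BDG inequality, absorbing $\tfrac14\mathbb{E}\sup\|w\|^\theta$ as in \eqref{uniform-2} and \eqref{pue-10}. The remaining integrals over $[\tau-3\rho,\tau]$ are bounded using Stage 1, and the forcing is bounded by the same $\varepsilon^{\theta/2}$ estimates. Since $\rho\le1$, all constants are uniform in $\rho$, and this gives \eqref{jul12a1}.
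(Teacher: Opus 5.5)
Your proposal is correct and is essentially the paper's proof, once you drop the Lipschitz/forcing split. That split is unnecessary: $u^0$ carries no noise, so the stochastic part of $w$ is exactly $\sqrt{\varepsilon}\,(h_k+\sigma_k(u^\varepsilon,\mathcal{L}_{u^\varepsilon}))$ and its jump analogue, and \eqref{sigma4} plays no role. The rest matches the paper step by step: It\^o's formula for $e^{\beta r}\|w\|^\theta$, the fixed-$\delta$ Young splitting that produces the loss $\tfrac{(\theta-1)(\theta-2)}{2}(1+2^{\theta-2})\delta$ and the factor $\varepsilon^{\theta/2}$, the interpolation with $a=\tfrac{\theta(q-2)}{2(p-2)}$ together with the explicit bound \eqref{3.12_4} for $u^\varepsilon$ started from $\varphi$, and finally a BDG estimate for the supremum over the last delay window.
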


\begin{proof}
 For convenience, denote by 
        $u^{\varepsilon}(s) = u^{\varepsilon}(s; \tau - t, \varphi)$
 and
        $u^0(s) = u^0(s; \tau - t, \psi)$.
        
 By \eqref{lmvlds-1} and It\^{o}'s formula, 
 we obtain that for any $r \in [\tau-2\rho, \tau]$ and $t \ge 2\rho$,
 \begin{align}\label{scr-1}
       & e^{\beta r} \mathbb{E} \left[ \| u^{\varepsilon} (r) - u^0(r) \|^\theta \right]
          + \theta \mathbb{E} 
                           \left[ \int_{\tau-t}^{r} e^{\beta s} 
                                       \| u^{\varepsilon}(s) - u^0(s) \|^{\theta - 2}
                                       \|B(u^{\varepsilon}(s) - u^0(s) )\|_p^p ds 
                           \right] \nonumber\\
       & + (\theta \lambda - \beta )  \mathbb{E}
                     \left[ \int_{\tau-t}^{r} e^{\beta s} \|u^{\varepsilon}(s) - u^0(s)\|^\theta ds \right]  \nonumber\\
       & + \theta \mathbb{E}
              \bigg[ \int_{\tau-t}^{r} e^{\beta s}
                            \| u^{\varepsilon}(s) - u^0(s) \|^{\theta - 2}
                            \langle f(u^{\varepsilon}(s), \mathcal{L}_{u^{\epsilon}(s)}) 
                                         - f(u^0(s), \mathcal{L}_{u^0(s) } ),
                                        u^{\varepsilon}(s) - u^0(s)
                           \rangle ds
              \bigg]  \nonumber\\
 \le & e^{\beta(\tau-t)} 
                \mathbb{E} 
                      \left[ \| u^{\varepsilon} (\tau-t) - u^0(\tau-t)\|^\theta \right]  \nonumber \\
      & + \theta \mathbb{E}
                   \bigg[ \int_{\tau-t}^{r} e^{\beta s}
                                 \| u^{\varepsilon}(s) - u^0(s) \|^{\theta - 2}
                                 \langle F( u^{\varepsilon}(s-\rho), \mathcal{L}_{u^{\varepsilon}(s-\rho)}) 
                                              - F( u^0(s-\rho), \mathcal{L}_{u^0(s-\rho)}),
                                             u^{\varepsilon}(s) - u^0(s)
                                 \rangle ds
                   \bigg]  \nonumber\\
       & + \frac{\theta(\theta - 1)}{2} \varepsilon \mathbb{E}
                  \bigg[ \int_{\tau-t}^{r} e^{\beta s}
                              \sum_{k \in \mathbb{N} }
                                     \| u^{\varepsilon}(s) - u^0(s) \|^{\theta - 2}
                                     \| h_k(s) + \sigma_k(u^{\varepsilon}(s), \mathcal{L}_{u^{\varepsilon}(s)}) \|^2 ds
                  \bigg]  \nonumber\\
      & + \mathbb{E}
                    \bigg[ \int_{\tau-t}^{r} 
                                 \int_{y \in \mathbb{Y} } e^{\beta s}
                                     \Bigl( \| u^{\varepsilon}(s) - u^0(s)
	                                              + \sqrt{\varepsilon}\widetilde{h}(s, y) 
	                                              + \sqrt{\varepsilon}
	                                                        \widetilde{\sigma}
	                                                              (u^{\varepsilon}(s), \mathcal{L}_{u^{\varepsilon}(s)}, y)
	                                           \|^{\theta}
      	                                       - \| u^{\varepsilon}(s) - u^0(s) \|^{\theta}  \nonumber\\
	  & \qquad\qquad\quad
	                                           - \sqrt{ \varepsilon} \theta 
	                                                                             \| u^{\varepsilon}(s) - u^0(s) \|^{\theta - 2}
	                                           \langle u^{\varepsilon}(s) - u^0(s),
	                                                        \widetilde{h}(s,y) 
	                                                        + \widetilde{\sigma}
	                                                               (u^{\varepsilon}(s), \mathcal{L}_{u^{\varepsilon}(s)}, y)
	                                           \rangle
	                                 \Bigr) \nu(dy) ds   
	                  \bigg].
 \end{align}  
 
 For the fourth term on the left-hand side of \eqref{scr-1},
 by \eqref{f3} and H\"{o}lder's inequality, we obtain
 \begin{align} \label{scr-2}
       &  \theta \mathbb{E}
               \left[ \int_{\tau-t}^{r} e^{\beta s}
                             \| u^{\varepsilon}(s) - u^0(s) \|^{\theta - 2}
                             \langle f(u^{\varepsilon}(s), \mathcal{L}_{u^{\epsilon}(s)}) 
                                          - f(u^0(s), \mathcal{L}_{u^0(s) } ),
                                         u^{\varepsilon}(s) - u^0(s)
                            \rangle ds
               \right]  \nonumber\\
  \ge & \lambda_2 \theta
               \mathbb{E}
                      \left[ \int_{\tau-t}^{r} e^{\beta s}
                                     \| u^{\varepsilon}(s) - u^0(s) \|^{\theta - 2}
                                    \sum_{ i \in \mathbb{Z}} 
   	                                      \left( |u_{i}^{\varepsilon}(s)|^{p-2} + |u_{i}^0(s)|^{p-2} \right)    
   	                                      |u_{i} ^{\varepsilon}(s) - u_{i}^0(s)|^2 
   	                          ds
   	                  \right]  \nonumber\\
   	       & - \theta \left( \|\phi_4\|_{\ell^\infty} +  \|\phi_4\|_1  \right)
   	                    \int_{\tau-t}^{r} e^{\beta s}
   	                        \mathbb{E}
   	                           	   \left[ \| u^{\varepsilon}(s) - u^0(s) \|^{\theta}
   	                               \right]
   	                     ds. 
 \end{align}
 
 For the second term on the right-hand side of \eqref{scr-1},
 by \eqref{F2}, we obtain
 \begin{align}\label{scr-3}
 	&  \theta \mathbb{E}
 	                    \left[ \int_{\tau-t}^{r} e^{\beta s}
 	                                  \| u^{\varepsilon}(s) - u^0(s) \|^{\theta - 2}
 	                                  \langle F( u^{\varepsilon}(s-\rho), \mathcal{L}_{u^{\varepsilon}(s-\rho)}) 
 	                                               - F( u^0(s-\rho), \mathcal{L}_{u^0(s-\rho)}),
 	                                              u^{\varepsilon}(s) - u^0(s)
 	                                  \rangle ds
 	                    \right]  \nonumber\\
 	\le & \theta \mathbb{E}
 	                  \left[ \int_{\tau-t}^{r} e^{\beta s}
 	                               \| u^{\varepsilon}(s) - u^0(s) \|^{\theta - 1}
 	                               \|F( u^{\varepsilon}(s-\rho), \mathcal{L}_{u^{\varepsilon}(s-\rho)}) 
 	                                  - F( u^0(s-\rho), \mathcal{L}_{u^0(s-\rho)}) \|
 	                            ds
 	                  \right]  \nonumber\\ 
   \le & \sqrt{2} \theta \|L_{F}\|_{\ell^\infty}  
                 \mathbb{E}
 	                   \left[ \int_{\tau-t}^{r}  e^{\beta s}
 	                                 \| u^{\varepsilon}(s) - u^0(s) \|^{\theta - 1} 
 	                                 \| u^{\varepsilon}(s-\rho) - u^0(s-\rho) \| 
 	                             ds 
 	                   \right] \nonumber\\
  	      & + \sqrt{2} \theta \|L_{F}\| 
  	                 \mathbb{E}
  	       	                \left[ \int_{\tau-t}^{r}  e^{\beta s}
  	       	                               \| u^{\varepsilon}(s) - u^0(s) \|^{\theta - 1}  
 	                                       \sqrt{\mathbb{E} 
 	                                                       \left[ \| u^{\varepsilon}(s-\rho) - u^0(s-\rho) \|^2 \right]}
 	                                  ds    
 	                       \right]  \nonumber\\                                      
   \le & \sqrt{2}
 	                \big( \|L_{F}\|_{\ell^\infty} + \|L_{F}\| \big) e^{\beta (\rho+\tau-t) }
                        \mathbb{E}
                               \left[ \int_{-\rho}^{0}
                                             \| \varphi(s) - \psi(s) \|^\theta ds
                               \right]  \nonumber\\
          & + \sqrt{2}
                        \big( \|L_{F}\|_{\ell^\infty} + \|L_{F}\| \big) 
                        \big( e^{\beta \rho} + (\theta - 1) \big)
                        \int_{\tau-t}^{r} e^{\beta s}
                             \mathbb{E}
                                   \left[ \| u^{\varepsilon}(s) - u^0(s) \|^\theta
                                   \right] 
                       ds. 
 \end{align} 
 
 For the third term on the right-hand side of \eqref{scr-1},  
 by \eqref{siglinear-1} and using the constant $\delta$ in \eqref{dissip1-3}, we obtain 
 \begin{align} \label{scr-4}      
       & \frac{\theta(\theta - 1)}{2} \varepsilon \mathbb{E}
                  \left[ \int_{\tau-t}^{r} e^{\beta s}
                              \sum_{k \in \mathbb{N} }
                                     \| u^{\varepsilon}(s) - u^0(s) \|^{\theta - 2}
                                     \| h_k(s) 
                                        + \sigma_k(u^{\varepsilon}(s), \mathcal{L}_{u^{\varepsilon}(s)}) \|^2 ds
                  \right]  \nonumber\\
 \le & \frac{(\theta - 1)(\theta - 2)}{2} \delta \mathbb{E}
                   \left[ \int_{\tau-t}^{r} e^{\beta s}
                                \| u^{\varepsilon}(s) - u^0(s) \|^\theta
                            ds
                   \right]  \nonumber\\   
      & + (\theta - 1) \delta^{-\frac{\theta - 2}{2} } \varepsilon^{\frac{\theta}{2} } 
                \mathbb{E}
                      \left[ \int_{\tau-t}^{r} e^{\beta s}
                                   \left( \sum_{k \in \mathbb{N} }
                                                         \| h_k(s) 
                                                             + \sigma_k(u^{\varepsilon}(s),   \mathcal{L}_{u^{\varepsilon}(s)}) 
                                                         \|^2 
                                         \right)^{\frac{\theta}{2} } ds
                      \right]  \nonumber\\  
 \le & \frac{(\theta - 1)(\theta - 2)}{2} \delta
               \mathbb{E}
                      \left[ \int_{\tau-t}^{r} e^{\beta s}
                                   \| u^{\varepsilon}(s) - u^0(s) \|^\theta
                               ds
                      \right]  \nonumber\\   
      & + 2^{\frac{\theta}{2} } (\theta - 1) \delta^{-\frac{\theta - 2}{2} } 
             \varepsilon^{\frac{\theta}{2} } 
                   \mathbb{E}
                         \bigg[ \int_{\tau-t}^{r} e^{\beta s}
                                      \Big( \| h(s) \|^2 + \| \alpha\|^2
                                               + 4 \sum_{i \in \mathbb{Z} }
                                                         \sum_{k \in \mathbb{N} }                                     
                                                               L_{\sigma, k,i}^2 |u^{\varepsilon}_i(s)|^q
                                               + 4 \|L_\sigma\|^2 \mathbb{E} \left[ \|u^{\varepsilon}(s) \|^2 \right]
                                     \Big)^{\frac{\theta}{2} } ds
                        \bigg]  \nonumber\\   
   \le & \frac{(\theta - 1)(\theta - 2)}{2} \delta
                 \mathbb{E}
                        \left[ \int_{\tau-t}^{r} e^{\beta s}
                                     \| u^{\varepsilon}(s) - u^0(s) \|^\theta
                                 ds
                        \right]  \nonumber\\   
        & + 2^{\theta -2} 2^{\frac{\theta}{2} } (\theta - 1) 
               \delta^{-\frac{\theta - 2}{2} } \varepsilon^{\frac{\theta}{2} } 
                     \int_{\tau-t}^{r} e^{\beta s}
                                  \| h(s) \|^\theta
                      ds
            +  \frac{1}{\beta} 2^{\theta -2} 2^{\frac{\theta}{2} } (\theta - 1) 
                \delta^{-\frac{\theta - 2}{2} } \varepsilon^{\frac{\theta}{2} }  
                           e^{\beta r} \| \alpha \|^\theta  \nonumber\\   
        & + 2^{2\theta -2} 2^{\frac{\theta}{2} } (\theta - 1) 
                      \Big( \sum_{i \in \mathbb{Z} }
                                    \sum_{k \in \mathbb{N} }                                     
                                          L_{\sigma, k,i}^2
                      \Big)^{\frac{\theta}{2} } 
                \delta^{-\frac{\theta - 2}{2} } \varepsilon^{\frac{\theta}{2} } 
                \mathbb{E}
                      \bigg[ \int_{\tau-t}^{r} e^{\beta s}
                                         \Big( a \|u^{\varepsilon}(s)\|^{\theta -2} \|u^{\varepsilon}(s)\|^p 
                                                  + \frac{b}{\theta} \|u^{\varepsilon}(s)\|^\theta
                                         \Big)        
                                     ds
                     \bigg]  \nonumber\\  
        & + 2^{2\theta -2} 2^{\frac{\theta}{2} } (\theta - 1) \|L_\sigma\|^\theta 
               \delta^{-\frac{\theta - 2}{2} } \varepsilon^{\frac{\theta}{2} }  
                    \int_{\tau-t}^{r} e^{\beta s}
                             \mathbb{E} \left[ \|u^{\varepsilon}(s) \|^\theta \right]
                    ds,                                                                            
 \end{align}
 where 
  $ a = \tfrac{\theta(q-2)}{2(p-2)} \in (0,1)$
  and 
  $b = (1-a) \theta$ 
  satisfying $a (\theta-2+p) + b = \tfrac{q \theta}{2}$.  
 
 For the fourth term on the right-hand side of \eqref{scr-1},
 Together with argument of \eqref{moment_gjh}, we use the constant $\delta$ in \eqref{dissip1-3} to obtain the estimate:
 \begin{align} \label{scr-5}
       & \mathbb{E}
                    \bigg[ \int_{\tau-t}^{r} e^{\beta s}
                                 \int_{y \in \mathbb{Y} }
                                     \Bigl( \| u^{\varepsilon}(s) - u^0(s)
	                                              + \sqrt{\varepsilon}\widetilde{h}(s, y) 
	                                              + \sqrt{\varepsilon}
	                                                        \widetilde{\sigma}
	                                                              (u^{\varepsilon}(s), \mathcal{L}_{u^{\varepsilon}(s)}, y)
	                                           \|^{\theta}
      	                                       - \| u^{\varepsilon}(s) - u^0(s) \|^{\theta}  \nonumber\\
	   & \qquad\qquad\quad
	                                           - \sqrt{ \varepsilon} \theta 
	                                                                             \| u^{\varepsilon}(s) - u^0(s) \|^{\theta - 2}
	                                           \langle u^{\varepsilon}(s) - u^0(s),
	                                                        \widetilde{h}(s,y) 
	                                                        + \widetilde{\sigma}
	                                                               (u^{\varepsilon}(s), \mathcal{L}_{u^{\varepsilon}(s)}, y)
	                                           \rangle
	                                 \Bigr) \nu(dy) ds   
	                  \bigg]  \nonumber\\
 \le & C_{\theta} \varepsilon
 	         \mathbb{E}
 	              \left[ \int_{\tau-t}^{r} e^{\beta s}
                               \| u^{\varepsilon}(s) - u^0(s) \|^{\theta-2}
                               \int_{y \in \mathbb{Y} }
                                         \| \widetilde{\sigma}(u^{\varepsilon}(s),        
                                                                             \mathcal{L}_{u^{\varepsilon}(s) }, y)
 	                                         + \widetilde{h}(s,y)
 	                                     \|^{2}
 	                             \nu(dy) ds
 	              \right]  \nonumber\\
 	  & + C_{\theta} \varepsilon^{\frac{\theta}{2} }
         	\mathbb{E}
 	               \left[ \int_{\tau-t}^{r} e^{\beta s}
                                \int_{y \in \mathbb{Y} }
 	                                \| \widetilde{\sigma}(u^\varepsilon(s), \mathcal{L}_{u^\varepsilon(s)}, y)
 	                                    + \widetilde{h}(s,y)
 	                                \|^{\theta}
 	                            \nu(dy) ds
 	               \right] \nonumber\\
 \le & \frac{\theta-2}{\theta} C_{\theta} \delta
 	         \mathbb{E}
 	              \left[ \int_{\tau-t}^{r} e^{\beta s}
                               \| u^{\varepsilon}(s) - u^0(s) \|^{\theta} ds
                  \right]  \nonumber\\                  
      & + \frac{2}{\theta} C_{\theta} 
             \delta^{-\frac{\theta - 2}{2} } \varepsilon^{\frac{\theta}{2} }
          	         \mathbb{E}
          	              \bigg[ \int_{\tau-t}^{r} e^{\beta s}
                                       \left( \int_{y \in \mathbb{Y} }
                                                    \| \widetilde{\sigma}(u^{\varepsilon}(s),        
                                                                                        \mathcal{L}_{u^{\varepsilon}(s) }, y)
 	                                                    + \widetilde{h}(s,y)
 	                                                \|^{2}
 	                                            \nu(dy)
 	                                     \right)^{\frac{\theta}{2} } ds
 	                     \bigg]  \nonumber\\
 	  & + C_{\theta} \varepsilon^{\frac{\theta}{2} }
                  \mathbb{E}
 	                     \left[ \int_{\tau-t}^{r} e^{\beta s}
                                       \int_{y \in \mathbb{Y} }
 	                                        \| \widetilde{\sigma}(u^{\varepsilon}(s), \mathcal{L}_{u^{\varepsilon}(s)}, y)
 	                                            + \widetilde{h}(s,y)
 	                                        \|^{\theta}
 	                                   \nu(dy) ds
 	                     \right],
 \end{align}
 where
 $C_{\theta} = \theta(\theta-1) 2^{\theta-3}$.        

For the second term on the right-hand side of \eqref{scr-5},  
by \eqref{siglinear-2}, H\"{o}lder's inequality and Young's inequality, we have    
\begin{align} \label{scr-6}
      & \frac{2}{\theta} C_{\theta} 
          \delta^{-\frac{\theta - 2}{2} } \varepsilon^{\frac{\theta}{2} }
          	         \mathbb{E}
          	              \bigg[ \int_{\tau-t}^{r} e^{\beta s}
                                       \Big( \int_{y \in \mathbb{Y} }
                                                    \| \widetilde{\sigma}(u^{\varepsilon}(s),        
                                                                                        \mathcal{L}_{u^{\varepsilon}(s) }, y)
 	                                                    + \widetilde{h}(s,y)
 	                                                \|^{2}
 	                                            \nu(dy)
 	                                    \Big)^{\frac{\theta}{2} }  ds
 	                     \bigg]  \nonumber\\
\le & \frac{2^{\theta} }{\theta} C_{\theta} 
         \delta^{-\frac{\theta - 2}{2} } \varepsilon^{\frac{\theta}{2} }
          	         \mathbb{E}
          	              \bigg[ \int_{\tau-t}^{r} e^{\beta s}
                                       \Big( \int_{y \in \mathbb{Y} }
                                                    \| \widetilde{\sigma}(u^{\varepsilon}(s),        
                                                                                        \mathcal{L}_{u^{\varepsilon}(s) }, y)
                                                    \|^2
                                               \nu(dy)  
                                        \Big)^{\frac{\theta}{2} }     
                                     ds
                        \bigg]   \nonumber\\                     
 	   & + \frac{2^{\theta}}{\theta} C_{\theta} 
 	          \delta^{-\frac{\theta - 2}{2} } \varepsilon^{\frac{\theta}{2} }
 	             	         \int_{\tau-t}^{r} e^{\beta s}
 	                                          \Big( \int_{y \in \mathbb{Y} }
 	                                                       \| \widetilde{h}(s,y) \|^{2}
 	                                                    \nu(dy)
 	                                           \Big)^{\frac{\theta}{2} } ds  \nonumber\\
\le & \frac{2^{\theta} }{\theta} C_{\theta} 
        \delta^{-\frac{\theta - 2}{2} } \varepsilon^{\frac{\theta}{2} }
          	         \mathbb{E}
          	              \bigg[ \int_{\tau-t}^{r} e^{\beta s}
                                       \Big( \sum_{i \in \mathbb{Z} } \beta_{i}^2
                                                + 4 \sum_{i \in \mathbb{Z} } 
                                                             \int_{y\in \mathbb{Y} }
                                                                     \left( L_{ \widetilde{\sigma}, i}(y)
                                                                    \right)^2 \nu(dy)
                                                            \left( |u_i^{\varepsilon}(s) |^{q}
                                                                      + \mathbb{E}\left[ \|u^{\varepsilon}(s)\|^2 \right]
                                                            \right)
                                        \Big)^{\frac{\theta}{2} }     
                                     ds
                        \bigg]   \nonumber\\                     
 	   & + \frac{2^{\theta}}{\theta} C_{\theta} 
 	          \delta^{-\frac{\theta - 2}{2} } \varepsilon^{\frac{\theta}{2} }
 	             	         \int_{\tau-t}^{r} e^{\beta s}
 	                                           \| \widetilde{h}(s) \|_{L^2(\mathbb{Y}, \nu; \ell^2)}^\theta
 	                         ds  \nonumber\\ 	                  
\le & 2^{\frac{\theta}{2} -1} 
               \frac{2^{\theta} }{\theta} C_{\theta} 
               \delta^{-\frac{\theta - 2}{2} } \varepsilon^{\frac{\theta}{2} }
          	      \int_{\tau-t}^{r} e^{\beta s}
                            \big( \sum_{i \in \mathbb{Z} } \beta_{i}^2
                            \big)^{\frac{\theta}{2} } 
                  ds  
        + \frac{2^{\theta}}{\theta} C_{\theta} 
           \delta^{-\frac{\theta - 2}{2} } \varepsilon^{\frac{\theta}{2} }
 	             	      \int_{\tau-t}^{r} e^{\beta s}
 	                               \| \widetilde{h}(s) \|_{L^2(\mathbb{Y}, \nu; \ell^2)}^\theta
 	                       ds  \nonumber\\
     & + 2^{\frac{\theta}{2} - 1} 2^{\frac{3\theta}{2} -1} 
                \frac{2^{\theta} }{\theta} C_{\theta} 
                \delta^{-\frac{\theta - 2}{2} } \varepsilon^{\frac{\theta}{2} }
               	         \mathbb{E}
               	              \bigg[  \int_{\tau-t}^{r} e^{\beta s}           	                          
               	                             \Big( \sum_{i \in \mathbb{Z} } 
                                                            \| L_{ \widetilde{\sigma}, i} \|_{L^2(\mathbb{Y}, \nu; \mathbb{R} ) }^{\frac{2\theta}{\theta -2} } 
                                             \Big)^{\frac{\theta -2}{2} } 
                                                           \sum_{i \in \mathbb{Z} }
                                                                  |u^{\varepsilon}_i(s) |^{\frac{q \theta} {2} }   
                                        ds
                                \bigg]  \nonumber\\
     & + 2^{\frac{\theta}{2} - 1} 2^{\frac{3\theta}{2} -1} 
                \frac{2^{\theta} }{\theta} C_{\theta} 
                \delta^{-\frac{\theta - 2}{2} } \varepsilon^{\frac{\theta}{2} }  
                    \int_{\tau-t}^{r} e^{\beta s}                                                                              
                         \Big( \sum_{i \in \mathbb{Z} } 
                                        \| L_{ \widetilde{\sigma}, i} \|_{L^2(\mathbb{Y}, \nu; \mathbb{R} ) }^2 
                                        \mathbb{E}\left[ \|u^{\varepsilon}(s)\|^2 \right]
                          \Big)^{\frac{\theta}{2} }                                             
                     ds  \nonumber\\      
\le & 2^{\frac{\theta}{2} -1} 
               \frac{2^{\theta} }{\theta} C_{\theta} 
               \delta^{-\frac{\theta - 2}{2} } \varepsilon^{\frac{\theta}{2} }
               \big( \sum_{i \in \mathbb{Z} } \beta_{i}^2
               \big)^{\frac{\theta}{2} }  
               \frac{1}{\beta} e^{\beta r} 
        + \frac{2^{\theta}}{\theta} C_{\theta} 
           \delta^{-\frac{\theta - 2}{2} } \varepsilon^{\frac{\theta}{2} }
 	             	         \int_{\tau-t}^{r} e^{\beta s}
 	                                   \| \widetilde{h}(s) \|_{L^2(\mathbb{Y}, \nu; \ell^2)}^\theta
 	                         ds  \nonumber\\
     & + 2^{\frac{\theta}{2} - 1} 2^{\frac{3\theta}{2} -1} 
                \frac{2^{\theta} }{\theta} C_{\theta} 
                \delta^{-\frac{\theta - 2}{2} } \varepsilon^{\frac{\theta}{2} }
                \Big( \sum_{i \in \mathbb{Z} } 
                              \| L_{ \widetilde{\sigma}, i} \|_{L^2(\mathbb{Y}, \nu; \mathbb{R} ) }^{\frac{2\theta}{\theta -2} } 
                \Big)^{\frac{\theta -2}{2} }  \nonumber\\
     & \quad 
                \cdot \mathbb{E}
               	                 \bigg[ \int_{\tau-t}^{r} e^{\beta s}       
                                                 \Big( a \|u^{\varepsilon}(s)\|^{\theta-2}   
                                                             \|u^{\varepsilon}(s)\|_p^{p} 
                                                   	      + \frac{b}{\theta} 
                                              	                  \|u^{\varepsilon}(s)\|^{\theta}  
                                                 \Big)
                                            ds
                                \bigg]  \nonumber\\
     & + 2^{\frac{\theta}{2} - 1} 2^{\frac{3\theta}{2} -1} 
                \frac{2^{\theta} }{\theta} C_{\theta} 
                \delta^{-\frac{\theta - 2}{2} } \varepsilon^{\frac{\theta}{2} }
                    \int_{\tau-t}^{r} e^{\beta s}  
                        \mathbb{E} \left[ \| u^{\varepsilon}(s) \|^\theta \right]                                                   
                     ds  
                      \Big( \sum_{i \in \mathbb{Z} } 
                                     \| L_{ \widetilde{\sigma}, i} \|_{L^2(\mathbb{Y}, \nu; \mathbb{R} ) }^2 
                      \Big)^{\frac{\theta}{2} }.                                                                     
\end{align}     

 For the third term on the right-hand side of \eqref{scr-5},  
 by \eqref{sigma2}, we obtain     
 \begin{align} \label{scr-7}        
       & C_{\theta} \varepsilon^{\frac{\theta}{2} }
                  \mathbb{E}
 	                     \left[ \int_{\tau-t}^{r} e^{\beta s}
                                       \int_{y \in \mathbb{Y} }
 	                                        \| \widetilde{\sigma}(u^{\varepsilon}(s), \mathcal{L}_{u^{\varepsilon}(s)}, y)
 	                                            + \widetilde{h}(s,y)
 	                                        \|^{\theta}
 	                                   \nu(dy) ds
 	                     \right]  \nonumber\\
 \le & 2^{\theta-1} C_{\theta} \varepsilon^{\frac{\theta}{2} }
	            \int_{\tau-t}^{r} e^{\beta s} 
	                     \| \widetilde{h}(s) \|^{\theta}_{L^{\theta}(\mathbb{Y}, \nu; \ell^2)} ds \nonumber\\
	  & + 2^{\theta-1} C_{\theta} \varepsilon^{\frac{\theta}{2} }
       	            \mathbb{E}
       	                  \left[ \int_{\tau-t}^{r} 
	                                   \int_{y \in \mathbb{Y} }
       	                                  e^{\beta s}
	                                      \| \widetilde{\sigma}(u^{\varepsilon}(s), \mathcal{L}_{u^{\varepsilon}(s)}, y) \|^{\theta}
	                                   \nu(dy) ds
	                      \right]  \nonumber\\
 \le & 2^{\theta-1} C_{\theta} \varepsilon^{\frac{\theta}{2} }
              \int_{\tau-t}^{r} e^{\beta s} 
 	              \| \widetilde{h}(s) \|^{\theta}_{L^{\theta}(\mathbb{Y}, \nu; \ell^2)} ds  
 	     + \frac{1}{\beta} e^{\beta r}
 	               	   6^{\theta-1} 2^{\frac{\theta}{2}} C_{\theta} \varepsilon^{\frac{\theta}{2} }
 	     	                 \int_{y \in \mathbb{Y} }
 	     	                      \Big( \sum_{i \in \mathbb{Z} }
 	                                      	     |\widetilde{\sigma}_{i}(0,\delta_0,y)|
 	     	                      \Big)^{\theta}
 	     	                 \nu(dy)  \nonumber\\
	  & + 6^{\theta-1} 2^{\frac{\theta}{2}} C_{\theta} \varepsilon^{\frac{\theta}{2} }
  	            \int_{y \in \mathbb{Y} }
	                         \Big( \sum_{i \in \mathbb{Z} }
	                                      \big| L_{\tilde{\sigma},i}(y) \big|^{\frac{\theta}{\theta-1} }
	                         \Big)^{\theta-1}
	                     \nu(dy)  \nonumber\\
	  & \quad 
	        \cdot \mathbb{E}
  	                  \left[ \int_{\tau-t}^{r} e^{\beta s}
	                               \Big( a \|u^{\varepsilon}(s)\|^{\theta-2} \|u^{\varepsilon}(s)\|_p^{p} 
	                                        + \frac{b}{\theta} \|u^{\varepsilon}(s)\|^{\theta}  
	                               \Big) ds
                   	  \right]  \nonumber\\
  	  & + 6^{\theta-1} 2^{\frac{\theta}{2}} C_{\theta} \varepsilon^{\frac{\theta}{2} }
	                   \int_{y \in \mathbb{Y} }
	                                \Big( \sum_{i \in \mathbb{Z}}
	                                               L_{\tilde{\sigma}, i}(y)
	                                \Big)^{\theta}
	                   \nu(dy)
	                   \mathbb{E}
	                          \left[ \int_{\tau-t}^{r} e^{\beta s} 
	                                             \|u^{\varepsilon}(s)\|^{\theta} 
	                                    ds
	                          \right].  	                     	                 	                     
 \end{align}
 By \eqref{scr-5}-\eqref{scr-7}, 
 we obtain that there exists a constant $\widetilde{C}_\theta > 0$ independent of $\rho \in [0,1]$, $\varphi$ and $\psi$, such that for all $r \in [\tau-2\rho, \tau]$ and $\varepsilon \in (0,1]$,     
 \begin{align} \label{scr-8}
       & \mathbb{E}
                    \bigg[ \int_{\tau-t}^{r} e^{\beta s}
                                 \int_{y \in \mathbb{Y} }
                                     \Bigl( \| u^{\varepsilon}(s) - u^0(s)
	                                              + \sqrt{\varepsilon}\widetilde{h}(s, y) 
	                                              + \sqrt{\varepsilon}
	                                                        \widetilde{\sigma}
	                                                              (u^{\varepsilon}(s), \mathcal{L}_{u^{\varepsilon}(s)}, y)
	                                           \|^{\theta}
      	                                       - \| u^{\varepsilon}(s) - u^0(s) \|^{\theta}  \nonumber\\
	   & \qquad\qquad\quad
	                                           - \sqrt{ \varepsilon} \theta 
	                                                                             \| u^{\varepsilon}(s) - u^0(s) \|^{\theta - 2}
	                                           \langle u^{\varepsilon}(s) - u^0(s),
	                                                        \widetilde{h}(s,y) 
	                                                        + \widetilde{\sigma}
	                                                               (u^{\varepsilon}(s), \mathcal{L}_{u^{\varepsilon}(s)}, y)
	                                           \rangle
	                                 \Bigr) \nu(dy) ds   
	                  \bigg]  \nonumber\\
 \le & \frac{\theta-2}{\theta} C_{\theta} \delta
 	         \mathbb{E}
 	              \left[ \int_{\tau-t}^{r} e^{\beta s}
                               \| u^{\varepsilon}(s) - u^0(s) \|^{\theta} ds
                  \right]  \nonumber\\ 
      & + e^{\beta r} \widetilde{C}_\theta \varepsilon^{\frac{\theta}{2} }
          + \widetilde{C}_\theta \varepsilon^{\frac{\theta}{2} }  
                    \int_{\tau-t}^{r} e^{\beta s}
           	                \big( \| \widetilde{h}(s) \|_{L^2(\mathbb{Y}, \nu; \ell^2) }^\theta
           	                         + \| \widetilde{h}(s) \|^{\theta}_{L^{\theta}(\mathbb{Y}, \nu; \ell^2) }
           	                \big)  
           	        ds  \nonumber\\
      &  + \widetilde{C}_\theta \varepsilon^{\frac{\theta}{2} } 
                   \Big( \sum_{i \in \mathbb{Z} } 
                                           \| L_{ \widetilde{\sigma}, i} \|_{L^2(\mathbb{Y}, \nu; \mathbb{R} ) }^{\frac{2\theta}{\theta -2} } 
                   \Big)^{\frac{\theta -2}{2} } 
                   \mathbb{E}
                         	              \left[  \int_{\tau-t}^{r} e^{\beta s}       
                                                      \big( \|u^{\varepsilon}(s)\|^{\theta-2} \|u^{\varepsilon}(s)\|_p^{p} 
                                                             	+ \|u^{\varepsilon}(s)\|^{\theta}  
                                                      \big)
                                                  ds
                                          \right]  \nonumber\\ 	
     & + \widetilde{C}_\theta \varepsilon^{\frac{\theta}{2} } 
                 \Big( \sum_{i \in \mathbb{Z} } 
                                          \| L_{ \widetilde{\sigma}, i} \|_{L^2(\mathbb{Y}, \nu; \mathbb{R} ) }^2 
                \Big)^{\frac{\theta}{2} }
                \int_{\tau-t}^{r} e^{\beta s}  
                        \mathbb{E} \left[ \| u^{\varepsilon}(s) \|^\theta \right]                                                   
                ds  \nonumber\\
 	 & + \widetilde{C}_\theta \varepsilon^{\frac{\theta}{2} }  
 	                   	            \int_{y \in \mathbb{Y} }
 	                 	                         \Big( \sum_{i \in \mathbb{Z} }
 	                 	                                      \big| L_{\tilde{\sigma},i}(y) \big|^{\frac{\theta}{\theta-1} }
 	                 	                         \Big)^{\theta-1}
 	                 	                     \nu(dy)
                                     \mathbb{E}
 	                   	                  \left[ \int_{\tau-t}^{r} e^{\beta s}
 	                 	                               \big( \|u^{\varepsilon}(s)\|^{\theta-2} \|u^{\varepsilon}(s)\|_p^{p} 
 	                 	                                        + \|u^{\varepsilon}(s)\|^{\theta}  
 	                 	                               \big) ds
 	                                    	  \right]  \nonumber\\
  	  & + \widetilde{C}_\theta \varepsilon^{\frac{\theta}{2} }
	                   \int_{y \in \mathbb{Y} }
	                                \Big( \sum_{i \in \mathbb{Z}}
	                                               L_{\tilde{\sigma}, i}(y)
	                                \Big)^{\theta}
	                   \nu(dy)
	                   \mathbb{E}
	                          \left[ \int_{\tau-t}^{r} e^{\beta s} 
	                                             \|u^{\varepsilon}(s)\|^{\theta} 
	                                    ds
	                          \right].  	                                    	                                   	                     
 \end{align}     

 From \eqref{scr-1}-\eqref{scr-4} and \eqref{scr-8}, 
 it follows that there exists a constant $\widehat{C}_\theta > 0$ depending on $\theta$ such that for all $r \in [\tau-2\rho, \tau]$ and $\varepsilon \in (0, 1]$,
 \begin{align}\label{scr-9}
       & e^{\beta r} \mathbb{E} \left[ \| u^{\varepsilon}(r) - u^0(r) \|^\theta \| \right]  \nonumber\\
       & + \bigg[ \theta \lambda 
                       - \beta 
                       - \theta \left( \|\phi_4\|_{\ell^\infty} +  \|\phi_4\|_1 \right) 
                       - \sqrt{2}
                                   \big( \|L_{F}\|_{\ell^\infty} + \|L_{F}\| \big) 
                                   \big( e^{\beta \rho} + (\theta - 1) \big)  \nonumber\\
       & \quad \ \ 
                       - \frac{(\theta - 1)(\theta - 2)}{2} \delta
                       - \frac{\theta-2}{\theta} C_{\theta} \delta
              \bigg] 
              \mathbb{E}
                     \left[ \int_{\tau-t}^{r} e^{\beta s} \|u^{\varepsilon}(s) - u^0(s)\|^\theta ds \right]  \nonumber\\
  \le & e^{\beta(\tau-t)} \mathbb{E} \left[ \| \varphi(0) - \psi(0) \|^\theta \right] 
            + \sqrt{2}
 	                \big( \|L_{F}\|_{\ell^\infty} + \|L_{F}\| \big)  e^{\beta (\rho+\tau-t) }
                        \mathbb{E}
                               \left[ \int_{-\rho}^{0}
                                             \| \varphi(s) - \psi(s) \|^\theta ds
                               \right]  \nonumber\\
        & + e^{\beta r} \widehat{C}_\theta \varepsilon^{\frac{\theta}{2} }
            + \widehat{C}_\theta \varepsilon^{\frac{\theta}{2} } 
                     \int_{\tau-t}^{r} e^{\beta s}
                          \big( \| h(s) \|^\theta
                                   + \| \widetilde{h}(s) \|_{L^2(\mathbb{Y}, \nu; \ell^2) }^\theta
                                   + \| \widetilde{h}(s) \|^{\theta}_{L^{\theta}(\mathbb{Y}, \nu; \ell^2) }
                          \big)
                      ds  \nonumber\\
       & + \widehat{C}_\theta \varepsilon^{\frac{\theta}{2} }            
                    \mathbb{E}
                          	              \left[  \int_{\tau-t}^{r} e^{\beta s}       
                                                       \big( \|u^{\varepsilon}(s)\|^{\theta-2} \|u^{\varepsilon}(s)\|_p^{p} 
                                                              	+ \|u^{\varepsilon}(s)\|^{\theta}  
                                                       \big)
                                                   ds
                                           \right].                                                                    
 \end{align}

 By \eqref{3.12_4} and \eqref{jan24a} in 
 the proof of Lemma \ref{pue}, 
 there is a constant 
        $\bar{C}_\theta = C(\beta, \theta, p, q, L) > 0$ 
 independent of $\varphi$  and $\rho \in [0,1]$,
 such that for all $r \in [\tau-2\rho, \tau]$, $t \ge 3\rho$ and $\varepsilon \in (0, \varepsilon^*)$,
 \begin{align} \label{scr-10}
       & \mathbb{E}
                  \left[  \int_{\tau-t}^{r} e^{\beta (s-r) }       
                                                              \big( \|u^{\varepsilon}(s)\|^{\theta-2}  
                                                                       \|u^{\varepsilon}(s)\|_p^{p} 
                                                                       + \|u^{\varepsilon}(s)\|^{\theta}  
                                                              \big)
                             ds
                  \right]  \nonumber\\
  \le & \bar{C}_\theta e^{\beta (\tau-t-r)}
  	            \mathbb{E} \left[ \|\varphi\|_{\infty}^{\theta} \right] 
  	       + \bar{C}_\theta  \nonumber\\
       & + \bar{C}_\theta  
       	     \int_{\tau-t}^{r} e^{\beta (s-r)}
  	             \left( \|g(s)\|^{\theta} 
  	                      + \|h(s)\|^{\theta} 
  	                      + \| \widetilde{h}(s)\|_{L^{\theta}(\mathbb{Y},\nu; \ell^2)}^\theta
  	                        + \| \widetilde{h}(s)\|_{L^{2}(\mathbb{Y},\nu; \ell^2)}^\theta
  	             \right) ds.
  \end{align}

 By \eqref{dissip1-3}, \eqref{scr-9} and \eqref{scr-10}, 
 we obtain that there exists $\varepsilon_0 = \varepsilon(\theta) \in (0, \varepsilon^*)$ such that for all $r \in [\tau-2\rho, \tau]$, $t \ge 3\rho$ and $\varepsilon \in (0, \varepsilon_0)$,
 \begin{align} \label{scr-11}
       & \mathbb{E} \left[ \| u^{\varepsilon} (r) - u^0(r) \|^\theta \| \right] 
           + \beta \mathbb{E}
                               \left[ \int_{\tau-t}^{r} e^{\beta (s-r) } 
                                                \| u^{\varepsilon}(s) - u^0(s) \|^\theta  
                                        ds 
                               \right]  \nonumber\\              
  \le & e^{\beta (\tau-t-r)} \mathbb{E} \left[ \| \varphi(0) - \psi(0) \|^\theta \right] 
            + \sqrt{2}
 	                \big( \|L_{F}\|_{\ell^\infty} + \|L_{F}\| \big)  e^{\beta (\rho+\tau-t-r) }
                        \mathbb{E}
                               \left[ \int_{-\rho}^{0}
                                             \| \varphi(s) - \psi(s) \|^\theta ds
                               \right]  \nonumber\\
        & + \widehat{C}_\theta \varepsilon^{\frac{\theta}{2} }
            +\widehat{C}_\theta \varepsilon^{\frac{\theta}{2} } 
                     \int_{\tau-t}^{r} e^{\beta (s-r) }
                          \big( \| h(s) \|^\theta
                                   + \| \widetilde{h}(s) \|_{L^2(\mathbb{Y}, \nu; \ell^2) }^\theta
                                   + \| \widetilde{h}(s) \|^{\theta}_{L^{\theta}(\mathbb{Y}, \nu; \ell^2) }
                          \big)
                      ds  \nonumber\\            
        & + \widehat{C}_\theta  \bar{C}_\theta \varepsilon^{\frac{\theta}{2} }   
                e^{\beta (\tau-t-r)}
          	            \mathbb{E} \left[ \|\varphi\|_{\infty}^{\theta} \right] 
          	       +  \widehat{C}_\theta  \bar{C}_\theta \varepsilon^{\frac{\theta}{2} }   \nonumber\\
               & +  \widehat{C}_\theta \bar{C}_\theta \varepsilon^{\frac{\theta}{2} } 
               	     \int_{\tau-t}^{r} e^{\beta (s-r)}
          	             \left( \|g(s)\|^{\theta} 
          	                      + \|h(s)\|^{\theta} 
          	                      + \| \widetilde{h}(s)\|_{L^{\theta}(\mathbb{Y},\nu; \ell^2)}^\theta
          	                        + \| \widetilde{h}(s)\|_{L^{2}(\mathbb{Y},\nu; \ell^2)}^\theta
          	             \right) ds  \nonumber\\
  \le & \left( 1 + \sqrt{2}
 	                            \big( \|L_{F}\|_{\ell^\infty} + \|L_{F}\| \big)  e^{\beta \rho} \rho
 	       \right) e^{\beta (\tau-t-r) }
                            \mathbb{E}
                                   \left[ \| \varphi - \psi \|_\infty^\theta
                                   \right]  \nonumber\\
        & + \widehat{C}_\theta \bar{C}_\theta \varepsilon^{\frac{\theta}{2} }   
                e^{\beta (\tau-t-r)}
          	            \mathbb{E} \left[ \|\varphi\|_{\infty}^{\theta} \right] 
           + \widehat{C}_\theta (1 + \bar{C}_\theta) \varepsilon^{\frac{\theta}{2} }          
          	         \nonumber\\
       & +  \widehat{C}_\theta  (1 + \bar{C}_\theta) \varepsilon^{\frac{\theta}{2} } 
               	     \int_{\tau-t}^{r} e^{\beta (s-r)}
          	             \left( \|g(s)\|^{\theta} 
          	                      + \|h(s)\|^{\theta} 
          	                      + \| \widetilde{h}(s)\|_{L^{\theta}(\mathbb{Y},\nu; \ell^2)}^\theta
          	                        + \| \widetilde{h}(s)\|_{L^{2}(\mathbb{Y},\nu; \ell^2)}^\theta
          	             \right) ds \nonumber\\
  \le & \breve{C}_\theta e^{- \beta t}
                            \mathbb{E}
                                   \left[ \| \varphi - \psi \|_\infty^\theta
                                   \right]  
            + \breve{C}_\theta e^{- \beta t} \varepsilon^{\frac{\theta}{2} }   
                   \mathbb{E} \left[ \|\varphi\|_{\infty}^{\theta} \right] 
            + \breve{C}_\theta \varepsilon^{\frac{\theta}{2} }          
          	         \nonumber\\
        & + \breve{C}_\theta \varepsilon^{\frac{\theta}{2} } 
               	     \int_{-\infty}^{\tau} e^{\beta (s-\tau)}
          	             \left( \|g(s)\|^{\theta} 
          	                      + \|h(s)\|^{\theta} 
          	                      + \| \widetilde{h}(s)\|_{L^{\theta}(\mathbb{Y},\nu; \ell^2)}^\theta
          	                        + \| \widetilde{h}(s)\|_{L^{2}(\mathbb{Y},\nu; \ell^2)}^\theta
          	             \right) ds.                
 \end{align}

 By It\^{o} formula, we obtain that for all $t \ge 2\rho$ and $\varepsilon \in (0, 1)$,
 \begin{align}\label{scr-12}
       & \mathbb{E} 
                  \left[ \sup_{r \in [\tau -\rho, \tau] } 
                               \| u^{\varepsilon} (r) - u^0(r) \|^\theta
                               \right]  \nonumber\\
 \le & \mathbb{E} 
                      \left[ \| u^{\varepsilon} (\tau-\rho) - u^0(\tau-\rho)\|^\theta \right]  \nonumber \\
       & + \theta \mathbb{E}
              \bigg[ \sup_{r \in [\tau -\rho, \tau] }
                             \int_{\tau-\rho}^{r} 
                            - \| u^{\varepsilon}(s) - u^0(s) \|^{\theta - 2}
                            \langle f(u^{\varepsilon}(s), \mathcal{L}_{u^{\epsilon}(s)}) 
                                         - f(u^0(s), \mathcal{L}_{u^0(s) } ),
                                        u^{\varepsilon}(s) - u^0(s)
                           \rangle ds
              \bigg]  \nonumber\\                      
      & + \theta \mathbb{E}
                   \bigg[ \int_{\tau-\rho}^{\tau} 
                                 \| u^{\varepsilon}(s) - u^0(s) \|^{\theta - 1}
                                 \| F( u^{\varepsilon}(s-\rho), \mathcal{L}_{u^{\varepsilon}(s-\rho)}) 
                                    - F( u^0(s-\rho), \mathcal{L}_{u^0(s-\rho)})
                                 \| ds
                   \bigg]  \nonumber\\
       & + \frac{\theta(\theta - 1)}{2} \varepsilon \mathbb{E}
                  \bigg[ \int_{\tau-\rho}^{\tau} 
                              \sum_{k \in \mathbb{N} }
                                     \| u^{\varepsilon}(s) - u^0(s) \|^{\theta - 2}
                                     \| h_k(s) + \sigma_k(u^{\varepsilon}(s), \mathcal{L}_{u^{\varepsilon}(s)}) \|^2 ds
                  \bigg]  \nonumber\\
      & + \theta \sqrt{\varepsilon} \mathbb{E}
                        \bigg[ \sup_{r \in [\tau -\rho, \tau] } 
                                    \int_{\tau-\rho}^{r} 
                                    \sum_{k \in \mathbb{N} }
                                           \| u^{\varepsilon}(s) - u^0(s) \|^{\theta - 2}
                                            \langle h_k(s) + \sigma_k(u^{\varepsilon}(s), \mathcal{L}_{u^{\varepsilon}(s) } ),  
                                             u^{\varepsilon}(s) - u^0(s) 
                                            \rangle dW_k(s)
                        \bigg]  \nonumber\\
      & + \mathbb{E}
                    \bigg[ \int_{\tau-\rho}^{\tau} 
                                 \int_{y \in \mathbb{Y} } 
                                    \Big |  \Bigl( \| u^{\varepsilon}(s) - u^0(s)
	                                              + \sqrt{\varepsilon}\widetilde{h}(s, y) 
	                                              + \sqrt{\varepsilon}
	                                                        \widetilde{\sigma}
	                                                              (u^{\varepsilon}(s), \mathcal{L}_{u^{\varepsilon}(s)}, y)
	                                           \|^{\theta}
      	                                       - \| u^{\varepsilon}(s) - u^0(s) \|^{\theta}  \nonumber\\
	  & \qquad\qquad\quad
	                                           - \sqrt{ \varepsilon} \theta 
	                                                                             \| u^{\varepsilon}(s) - u^0(s) \|^{\theta - 2}
	                                           \langle u^{\varepsilon}(s) - u^0(s),
	                                                        \widetilde{h}(s,y) 
	                                                        + \widetilde{\sigma}
	                                                               (u^{\varepsilon}(s), \mathcal{L}_{u^{\varepsilon}(s)}, y)
	                                           \rangle
	                                 \Bigr) \Big |
	                                  \nu(dy) ds   
	                  \bigg] \nonumber\\
      & + \theta \sqrt{\varepsilon} 
               \mathbb{E}
                        \bigg[ \sup_{r \in [\tau -\rho, \tau] }
                                     \int_{\tau-\rho}^{r} 
                                        \int_{y \in \mathbb{Y} } 
                                           \| u^{\varepsilon}(s-) - u^0(s-) \|^{\theta - 2}  \nonumber\\
      &\qquad\qquad\qquad\qquad\qquad\qquad                              
                                            \langle \widetilde{h}(s,y) 
                                            	         + \widetilde{\sigma}(u^{\varepsilon}(s-), \mathcal{L}_{u^{\varepsilon}(s)}, y),  
                                                         u^{\varepsilon}(s-) - u^0(s-) 
                                            \rangle  \widetilde{N}(ds, dy)
                        \bigg].
 \end{align}   
 
  For the second term on the right-hand side of \eqref{scr-12},
  by \eqref{f3} and H\"{o}lder's inequality, we obtain
  \begin{align} \label{scr-13}
        &  \theta \mathbb{E}
                \left[ \sup_{r \in [\tau -\rho, \tau] }
                          \int_{\tau-\rho}^{r} 
                              - \| u^{\varepsilon}(s) - u^0(s) \|^{\theta - 2}
                                 \langle f(u^{\varepsilon}(s), \mathcal{L}_{u^{\epsilon}(s)}) 
                                              - f(u^0(s), \mathcal{L}_{u^0(s) } ),
                                             u^{\varepsilon}(s) - u^0(s)
                                 \rangle ds
                \right]  \nonumber\\
   \le & \lambda_2 \theta
                \mathbb{E}
                       \left[ \sup_{r \in [\tau -\rho, \tau] }
                                   \int_{\tau-\rho}^{r} 
                                     - \| u^{\varepsilon}(s) - u^0(s) \|^{\theta - 2}
                                        \sum_{ i \in \mathbb{Z}} 
    	                                      \left( |u_{i}^{\varepsilon}(s)|^{p-2} + |u_{i}^0(s)|^{p-2} \right)    
    	                                      |u_{i} ^{\varepsilon}(s) - u_{i}^0(s)|^2 
    	                          ds
    	                  \right]  \nonumber\\
        & +  \theta \|\phi_4\|_{\ell^\infty} 
    	                   \mathbb{E}
    	                         \left[ \int_{\tau-\rho}^{\tau} 
    	                                      \| u^{\varepsilon}(s) - u^0(s) \|^{\theta}
    	                                  ds
    	                         \right]  \nonumber\\
        & + \theta \|\phi_4\|_1 \mathbb{E}
    	               	                         \left[ \int_{\tau-\rho}^{\tau} 
    	               	                                      \| u^{\varepsilon}(s) - u^0(s) \|^{\theta-2}
    	               	                                      \mathbb{E} \left[ \| u^{\varepsilon}(s) - u^0(s) \|^2 \right]
    	               	                                  ds
    	               	                         \right]  \nonumber\\
   \le & \theta \left( \|\phi_4\|_{\ell^\infty} + \|\phi_4\|_1  \right)
    	                    \int_{\tau-\rho}^{\tau}
    	                        \mathbb{E}
    	                           	   \left[ \| u^{\varepsilon}(s) - u^0(s) \|^{\theta}
    	                               \right]
    	                     ds. 
  \end{align}
 
 For the third term on the right-hand side of \eqref{scr-12},
 by \eqref{F2}, we obtain
 \begin{align}\label{scr-14}
 	&  \theta \mathbb{E}
 	                    \left[ \int_{\tau-\rho}^{\tau}
 	                                  \| u^{\varepsilon}(s) - u^0(s) \|^{\theta - 2}
 	                                  \langle F( u^{\varepsilon}(s-\rho), \mathcal{L}_{u^{\varepsilon}(s-\rho)}) 
 	                                               - F( u^0(s-\rho), \mathcal{L}_{u^0(s-\rho)}),
 	                                              u^{\varepsilon}(s) - u^0(s)
 	                                  \rangle ds
 	                    \right]  \nonumber\\
 	\le & \theta \mathbb{E}
 	                  \left[ \int_{\tau-\rho}^{\tau}
 	                               \| u^{\varepsilon}(s) - u^0(s) \|^{\theta - 1}
 	                               \|F( u^{\varepsilon}(s-\rho), \mathcal{L}_{u^{\varepsilon}(s-\rho)}) 
 	                                  - F( u^0(s-\rho), \mathcal{L}_{u^0(s-\rho)}) \|
 	                            ds
 	                  \right]  \nonumber\\ 	                 	                	                
   \le & \sqrt{2} (\theta - 1)
 	                \big( \|L_{F}\|_{\ell^\infty} + \|L_{F}\| \big) 
                        \mathbb{E}
                               \left[ \int_{\tau-\rho}^{\tau}
                                             \| u^{\varepsilon}(s) - u^0(s) \|^\theta ds
                               \right]  \nonumber\\
          & + \sqrt{2} 
                        \big( \|L_{F}\|_{\ell^\infty} + \|L_{F}\| \big) 
                        \int_{\tau-\rho}^{\tau} 
                             \mathbb{E}
                                   \left[ \| u^{\varepsilon}(s-\rho) - u^0(s-\rho) \|^\theta
                                   \right] 
                          ds  \nonumber\\
   \le & \sqrt{2} \theta
 	                \big( \|L_{F}\|_{\ell^\infty} + \|L_{F}\| \big) 
                        \int_{\tau-2\rho}^{\tau}
                            \mathbb{E}
                                  \left[ \| u^{\varepsilon}(s) - u^0(s) \|^\theta 
                                  \right]
                        ds. 
 \end{align}  

 For the fourth term on the right-hand side of \eqref{scr-12},
 similar the argument in \eqref{scr-4}, we obtain
 \begin{align}\label{scr-15}  
       & \frac{\theta(\theta - 1)}{2} \varepsilon \mathbb{E}
                  \bigg[ \int_{\tau-\rho}^{\tau} 
                              \sum_{k \in \mathbb{N} }
                                     \| u^{\varepsilon}(s) - u^0(s) \|^{\theta - 2}
                                     \| h_k(s) + \sigma_k(u^{\varepsilon}(s), \mathcal{L}_{u^{\varepsilon}(s)}) \|^2 ds
                  \bigg]  \nonumber\\ 
    \le & \frac{(\theta - 1)(\theta - 2)}{2}
                  \mathbb{E}
                         \left[ \int_{\tau-\rho}^{\tau} 
                                      \| u^{\varepsilon}(s) - u^0(s) \|^\theta
                                  ds
                         \right]  \nonumber\\   
         & + 2^{\theta -2} 2^{\frac{\theta}{2} } (\theta - 1) 
                   \varepsilon^{\frac{\theta}{2} } 
                      \int_{\tau-\rho}^{\tau} 
                                   \| h(s) \|^\theta
                       ds
             + 2^{\theta -2} 2^{\frac{\theta}{2} } (\theta - 1) 
                     \varepsilon^{\frac{\theta}{2} }  
                            \| \alpha \|^\theta  \nonumber\\   
         & + 2^{2\theta -2} 2^{\frac{\theta}{2} } (\theta - 1) 
                  \Big( \sum_{i \in \mathbb{Z} }
                                \sum_{k \in \mathbb{N} }                                     
                                      L_{\sigma, k,i}^2
                  \Big)^{\frac{\theta}{2} } 
                 \varepsilon^{\frac{\theta}{2} } 
                 \mathbb{E}
                       \bigg[ \int_{\tau-\rho}^{\tau} 
                                          \Big( a \|u^{\varepsilon}(s)\|^{\theta -2} \|u^{\varepsilon}(s)\|^p 
                                                   + \frac{b}{\theta} \|u^{\varepsilon}(s)\|^\theta
                                          \Big)        
                                      ds
                      \bigg]  \nonumber\\  
         & + 2^{2\theta -2} 2^{\frac{\theta}{2} } (\theta - 1) \|L_\sigma\|^\theta 
                  \varepsilon^{\frac{\theta}{2} }  
                     \int_{\tau-\rho}^{\tau} 
                              \mathbb{E} \left[ \|u^{\varepsilon}(s) \|^\theta \right]
                     ds.                  
 \end{align}
 
 For the fifth term on the right-hand side of \eqref{scr-12},
 by the BDG inequality, we obtain
 \begin{align}\label{scr-16} 
       & \theta \sqrt{\varepsilon} \mathbb{E}
                         \bigg[ \sup_{r \in [\tau -\rho, \tau] } 
                                     \int_{\tau-\rho}^{r} 
                                     \sum_{k \in \mathbb{N} }
                                            \| u^{\varepsilon}(s) - u^0(s) \|^{\theta - 2}
                                             \langle h_k(s) + \sigma_k(u^{\varepsilon}(s), \mathcal{L}_{u^{\varepsilon}(s) } ),  
                                              u^{\varepsilon}(s) - u^0(s) 
                                             \rangle dW_k(s)
                         \bigg]  \nonumber\\
 \le & c_1 \theta \sqrt{\varepsilon} \mathbb{E}
                         \bigg[ 
                                   \Big( \int_{\tau-\rho}^{\tau} 
                                                \sum_{k \in \mathbb{N} }
                                                      \| u^{\varepsilon}(s) - u^0(s) \|^{2\theta - 2}
                                                      \| h_k(s) + \sigma_k(u^{\varepsilon}(s), \mathcal{L}_{u^{\varepsilon}(s) } )
                                                      \|^2 ds
                                   \Big)^{\frac{1}{2} }                                        
                         \bigg]  \nonumber\\   
  \le & \frac{1}{4} 
               \mathbb{E}
                      \left[ \sup_{r \in [\tau-\rho, \tau] } 
                                    \| u^{\varepsilon}(r) - u^0(r) \|^{\theta}
                      \right]  \nonumber\\
        & + c_1^2 \theta^2 \varepsilon
                                    \mathbb{E}
                                         \left[ \int_{\tau-\rho}^{\tau} 
                                                 \sum_{k \in \mathbb{N} }
                                                       \| u^{\varepsilon}(s) - u^0(s) \|^{\theta - 2}
                                                       \| h_k(s) + \sigma_k(u^{\varepsilon}(s), \mathcal{L}_{u^{\varepsilon}(s) } )
                                                       \|^2 ds
                                        \right],                                       
 \end{align}
 where the second term on the right-hand side can be estimated by \eqref{scr-15}.

 For the sixth term on the right-hand side of \eqref{scr-12},
 similar to the argument of \eqref{scr-8}, we obtain 
 \begin{align}\label{scr-17}  
       & \mathbb{E}
                    \bigg[ \int_{\tau-\rho}^{\tau} 
                                 \int_{y \in \mathbb{Y} } 
                                     \Bigl( \| u^{\varepsilon}(s) - u^0(s)
	                                              + \sqrt{\varepsilon}\widetilde{h}(s, y) 
	                                              + \sqrt{\varepsilon}
	                                                        \widetilde{\sigma}
	                                                              (u^{\varepsilon}(s), \mathcal{L}_{u^{\varepsilon}(s)}, y)
	                                           \|^{\theta}
      	                                       - \| u^{\varepsilon}(s) - u^0(s) \|^{\theta}  \nonumber\\
	  & \qquad\qquad\quad
	                                           - \sqrt{ \varepsilon} \theta 
	                                                                             \| u^{\varepsilon}(s) - u^0(s) \|^{\theta - 2}
	                                           \langle u^{\varepsilon}(s) - u^0(s),
	                                                        \widetilde{h}(s,y) 
	                                                        + \widetilde{\sigma}
	                                                               (u^{\varepsilon}(s), \mathcal{L}_{u^{\varepsilon}(s)}, y)
	                                           \rangle
	                                 \Bigr) \nu(dy) ds   
	                  \bigg] \nonumber\\ 
 \le & \frac{\theta-2}{\theta} C_{\theta}
 	         \mathbb{E}
 	              \left[ \int_{\tau-\rho}^{\tau}
                               \| u^{\varepsilon}(s) - u^0(s) \|^{\theta} ds
                  \right]  \nonumber\\ 
      & + \widetilde{C}_\theta \varepsilon^{\frac{\theta}{2} }
          + \widetilde{C}_\theta \varepsilon^{\frac{\theta}{2} }  
                    \int_{\tau-\rho}^{\tau}
           	                \big( \| \widetilde{h}(s) \|_{L^2(\mathbb{Y}, \nu; \ell^2) }^\theta
           	                         + \| \widetilde{h}(s) \|^{\theta}_{L^{\theta}(\mathbb{Y}, \nu; \ell^2) }
           	                \big)  
           	        ds  \nonumber\\
      &  + \widetilde{C}_\theta \varepsilon^{\frac{\theta}{2} } 
                   \Big( \sum_{i \in \mathbb{Z} } 
                                           \| L_{ \widetilde{\sigma}, i} \|_{L^2(\mathbb{Y}, \nu; \mathbb{R} ) }^{\frac{2\theta}{\theta -2} } 
                   \Big)^{\frac{\theta -2}{2} } 
                   \mathbb{E}
                         	              \left[  \int_{\tau-\rho}^{\tau}   
                                                      \big( \|u^{\varepsilon}(s)\|^{\theta-2} \|u^{\varepsilon}(s)\|_p^{p} 
                                                             	+ \|u^{\varepsilon}(s)\|^{\theta}  
                                                      \big)
                                                  ds
                                          \right]  \nonumber\\ 	
     & + \widetilde{C}_\theta \varepsilon^{\frac{\theta}{2} } 
                 \Big( \sum_{i \in \mathbb{Z} } 
                                          \| L_{ \widetilde{\sigma}, i} \|_{L^2(\mathbb{Y}, \nu; \mathbb{R} ) }^2 
                \Big)^{\frac{\theta}{2} }
                \int_{\tau-\rho}^{\tau}
                        \mathbb{E} \left[ \| u^{\varepsilon}(s) \|^\theta \right]                                                   
                ds  \nonumber\\
 	 & + \widetilde{C}_\theta \varepsilon^{\frac{\theta}{2} }  
 	                   	            \int_{y \in \mathbb{Y} }
 	                 	                         \Big( \sum_{i \in \mathbb{Z} }
 	                 	                                      \big| L_{\tilde{\sigma},i}(y) \big|^{\frac{\theta}{\theta-1} }
 	                 	                         \Big)^{\theta-1}
 	                 	                     \nu(dy)
                                     \mathbb{E}
 	                   	                  \left[ \int_{\tau-\rho}^{\tau}
 	                 	                               \big( \|u^{\varepsilon}(s)\|^{\theta-2} \|u^{\varepsilon}(s)\|_p^{p} 
 	                 	                                        + \|u^{\varepsilon}(s)\|^{\theta}  
 	                 	                               \big) ds
 	                                    	  \right]  \nonumber\\
  	  & + \widetilde{C}_\theta \varepsilon^{\frac{\theta}{2} }
	                   \int_{y \in \mathbb{Y} }
	                                \Big( \sum_{i \in \mathbb{Z}}
	                                               L_{\tilde{\sigma}, i}(y)
	                                \Big)^{\theta}
	                   \nu(dy)
	                   \mathbb{E}
	                          \left[ \int_{\tau-\rho}^{\tau} 
	                                             \|u^{\varepsilon}(s)\|^{\theta} 
	                                    ds
	                          \right].   	                                       
\end{align} 
 
 For the seventh term on the right-hand side of \eqref{scr-12},
 by the BDG inequality, we obtain
 \begin{align}\label{scr-18}  
      & \theta \sqrt{\varepsilon} 
               \mathbb{E}
                        \bigg[ \sup_{r \in [\tau -\rho, \tau] }
                                     \int_{\tau-\rho}^{r} 
                                        \int_{y \in \mathbb{Y} } 
                                           \| u^{\varepsilon}(s-) - u^0(s-) \|^{\theta - 2}  \nonumber\\
      &\qquad\qquad\qquad\qquad\qquad\qquad                              
                                            \langle \widetilde{h}(s,y) 
                                            	         + \widetilde{\sigma}(u^{\varepsilon}(s-), \mathcal{L}_{u^{\varepsilon}(s)}, y),  
                                                         u^{\varepsilon}(s-) - u^0(s-) 
                                            \rangle  \widetilde{N}(ds, dy)
                        \bigg]  \nonumber\\
\le & c_2 \theta \sqrt{\varepsilon} 
               \mathbb{E}
                        \bigg[ 
                                  \Big( \int_{\tau-\rho}^{\tau} 
                                              \int_{y \in \mathbb{Y} } 
                                                  \| u^{\varepsilon}(s-) - u^0(s-) \|^{2\theta - 2}
                                                  \| \widetilde{h}(s,y) 
                                            	         + \widetilde{\sigma}(u^{\varepsilon}(s-), \mathcal{L}_{u^{\varepsilon}(s)}, y)
                                            	  \|^2
                                               N(ds, dy)
                                  \Big)^{\frac{1}{2} }
                        \bigg]  \nonumber\\
\le &  \frac{1}{4} 
               \mathbb{E}
                      \left[ \sup_{r \in [\tau-\rho, \tau] } 
                                    \| u^{\varepsilon}(r) - u^0(r) \|^{\theta}
                      \right] 
         + c_2^2 \theta (\theta - 2) 
                 \mathbb{E}
                        \left[ \int_{\tau-\rho}^{\tau} \| u^{\varepsilon}(s) - u^0(s) \|^{\theta}
                        \right]  \nonumber\\
     & + 2 c_2^2 \theta \varepsilon^{\frac{\theta}{2} } 
                       \mathbb{E}
                                \bigg[ \int_{\tau-\rho}^{\tau}
                                             \left( \int_{y \in \mathbb{Y} } 
                                                          \| \widetilde{h}(s,y) 
                                            	              + \widetilde{\sigma}(u^{\varepsilon}(s), \mathcal{L}_{u^{\varepsilon}(s)}, y)
                                            	          \|^2
                                                       \nu(dy) 
                                            \right)^{\frac{\theta}{2} } ds 
                        \bigg],                                          
 \end{align}    
 where the third term on the right-hand side can be estimated along with \eqref{scr-17}.

 Then from \eqref{scr-11}-\eqref{scr-18}
 it follows that there exists a constant $M_\theta > 0$ independent of $\rho \in [0,1]$, $\varphi$ and $\psi$, such that for all $r \in [\tau-\rho, \tau]$, $t \ge 3\rho$ and $\varepsilon \in (0, \hat{\varepsilon} )$,
 \begin{align}\label{scr-19}
       & \mathbb{E} 
                  \left[ \sup_{r \in [\tau -\rho, \tau] } 
                               \| u^{\varepsilon} (r) - u^0(r) \|^\theta \| \right]  \nonumber\\
 \le & 2 \mathbb{E} 
                      \left[ \| u^{\varepsilon} (\tau-\rho) - u^0(\tau-\rho)\|^\theta \right]  
          + M_\theta
                    \int_{\tau-\rho}^{\tau}
           	                        \mathbb{E}
           	                           	   \left[ \| u^{\varepsilon}(s) - u^0(s) \|^{\theta}
           	                               \right]
           	                     ds  \nonumber\\    
      & + M_\theta  
                   \int_{\tau-2\rho}^{\tau}
           	                        \mathbb{E}
           	                           	   \left[ \| u^{\varepsilon}(s) - u^0(s) \|^{\theta}
           	                               \right]
           	                     ds  
          + M_\theta \varepsilon^{\frac{\theta}{2} }  \nonumber\\
      & + M_\theta \varepsilon^{\frac{\theta}{2} }  
                           \int_{\tau-\rho}^{\tau}
                  	                \big( \|h(s)\|^\theta
                  	                         + \| \widetilde{h}(s) \|_{L^2(\mathbb{Y}, \nu; \ell^2) }^\theta
                  	                         + \| \widetilde{h}(s) \|^{\theta}_{L^{\theta}(\mathbb{Y}, \nu; \ell^2) }
                  	                \big)  
                  	        ds  \nonumber\\
      & + M_\theta \varepsilon^{\frac{\theta}{2} } 
                          \mathbb{E}
                                	              \left[  \int_{\tau-\rho}^{\tau}   
                                                             \big( \|u^{\varepsilon}(s)\|^{\theta-2} \|u^{\varepsilon}(s)\|_p^{p} 
                                                                    	+ \|u^{\varepsilon}(s)\|^{\theta}  
                                                             \big)
                                                         ds
                                                 \right]  \nonumber\\
 \le & \left( 2 + 3 M_\theta \rho \right) 
              \sup_{r \in [\tau-2\rho, \tau] } 
                   \mathbb{E} 
                         \left[ \| u^{\varepsilon} (r) - u^0(r)\|^\theta \right]  
          + M_\theta \varepsilon^{\frac{\theta}{2} }  \nonumber\\
      & + M_\theta \varepsilon^{\frac{\theta}{2} } e^{\beta \rho}
                           \int_{\tau-\rho}^{\tau} e^{\beta(s-\tau) }
                  	                \big( \|h(s)\|^\theta
                  	                         + \| \widetilde{h}(s) \|_{L^2(\mathbb{Y}, \nu; \ell^2) }^\theta
                  	                         + \| \widetilde{h}(s) \|^{\theta}_{L^{\theta}(\mathbb{Y}, \nu; \ell^2) }
                  	                \big)  
                  	        ds  \nonumber\\
      & + M_\theta \varepsilon^{\frac{\theta}{2} } e^{\beta \rho}
                          \mathbb{E}
                                 \left[  \int_{\tau-\rho}^{\tau} e^{\beta(s-\tau) }  
                                               \big( \|u^{\varepsilon}(s)\|^{\theta-2} \|u^{\varepsilon}(s)\|_p^{p} 
                                                                    	+ \|u^{\varepsilon}(s)\|^{\theta}  
                                               \big)
                                           ds
                                \right]  \nonumber\\     
 \le & \widetilde{M}_{\theta} e^{-\beta t}
                                          \mathbb{E}
                                                 \left[ \| \varphi - \psi \|_\infty^\theta
                                                 \right]  
         + \widetilde{M}_{\theta} \varepsilon^{\frac{\theta}{2} }   
                 e^{-\beta t}
                      \mathbb{E} \left[ \|\varphi\|_{\infty}^{\theta} \right] 
         +  \widetilde{M}_\theta \varepsilon^{\frac{\theta}{2} } \nonumber\\
      & + \widetilde{M}_\theta \varepsilon^{\frac{\theta}{2} } 
             \int_{-\infty}^{\tau} e^{\beta (s-\tau)}
                	             \left( \|g(s)\|^{\theta} 
                	                      + \|h(s)\|^{\theta} 
                	                      + \| \widetilde{h}(s)\|_{L^{\theta}(\mathbb{Y},\nu; \ell^2)}^\theta
                	                        + \| \widetilde{h}(s)\|_{L^{2}(\mathbb{Y},\nu; \ell^2)}^\theta
                	             \right) ds,   
 \end{align}    
 where $\widetilde{M}_\theta > 0$ is a constant depending on $\theta$.
 This completes the proof.
\end{proof}

\begin{remark}
   In proof of Lemma \ref{scr}, 
   the small constant $\delta>0$  introduced in \eqref{dissip1-3}
   is used  to deal with the third and fourth terms on the right-hand side of \eqref{scr-1}
   such that the factor in the front of the  last
   term 
   in \eqref{jul12a1}  is $\varepsilon^{\frac{\theta}{2} }$, which will
   be 
    useful for showing the optimal convergence rate of singleton measure attractor.   
\end{remark}

 In what follows,
 we will present the optimal rate of convergence in terms of the Wasserstein metric for the singleton measure attractors of \eqref{lmvlds-2} as $\varepsilon \to 0$.

   Recall that
  $\mathcal{A}^\varepsilon = \{ \mu_\tau^\varepsilon \}_{\tau \in \mathbb{R} }$
 and
    $\mathcal{A}^0 = \{ \mu_\tau^0 \}_{\tau \in \mathbb{R} }$
are the
 singleton measure attractors in $\mathcal{P}_{\theta}(D_\rho)$ for \eqref{lmvlds-2}  
 and \eqref{lds}, respectively.
 Then we have:

\begin{theorem}\label{conv-rate}
   Suppose that {\bf(H1)}-{\bf(H4)}, \eqref{thetaassum1}, \eqref{thetaassum2}, \eqref{dissip1}, \eqref{dissip2} 
   and \eqref{dissip1-2} hold. 
   Then for every $\tau \in \mathbb{R}$ and $\theta \in \left(2, \tfrac{2(p-2)}{q-2} \right)$,
   there exists a positive constants $C_{\tau, \theta} = C(\tau, \theta) > 0$ and $\hat{\varepsilon} >0$ 
   such that for all $\varepsilon \in (0, \hat{\varepsilon} )$ and $\rho \in [0,1]$, 
   \begin{align} \label{cr-0}
         \mathbb{W}_\theta \left( \mu_\tau^\varepsilon, \mu_{\tau}^0 \right)
         \le C_{\tau, \theta} \, \varepsilon^{\frac{1}{2}},
   \end{align}
   and the convergence order $\frac{1}{2}$ is optimal.
\end{theorem}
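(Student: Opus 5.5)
The plan is to combine the invariance of the singleton attractors with Lemma \ref{scr}. Fix $\tau$ and $\varepsilon\in(0,\hat\varepsilon)$ with $\hat\varepsilon\le\min\{\varepsilon_0,\varepsilon^*\}$, where $\varepsilon_0$ comes from Lemma \ref{sdsec} and Lemma \ref{scr}.

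By Theorem \ref{sin-attractor}, applied to both $\varepsilon$ and $\varepsilon=0$, invariance gives, for every $t\ge 3\rho$,
\[
\mu^\varepsilon_\tau=P^{\ast,\varepsilon}_{\tau-t,\tau}\mu^\varepsilon_{\tau-t},
\qquad
\mu^0_\tau=P^{\ast,0}_{\tau-t,\tau}\mu^0_{\tau-t}.
\]
Take an optimal $\mathbb{W}_\theta$-coupling of $\mu^\varepsilon_{\tau-t}$ and $\mu^0_{\tau-t}$ on $D_\rho$, with respect to $\|\cdot\|_\infty$. Realize it as a pair $(\varphi,\psi)\in L^\theta(\Omega,\mathcal{F}_{\tau-t};D_\rho)$, independent of the noise after time $\tau-t$. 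The pair $(u^\varepsilon_\tau(\cdot;\tau-t,\varphi),\,u^0_\tau(\cdot;\tau-t,\psi))$ is then a coupling of $\mu^\varepsilon_\tau$ and $\mu^0_\tau$, so
\[
\mathbb{W}_\theta^\theta(\mu^\varepsilon_\tau,\mu^0_\tau)\le \mathbb{E}\Big[\sup_{s\in[-\rho,0]}\|u^\varepsilon_\tau(s)-u^0_\tau(s)\|^\theta\Big].
\]
Lemma \ref{scr} bounds the right-hand side by
\[
\mathfrak{C}_\theta e^{-\beta t}\mathbb{E}\|\varphi-\psi\|_\infty^\theta+\mathfrak{C}_\theta\varepsilon^{\theta/2}e^{-\beta t}\mathbb{E}\|\varphi\|^\theta_\infty+\mathfrak{C}_\theta\varepsilon^{\theta/2}\big(1+G_\tau\big),
\]
where $G_\tau$ is the weighted integral of $g,h,\widetilde h$ appearing in \eqref{dissip2}, which is finite.

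Next, both $\mathcal{A}^\varepsilon$ and $\mathcal{A}^0$ lie in the absorbing family $K$ of Lemma \ref{pas}. Since $K$ is tempered and satisfies \eqref{temper},
\[
e^{-\beta t}\big(\mathbb{E}\|\varphi\|^\theta_\infty+\mathbb{E}\|\psi\|^\theta_\infty\big)\le 2e^{-\beta t}R_{\tau-t}\to0 \quad\text{as } t\to\infty,
\]
uniformly in $\varepsilon$. Letting $t\to\infty$ therefore gives $\mathbb{W}_\theta^\theta\le \mathfrak{C}_\theta(1+G_\tau)\varepsilon^{\theta/2}$, which is \eqref{cr-0} with $C_{\tau,\theta}=(\mathfrak{C}_\theta(1+G_\tau))^{1/\theta}$. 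The constants in Lemma \ref{scr} do not depend on $\rho\in[0,1]$, so the bound is uniform in $\rho$.

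For optimality, I would exhibit an example satisfying (H1)–(H4) and the dissipativity conditions, with $\mathbb{W}_\theta(\mu^\varepsilon_\tau,\mu^0_\tau)\ge c\,\varepsilon^{1/2}$. Take $f=0$ (or $f_i(s)=\lambda_1|s|^{p-2}s$, which is odd), $F=0$, $g=0$, $\sigma=\widetilde\sigma=0$ and $\widetilde h=0$, with $h_{k,i}$ a constant vector $h$ that is nonzero in one coordinate.

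Then $\mu^0_\tau=\delta_0$ because $0$ solves the deterministic problem, so
\[
\mathbb{W}_\theta(\mu^\varepsilon_\tau,\delta_0)=\Big(\mathbb{E}\|u^\varepsilon_\tau\|_\infty^\theta\Big)^{1/\theta}\ge \big(\mathbb{E}\|u^\varepsilon(\tau)\|^2\big)^{1/2}.
\]
Applying Itô's formula to $\|u^\varepsilon\|^2$ in the stationary regime, the drift is monotone with polynomial growth, so $\mathbb{E}\|u^\varepsilon\|^2\ge c\varepsilon\|h\|^2$ for a constant $c>0$ that is uniform in small $\varepsilon$. The simplest route is the linear-growth case: scaling $u=\sqrt\varepsilon v$, the higher-order nonlinearity only helps in the limit $\varepsilon\to0$, and the variance is bounded below by the Ornstein–Uhlenbeck value $\varepsilon\|h\|^2/(2(\lambda+C))$.

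\textbf{Main obstacle.} The delicate point is the first step: the coupling construction. The law of $u^\varepsilon$ depends on its own distribution (McKean–Vlasov), so I must check that solutions started from the coupled initial data have marginal laws equal to $P^{\ast}\mu$. This holds by the uniqueness in law given by the Yamada–Watanabe argument, since $\mathcal{L}_\varphi=\mu^\varepsilon_{\tau-t}$ exactly. I also need the initial data to be $\mathcal{F}_{\tau-t}$-measurable, which I obtain by enlarging the filtration.

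A second point is the lower bound $\mathbb{E}\|u^\varepsilon(\tau)\|^2\ge c\varepsilon$ under the nonlinear drift. I would handle it from the Itô identity for $\|u^\varepsilon\|^2$, using $\mathbb{E}\|u\|_p^p\le C\,\mathbb{E}\|u\|^2\cdot(\text{small})$ at the scale $\sqrt\varepsilon$.
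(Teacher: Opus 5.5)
Your Step 1 is the paper's proof of \eqref{cr-0}. Both use invariance of the two singleton attractors, a pair $(\varphi,\psi)$ of $\mathcal{F}_{\tau-t}$-measurable initial segments with the right laws, Lemma \ref{scr}, and then $t\to\infty$ using temperedness. Choosing an optimal coupling is harmless but unnecessary, because $\mathbb{E}\|\varphi-\psi\|_\infty^\theta$ is multiplied by $e^{-\beta t}$ anyway.

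For optimality you take a genuinely different example from the paper.

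\emph{The paper's example.} It uses the linear system \eqref{special lds} with $\rho=0$, the discrete Laplacian ($p=2$), no $f$, and additive L\'evy noise. The attractor is then the law of the explicit stochastic convolution $z^\varepsilon(\tau)=\sqrt{\varepsilon}\,z^1(\tau)$. So $\mathbb{W}_\theta(\mu^\varepsilon_\tau,\delta_0)\ge\mathbb{E}\|z^\varepsilon(\tau)\|=C_1\sqrt{\varepsilon}$ is an exact scaling identity, and the only work is showing $C_1>0$, via the isometry and \eqref{mar01a1}. This is cheap, but strictly speaking that system is not covered by (H2), which requires $p>2$ and $\lambda_1>0$.

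\emph{Your example.} The choice $F=g=\sigma=\widetilde\sigma=\widetilde h=0$, $f_i(s)=\lambda_1|s|^{p-2}s$, constant $h$ does satisfy all the hypotheses. Your alternative $f=0$ does not, since it violates \eqref{f1}. So your example proves optimality inside the class of the theorem.

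The price is that the lower bound is no longer a scaling identity, and your sketched justification is not correct as stated. The $p$-Laplacian and $f$ are superlinear, so no Ornstein--Uhlenbeck comparison with a linear constant $C$ exists. Moreover these terms lower the second moment; they do not help a lower bound.

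Here is how to close the argument.
\begin{itemize}
\item The example is autonomous, so the pullback mixing in Theorem \ref{sin-attractor} shows $\mu^\varepsilon_\tau$ is independent of $\tau$. You may therefore take expectations in It\^o's formula at stationarity, which gives
\[
2\lambda\,\mathbb{E}\|u^\varepsilon\|^2=\varepsilon\|h\|^2-2\,\mathbb{E}\|Bu^\varepsilon\|_p^p-2\lambda_1\mathbb{E}\|u^\varepsilon\|_p^p\ge\varepsilon\|h\|^2-(2^{p+1}+2\lambda_1)\,\mathbb{E}\|u^\varepsilon\|^p .
\]
\item You then need an \emph{absolute} bound $\mathbb{E}\|u^\varepsilon\|^p\le C\varepsilon^{p/2}=o(\varepsilon)$, not a bound relative to $\mathbb{E}\|u^\varepsilon\|^2$ as you wrote. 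It follows from It\^o's formula for $\|u^\varepsilon\|^p$ at stationarity, where $A$ and $f$ enter with a good sign:
\[
\lambda\,\mathbb{E}\|u^\varepsilon\|^p\le\frac{p-1}{2}\,\varepsilon\|h\|^2\big(\mathbb{E}\|u^\varepsilon\|^p\big)^{(p-2)/p}.
\]
Alternatively, $\sigma=\widetilde\sigma=0$ allows $q=2$ and hence any $\theta\ge p$, so the bound is your own Step 1 estimate applied to this example.
\item This yields $\mathbb{E}\|u^\varepsilon(\tau)\|^2\ge\varepsilon\|h\|^2/(4\lambda)$ for small $\varepsilon$, hence $\mathbb{W}_\theta(\mu^\varepsilon_\tau,\delta_0)\ge\|h\|\sqrt{\varepsilon}/(2\sqrt{\lambda})$.
\end{itemize}

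Your route thus gives an optimality statement that genuinely lives inside (H1)--(H4), at the cost of this extra moment estimate. The paper's route is a one-line exact computation but uses a system outside the stated hypotheses.
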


\begin{proof}
We first prove \eqref{cr-0} is valid,
and then prove the convergence order $\frac{1}{2}$ is optimal.

 {\bf Step 1}: Prove \eqref{cr-0}.
 
 Given $\tau \in \mathbb{R}$, $t > 0$ and $\rho \in [0,1]$, 
 let $\varphi$,
  $\psi \in L^{\theta}(\Omega, \mathcal{F}_{\tau-t}; D_\rho)$ such that   
         $\mathcal{L}_{\varphi} = \mu_{\tau-t}^\varepsilon$ 
 and $\mathcal{L}_{\psi} = \mu_{\tau-t}^0$.
 Then by the invariance  of $\{ \mu_\tau^\varepsilon \}_{\tau \in \mathbb{R} }$ and $\{ \mu_\tau^0 \}_{\tau \in \mathbb{R} }$, we obtain
    \begin{align} \label{cr-1}
        \mathbb{W}_\theta \left( \mu_{\tau}^\varepsilon, \mu_{\tau}^0 \right) 
    = & \mathbb{W}_\theta 
                       \left( P_{\tau-t, \tau}^{\ast,\varepsilon} \mu_{\tau-t}^\varepsilon,
                                P_{\tau-t, \tau}^{\ast, 0} \mu_{\tau-t}^0
                       \right)  \nonumber\\
 \le & \left( \mathbb{E}
                              \left[ \| u^\varepsilon_\tau(\cdot; \tau-t, \xi) - u^0_\tau(\cdot; \tau-t, \zeta)  
                                       \|_\infty^\theta
                              \right]
              \right)^{\frac{1}{\theta} }.
 \end{align}             
 From \eqref{cr-1} and Lemma \ref{scr}, 
 it follows that there exist the constants $C_{\tau, \theta} > 0$ and $\hat{\varepsilon} = \hat{\varepsilon}(\theta) \in (0, \varepsilon^*)$ such that for all $t \ge 3\rho$ and $\varepsilon \in (0, \hat{\varepsilon} )$,
   \begin{align} \label{cr-2}
       \mathbb{W}_\theta \left( \mu_{\tau}^\varepsilon, \mu_{\tau}^0 \right) 
\le & C_{\tau, \theta} e^{\frac{\beta}{\theta} (\tau-t) }
            \left( \mathbb{E}
                             \left[ \| \varphi - \psi \|_\infty^\theta
                             \right]
            \right)^{\frac{1}{\theta} }  
         + C_{\tau, \theta} e^{\frac{\beta}{\theta} (\tau-t) }
                      \left( \mathbb{E} \left[ \|\varphi\|_{\infty}^{\theta} \right] \right)^{\frac{1}{\theta} }
         + C_{\tau, \theta} \varepsilon^{\frac{1}{2} } \nonumber\\
\le & 2^{1-\frac{1}{\theta} } C_{\tau, \theta} e^{\frac{\beta}{\theta} (\tau-t) }
             \left( \int_{D_\rho} \|y\|_\infty^\theta \mu_{\tau-t}^\varepsilon(dy)
                      + \int_{D_\rho}  \|z\|_\infty^\theta \mu_{\tau-t}^0(dz)
             \right)^{\frac{1}{\theta} }  \nonumber\\
      & + C_{\tau, \theta} e^{\frac{\beta}{\theta} (\tau-t) }
                                   \left( \int_{D_\rho} \|y\|_\infty^\theta \mu_{\tau-t}^\varepsilon(dy) \right)^{\frac{1}{\theta} }
         + C_{\tau, \theta} \varepsilon^{\frac{1}{2} }   \nonumber\\
\le & 2^{1-\frac{1}{\theta} }
C_{\tau, \theta} 
 e^{\frac{\beta}{\theta} (\tau-t) }
               \left( \| \mathcal{A}^\varepsilon(\tau-t) \|_{\mathcal{P}_{\theta}(D_\rho)}
                        + \| \mathcal{A}^0(\tau-t) \|_{\mathcal{P}_{\theta}(D_\rho)}
               \right)  \nonumber\\
     & + C_{\tau, \theta} e^{\frac{\beta}{\theta} (\tau-t) } 
                 \| \mathcal{A}^\varepsilon(\tau-t) \|_{\mathcal{P}_{\theta}(D_\rho)}
         + C_{\tau, \theta} \varepsilon^{\frac{1}{2} } .
 \end{align}
 Letting $t \to \infty$ in \eqref{cr-1}, 
 we obtain that for all $\varepsilon \in (0, \hat{\varepsilon} )$ and $\rho \in [0,1]$,
 \begin{align} \label{cr-3}
       \mathbb{W}_\theta \left( \mu_{\tau}^\varepsilon, \mu_{\tau}^0 \right)
       \le C_{\tau, \theta} \, \varepsilon^{\frac{1}{2} },
 \end{align}
 which implies \eqref{cr-0}.

 {\bf Step 2}:
  Prove  the convergence order $\frac{1}{2}$ in \eqref{cr-3} is optimal.
 
 {Since \eqref{cr-3} holds for all $\rho \in [0,1]$},
 we consider a linear  stochastic lattice system 
 in $\ell^2$ driven by additive L\'{e}vy noise 
 to demonstrate that the
 convergence rate  $\frac{1}{2}$ in \eqref{cr-3}
 cannot be improved in general.
 Consider the stochastic equation:
 \begin{align} \label{special lds}
    du(t) + A u(t) dt + \lambda u(t) dt
    = \sqrt{\varepsilon} 
            \int_{y \in \mathbb{Y} } \widetilde{h}(t, y) \widetilde{N}(dt,dy) \quad \text{in } \ \ell^2,
 \end{align}
 where $u=(u_i)_{i \in \mathbb{Z} }$,
      $(Au)_i = - u_{i+1}(t) + 2u_{i} (t) - u_{i-1}(t)$,
 $\widetilde{h}(s,y) = (\widetilde{h}_i(s,y))_{i \in \mathbb{Z}}$ satisfies \eqref{dissip2} with $\theta = 2$ and $0< \beta < 
  \lambda $. We further assume that
 for every $\tau\in \mathbb{R}$,  
  \be\label{mar01a1}
  \int_{-\infty}^\tau 
  \|\widetilde{h}(s)\|^2 _{L^2(\mathbb{Y}, \nu; \ell^2)}
  ds \neq 0.
  \ee
  
  Note that
  for every initial time $\tau$ and initial value $\xi \in L^2(\Omega, \mathcal{F}_\tau; \ell^2)$,
  system \eqref{special lds}
  has a unique solution
   in $L^2(\Omega, D([\tau, +\infty), \ell^2) )$
   (see e.g. \cite{W2011CSA}).
  Let $\mathbb{A} = A + \lambda I$, where $I$ is the identity operator.
  Then we have
 $$
   \| e^{-\mathbb{A}t} \| \le e^{- \lambda t},  \quad \forall \ t \ge 0,
 $$
and the  solution
of \eqref{special lds} is given by 
 $$ u(t; \tau, \xi)
   = e^{- \mathbb{A}(t - \tau)} \xi
     + \sqrt{\varepsilon} 
              \int_\tau^t 
                  \int_{y \in \mathbb{Y} } 
                           e^{- \mathbb{A}(t-s)} 
                           \widetilde{h}(s, y) \widetilde{N}(ds,dy).
 $$

By the argument of 
  Theorem \ref{sin-attractor},
 we find that \eqref{special lds} has a unique singleton measure attractor in $\mathcal{P}_2(\ell^2)$
 which is  given by an evolution family of 
 probability measures $\{\mu_\tau^\varepsilon\}_{\tau \in \mathbb{R}}$,
 where $\mu_\tau^\varepsilon$
 is the   law  of the random variable:
 $$
 z^\varepsilon(\tau) 
     = \sqrt{\varepsilon} 
             \int_{-\infty}^\tau 
                 \int_{y \in \mathbb{Y}} 
                       e^{- \mathbb{A}(\tau - s)}  \widetilde{h}(s, y) 
             \widetilde{N}(ds,dy) 
     \in L^\theta(\Omega, \ell^2).
     $$
 Indeed, in this case   we have
 \begin{align*}
      u(t; \tau, z^\varepsilon(\tau) )
   = & e^{-\mathbb{A}(t - \tau)} z^\varepsilon(\tau)
         + \sqrt{\varepsilon} 
                 \int_\tau^t 
                     \int_{y \in \mathbb{Y} } 
                              e^{- \mathbb{A}(t - s)} 
                                    \widetilde{h}(s, y) 
                                         \widetilde{N}(ds,dy)  \nonumber\\
   = & \sqrt{\varepsilon} 
                 \int_{-\infty}^\tau 
                      \int_{y \in \mathbb{Y} } 
                                e^{- \mathbb{A}(t - s)} 
                                      \widetilde{h}(s, y) 
                                           \widetilde{N}(ds,dy)
        + \sqrt{\varepsilon} 
                  \int_\tau^t 
                       \int_{y \in \mathbb{Y} } 
                                e^{- \mathbb{A}(t - s)} 
                                      \widetilde{h}(s, y) 
                                           \widetilde{N}(ds,dy)  \nonumber\\
  = & \sqrt{\varepsilon}
               \int_{-\infty}^t 
                    \int_{y \in \mathbb{Y} } 
                         e^{- \mathbb{A}(t - s)}
                               \widetilde{h}(s, y) 
                                     \widetilde{N}(ds,dy)  
  =  z^\varepsilon(t),
 \end{align*}
 which shows that
 the family
  $\{\mu_\tau^\varepsilon\}_{\tau \in \mathbb{R}}$
  is invariant under  $\{P^{*,\varepsilon}_{\tau, t}\}_{t\ge \tau}$;
  that is,
  $P^{*,\varepsilon}_{\tau, t} \mu_\tau^\varepsilon = \mu_t^\varepsilon$
  for all $\tau \in\mathbb{R}$
  and $t\ge \tau$.
   
 If $\varepsilon = 0$, we obtain
 the limiting system
 of \eqref{special lds}: 
 \begin{align} \label{linear ls}
   \frac{d u}{d t} +   A u(t) + \lambda u(t) = 0, \quad t > \tau.
 \end{align}
 Note that
 system \eqref{linear ls}
 has a unique singleton measure attractor
 given by 
 the Dirac measure $\delta_0$
 at $0 \in \ell^2$.   

 Define $\psi_0 (u) = \| u \| $ for every $u\in \ell^2$.
 Then
 $\psi_0 $ is
 a  Lipschitz continuous  function with $\|\psi_0\|_{Lip} \le 1$, and
 thus  we have
 \begin{align} \label{lowbound}
    \mathbb{W}_\theta (\mu_\tau^\varepsilon, \delta_0)
       & \ge \mathbb{W}_1 (\mu_\tau^\varepsilon, \delta_0)
            = \sup_{\| \psi \|_{Lip} \le 1}
                                     \left\{ \int_{\ell^2} \psi(x) \mu_\tau^\varepsilon(dx) 
                                               - \int_{\ell^2} \psi(x) \delta_0(dx)
                                     \right\}  \nonumber\\
       & \ge \int_{\ell^2} \psi_0 (x) \  \mu_\tau^\varepsilon(dx)
                  - \int_{\ell^2} \psi_0 (x) \  \delta_0(dx)
           = \mathbb{E} \left[ \psi_0 (z^\varepsilon (\tau) ) \right] - \psi_0 (0) 
           = \mathbb{E} \left[ \| z^\varepsilon (\tau) \| \right]  \nonumber \\
       & = \sqrt{\varepsilon}
                  \mathbb{E}
                                   \left[
                                          \| \int_{-\infty}^\tau
                                                \int_{y \in \mathbb{Y} } e^{-\mathbb{A}(\tau-s)} \widetilde{h}(s, y) \widetilde{N}(ds,dy)
                                          \| 
                                   \right]
      =  {C}_1 \sqrt{\varepsilon},
 \end{align}
 where
 $C_1 = \mathbb{E}
                              \left[ \| \int_{-\infty}^\tau
                                           \int_{y \in \mathbb{Y} } e^{-\mathbb{A}(\tau-s)} \widetilde{h}(s, y) \widetilde{N}(ds,dy)
                                     \|
                              \right]
                     $.
 By \eqref{dissip2}  we have
 \begin{align*}
    C_1^2 
   \le & \mathbb{E}
                              \left
                              (   \int_{-\infty}^\tau
                                           \int_{y \in \mathbb{Y} } 
                                           \| e^{-\mathbb{A}(\tau-s)} \widetilde{h}(s, y)\|^2  \nu(dy) ds 
                              \right )  \nonumber\\
    \le & \mathbb{E}
                              \left
                              (   \int_{-\infty}^\tau
                                           \int_{y \in \mathbb{Y} } 
                                           \| e^{-\lambda
                                           (\tau-s)} \widetilde{h}(s, y)\|^2  \nu(dy) ds 
                              \right )  \nonumber\\
    \le & \mathbb{E}
                              \left
                              (   \int_{-\infty}^\tau
                                           \int_{y \in \mathbb{Y} } 
                                           \| e^{-\beta 
                                           (\tau-s)} \widetilde{h}(s, y)\|^2  \nu(dy) ds 
                              \right ) 
                              <\infty.
 \end{align*}
 It remains to show $C_1>0$.
 If $C_1=0$, then we have, $\mathbb{P}$-almost surely,
 $$ \| \int_{-\infty}^\tau
              \int_{y \in \mathbb{Y} } e^{-\mathbb{A}(\tau-s)} \widetilde{h}(s, y)  
              \widetilde{N}(ds,dy) \| = 0,
 $$
 and hence
 $$ \mathbb{E} \left[ \| \int_{-\infty}^\tau
                                           \int_{y \in \mathbb{Y} } e^{-\mathbb{A}(\tau-s)} \widetilde{h}(s, y) \widetilde{N}(ds,dy) \|^2 \right] = 0,
 $$
 which implies that
 $$ \int_{-\infty}^\tau
           \int_{y \in \mathbb{Y} }
                  \|e^{-\mathbb{A}(\tau-s)} \widetilde{h}(s, y) \|^2 \nu(dy) ds =0 .
 $$
 Consequently, we get 
   $$ e^{-\mathbb{A}(\tau-s)} \widetilde{h}(s, y) = 0 \  \text{in}\ \ell^2,\  \text{for a.e.}\ (s,y) \in (-\infty, \tau] \times \mathbb{Y},
   $$
  and thus
   $\widetilde{h}(s, y) = 0$
    in $\ell^2$ for a.e. $(s,y) \in (-\infty, \tau] \times \mathbb{Y}$,
 and 
 $ \int_{-\infty}^\tau
                                           \int_{y \in \mathbb{Y} } 
                 \| \widetilde{h}(s, y)   \|^2 \nu(dy) ds   = 0,
 $
 which is in contradiction  \eqref{mar01a1}.
 Therefore, we must have $0<C_1<\infty$,
 which together with \eqref{lowbound} completes the proof.
\end{proof}

\section*{Acknowledgements}

This work is partially supported by the NNSF of China (11971260, 12471167).

%

\end{document}